\def\cA{{\cal A}}
\def\cB{{\cal B}}
\def\cE{{\cal E}}
\def\cI{{\cal I}}
\def\cJ{{\cal J}}
\def\cN{{\cal N}}
\def\cU{{\cal U}}
\def\cV{{\cal V}}
\def\cR{{\cal R}}
\def\bone{\mathbf{1}}
\newcommand{\ti}{\mbox{\tt i}}
\def\sss{\scriptscriptstyle}
\def\wtd{\widetilde}
\def\what{\widehat}
\def\scrL{\mathscr{L}}
\def\scrT{\mathscr{T}}
\def\bbC{\mathbb{C}}
\def\bbR{\mathbb{R}}
\DeclareMathOperator{\cut}{cut}
\DeclareMathOperator{\Ncut}{Ncut}
\DeclareMathOperator{\diag}{diag}
\DeclareMathOperator{\eig}{eig}
\DeclareMathOperator{\err}{err}
\DeclareMathOperator{\rank}{rank}
\DeclareMathOperator{\subspan}{span}
\DeclareMathOperator{\trans}{trans}
\DeclareMathOperator{\vol}{vol}
\DeclareMathOperator{\LGopt}{LGopt}
\DeclareMathOperator{\QEPmin}{QEPmin}
\DeclareMathOperator{\HH}{H}
\DeclareMathOperator{\T}{T}
\DeclareMathOperator{\NRes}{NRes}
\newtheorem{theorem}{Theorem}[section]
\newtheorem{proposition}{Proposition}[section]
\newtheorem{lemma}{Lemma}[section]
\newtheorem{corollary}{Corollary}[section]
\theoremstyle{definition}
\newtheorem{definition}{Definition}[section]
\newtheorem{remark}{Remark}[section]
\newtheorem{example}{Example}[section]
\newcommand{\ignore}[1]{}
\numberwithin{equation}{section}
\title{
Linear Constrained Rayleigh Quotient Optimization: \\
Theory and Algorithms}
\author{Yunshen Zhou \and Zhaojun Bai \and Ren-Cang Li}
\date{\today}
\begin{document}
\maketitle

\begin{abstract}
We consider the following constrained Rayleigh quotient optimization problem (CRQopt)
$$
\min_{x\in \mathbb{R}^n} x^{\T}Ax\,\,\mbox{subject to}\,\, x^{\T}x=1\,\mbox{and}\,C^{\T}x=b,
$$
where $A$ is an $n\times n$ real symmetric matrix and $C$ is an $n\times m$ real matrix. 
      Usually, $m\ll n$.
The problem is also known as the constrained eigenvalue problem in the literature 
because it becomes an eigenvalue problem
if the linear constraint $C^{\T}x=b$ is removed. We start by equivalently transforming  CRQopt into
an optimization problem, called LGopt, of minimizing the Lagrangian multiplier of CRQopt, and then
an problem, called QEPmin, of finding the smallest eigenvalue of a quadratic eigenvalue problem.
Although such equivalences has been discussed 
      in the literature, it appears to be the first time that these equivalences are
rigorously justified.  Then we propose to numerically solve LGopt and QEPmin by
the Krylov subspace projection method via the Lanczos process. The basic idea, as the Lanczos method
for the symmetric eigenvalue problem,
is to first reduce LGopt and QEPmin by projecting them onto Krylov subspaces to yield problems of the same types but
of much smaller sizes, and then solve the reduced problems by some direct methods, which is
either a secular equation solver (in the case of LGopt) or an eigensolver (in the case of QEPmin).
The resulting algorithm is called the Lanczos algorithm.
We perform convergence analysis for the proposed method and obtain error bounds.
The sharpness of the error bound is demonstrated by artificial examples, although in applications the method often converges
much faster than the bounds suggest.
Finally, we apply the Lanczos algorithm to semi-supervised learning in the context of 
      constrained clustering. 
\end{abstract}

\newpage
\tableofcontents


\section{Introduction} \label{sec-intro}
In this paper, we are concerned with the following {\em linear constrained Rayleigh
quotient\/} (CRQ) optimization:
\begin{subequations}\label{eq:CRQopt}
\begin{empheq}[left={\mbox{CRQopt:}\quad}\empheqlbrace]{alignat=2}
\min ~ & v^{\T}Av,    \label{eq:CRQopt-0} \\
\text{s.t.}~ & v^{\T} v = 1, \label{eq:CRQopt-1} \\
~ & C^{\T} v = b, \label{eq:CRQopt-2}
\end{empheq}
\end{subequations}
where $A\in \bbR^{n\times n}$ is symmetric, i.e., $A=A^{\T}$,
$C\in \bbR^{ n\times m}$ has full column rank, and
$b\in\bbR^m$. Necessarily $m< n$ but often $m\ll n$.
We are particularly interested in the case where $A$ is large
and sparse and  $b\neq 0$.

CRQopt \eqref{eq:CRQopt} is  also  known
as {\em the constrained eigenvalue problem},
a term coined in \cite{gagv:1989} in 1989. However, it had appeared 
in the literature much earlier than that \cite{golu:1973}. 
In that sense, it is a classical problem.
However, past studies are fragmented with some claims, although often true, 
not rigorously justified
or needed conditions to hold. In this paper, 
our goal is to provide a thorough investigation into this classical problem,
including rigorous justifications of statements previously taken for granted in the literature and
addressing the theoretical subtleties that were not paid attention to. We also present a quantitative convergence
analysis for the Krylov type subspace projection method,  
which we will also call the Lanczos algorithm, for solving 
large scale CRQopt~\eqref{eq:CRQopt}.

\paragraph{Related works.}\label{sec-intro-motivation}
CRQopt \eqref{eq:CRQopt} has found a wide range of applications, such as
ridge regression \cite{drap:1963,gohw:1979},
trust-region subproblem \cite{moso:1983,rewo:1997},
constrained least square problem \cite{gand:1981},
spectral image segmentation \cite{erof:2011,shmj:2000},
transductive learning \cite{joac:2003}, and
community detection \cite{newm:2013}.

The first systematic study of CRQopt~\eqref{eq:CRQopt} perhaps belongs to
Gander, Golub and von Matt \cite{gagv:1989}.
Using the full QR and eigen-decompositions,
 they first reformulated CRQopt~\eqref{eq:CRQopt} as an optimization problem of
finding the minimal Lagrangian multiplier via solving a secular equation 
(in a way that is different from
our secular equation solver in Appendix~\ref{sec:secularEq}).
Alternatively, they also turned
CRQopt~\eqref{eq:CRQopt} into an optimization problem of finding
the smallest real eigenvalue of a quadratic eigenvalue problem (QEP).
However, the equivalence between the QEP optimization and the Lagrangian multiplier
problem was not rigorously justified there.

Numerical algorithms proposed in \cite{gagv:1989} are not suitable 
for large scale CRQopt~\eqref{eq:CRQopt} because they
requires a full eigen-decomposition of $A$ as a dense matrix.
Later in \cite{gozz:2000}, Golub, Zhang and Zha considered large and sparse
CRQopt~\eqref{eq:CRQopt} but only with the homogeneous constraint, i.e, $b=0$.
In this special case, CRQopt~\eqref{eq:CRQopt} is equivalent to computing the smallest
eigenvalue of $A$ restricted to the null space of $C^{\T}$.
An inner-outer iterative Lanczos method was proposed to solve
the homogeneous CRQopt~\eqref{eq:CRQopt}.
In \cite{xuls:2009}, Xu, Li and Schuurmans proposed a projected power method
for solving CRQopt~\eqref{eq:CRQopt}.
The projected power method is an
iterative method only involving matrix-vector products, and thus
it is suitable for large and sparse CRQopt~\eqref{eq:CRQopt}. However, its convergence
is linear at best and often too slow.
In \cite{erof:2011}, Eriksson, Olsson and Kahl
reformulated CRQopt~\eqref{eq:CRQopt}
into an eigenvalue optimization problem 
(see Appendix B for details)
An algorithm based on the line search was used to find the optimal solution.
This algorithm is suitable for CRQopt \eqref{eq:CRQopt} with a large and sparse 
matrix $A$, but
it is too costly because the smallest eigenvalue has to be computed multiple times
during each line search action.




\paragraph{Contributions.}\label{sec-intro-contributions}
Our study in this paper on CRQopt~\eqref{eq:CRQopt} begins with
the standard approach of Lagrangian multipliers, as was taken in \cite{gagv:1989},
which leads to an optimization problem of minimizing the Lagrangian multiplier of CRQopt,
called LGopt (Section~\ref{ssec:LGopt}), and then
an optimization problem of finding the smallest real eigenvalue of a quadratic eigenvalue problem (QEP),
called QEPmin (Section~\ref{ssec:QEPmin}).
We summarize our major contributions as follows.
\begin{enumerate}
\item Although transforming CRQopt into LGopt and QEPmin is not really new, our formulations
      of LGopt and QEPmin
      set them up onto a natural path for use in Krylov subspace type projection methods that only requires matrix-vector products.
      Therefore, the formulations are suitable for large scale CRQopt.
      We rigorously proved the equivalences among the three problems while they were only loosely argued previously as, e.g., in
      \cite{gagv:1989}. As far as subtle technicalities are concerned, we prove that the leftmost eigenvalue in the complex plane is real, which has a significant implication when it comes to numerical computations.


\item We devise a Lanczos algorithm to solve the induced optimization problems: 
      LGopt and QEPmin. This algorithm is made possible, as we argued moments ago, by our different formulations from what in the literature.
      Along the way, we also propose an efficient numerical algorithm for
      the type of secular equations arising from solving each projected LGopt.


\item We establish a quantitative convergence analysis for the Lanczos algorithm and
      obtain error bounds on approximations generated by the algorithm. These error bounds are in general  sharp in the worst case
      as  demonstrated by artificially designed numerical examples.

\item We apply our algorithm to the large scale CRQopt from the constrained clustering that
      arises from the standard spectral algorithm with linear constraints to encode 
            prior knowledge labels. During our
      tests, we observed that our algorithm was $2$ to $23$  
            times faster than FAST-GE-2.0 \cite{jixb:2017} for
            constrained image segmentation, depending on given image data.


\end{enumerate}

\paragraph{Organization.}\label{sec-intro-organiztion}
In Section \ref{sec-crq-theory},  we investigate the theoretical aspect of CRQopt \eqref{eq:CRQopt} such as
the feasible set, the existence of a minimizer, and  transforming
CRQopt \eqref{eq:CRQopt} into two equivalent optimization problems with rigorous justifications.
A Krylov subspace projection approach for solving CRQopt \eqref{eq:CRQopt} via its equivalent optimization problems just mentioned are
detailed in Section \ref{sec-crq-alg} and its convergence analysis is given in Section~\ref{sec-crq-conv}.
Numerical examples that demonstrate the sharpness of our error bounds for convergence are presented in Section \ref{sec-crq-ex}.
Section \ref{sec-app} describes an application of our algorithms 
to the constrained image segmentation problem.
Concluding remarks are in Section \ref{sec-concluding}.
There are three appendices. Appendix~\ref{sec:secularEq} explains 
how to solve the secular equation arising from solving the reduced LGopt.
Appendix~\ref{sec:dual} proves the equivalence between CRQopt \eqref{eq:CRQopt} 
and an eigenvalue optimization problem proposed by 
Eriksson, Olsson and Kahl \cite{erof:2011}.
Appendix~\ref{sec-alg-software} documents CRQPACK, a software package
for an implementation of Lanczos algorithm and reproducing 
numerical experiments presented in this paper.

\paragraph{Notation.}
Throughout the article, $\bbR$, $\bbR^n$ and
$\bbR^{m\times n}$ are set of real numbers, columns vectors
of dimension $n$, and $m\times n$ matrices, respectively. $\mathbb{C}$, $\mathbb{C}^n$ and  $\mathbb{C}^{m\times n}$
are set of complex numbers, columns vectors of dimension $n$, and $m\times n$ matrices, respectively.
We use MATLAB-like notation $X_{(i:j,k:l)}$ to denote the submatrix of $X$, consisting of the intersections of
rows $i$ to $j$ and columns $k$ to $l$, and when $i : j$ is replaced by $:$, it means all rows, similarly for columns.
For a vector $v\in\mathbb{C}$, $v_{(k)}$ refers the $k$th entry of $v$ and $v_{(i:j)}$
is the subvector of $v$ consisting of the $i$th to $j$th entries inclusive.
The $n \times n$ identity matrix is $I_n$ or simply $I$ if its size is clear from the context, and $e_j$ is the $j$th
column of an identity matrix whose size is determined by the context.
$\diag(c_1,c_2,\dots,c_n)$ is an  $n\times n$ diagonal matrix with diagonal elements $c_1,c_2,\dots,c_n$.
The imaginary unit is $\ti=\sqrt{-1}$.

For $X\in\mathbb{C}^{m\times n}$, $X^{\T}$, $\cR(X)$, $\cN(X)$ denote its transpose, range and null space, respectively.
For a real symmetric matrix $H$,  $\eig(H)$ stands for the set of all eigenvalues of $H$, and $\lambda_{\min}(H)$ and
$\lambda_{\max}(H)$ denote the smallest and largest  eigenvalue of $H$, respectively.
$\|\cdot\|_p$ $(1\le p\le\infty$) is the $\ell_p$-vector or $\ell_p$-operator norm, respectively, depending on the argument.
As a special case, $\|\cdot\|_2$ or $\|\cdot\|$ is either the Euclidean norm of vector or the spectral norm of a matrix.


\section{Theory}\label{sec-crq-theory}

\subsection{Feasible set and solution existence}\label{sec-crq-problem-exist}
In CRQopt \eqref{eq:CRQopt},  we assumed $\rank(C)=m$. Let
\begin{equation}\label{eq:n0:defn}
n_0=(C^{\T})^{\dag}b,
\end{equation}
i.e., $n_0$ is the unique minimal norm solution
of $C^{\T}v=b$,  where $C^{\dag}$ is the Moore-Penrose
inverse of $C$. Because of the assumption $\rank(C)=m$, we have  \cite{bjor:1996,demm:1997,stsu:1990}
$$
C^{\dag}=(C^{\T}C)^{-1}C^{\T},\,\,(C^{\T})^{\dag} = (C^{\dag})^{\T} = C (C^{\T}C)^{-1}.
$$
The most important orthogonal projection throughout this article is
\begin{equation}\label{eq:theP-dfn}
P=I-CC^{\dag}
\end{equation}
which orthogonally projects any vector onto $\cN(C^{\T})$,
the null space of $C^{\T}$ \cite{stsu:1990}.
Any $v\in\bbR^n$ that satisfies $C^{\T}v=b$ can be orthogonally decomposed as
\begin{equation}\label{eq:v}
v = (I-P)v+Pv=n_0+Pv\in n_0+\cN(C^{\T}).
\end{equation}
Evidently $\|v\|^2=\|n_0\|^2+\|Pv\|^2$, which, together with
the unit length constraint \eqref{eq:CRQopt-1}, lead to
the following immediate conclusions about the solvability of
CRQopt~\eqref{eq:CRQopt}:
\begin{itemize}
\item If $\|n_0\|>1$,
then there is no unit vector $v$ satisfying $C^{\T}v=b$.
This is because for
any $v$ satisfying $C^{\T}v=b$ has norm no smaller than $\|n_0\|$.
Thus CRQopt~\eqref{eq:CRQopt} has no minimizer.

\item If $\|n_0\|=1$, then $v=n_0$ is the only unit vector that
satisfies $C^{\T}v=b$. Thus CRQopt~\eqref{eq:CRQopt} has a
unique minimizer $v = n_0$.
\item If $\|n_0\|<1$, then there are infinitely many feasible
vectors $v$ that satisfy $C^{\T}v=b$.
\end{itemize}
Therefore only the case $\|n_0\|<1$ needs further investigation.
Consequently, throughout the rest of the article,
we will assume $\|n_0\|<1$.

\subsection{Equivalent LGopt}\label{ssec:LGopt}

Using the orthogonal decomposition \eqref{eq:v}, we have
\begin{subequations}\label{eq:decomp-vAv}
\begin{align}
v^{\T}Av& =v^{\T}PAPv+2v^{\T}PAn_0+n_0^{\T}An_0, \label{eq:decomp-vAv:1}\\
v^{\T}v& =\|n_0\|^2+\|Pv\|^2. \label{eq:decomp-vAv:2}
\end{align}
\end{subequations}
Since $n_0^{\T}An_0$ and $\|n_0\|$ are constants,
CRQopt~\eqref{eq:CRQopt} is equivalent
to the following constrained quadratic minimization problem
\begin{subequations}\label{eq:CQopt}
\begin{empheq}[left={\mbox{CQopt:}\quad}\empheqlbrace]{alignat=2}
\min         ~& v^{\T}PAPv+2v^{\T}b_0,\label{eq:CQopt-0}\\
\mbox{s.t.}  ~& \|Pv\|=\gamma, \label{eq:CQopt-1}\\
             ~& v\in n_0+\cN(C^{\T}),\label{eq:CQopt-2}
\end{empheq}
\end{subequations}
where
\begin{equation}\label{eq:b0-gamma:defn}
b_0=PAn_0 \in \cN(C^{\T}),\quad \gamma:=\sqrt{1-\|n_0\|^2}>0.
\end{equation}
Necessarily, $0<\gamma<1$. However, in the rest of our development, unless we refer back to CRQopt~\eqref{eq:CRQopt},
$\gamma<1$ can be removed, i.e., $\gamma$ can be any positive number.

\begin{theorem}\label{thm:CRQopt=CQopt}
$v_{\ast}$ is a minimizer of {\rm CRQopt}~\eqref{eq:CRQopt}
if and only if $v_{\ast}$ is a minimizer of {\rm CQopt}~\eqref{eq:CQopt}.
\end{theorem}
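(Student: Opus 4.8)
The plan is to show that {\rm CRQopt}~\eqref{eq:CRQopt} and {\rm CQopt}~\eqref{eq:CQopt} share \emph{exactly} the same feasible set, and that on this common set their objective functions differ only by the additive constant $n_0^{\T}An_0$. Once both facts are established, minimizing one objective over the feasible set is the same as minimizing the other, so the two problems have identical minimizers, which is precisely the claimed equivalence.

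First I would prove that the feasible sets coincide by matching the constraints one at a time. The constraint \eqref{eq:CQopt-2}, namely $v\in n_0+\cN(C^{\T})$, is equivalent to $C^{\T}v=b$: if $v=n_0+w$ with $w\in\cN(C^{\T})$, then $C^{\T}v=C^{\T}n_0=C^{\T}C(C^{\T}C)^{-1}b=b$; conversely, if $C^{\T}v=b$, then the decomposition \eqref{eq:v} writes $v=n_0+Pv$ with $Pv\in\cN(C^{\T})$. This pairs \eqref{eq:CQopt-2} with \eqref{eq:CRQopt-2}. Assuming this common constraint, I would then invoke the Pythagorean identity \eqref{eq:decomp-vAv:2}, $\|v\|^2=\|n_0\|^2+\|Pv\|^2$, to convert the unit-norm constraint: since $\gamma^2=1-\|n_0\|^2$ and $\gamma>0$ by \eqref{eq:b0-gamma:defn}, we have $v^{\T}v=1\iff\|Pv\|^2=\gamma^2\iff\|Pv\|=\gamma$, which pairs \eqref{eq:CRQopt-1} with \eqref{eq:CQopt-1}. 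Hence the two feasible sets are identical.

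It remains to compare the objectives on this common set. For any feasible $v$ (so that $v=n_0+Pv$), the decomposition \eqref{eq:decomp-vAv:1} gives $v^{\T}Av=v^{\T}PAPv+2v^{\T}PAn_0+n_0^{\T}An_0$, and since $b_0=PAn_0$ by \eqref{eq:b0-gamma:defn}, the first two terms are exactly the {\rm CQopt} objective $v^{\T}PAPv+2v^{\T}b_0$. Thus the {\rm CRQopt} objective equals the {\rm CQopt} objective plus the constant $n_0^{\T}An_0$ throughout the feasible set, and the equivalence of minimizers follows immediately. I do not expect a genuinely hard step here; the only point demanding care is the feasible-set matching, specifically verifying that the affine constraint \eqref{eq:CQopt-2} really encodes $C^{\T}v=b$ (which hinges on $C^{\T}n_0=b$ and on $P$ projecting onto $\cN(C^{\T})$) and that the quadratic decomposition \eqref{eq:decomp-vAv:1} is valid precisely because $(I-P)v=n_0$ for feasible $v$, and not for arbitrary $v$.
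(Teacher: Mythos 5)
Your proposal is correct and follows essentially the same route as the paper: the paper derives the decompositions \eqref{eq:decomp-vAv:1} and \eqref{eq:decomp-vAv:2} for $v$ satisfying $C^{\T}v=b$ (so that $v=n_0+Pv$) and then states the theorem as an immediate consequence of the fact that $n_0^{\T}An_0$ and $\|n_0\|$ are constants on the feasible set. Your write-up simply makes explicit the constraint-matching that the paper leaves implicit, so there is nothing to correct.
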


One way to solve CQopt \eqref{eq:CQopt} is the method of the Lagrangian multipliers. It seeks
the stationary points of the Lagrangian function
\begin{equation}\label{eq:Lfun4CQopt}
\scrL(v,\lambda) = v^{\T}PAPv+2v^{\T}b_0-\lambda(v^{\T}Pv-\gamma^2).
\end{equation}
Differentiating $\scrL$ with respect to $v$ and $\lambda$, we get
\begin{subequations} \label{eq:CQopt-LagEq}
\begin{align}
(PA-\lambda I)Pv&=-b_0, \label{eq:CQopt-LagEq-1}\\
\|Pv\|&=\gamma. \label{eq:CQopt-LagEq-2}
\end{align}
\end{subequations}
Let $u=Pv\in\cN(C^{\T})$. Then $u=Pu$ and $v=n_0+u$.
The Lagrangian equations in \eqref{eq:CQopt-LagEq}
are equivalent to the following  equations:
\begin{subequations} \label{eq:CQopt-LagEq'}
\begin{align}
(PAP-\lambda I)u&=-b_0,  \label{eq:CQopt-LagEq'-1}\\
\|u\|&=\gamma,  \label{eq:CQopt-LagEq'-2}\\
u&\in\cN(C^{\T}).  \label{eq:CQopt-LagEq'-3}
\end{align}
\end{subequations}
In fact,
any solution $(\lambda,v)$ of \eqref{eq:CQopt-LagEq} gives rise to a solution
$(\lambda,u)$ with $u=Pv$ of \eqref{eq:CQopt-LagEq'}, and conversely
any solution $(\lambda,u)$ of \eqref{eq:CQopt-LagEq'} leads to
a solution $(\lambda,v)$ with $v=n_0+u$ of \eqref{eq:CQopt-LagEq}.

The system of equations  \eqref{eq:CQopt-LagEq'} has more than one solution pairs
$(\lambda, u)$.
We seek a pair $(\lambda,u)$ among them that minimizes
the objective function of \eqref{eq:CQopt} for  $v\in\bbR^n$. Note that
\begin{alignat}{2}
f(v) :&= v^{\T}PAPv+2v^{\T}b_0  \nonumber\\
      & =  v^{\T}PAPv+2v^{\T}P A n_0  \nonumber\\
      &\stackrel{\text{$u=Pv$}}{=} u^{\T}Au+2u^{\T} A n_0 \nonumber\\
      & \stackrel{\text{$u=Pu$}}{=}  u^{\T}PAPu+2u^{\T} PA n_0 \nonumber\\
      & =  u^{\T}PAPu+2u^{\T} b_0 \nonumber\\
      & = f(u), \label{eq:fun-f:dfn}
\end{alignat}
i.e., $f(v)=f(u)$ for $v\in\bbR^n$ and $u=Pv$.
Therefore minimizing $f(v)$ over $v\in\bbR^n$ is equivalent to 
minimizing $f(u)$ over $u\in\cN(C^{\T})$.
The following lemma compares the value of $f$ at different
solution pairs $(\lambda,u)$ of the system \eqref{eq:CQopt-LagEq'}.
The proof of the lemma is inspired by Gander \cite{gand:1981} 
on solving a least squares problem with a quadratic constraint, 

\begin{lemma}\label{lem:minlambda}
For two solution pairs $(\lambda_i,u_i)$ for $i=1,2$  of the Lagrangian system of equations \eqref{eq:CQopt-LagEq'},
$\lambda_1<\lambda_2$ if and only if  $f(u_1)< f(u_2)$.
\end{lemma}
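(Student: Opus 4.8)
The plan is to collapse the comparison of the two objective values $f(u_1)$ and $f(u_2)$ into a single scalar identity that directly exposes how the sign of $f(u_1)-f(u_2)$ is tied to the sign of $\lambda_1-\lambda_2$. The main tools will be the Lagrangian equation \eqref{eq:CQopt-LagEq'-1}, the symmetry of $PAP$, and the Cauchy--Schwarz inequality; no diagonalization of $PAP$ is needed.

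First I would use \eqref{eq:CQopt-LagEq'-1} to remove the quadratic part of $f$. Since $(PAP-\lambda_i I)u_i=-b_0$ rearranges to $PAP\,u_i=\lambda_i u_i-b_0$, substituting this into $f(u_i)=u_i^{\T}PAP u_i+2u_i^{\T}b_0$ and invoking the constraint $\|u_i\|=\gamma$ gives the compact form $f(u_i)=\lambda_i\gamma^2+u_i^{\T}b_0$. This reduces everything to understanding the cross term $u_i^{\T}b_0$. To handle it, I would cross-multiply the two Lagrangian equations: multiplying \eqref{eq:CQopt-LagEq'-1} for $i=2$ on the left by $u_1^{\T}$ yields $u_1^{\T}PAP u_2=\lambda_2(u_1^{\T}u_2)-u_1^{\T}b_0$, and the symmetric manipulation with indices swapped yields $u_2^{\T}PAP u_1=\lambda_1(u_1^{\T}u_2)-u_2^{\T}b_0$. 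Subtracting these and using the symmetry $u_1^{\T}PAP u_2=u_2^{\T}PAP u_1$ cancels the quadratic cross-terms and leaves $u_1^{\T}b_0-u_2^{\T}b_0=(\lambda_2-\lambda_1)\,u_1^{\T}u_2$.

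Combining the two steps produces the key identity
\[
f(u_1)-f(u_2)=(\lambda_1-\lambda_2)\bigl(\gamma^2-u_1^{\T}u_2\bigr),
\]
after which the claim reduces entirely to controlling the sign of the scalar factor $\gamma^2-u_1^{\T}u_2$. Because $\|u_1\|=\|u_2\|=\gamma$, Cauchy--Schwarz gives $u_1^{\T}u_2\le\gamma^2$, so the factor is nonnegative.

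The delicate part, which I expect to be the main obstacle, is ruling out equality in that last inequality. I would argue that $u_1^{\T}u_2=\gamma^2$ forces $u_1=u_2$ (equality in Cauchy--Schwarz for two vectors of the same norm $\gamma$), and that $u_1=u_2$ in turn forces $\lambda_1=\lambda_2$, by subtracting the two instances of \eqref{eq:CQopt-LagEq'-1} to get $(\lambda_2-\lambda_1)u_1=0$ and then using $u_1\neq 0$ (as $\|u_1\|=\gamma>0$). Hence whenever $\lambda_1\neq\lambda_2$ the factor $\gamma^2-u_1^{\T}u_2$ is strictly positive, so the identity shows $f(u_1)-f(u_2)$ has exactly the same sign as $\lambda_1-\lambda_2$. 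Both directions of the equivalence then follow: if $\lambda_1<\lambda_2$ the strictly positive factor gives $f(u_1)<f(u_2)$; conversely, $f(u_1)<f(u_2)$ forces $\lambda_1\neq\lambda_2$, hence strict positivity of the factor, and therefore $\lambda_1<\lambda_2$.
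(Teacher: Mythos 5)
Your proposal is correct and follows essentially the same route as the paper's proof: both derive the key identity $f(u_1)-f(u_2)=(\lambda_1-\lambda_2)(\gamma^2-u_1^{\T}u_2)$ from the Lagrangian equation and the norm constraint, then conclude via Cauchy--Schwarz together with the observation that $u_1=u_2$ forces $\lambda_1=\lambda_2$. The only differences are cosmetic (you obtain the identity via the intermediate form $f(u_i)=\lambda_i\gamma^2+u_i^{\T}b_0$ and a cross-multiplication, and you rule out $u_1=u_2$ by subtracting the two equations rather than by the explicit formula for $\lambda$), so nothing further is needed.
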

\begin{proof}
The proof relies on the following three facts:
\begin{enumerate}
  \item For any solution pair $(\lambda,u)$ of \eqref{eq:CQopt-LagEq'}, we have
      \begin{equation}\label{eq:u->lambda:LGopt}
      \lambda u=PAPu+b_0
      \quad\Rightarrow\quad
\lambda=\frac{1}{u^{\T}u}{u^{\T}(PAPu+b_0)}
=\frac{1}{\gamma^2}{u^{\T}(PAPu+b_0)}.
      \end{equation}

  \item Given $(\lambda_i,u_i)$ for $i=1,2$, satisfying \eqref{eq:CQopt-LagEq'},
      we have
      \begin{alignat*}{2}
f(u_1)&=u_1^{\T}PAPu_1+2u^{\T}_1 b_0 \\
&\stackrel{\text{ \eqref{eq:CQopt-LagEq'-1}}}{=}
-b^{\T}_0 u_1+\lambda_1u_1^{\T}u_1+2u^{\T}_1 b_0 \\
&\stackrel{\text{ \eqref{eq:CQopt-LagEq'-2}}}{=}u^{\T}_1 b_0 +\lambda_1\gamma^2 \\
&\stackrel{\text{ \eqref{eq:CQopt-LagEq'-1}}}{=}-u_2^{\T}(PAP-\lambda_2I)u_1+\lambda_1\gamma^2.
\end{alignat*}
      Similarly, we have $f(u_2)=-u_1^{\T}(PAP-\lambda_1I)u_2+\lambda_2\gamma^2$. Therefore
      \begin{equation}\label{eq:difff}
      f(u_1)-f(u_2)=(\lambda_1-\lambda_2)(\gamma^2-u_1^{\T}u_2).
      \end{equation}
  \item For $u_i$ of norm $\gamma$,
by the Cauchy-Schwartz inequality, $u_1^{\T}u_2\le\|u_1\|\,\|u_2\|=\gamma^2$, and
$u_1^{\T}u_2=\|u_1\|\,\|u_2\|=\gamma^2$ if and only if $u_1=u_2$.
Hence if $u_1\ne u_2$, then $\gamma^2-u_1^{\T}u_2>0$.
\end{enumerate}
Now we are ready to prove the claim of the lemma.
If $\lambda_1<\lambda_2$, then $u_1\neq u_2$ otherwise \eqref{eq:u->lambda:LGopt}
would imply $\lambda_1=\lambda_2$, and thus $f(u_1)<f(u_2)$ by \eqref{eq:difff}. On the other hand,
if $f(u_1)<f(u_2)$, then $\gamma^2-u_1^{\T}u_2>0$ because $\gamma^2-u_1^{\T}u_2\ge 0$ always and it cannot be $0$
by \eqref{eq:difff},
and thus $\lambda_1-\lambda_2<0$ again by \eqref{eq:difff}.
\end{proof}

As a consequence of Lemma \ref{lem:minlambda},  we find that solving CQopt \eqref{eq:CQopt} is
equivalent  to
solving the smallest Lagrangian multiplier $\lambda$ of \eqref{eq:Lfun4CQopt}, i.e., those $\lambda$ that
satisfy \eqref{eq:CQopt-LagEq'}. Specifically, solving CQopt \eqref{eq:CQopt} is
equivalent  to solving
the  following  Lagrangian minimization problem:
\begin{subequations}\label{eq:LGopt}
\begin{empheq}[left={\mbox{LGopt:}\quad}\empheqlbrace]{alignat=2}
\min  ~& \lambda  \label{eq:LGopt-0}  \\
\mbox{s.t.}  ~& (PAP-\lambda I)u=-b_0,\label{eq:LGopt-1}\\
~&\|u\|=\gamma,\label{eq:LGopt-2}\\
~&u\in\cN(C^{\T}).\label{eq:LGopt-3}
\end{empheq}
\end{subequations}

\begin{theorem}\label{thm:CQopt=LGopt}
If $v_{\ast}$ is a minimizer of {\rm CQopt}~\eqref{eq:CQopt}, then
$(\lambda_*,u_*)$ with
$$
u_*=Pv_*,\,\,\lambda_*=\frac{1}{\gamma^2} {u_*^{\T}(PAPu_*+b_0)}
$$
is a minimizer of {\rm LGopt}~\eqref{eq:LGopt}. Conversely
if $(\lambda_*,u_*)$ is a minimizer of {\rm LGopt}~\eqref{eq:LGopt}, then $v_\ast=n_0+u_{\ast}$ is a minimizer of {\rm CQopt}~\eqref{eq:CQopt}.
\end{theorem}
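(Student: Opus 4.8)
The plan is to treat the Lagrangian system \eqref{eq:CQopt-LagEq'} as the set of first-order stationary pairs of CQopt and to let Lemma~\ref{lem:minlambda} carry both implications. Two preliminary observations frame everything. First, CQopt has a minimizer: after the substitution $v=n_0+u$ with $u\in\cN(C^{\T})$, its feasible set is the sphere $\{u\in\cN(C^{\T}):\|u\|=\gamma\}$, which is nonempty (as $\gamma>0$ and $n-m\ge 1$) and compact, while $f$ is continuous; moreover by \eqref{eq:fun-f:dfn} the objective depends only on $u=Pv$, so CQopt is exactly the problem of minimizing $f(u)$ over this sphere. Second, the feasible set of LGopt~\eqref{eq:LGopt} is, by definition, the set of solution pairs $(\lambda,u)$ of \eqref{eq:CQopt-LagEq'}, and minimizing $\lambda$ over it is precisely what Lemma~\ref{lem:minlambda} compares against values of $f$.

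For the forward direction, I would first show that a minimizer $v_*$ produces a solution pair. Writing $u_*=Pv_*$, the point $u_*$ minimizes $f$ on the sphere $\{u\in\cN(C^{\T}):\|u\|=\gamma\}$. The gradient of $f$ restricted to $\cN(C^{\T})$ is $2(PAPu+b_0)\in\cN(C^{\T})$, and the constraint $\|u\|^2=\gamma^2$ has gradient $2u\neq 0$ there; hence the first-order condition forces $PAPu_*+b_0=\lambda_* u_*$ for a scalar $\lambda_*$, which is exactly \eqref{eq:CQopt-LagEq'-1}, and taking the inner product with $u_*$ yields $\lambda_*=\frac{1}{\gamma^2}u_*^{\T}(PAPu_*+b_0)$ as in \eqref{eq:u->lambda:LGopt}. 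Thus $(\lambda_*,u_*)$ is a solution pair. To see it is optimal for LGopt, let $(\lambda',u')$ be any other solution pair; then $v'=n_0+u'$ is feasible for CQopt (indeed $Pv'=u'$ and $\|Pv'\|=\gamma$), so $f(u_*)=f(v_*)\le f(v')=f(u')$ by optimality of $v_*$, and Lemma~\ref{lem:minlambda} gives $\lambda_*\le\lambda'$. Hence $\lambda_*$ is the smallest Lagrange multiplier and $(\lambda_*,u_*)$ minimizes LGopt.

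For the converse, let $(\lambda_*,u_*)$ minimize LGopt and set $v_*=n_0+u_*$, which is feasible for CQopt. Pick any CQopt minimizer $v_\sharp$ (it exists by the first observation) and put $u_\sharp=Pv_\sharp$; by the forward direction $(\lambda_\sharp,u_\sharp)$ minimizes LGopt, so $\lambda_\sharp=\lambda_*$ since the optimal value of LGopt is a single number. Both $(\lambda_*,u_*)$ and $(\lambda_\sharp,u_\sharp)$ are solution pairs, so Lemma~\ref{lem:minlambda} rules out both $f(u_*)<f(u_\sharp)$ and $f(u_\sharp)<f(u_*)$, forcing $f(u_*)=f(u_\sharp)$. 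Since $f(u_\sharp)=f(v_\sharp)$ is the optimal value of CQopt and $f(v_*)=f(u_*)$, we conclude that $f(v_*)$ attains that optimal value, so $v_*$ is a minimizer of CQopt.

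The step I expect to require the most care is establishing that a CQopt minimizer actually satisfies the stationarity equation \eqref{eq:CQopt-LagEq'-1}, rather than merely that \eqref{eq:CQopt-LagEq'} lists the stationary points. The subtlety is that CQopt carries both the affine constraint $v\in n_0+\cN(C^{\T})$ and the spherical constraint, yet only the latter appears in the Lagrangian \eqref{eq:Lfun4CQopt}. Reducing to $u\in\cN(C^{\T})$ via \eqref{eq:fun-f:dfn} disposes of the affine constraint, after which the single remaining constraint has nowhere-vanishing gradient on the sphere (because $\gamma>0$), so the constraint qualification holds and the Lagrange multiplier rule applies without exception. Once this is secured, the rest is a direct application of Lemma~\ref{lem:minlambda} together with the value identity \eqref{eq:difff}.
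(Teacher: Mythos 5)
Your proposal is correct and follows essentially the same route the paper intends: the paper states Theorem~\ref{thm:CQopt=LGopt} as a direct consequence of the Lagrangian system \eqref{eq:CQopt-LagEq'}, the identity $f(v)=f(Pv)$ in \eqref{eq:fun-f:dfn}, and the monotonicity Lemma~\ref{lem:minlambda}, which is exactly the machinery you use. You additionally supply the details the paper leaves implicit (existence of a CQopt minimizer by compactness, and the constraint qualification ensuring a minimizer is a stationary pair), and both of these are handled correctly.
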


The case $b_0= P A n_0 = 0$,
which includes but is not equivalent to
the homogeneous CRQopt~\eqref{eq:CRQopt} (i.e., $b = 0$)
\cite{golu:1973,gozz:2000}, can be dealt with as follows.  Suppose $b_0=0$ and
let $\theta_1$ be the smallest eigenvalue of $PAP$. Keep in mind that
$PAP$ always has an eigenvalue $0$ with multiplicity $m$ associated with the subspace $\cN(C^{\T})^{\bot}=\cR(C)$, the column space of $C$.
There are the following two subcases:
\begin{itemize}
\item {\bf Subcase $\theta_1 \neq 0$:}
Then\footnote {This cannot happen if $A$ is positive semidefinite.} $\theta_1<0$.
      Let $z_1$ be a corresponding eigenvector of $PAP$. Then $z_1=PAPz_1/\theta_1\in\cN(C^{\T})$.
      So $(\theta_1,z_1)$ is a minimizer of {\rm LGopt}~\eqref{eq:LGopt} and therefore $z_1$ is a minimizer of CQopt~\eqref{eq:CQopt},
      which in turn implies that $v_{\ast} = n_0+ \gamma z_1/\|z_1\|$ is a minimizer of CRQopt \eqref{eq:CRQopt}.

\item {\bf Subcase $\theta_1 = 0$:}  If there exists a corresponding eigenvector $z_1\in\cN(C^{\T})$, i.e., $Pz_1 \neq 0$,
      then $(\theta_1,Pz_1)$ is a minimizer of {\rm LGopt}~\eqref{eq:LGopt} and therefore $Pz_1$ is a minimizer of CQopt~\eqref{eq:CQopt},
      which in turn implies that $v_{\ast} = n_0+ \gamma Pz_1/\|Pz_1\|$ is a minimizer of CRQopt \eqref{eq:CRQopt}.
      Otherwise there exists no corresponding eigenvector $z_1$ such that $Pz_1 \neq 0$.
      Let $\theta_2$ be the second smallest eigenvalue of $PAP$, which is nonzero, and $z_2$ a corresponding  eigenvector.
      Then $z_2=PAPz_2/\theta_2\in\cN(C^{\T})$, and
      $(\theta_2,z_2)$ is a minimizer of {\rm LGopt}~\eqref{eq:LGopt} and therefore $z_2$ is a minimizer of CQopt~\eqref{eq:CQopt},
      which in turn implies that $v_{\ast} = n_0+ \gamma z_2/\|z_2\|$ is a minimizer of CRQopt \eqref{eq:CRQopt}.
\end{itemize}
In view of such a quick resolution for the case $b_0=0$, in
the rest of this article, we will assume
\begin{equation}\label{eq:b0NOT0}
b_0=PAn_0\neq 0.
\end{equation}

\subsection{Equivalent QEPmin}\label{ssec:QEPmin}
Let $(\lambda,u)$ be a feasible pair  of LGopt \eqref{eq:LGopt}
and $\lambda\not\in\eig(PAP)$.
We can write $u=-(PAP-\lambda I)^{-1}b_0$, and then
\begin{equation}\label{eq:gamma-sq}
\gamma^2=u^{\T}u=b_0^{\T}(PAP-\lambda I)^{-2}b_0 = b_0^{\T}z,
\end{equation}
where $z=(PAP-\lambda I)^{-2}b_0$, or equivalently, $(PAP-\lambda I)^{2}z=b_0$.
Therefore $b_0^{\T}z/\gamma^2=1$ by \eqref{eq:gamma-sq}, and thus
the pair $(\lambda, z)$ satisfies the
quadratic eigenvalue problem (QEP):
\begin{equation} \label{eq:QEP-deriv}
(PAP-\lambda I)^{2}z
=b_0
= b_0\cdot 1
= b_0 \left(b_0^{\T}z/\gamma^2\right)=\frac 1{\gamma^2}b_0b_0^{\T}z.
\end{equation}
We claim that any $z$ satisfying \eqref{eq:QEP-deriv} is
in $\cN(C^{\T})$.
To see this, we
expand $(PAP-\lambda I)^{2}z$
and extract $\lambda^2 z$ from $(PAP-\lambda I)^{2}z =b_0$ to get
\[
z=\frac{1}{\lambda^2}\left[-(PAP)^2z+2\lambda\cdot PAP z+b_0\right] \in\cN(C^{\T}),
\]
where we have used the assumption
$\lambda\not\in\eig(PAP)$ to conclude $\lambda\ne 0$,
and $b_0=PAn_0 \in \cN(C^{\T})$.
Therefore we have shown that
under the assumption that LGopt \eqref{eq:LGopt}
has no feasible pair $(\lambda,u)$ with $\lambda\in\eig(PAP)$,
any feasible pair $(\lambda,u)$ of LGopt \eqref{eq:LGopt}
satisfies QEP \eqref{eq:QEP-deriv} with $z\in\cN(C^{\T})$.

Next, we prove that any pair $(\lambda,z)$ satisfying
\begin{equation}\label{eq:QEPmin-(lambda,z)}
0\ne z\in\cN(C^{\T}),\,\,\lambda\not\in\eig(PAP)\,\,
\mbox{and QEP \eqref{eq:QEP-deriv}},
\end{equation}
leads to a feasible pair of the Lagrange equations \eqref{eq:LGopt}.
First we note that $b_0^{\T}z\ne 0$; otherwise we would have
$(PAP-\lambda I)^{2}z=0$ by \eqref{eq:QEP-deriv},
implying $z=0$ since $\lambda\not\in\eig(PAP)$, a contradiction.
Let $(\lambda,z)$ be a scalar-vector pair that satisfying \eqref{eq:QEPmin-(lambda,z)}.
Define $u:=-(PAP-\lambda I)^{-1}b_0$. Then $(PAP-\lambda I)u=-b_0$, i.e.,
\eqref{eq:LGopt-1} holds, and also
$$
\lambda u=PAPu+b_0
\quad\Rightarrow\quad
u=\frac 1{\lambda}(PAPu+b_0)\in\cN(C^{\T}),
$$
i.e., \eqref{eq:LGopt-3} holds. Without loss of generality, we may scale $z$ such that $b_0^{\T}z=\gamma^2$. It follows from \eqref{eq:QEP-deriv} that
$$
(PAP-\lambda I)^2z=b_0
\quad\Rightarrow\quad
z=(PAP-\lambda I)^{-2}b_0,
$$
implying
$$
1=\frac 1{\gamma^2}b_0^{\T}z=\frac 1{\gamma^2}b_0^{\T}(PAP-\lambda I)^{-2}b_0=\frac 1{\gamma^2}u^{\T}u
\quad\Rightarrow\quad
\|u\|=\gamma,
$$
i.e., \eqref{eq:LGopt-2} holds. Lemma~\ref{lm:LGopt2QEPmin-0} summarizes what we have just proved.

\begin{lemma}\label{lm:LGopt2QEPmin-0}
Suppose the constraints of {\rm LGopt~\eqref{eq:LGopt}} has no
feasible pair $(\lambda,u)$
with $\lambda\in\eig(PAP)$, and suppose that {\rm QEP}~\eqref{eq:QEP-deriv}
has no solution pair $(\lambda,z)$
with $0\ne z\in\cN(C^{\T})$ and {$\lambda\in\eig(PAP)$}.
Then any pair $(\lambda,u)$ satisfying the constraints of
{\rm LGopt}~\eqref{eq:LGopt}
gives rise to a pair $(\lambda,z)$ with $z=(PAP-\lambda I)^{-2}b_0$
that satisfies {\rm QEP}~\eqref{eq:QEP-deriv}.
Conversely, any pair $(\lambda,z)$ with $z\ne 0$ satisfying
{\rm QEP}~\eqref{eq:QEP-deriv} leads to a
pair $(\lambda,u)$ with $u:=-(PAP-\lambda I)^{-1}b_0$ that
satisfies the constraints of {\rm LGopt}~\eqref{eq:LGopt}.
\end{lemma}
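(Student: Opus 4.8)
The proof will mostly be a matter of assembling the two derivations already carried out in the paragraphs preceding the statement; the genuinely new content is using the two standing hypotheses to discharge the side conditions $\lambda\notin\eig(PAP)$ and $z\in\cN(C^{\T})$ that those derivations silently assumed. So the plan is to treat the two directions separately and, in each, first secure the relevant side condition and then quote the corresponding computation.

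For the forward direction I would start from a pair $(\lambda,u)$ satisfying the three constraints of LGopt. The first hypothesis, that no feasible LGopt pair has $\lambda\in\eig(PAP)$, immediately gives $\lambda\notin\eig(PAP)$, so $PAP-\lambda I$ is invertible and $u=-(PAP-\lambda I)^{-1}b_0$. Setting $z=(PAP-\lambda I)^{-2}b_0$ and using $\|u\|=\gamma$ exactly as in \eqref{eq:gamma-sq} yields $b_0^{\T}z=\gamma^2$; substituting $b_0=\gamma^{-2}b_0b_0^{\T}z$ into $(PAP-\lambda I)^2z=b_0$ reproduces \eqref{eq:QEP-deriv}. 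This direction is routine once $\lambda\notin\eig(PAP)$ is in hand.

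The converse is where the real work lies, and I expect it to be the main obstacle. Given $(\lambda,z)$ with $z\ne 0$ solving \eqref{eq:QEP-deriv}, the construction $u:=-(PAP-\lambda I)^{-1}b_0$ is only meaningful once $\lambda\notin\eig(PAP)$, so I must establish this before anything else. The plan is to split $z=Pz+(I-P)z$ and exploit that $PAP$ annihilates $\cR(C)=\cN(C^{\T})^{\bot}$: a short computation gives $(PAP-\lambda I)^2z=w+\lambda^2(I-P)z$ with $w\in\cN(C^{\T})$, while the right-hand side $\gamma^{-2}b_0b_0^{\T}z$ lies in $\cN(C^{\T})$, so applying $I-P$ forces $\lambda^2(I-P)z=0$. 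Hence when $\lambda\ne 0$ we recover $z\in\cN(C^{\T})$ (equivalently, the extraction of $\lambda^2z$ used just before \eqref{eq:QEPmin-(lambda,z)}), and then the second hypothesis rules out $\lambda\in\eig(PAP)$, making $u$ well defined. With $\lambda\notin\eig(PAP)$ and $z\in\cN(C^{\T})$ secured, I would copy the verification preceding the lemma: $b_0^{\T}z\ne 0$, the normalization $b_0^{\T}z=\gamma^2$, and the three identities \eqref{eq:LGopt-1}--\eqref{eq:LGopt-3}.

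The subtle point, and the step I would watch most carefully, is the case $\lambda=0$, which always lies in $\eig(PAP)$ because of the $m$-dimensional null direction $\cR(C)$. Here the projection identity gives no information, and indeed any $0\ne z\in\cR(C)$ solves \eqref{eq:QEP-deriv} trivially, producing spurious solutions for which $u$ cannot be formed. I would handle this by observing that if $\lambda=0$ then $Pz$ itself solves \eqref{eq:QEP-deriv} and lies in $\cN(C^{\T})$, so the second hypothesis forces $Pz=0$, whence $z\in\cR(C)$; such $z$ carry no LGopt pair and are excluded precisely by requiring $u=-(PAP-\lambda I)^{-1}b_0$ to be defined. Making this exclusion explicit, rather than leaving it implicit in the terse ``$z\ne 0$'' phrasing, is the one place where I would be more careful than the surrounding discussion.
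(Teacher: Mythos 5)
Your proof is correct, and its computational core --- the identity \eqref{eq:gamma-sq} in the forward direction and the verification of \eqref{eq:LGopt-1}--\eqref{eq:LGopt-3} in the converse --- coincides with the derivations the paper carries out in the paragraphs immediately preceding the lemma (the paper's ``proof'' is just the sentence that the lemma summarizes what was just proved). Where you genuinely add something is in the converse: the paper only establishes it for pairs already assumed to satisfy \eqref{eq:QEPmin-(lambda,z)}, i.e.\ with $0\ne z\in\cN(C^{\T})$ and $\lambda\notin\eig(PAP)$ built in as hypotheses, whereas the lemma's statement quantifies over all nonzero solutions of \eqref{eq:QEP-deriv}. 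Your projection step --- applying $I-P$ to \eqref{eq:QEP-deriv} to force $\lambda^2(I-P)z=0$, hence $z\in\cN(C^{\T})$ when $\lambda\ne 0$, and then invoking the second standing hypothesis to exclude $\lambda\in\eig(PAP)$ --- closes exactly that gap. Your remark on $\lambda=0$ is also on target: $0\in\eig(PAP)$ always, any $0\ne z\in\cR(C)$ solves \eqref{eq:QEP-deriv} trivially (both sides vanish, since $b_0\perp\cR(C)$), and the second hypothesis does not rule such pairs out because they are not in $\cN(C^{\T})$; for them $u=-(PAP-\lambda I)^{-1}b_0$ is undefined, so the converse as literally stated is a slight overstatement that your explicit exclusion repairs. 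The price of your route is a somewhat longer argument; what it buys is that the stated hypotheses actually do the work the lemma claims of them.
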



As a corollary of Lemma~\ref{lm:LGopt2QEPmin-0},
we conclude that LGopt \eqref{eq:LGopt} is equivalent to
\begin{subequations}\label{eq:QEPmin}
\begin{empheq}[left={\mbox{QEPmin:}\quad}\empheqlbrace]{alignat=2}
\min ~& \lambda \label{eq:QEPmin-0}\\
\mbox{s.t.} 
~& (PAP-\lambda I)^{2}z=\gamma^{-2}b_0b_0^{\T}z,\label{eq:QEPmin-1}\\
~& \lambda\in\bbR, 0\ne z\in\cN(C^{\T}),\label{eq:QEPmin-2}
\end{empheq}
\end{subequations}
under the assumptions of Lemma~\ref{lm:LGopt2QEPmin-0}.
Soon  we  show that LGopt \eqref{eq:LGopt} and
QEPmin \eqref{eq:QEPmin} are still equivalent even without the assumptions.


We name the minimization problem \eqref{eq:QEPmin} QEPmin because
the constraint~\eqref{eq:QEPmin-1} is a
quadratic eigenvalue problem (QEP).
Although this QEP generally may have complex eigenvalues $\lambda$,
the ``$\min$'' in \eqref{eq:QEPmin-0}
implicitly restricts the consideration only to the real eigenvalues $\lambda$
of QEP \eqref{eq:QEPmin-1} in the context of QEPmin \eqref{eq:QEPmin}.
In this sense, there is no need to specify
$\lambda\in\bbR$ in \eqref{eq:QEPmin-2}, but we are doing it anyway
to emphasize the implication. This comment applies to two other
minimization problems pQEPmin \eqref{eq:pQEPmin} and rQEPmin \eqref{eq:rQEPmin} later that involve
a QEP as a constraint as well.

In the rest of this section, we prove the equivalence between LGopt \eqref{eq:LGopt}
and QEPmin \eqref{eq:QEPmin} without the assumptions of Lemma~\ref{lm:LGopt2QEPmin-0}.
The key idea is is to remove the null space
conditions $u,z\in\mathcal{N}(C^{\T})$ by projecting equations \eqref{eq:LGopt-1}, \eqref{eq:LGopt-2}
in LGopt and \eqref{eq:QEPmin-1} in QEPmin onto an appropriate subspace.


\subsection{pLGopt}
Let $S=[S_1,\ S_2]\in\bbR^{n\times n}$ be an orthogonal matrix with
\begin{equation}\label{eq:s1s2}
\cR(S_1)=\cN(C^{\T}),\,\,\cR(S_2)=\cN(C^{\T})^{\perp}.
\end{equation}
Since $\rank(C)=m$, we know
$S_1\in\bbR^{n\times (n-m)}$ and $S_2\in\bbR^{n\times m}$.
It can be verified that the projection matrix
$P=I-CC^{\dag}$ in \eqref{eq:theP-dfn} can be written as
\begin{equation}\label{eq:theP:properties}
P=S_1S_1^{\T}=I-S_2S_2^{\T},
\end{equation}
and we have
\begin{equation}
PS_1=S_1,\,\,
PS_2=0.
\end{equation}

Set
\begin{equation}\label{eq:def:gH}
g_0=S_1^{\T}b_0,\,\, H=S_1^{\T}PAPS_1=S_1^{\T}AS_1\in\bbR^{(n-m)\times (n-m)},
\end{equation}
we have
\begin{subequations}\label{eq:decomp:PAP}
\begin{align}
S^{\T}PAPS&=\begin{bmatrix}
           S_1^{\T}PAPS_1& S_1^{\T}PAPS_2\\
           S_2^{\T}PAPS_1& S_2^{\T}PAPS_2
        \end{bmatrix}
       =\kbordermatrix{
         &\sss n-m & \sss m \\
       \sss n-m &    H & 0 \\
       \sss m   &  0 & 0        }, \label{eq:decomp:PAP-1}\\
S^{\T}b_0&=\begin{bmatrix}
           S_1^{\T}b_0\\
           S_2^{\T}b_0
        \end{bmatrix}=\kbordermatrix{
         & \\
       \sss n-m &    g_0 \\
       \sss m  &  0         }. \label{eq:decomp:PAP-2}
\end{align}
\end{subequations}
Immediately from the decomposition \eqref{eq:decomp:PAP-1}, we conclude the following lemma:

\begin{lemma}\label{lm:eig(PAP)}
The  eigenvalues of $PAP$ consist of those of $H$ and
$0$ with multiplicities $m$, i.e.,
$\eig(PAP)=\eig(H)\cup\{0,0,\ldots,0\}$. If $0\ne\lambda\in\eig(PAP)$,
then $\lambda\in\eig(H)$ and its associated eigenvector must be in $\cN(C^{\T})$.
The matrix $PAP$ has more than $m$ eigenvalues $0$
if and only if $H$ is singular. For each
eigenvalue $0$ of $PAP$ coming from $\eig(H)$, there is
an eigenvector $z$ of $PAP$ such that
$Pz\ne 0$ (in fact, $Pz$ is an eigenvector
for that particular eigenvalue $0$ as well).
\end{lemma}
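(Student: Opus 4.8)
The plan is to read everything off the block decomposition \eqref{eq:decomp:PAP-1}, which asserts that the orthogonal similarity transformation by $S$ puts $PAP$ into block-diagonal form, $S^{\T}(PAP)S=\diag(H,0)$ with a trailing $m\times m$ zero block. First I would invoke orthogonal similarity: since $S$ is orthogonal, $PAP$ and $S^{\T}(PAP)S$ have the same eigenvalues with the same multiplicities, and the spectrum of a block-diagonal matrix is the union (counting multiplicity) of the spectra of its diagonal blocks. Hence $\eig(PAP)=\eig(H)\cup\{0,\ldots,0\}$, with $m$ copies of $0$ coming from the trailing zero block. This settles the first claim and, by simply counting the multiplicity of the eigenvalue $0$, also the third: $PAP$ carries exactly $m$ zeros from the trailing block plus as many zeros as $H$ contributes, so it has more than $m$ zeros precisely when $0\in\eig(H)$, i.e., when $H$ is singular.

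For the second claim, suppose $0\ne\lambda\in\eig(PAP)$. Since the only eigenvalue contributed by the trailing zero block is $0$, necessarily $\lambda\in\eig(H)$. To locate the eigenvector, let $PAPw=\lambda w$ with $w\ne 0$; dividing by $\lambda\ne 0$ gives $w=\lambda^{-1}PAPw=\lambda^{-1}P(APw)\in\cR(P)=\cN(C^{\T})$, exactly the normalization argument already used for the Lagrange system in \eqref{eq:u->lambda:LGopt}. Thus every eigenvector associated with a nonzero eigenvalue lies in $\cN(C^{\T})$.

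The only point requiring care is the last claim, where one must separate the $m$ ``structural'' zeros, whose eigenvectors live in $\cR(S_2)=\cR(C)$ and are annihilated by $P$, from the zeros inherited from $H$. The key identity is $PAPS_1=S_1H$, obtained by reading off the first block column of $(PAP)S=S\,\diag(H,0)$ (left-multiply \eqref{eq:decomp:PAP-1} by $S$ and use $SS^{\T}=I$). Now suppose $0$ is an eigenvalue of $PAP$ coming from $\eig(H)$, i.e., $Hy=0$ for some $y\ne 0$, and set $z:=S_1y$. Because $S_1$ has orthonormal columns, $z\ne 0$ and $z\in\cR(S_1)=\cN(C^{\T})$, whence $Pz=S_1S_1^{\T}S_1y=S_1y=z\ne 0$ using $S_1^{\T}S_1=I$. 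Moreover $PAPz=PAPS_1y=S_1Hy=0$, so $z$ is an eigenvector of $PAP$ for the eigenvalue $0$, and since $Pz=z$ the vector $Pz$ is the very same eigenvector; this proves both $Pz\ne 0$ and the parenthetical remark simultaneously. I expect this final step to be the main (though still mild) obstacle, as it is the one place where the block decomposition \eqref{eq:decomp:PAP-1} must be supplemented by the column-wise identity $PAPS_1=S_1H$ and by the conceptual distinction between the two sources of the eigenvalue $0$.
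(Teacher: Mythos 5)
Your proof is correct and follows exactly the route the paper intends: the paper offers no written proof, asserting the lemma follows ``immediately'' from the block decomposition \eqref{eq:decomp:PAP-1}, and your argument simply supplies the details of that reading (orthogonal similarity for the spectrum, the identity $PAPS_1=S_1H$ for the last claim). Nothing to correct.
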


To explicitly eliminate the constraint $u\in\cN(C^{\T})$ in LGopt \eqref{eq:LGopt}, we project LGopt \eqref{eq:LGopt}
onto  $\mathcal{R}(S_1)$ and introduce the following
projected minimization problem
\begin{subequations}\label{eq:pLGopt}
\begin{empheq}[left={\mbox{pLGopt:}\quad}\empheqlbrace]{alignat=2}
\min        ~& \lambda \label{eq:pLGopt-0} \\
\mbox{s.t.} ~&(H-\lambda I)y=-g_0,\label{eq:pLGopt-1}\\
            ~&\|y\|=\gamma.\label{eq:pLGopt-2}
\end{empheq}
\end{subequations}
The next theorem establishes the equivalence between LGopt~\eqref{eq:LGopt}
and pLGopt \eqref{eq:pLGopt}.

\begin{theorem}\label{thm:LGopt=pLGopt}
The pair $(\lambda_{\ast},y_{\ast})$ is a minimizer of {\rm pLGopt} \eqref{eq:pLGopt}
if and only if $(\lambda_{\ast},u_{\ast})$ with $u_{\ast}=S_1y_{\ast}$ is a  minimizer of
{\rm LGopt}~\eqref{eq:LGopt}.
\end{theorem}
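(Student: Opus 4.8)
The plan is to establish a $\lambda$-preserving bijection between the feasible set of LGopt~\eqref{eq:LGopt} and that of pLGopt~\eqref{eq:pLGopt}, and then to observe that since both objective functions are simply $\lambda$, this bijection automatically carries minimizers to minimizers. First I would set up the correspondence on the variable side. The constraint \eqref{eq:LGopt-3} forces any feasible $u$ into $\cN(C^{\T})=\cR(S_1)$, so it can be written uniquely as $u=S_1y$ with $y=S_1^{\T}u\in\bbR^{n-m}$, and conversely every $y$ produces a $u=S_1y\in\cN(C^{\T})$. Because $S_1$ has orthonormal columns, $\|u\|=\|S_1y\|=\|y\|$, so the norm constraints \eqref{eq:LGopt-2} and \eqref{eq:pLGopt-2} match. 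I would also record at the outset that $b_0=PAn_0\in\cN(C^{\T})$ yields $b_0=Pb_0=S_1S_1^{\T}b_0=S_1g_0$.

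The key algebraic step is to translate the linear constraint \eqref{eq:LGopt-1} into \eqref{eq:pLGopt-1}. Using $PS_1=S_1$ and $P=S_1S_1^{\T}$, I would verify the identity $PAPS_1=PAS_1=S_1S_1^{\T}AS_1=S_1H$, so that $(PAP-\lambda I)S_1y=S_1(H-\lambda I)y$. Then, for $u=S_1y$, the LGopt equation $(PAP-\lambda I)u=-b_0$ becomes $S_1(H-\lambda I)y=-S_1g_0$. Since $S_1$ is injective (it has left inverse $S_1^{\T}$), this is equivalent to $(H-\lambda I)y=-g_0$, which is exactly \eqref{eq:pLGopt-1}; the converse direction follows by left-multiplying \eqref{eq:pLGopt-1} by $S_1$. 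Thus the two linear constraints are equivalent under the substitution $u=S_1y$.

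Combining these observations, a pair $(\lambda,u)$ is feasible for LGopt if and only if $(\lambda,y)=(\lambda,S_1^{\T}u)$ is feasible for pLGopt, and this correspondence preserves $\lambda$. Consequently the sets of attainable objective values coincide, the two minima are equal, and $(\lambda_{\ast},y_{\ast})$ solves pLGopt precisely when $(\lambda_{\ast},u_{\ast})$ with $u_{\ast}=S_1y_{\ast}$ solves LGopt, which is the assertion of the theorem.

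I do not anticipate a serious obstacle here, since every step is a short linear-algebra identity. The only point genuinely requiring care is the equivalence of the two linear constraints: one must justify that passing from the $\bbR^n$-equation $S_1(H-\lambda I)y=-S_1g_0$ to the $\bbR^{n-m}$-equation $(H-\lambda I)y=-g_0$ loses no information, which is handled cleanly by the injectivity of $S_1$ (equivalently, by $S_1^{\T}S_1=I_{n-m}$). Everything else is bookkeeping around the isometry $y\mapsto S_1y$ and the identity $PAPS_1=S_1H$.
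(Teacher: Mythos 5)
Your proposal is correct and follows essentially the same route as the paper: both arguments establish a $\lambda$-preserving bijection between the feasible sets via $u=S_1y$, using $\|S_1y\|=\|y\|$ and the equivalence of \eqref{eq:LGopt-1} with \eqref{eq:pLGopt-1}. The only cosmetic difference is that the paper conjugates by the full orthogonal matrix $S=[S_1,\ S_2]$ and reads off the block form \eqref{eq:2lag}, whereas you use the intertwining identity $(PAP-\lambda I)S_1=S_1(H-\lambda I)$ together with the injectivity of $S_1$ — the same computation in slightly leaner clothing.
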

\begin{proof}
We begin by showing the equivalence between
the constraints of LGopt \eqref{eq:LGopt} and those of pLGopt \eqref{eq:pLGopt}.
Note that any $0\ne u\in \cN(C^{\T})$ can be expressed by $u=S_1y$ for some $0\ne y\in\bbR^{n-m}$ and vice versa.
Making use of \eqref{eq:decomp:PAP}, we have
\begin{align}
S^{\T}[(PAP-\lambda I)u+b_0]
  &=S^{\T}(PAP-\lambda I)SS^{\T}u+S^{\T}b_0 \nonumber\\
  &=\begin{bmatrix}
       H-\lambda I & 0\\
       0&-\lambda I
    \end{bmatrix}\begin{bmatrix}
                   y\\
                   0
                 \end{bmatrix}+\begin{bmatrix}
                                   g_0\\
                                   0
                               \end{bmatrix}, \label{eq:2lag}
\end{align}
and
\begin{equation}\label{eq:normeq}
u^{\T}u=y^{\T}S_1^{\T}S_1y=y^{\T}y.
\end{equation}
Now if $(\lambda,u)$ satisfies the constraints of LGopt \eqref{eq:LGopt},
then $S^{\T}[(PAP-\lambda I)u+b_0]=0$ because of \eqref{eq:LGopt-1},
$u=S_1y$ for some $y$ because of \eqref{eq:LGopt-3}, and $\|y\|=\gamma$ because of
\eqref{eq:LGopt-2} and \eqref{eq:normeq}. It follows from \eqref{eq:2lag} that $(H-\lambda I)y+g_0=0$.
Thus $(\lambda,y)$ satisfies the constraints of pLGopt~\eqref{eq:pLGopt}.

On the other hand, suppose $(\lambda,y)$ satisfies the constraints of pLGopt \eqref{eq:pLGopt}. Let $u=S_1y\in\cN(C^{\T})$.
Both \eqref{eq:2lag} and \eqref{eq:normeq} remain valid. Then
$S^{\T}[(PAP-\lambda I)u+b_0]=0$ which implies $(PAP-\lambda I)u+b_0=0$ because $S^{\T}$ is an orthogonal matrix.
Also $\|u\|=\gamma$ by \eqref{eq:normeq}. This completes the proof of that
$(\lambda,u)$ satisfies the constraints of LGopt \eqref{eq:LGopt}.

Therefore, LGopt \eqref{eq:LGopt} and pLGopt \eqref{eq:pLGopt} have the same optimal value $\lambda_{\ast}$.
More than that,  if $(\lambda_{\ast},u_{\ast})$ is a minimizer of LGopt \eqref{eq:LGopt}, then there exists
$y_{\ast}$ such that $u_{\ast}=S_1y_{\ast}$ and that
$(\lambda_{\ast},y_{\ast})$ is a minimizer of pLGopt \eqref{eq:pLGopt}, and vice versa.
\end{proof}

We note that for a modest-sized CRQopt~\eqref{eq:CRQopt}, say $n$ up to $2000$, we may as well perform the reduction to form
pLGopt~\eqref{eq:pLGopt} explicitly. Due to its modest size, pLGopt~\eqref{eq:pLGopt} can be solved as a dense matrix computational problem. The detail is buried later in the proof of Lemma~\ref{lm:pLGopt}.

\subsection{pQEPmin}\label{ssec:pQEPmin}
For the same purpose as we projected the Lagrange equations,
we introduce the following projected minimization problem
as the counterpart of QEPmin \eqref{eq:QEPmin}:
\begin{subequations}\label{eq:pQEPmin}
\begin{empheq}[left={\mbox{pQEPmin:}\quad}\empheqlbrace]{alignat=2}
\min        ~& \lambda  \label{eq:pQEPmin-0}\\
\mbox{s.t.} ~&(H-\lambda I)^2w=\gamma^{-2}g_0g_0^{\T}w,  \label{eq:pQEPmin-1} \\
            ~&\lambda\in\bbR, w\ne 0. \label{eq:pQEPmin-2}
\end{empheq}
\end{subequations}
The equation in \eqref{eq:pQEPmin-1} has an appearance of a QEP.
As stated, the optimal value of pQEPmin \eqref{eq:pQEPmin} is the smallest real eigenvalue of QEP \eqref{eq:pQEPmin-1}.
The next theorem establishes the equivalence between QEPmin~\eqref{eq:QEPmin} and pQEPmin \eqref{eq:pQEPmin}.

\begin{theorem}\label{thm:QEPmin=pQEPmin}
The pair $(\lambda_{\ast},w_{\ast})$ is a minimizer of {\rm pQEPmin} \eqref{eq:pQEPmin}
if and only if $(\lambda_{\ast},z_{\ast})$ with $z_{\ast}=S_1w_{\ast}$ is a minimizer of
{\rm QEPmin} \eqref{eq:QEPmin}.
\end{theorem}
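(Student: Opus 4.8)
The plan is to mirror the proof of Theorem~\ref{thm:LGopt=pLGopt}: I will set up a bijection between the feasible pairs of QEPmin~\eqref{eq:QEPmin} and pQEPmin~\eqref{eq:pQEPmin} at each fixed value of $\lambda$, given by $z=S_1w$, and then conclude that the two problems share the same set of attainable $\lambda$'s, hence the same optimal value, with corresponding minimizers. Since the objective in both problems is simply $\lambda$, once the feasible sets are in $\lambda$-respecting bijection the equivalence of minimizers is immediate. The whole argument therefore reduces to matching the QEP constraint~\eqref{eq:QEPmin-1} against~\eqref{eq:pQEPmin-1} under the substitution $z=S_1w$.

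First I would record the vector correspondence. Because $\cR(S_1)=\cN(C^{\T})$ and $S_1$ has orthonormal columns, every $z\in\cN(C^{\T})$ is written uniquely as $z=S_1w$ with $w=S_1^{\T}z\in\bbR^{n-m}$, and conversely $z=S_1w\in\cN(C^{\T})$ for any $w$; moreover $S_1$ is injective, so $z=0$ if and only if $w=0$. This reconciles the constraint $0\ne z\in\cN(C^{\T})$ in~\eqref{eq:QEPmin-2} with $w\ne 0$ in~\eqref{eq:pQEPmin-2}, and it automatically supplies the membership $z\in\cN(C^{\T})$ that has no counterpart in pQEPmin.

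The heart of the proof is to project the QEP by left-multiplying~\eqref{eq:QEPmin-1} by $S^{\T}$ and substituting $z=S_1w$. The key identity is that, because $S$ is orthogonal and thus $SS^{\T}=I$, squaring commutes with the orthogonal conjugation: $S^{\T}(PAP-\lambda I)^2S=\bigl(S^{\T}(PAP-\lambda I)S\bigr)^2$. By the block decomposition~\eqref{eq:decomp:PAP-1}, the inner factor $S^{\T}(PAP-\lambda I)S$ is block-diagonal with diagonal blocks $H-\lambda I$ and $-\lambda I$, so its square is block-diagonal with blocks $(H-\lambda I)^2$ and $\lambda^2 I$. Applied to $S^{\T}z=[\,w^{\T},\,0\,]^{\T}$ this yields $[\,((H-\lambda I)^2w)^{\T},\,0\,]^{\T}$. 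On the right-hand side, $b_0^{\T}z=g_0^{\T}w$ by~\eqref{eq:def:gH} while $S^{\T}b_0=[\,g_0^{\T},\,0\,]^{\T}$ by~\eqref{eq:decomp:PAP-2}, so $\gamma^{-2}S^{\T}b_0b_0^{\T}z=[\,(\gamma^{-2}g_0g_0^{\T}w)^{\T},\,0\,]^{\T}$. Equating the two sides, the top block is exactly the pQEPmin constraint~\eqref{eq:pQEPmin-1} and the bottom block is the trivial identity $0=0$. Thus $(\lambda,z)$ with $z=S_1w$ satisfies~\eqref{eq:QEPmin-1} if and only if $(\lambda,w)$ satisfies~\eqref{eq:pQEPmin-1}; the reverse direction uses that $S^{\T}$ is invertible to undo the projection.

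Combining these observations, the feasible pairs of QEPmin and pQEPmin are in bijection via $z=S_1w$ at each $\lambda$, so the two problems have the same minimum value $\lambda_{\ast}$, and a minimizer $(\lambda_{\ast},w_{\ast})$ of pQEPmin corresponds to the minimizer $(\lambda_{\ast},S_1w_{\ast})$ of QEPmin and vice versa. I expect the only genuinely delicate point to be the quadratic term: one must be careful that squaring passes through the orthogonal conjugation, i.e.\ that $S^{\T}(PAP-\lambda I)^2S=\bigl(S^{\T}(PAP-\lambda I)S\bigr)^2$, which rests essentially on $S$ being a full orthogonal matrix (so $SS^{\T}=I$) rather than merely having orthonormal columns. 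Everything else is bookkeeping with the block structure already assembled in~\eqref{eq:decomp:PAP}.
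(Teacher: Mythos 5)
Your proposal is correct and follows essentially the same route as the paper: both left-multiply the QEP constraint by $S^{\T}$, insert $SS^{\T}=I$ so that the square passes through the orthogonal conjugation, and read off the equivalence of \eqref{eq:QEPmin-1} and \eqref{eq:pQEPmin-1} from the block structure in \eqref{eq:decomp:PAP}, with the bijection $z=S_1w$ handling the null-space and nonzero conditions. The point you flag as delicate (that $S$ is a full orthogonal matrix, not just $S_1$ with orthonormal columns) is exactly the mechanism the paper's computation relies on.
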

\begin{proof}
Similarly, we begin by showing the equivalence between
the constraints of  QEPmin \eqref{eq:QEPmin} and those of pQEPmin \eqref{eq:pQEPmin}.
Keeping \eqref{eq:decomp:PAP} in mind,  we have for any $z=S_1w$
\begin{align}
S^{\T}&\left[(PAP-\lambda I)^2z-\gamma^{-2}\,{b_0b_0^{\T}}z\right] \nonumber\\
  &=S^{\T}(PAP-\lambda I)SS^{\T}(PAP-\lambda I)SS^{\T}z-\gamma^{-2}\,S^{\T}b_0b_0^{\T}SS^{\T}z \nonumber\\
&=\begin{bmatrix}
    (H-\lambda I)^2 & 0\\
    0&\lambda^2 I
  \end{bmatrix}\begin{bmatrix}
                  w\\
                  0
               \end{bmatrix}-\begin{bmatrix}
                                \gamma^{-2}\,{g_0 g_0^{\T}} & 0\\
                                0&0
                              \end{bmatrix}\begin{bmatrix}
                                              w\\
                                              0
                                            \end{bmatrix}. \label{eq:2qepeq}
\end{align}
Now if $(\lambda,z)$ satisfies the constraints of  QEPmin \eqref{eq:QEPmin}, then $0\ne z\in\cN(C^{\T})$
and thus $z=S_1w$ for some $0\ne w\in\bbR^{n-m}$. Therefore, by
\eqref{eq:2qepeq}, $(\lambda,w)$ satisfies  \eqref{eq:pQEPmin-1}.

On the other hand, suppose
$(\lambda,w)$ satisfies \eqref{eq:pQEPmin-1} and \eqref{eq:pQEPmin-2}. Let $z=S_1w\in\cN(C^{\T})$. 
Then $z\ne 0$ and by \eqref{eq:2qepeq},  $S^{\T}[(PAP-\lambda I)^2z-\gamma^{-2}{b_0b_0^{\T}}z]=0$.
Since $S^{\T}$ is orthogonal, we get \eqref{eq:QEPmin-1}. This proves that
$(\lambda,z)$ satisfies the constraints of  QEPmin \eqref{eq:QEPmin}.

Therefore, QEPmin \eqref{eq:QEPmin} and  pQEPmin \eqref{eq:pQEPmin} have the same optimal value $\lambda_{\ast}$.
More than that,  if $(\lambda_{\ast},z_{\ast})$ is a minimizer of QEPmin \eqref{eq:QEPmin}, then there exists
$w_{\ast}\ne 0$ such that $z_{\ast}=S_1w_{\ast}$ and that
$(\lambda_{\ast},w_{\ast})$ is a minimizer of pQEPmin \eqref{eq:pQEPmin}, and vice versa.
\end{proof}

\subsection{pLGopt  and pQEPmin  are equivalent}\label{sec:pLGopt=pQEPmin}
Although, in leading to pLGopt \eqref{eq:pLGopt} and pQEPmin \eqref{eq:pQEPmin}, the matrix $H$ and the vector $g_0$
are derived from reducing $A$, $C$, and $b$ in the original CRQopt \eqref{eq:CRQopt}, the developments
in this section does not require that.
Given this,  in the rest of this section, we consider general
pLGopt \eqref{eq:pLGopt} and pQEPmin \eqref{eq:pQEPmin} with\footnote{Unlike before, there is no need
          to assume $\gamma<1$. In addition, the size of square matrix $H$
          and vector $g_0$ can be arbitrary, not necessarily equal to $n-m$.}
$$
H\in\bbR^{\ell\times\ell},\,\,
H^{\T}=H,\,\,
0\ne g_0\in\bbR^{\ell},\,\,\mbox{and}\,\,
\gamma>0.
$$
To set up the stage for the rest of this subsection,
we let $H=Y\Theta Y^{\T}$ be the eigen-decomposition of $H$:
\begin{equation}\label{eq:eig-decomp:H}
H=Y\Theta Y^{\T}
\,\,\mbox{with}\,\,
\Theta=\diag(\theta_1,\theta_2,\ldots,\theta_{\ell}),\,\,
Y=[y_1,y_2,\ldots,y_{\ell}],\,\,
Y^{\T}Y=I_{\ell}.
\end{equation}
Without loss of generality, we arrange $\theta_i$ in the ascending order, i.e.,
$$
\theta_1=\theta_2=\cdots=\theta_d<\theta_{d+1}\le\cdots\le\theta_{\ell},
$$
so $\lambda_{\min}(H)=\theta_1$. Define the secular function
\begin{equation}\label{eq:sec-fun:H}
\chi(\lambda):=g_0^{\T}(H-\lambda I)^{-2}g_0-\gamma^2
    =(Y^{\T}g_0)^{\T}(\Theta-\lambda I)^{-2}(Y^{\T}g_0)-\gamma^2
    =\sum_{i=1}^{l}\frac{\xi_i^2}{(\lambda-\theta_i)^2}-\gamma^2,
\end{equation}
where  $\xi_i=g_0^{\T}y_i$ for $i=1,2,\cdots,n$, and let
\begin{equation}\label{eq:j04ci}
j_0=\min\{i\,:\, \xi_i\ne 0\}.
\end{equation}

\begin{lemma} \label{lm:pLGopt}
Let $(\lambda_{\ast},y_{\ast})$ be a minimizer of {\rm pLGopt} \eqref{eq:pLGopt}. The following statements hold.
\begin{enumerate}[{\rm (a)}]
  \item $\lambda_{\ast} \leq \lambda_{\min}(H)$.
  \item $\lambda_{\ast} = \lambda_{\min}(H)$ if and only if
$$
g_0\,\bot\,\,\cU\,\,\mbox{and}\,\,
\|(H-\lambda_{\min}(H) I)^{\dagger} g_0\|_2\le \gamma,
$$
where $\cU$ is the eigenspace of $H$ associated with its eigenvalue $\lambda_{\min}(H)$.
  \item If $g_0\notperp\cU$, then $\lambda_{\ast} < \lambda_{\min}(H)$ and $\lambda_{\ast}$ is the smallest root of the secular function $\chi(\lambda)$,
        and $y_{\ast}=-(H-\lambda_{\ast} I)^{-1}g_0$.
\end{enumerate}
\end{lemma}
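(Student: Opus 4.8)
The plan is to reduce the minimization to a careful analysis of when the two constraints $(H-\lambda I)y=-g_0$ and $\|y\|=\gamma$ are simultaneously satisfiable for a given $\lambda$, and then to locate the smallest such $\lambda$. The feasibility question splits naturally according to whether $\lambda\in\eig(H)$. When $\lambda\notin\eig(H)$, the first constraint forces $y=-(H-\lambda I)^{-1}g_0$ uniquely, so $(\lambda,y)$ is feasible if and only if $\|(H-\lambda I)^{-1}g_0\|^2=\gamma^2$, that is $\chi(\lambda)=0$ with $\chi$ as in \eqref{eq:sec-fun:H}. When $\lambda=\theta\in\eig(H)$, I would instead use that $H=H^{\T}$ gives $\cR(H-\theta I)=\cN(H-\theta I)^{\perp}$, so the linear system is solvable exactly when $g_0\perp\cN(H-\theta I)$; in that case the solution set is $-(H-\theta I)^{\dagger}g_0+\cN(H-\theta I)$ and the attainable norms form the interval $[\,\|(H-\theta I)^{\dagger}g_0\|,\infty)$. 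Hence $\lambda=\theta$ is feasible precisely when $g_0\perp\cN(H-\theta I)$ and $\|(H-\theta I)^{\dagger}g_0\|\le\gamma$.

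With feasibility characterized, the next step is to understand $\chi$ on the interval $(-\infty,\theta_1)$, where $\theta_1=\lambda_{\min}(H)$ and no eigenvalues lie. Differentiating the diagonalized form gives $\chi'(\lambda)=\sum_i 2\xi_i^2(\theta_i-\lambda)^{-3}>0$ there (using $g_0\neq 0$), so $\chi$ is strictly increasing on $(-\infty,\theta_1)$ with $\chi(\lambda)\to-\gamma^2<0$ as $\lambda\to-\infty$. The behaviour as $\lambda\to\theta_1^-$ is where the two regimes separate: if $g_0\notperp\cU$ then some $\xi_i\neq 0$ with $\theta_i=\theta_1$ and the corresponding term blows up, giving $\chi(\lambda)\to+\infty$; if $g_0\perp\cU$ then those terms vanish, $\chi$ extends continuously to $\theta_1$, and $\chi(\theta_1)=\|(H-\theta_1 I)^{\dagger}g_0\|^2-\gamma^2$.

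Assembling these facts yields (a), (b), (c) together. In every case I can exhibit a feasible $\lambda\le\theta_1$ (a root of $\chi$ in $(-\infty,\theta_1)$ when one exists, or $\theta_1$ itself when $g_0\perp\cU$ and $\|(H-\theta_1 I)^{\dagger}g_0\|\le\gamma$), so minimality forces $\lambda_{\ast}\le\theta_1$, proving (a). For (c), when $g_0\notperp\cU$ the intermediate value theorem together with strict monotonicity gives a unique root $\lambda_0\in(-\infty,\theta_1)$; since $\theta_1$ is not feasible (its solvability condition $g_0\perp\cU$ fails) and every feasible $\lambda<\theta_1$ must be a root of $\chi$, this $\lambda_0$ is the smallest feasible value, whence $\lambda_{\ast}=\lambda_0<\theta_1$ is the smallest root of $\chi$ with $y_{\ast}=-(H-\lambda_{\ast}I)^{-1}g_0$. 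For (b), the ($\Leftarrow$) direction uses that $\chi(\lambda)<\chi(\theta_1)\le 0$ on $(-\infty,\theta_1)$ rules out any smaller feasible root while $\theta_1$ is itself feasible, forcing $\lambda_{\ast}=\theta_1$; the ($\Rightarrow$) direction is the contrapositive of (c) combined with the sub-case $\|(H-\theta_1 I)^{\dagger}g_0\|>\gamma$, which makes $\chi(\theta_1)>0$ and hence produces a feasible root strictly below $\theta_1$.

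I expect the main obstacle to be the boundary case $\lambda=\theta_1$, where $H-\lambda I$ is singular: one must argue solvability through the range condition $\cR(H-\theta_1 I)=\cN(H-\theta_1 I)^{\perp}$, handle the minimal-norm solution and the set of attainable norms correctly, and verify that $\chi$ does or does not remain finite at $\theta_1$ exactly according to whether $g_0\perp\cU$. The delicate bookkeeping is getting the strict-versus-weak inequalities in the norm condition right, so that the $=\theta_1$ boundary in (b) dovetails with the $<\theta_1$ conclusion in (c).
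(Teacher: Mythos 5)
Your proposal is correct and follows essentially the same route as the paper's proof: the monotonicity of the secular function $\chi$ on $(-\infty,\theta_1)$, its limits at $-\infty$ and $\theta_1^-$, the case split on $g_0\perp\cU$, and the use of the minimal-norm solution $-(H-\theta_1 I)^{\dagger}g_0$ padded by an eigenvector when its norm falls short of $\gamma$. The only difference is organizational — you characterize the full feasible set of $\lambda$ up front, while the paper exhibits feasible pairs case by case and then proves minimality by contradiction — but the underlying argument is identical.
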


\begin{proof}
The secular function $\chi(\lambda)$ in \eqref{eq:sec-fun:H} is continuous on $(-\infty,\theta_1)$
and $\lim\limits_{\lambda\rightarrow-\infty}\chi(\lambda)=-\gamma^2<0$.
Since
$$
\chi'(\lambda)=-2\sum_{i=1}^{\ell}\frac{\xi_i^2}{(\lambda-\theta_i)^3}>0\quad\mbox{for  $\lambda<\theta_1$},
$$
$\chi(\lambda)$ is strictly increasing in $(-\infty,\theta_1)$.
We have the following situations to deal with:
\begin{enumerate}[(1)]
\item If $g_0\notperp\cU$, then $\sum_{i=1}^d \xi_i^2>0$, i.e., $j_0\le d$, then $\lim\limits_{\lambda\rightarrow\theta_1^-}\chi(\lambda)=+\infty>0$.
   There exists a unique $\lambda_{\ast} \in(-\infty,\theta_1)$ such that $\chi(\lambda_{\ast})=0$.
   Let $y_{\ast}=-(H-\lambda_{\ast} I)^{-1}g_0$. We have
   $$
   (H-\lambda_{\ast} I)y_{\ast}=-g_0,\quad
   y_{\ast}^{\T}y_{\ast}=g_0^{\T}(H-\lambda_{\ast} I)^{-2}g_0=\chi(\lambda_{\ast})+\gamma^2=\gamma^2.
   $$
  Therefore, $(\lambda_{\ast},y_{\ast})$  satisfies
                 the constraints of pLGopt \eqref{eq:pLGopt}.

\item Suppose that $g_0\,\bot\,\cU$, then $\sum_{i=1}^d \xi_i^2=0$, i.e., $j_0>d$. Let
      $$
      w=-(H-\theta_1 I)^{\dag} g_0=-\sum_{i=d+1}^{\ell}\frac{\xi_i}{\theta_i-\theta_1}y_i.
      $$
      Then $(H-\theta_1 I)w=-g_0$ and $\lim\limits_{\lambda\rightarrow\theta_1^-}\chi(\lambda)=w^{\T}w-\gamma^2$. There
      are three subcases to consider.
      \begin{enumerate}[(i)]
      \item If $\|w\|>\gamma$, then there exists a unique $\lambda_{\ast} \in(-\infty,\theta_1)$ such that
                 $\chi(\lambda_{\ast})=0$. Moreover $(\lambda_{\ast},y_{\ast})$ with $y_{\ast}=-(H-\lambda_{\ast} I)^{-1}g_0$ satisfies
                 the constraints of pLGopt \eqref{eq:pLGopt}.
      \item If $\|w\|=\gamma$, then $(\lambda_{\ast},y_{\ast})$ with $\lambda_{\ast}=\theta_1$ and $y_{\ast}=w$ satisfies
                 the constraints of pLGopt \eqref{eq:pLGopt}.
      \item If $\|w\|<\gamma$, then  $(\lambda_{\ast},y_{\ast})$ with $\lambda_{\ast}=\theta_1$ and
                 $y_{\ast}=w+\sqrt{\gamma^2-\|w\|^2}\,y_1$ satisfies
                 the constraints of pLGopt \eqref{eq:pLGopt}.
      \end{enumerate}
\end{enumerate}
So far we have proved that    $(\lambda_{\ast},y_{\ast})$ satisfies the constraints of pLGopt \eqref{eq:pLGopt} for all situations. Now we prove $\lambda_\ast$ is the smallest Lagrange multiplier of pLGopt \eqref{eq:pLGopt}. Suppose there exists $\widehat{\lambda}<\lambda_\ast$ such that $(\widehat{\lambda},\widehat{y})$ satisfies the constraints of pLGopt \eqref{eq:pLGopt}, then $\widehat{\lambda}<\lambda_\ast\le\theta_1$, so $\widehat{\lambda}\notin\text{eig}(H)$. Therefore, in order to make $(\widehat{\lambda},\widehat{y})$ satisfies \eqref{eq:pLGopt-1}, we have  $\widehat{y}=-(H-\widehat{\lambda} I)^{-1}g_0$.  Note that  $\lim\limits_{\lambda\rightarrow\lambda_\ast^-}\chi(\lambda)\le 0$ for all cases and $\chi(\lambda)$ is strictly increasing in $(-\infty,\lambda_\ast)$, so  $\chi(\widehat{\lambda})=\widehat{y}^{\T}\widehat{y}-\gamma^2<0$, which is contradictory to \eqref{eq:pLGopt-2} that $\|\widehat{y}\|=\gamma$. Therefore, $\lambda_{\ast}$ is the smallest
      Lagrangian multiplier, and thus
      $(\lambda_{\ast},y_{\ast})$ is a minimizer of {\rm pLGopt} \eqref{eq:pLGopt}.

For all situations, the smallest Lagrangian multiplier $\lambda_{\ast}$ of pLGopt \eqref{eq:pLGopt}
satisfies $\lambda_{\ast} \leq \lambda_{\min}(H)$, as expected.
Also $\lambda_{\ast}=\theta_1$ can only happen in the subcase (ii) or (iii).
\end{proof}

Buried in the proof above is a viable numerical algorithm to solve pLGopt~\eqref{eq:pLGopt}, provided
$\lambda_*$ in the case (a) and the subcase (i) of the case (b) can be efficiently solved. In both cases,
it is the unique root of secular equation $\chi(\lambda)=0$ in $(-\infty,\theta_1)$ in which $\chi(\lambda)$
monotonically increasing. A default method is Newton's method which applies the tangent line approximation, since both
$\chi(\lambda)$ and its derivative $\chi'(\lambda)$ is rather straightforward to evaluate. However,
this secular equation $\chi(\lambda)=0$ has a special rational form. Previous ideas in
solving secular equations of similar types \cite{bund:1978,gagv:1989,li:1993f,zhsl:2017} can be adopted to devise a much fast method
than Newton's method. Details are presented
in Appendix~\ref{sec:secularEq}.


\begin{lemma}\label{lm:pLGopt2pQEPmin}
If $(\lambda,y)$ satisfies the constraints of {\rm pLGopt} \eqref{eq:pLGopt}, then there exists
a vector $w\in\bbR^{\ell}$ such that $(\lambda,w)$ satisfies the constraints of {\rm pQEPmin} \eqref{eq:pQEPmin}.
Specifically,
$$
w=\begin{cases}
  (H-\lambda I)^{-1}y, &\quad\mbox{if $\lambda\notin\eig(H)$}, \\
  \mbox{the corresponding eigenvector of $H$}, &\quad\mbox{if $\lambda\in\eig(H)$}.
  \end{cases}
$$
In particular, the optimal value of {\rm pQEPmin} \eqref{eq:pQEPmin} is
less than or equal to the optimal value of {\rm pLGopt} \eqref{eq:pLGopt}.
\end{lemma}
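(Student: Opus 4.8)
The plan is to verify directly that the vector $w$ prescribed in the statement satisfies the QEP constraint \eqref{eq:pQEPmin-1} together with $w\ne 0$, splitting into the two cases $\lambda\notin\eig(H)$ and $\lambda\in\eig(H)$ that appear in the definition of $w$, and then to read off the ``in particular'' clause by applying the construction to a minimizer of pLGopt.

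First I would dispose of the generic case $\lambda\notin\eig(H)$. Here $H-\lambda I$ is invertible, so from the constraint $(H-\lambda I)y=-g_0$ in \eqref{eq:pLGopt-1} I may write $y=-(H-\lambda I)^{-1}g_0$, whence $w=(H-\lambda I)^{-1}y=-(H-\lambda I)^{-2}g_0$. Applying $(H-\lambda I)^2$ gives $(H-\lambda I)^2w=-g_0$. For the right-hand side of \eqref{eq:pQEPmin-1} I need the scalar $g_0^{\T}w=-g_0^{\T}(H-\lambda I)^{-2}g_0=-\|y\|^2=-\gamma^2$, where the final equality is exactly the norm constraint \eqref{eq:pLGopt-2} and I have used the symmetry of $H$. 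Substituting, $\gamma^{-2}g_0g_0^{\T}w=\gamma^{-2}g_0(-\gamma^2)=-g_0$, which matches the left-hand side, so \eqref{eq:pQEPmin-1} holds. Since $\|y\|=\gamma>0$ forces $y\ne 0$ and $(H-\lambda I)^{-1}$ is nonsingular, $w\ne 0$.

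The case $\lambda\in\eig(H)$ is where the only real subtlety lies, and I expect it to be the crux of the argument. The point is that feasibility of $(\lambda,y)$ for pLGopt already forces $g_0$ to be orthogonal to the eigenspace $\cN(H-\lambda I)$: indeed $-g_0=(H-\lambda I)y\in\cR(H-\lambda I)$, and because $H$ is symmetric $\cR(H-\lambda I)=\cN(H-\lambda I)^{\perp}$, so $g_0\,\perp\,\cN(H-\lambda I)$. Now take $w$ to be any eigenvector of $H$ for the eigenvalue $\lambda$, so that $(H-\lambda I)w=0$ and $w\ne 0$. Then the left-hand side of \eqref{eq:pQEPmin-1} is $(H-\lambda I)^2w=0$, while the orthogonality just established gives $g_0^{\T}w=0$, so the right-hand side $\gamma^{-2}g_0g_0^{\T}w=0$ as well; hence \eqref{eq:pQEPmin-1} holds trivially. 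This confirms that $(\lambda,w)$ satisfies the constraints of pQEPmin \eqref{eq:pQEPmin} in both cases.

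Finally, for the ``in particular'' statement I would let $\lambda_{\ast}$ denote the optimal value of pLGopt \eqref{eq:pLGopt} and fix a minimizer $(\lambda_{\ast},y_{\ast})$. The construction above yields a $w_{\ast}\ne 0$ for which $(\lambda_{\ast},w_{\ast})$ is feasible for pQEPmin \eqref{eq:pQEPmin}, so the smallest real eigenvalue achievable in pQEPmin is at most $\lambda_{\ast}$, which is precisely the asserted inequality between the two optimal values.
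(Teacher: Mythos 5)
Your proposal is correct and follows essentially the same route as the paper: verify the QEP constraint directly for $w=(H-\lambda I)^{-1}y$ in the invertible case and for an eigenvector in the degenerate case, then apply the construction to a minimizer of pLGopt. Your observation that $g_0\perp\cN(H-\lambda I)$ via $\cR(H-\lambda I)=\cN(H-\lambda I)^{\perp}$ is just a repackaging of the paper's one-line computation $g_0^{\T}w=-y^{\T}(H-\lambda I)w=0$, so there is no substantive difference.
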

\begin{proof}
There are two cases to consider.
\begin{itemize}
\item Case $\lambda\in\eig(H)$:
Let $w$ be an eigenvector
of $H$ corresponding to eigenvalue $\lambda$, i.e., $H w = \lambda w$.
By \eqref{eq:pLGopt-1}, $g_0=-(H-\lambda I)y$, and thus
\[
\gamma^{-2}g_0 g_0^{\T}w=-\gamma^{-2}g_0 y^{\T}(H-\lambda I)w=0.
\]
Evidently, $(H-\lambda I)^{2}w=0$.
Hence $(\lambda,w)$ satisfies \eqref{eq:pQEPmin-1}.

\item Case $\lambda\not\in\eig(H)$:
Let $w=(H-\lambda I)^{-1}y$. Using \eqref{eq:pLGopt-1}, we have
    \begin{align*}
    (H-\lambda I)^{2}w&=(H-\lambda I)y=-g_0, \\
    \gamma^{-2}g_0g_0^{\T}w&=\gamma^{-2}g_0g_0^{\T}(H-\lambda I)^{-1}y=-\gamma^{-2}g_0y^{\T}y=-g_0.
    \end{align*}
   Again $(\lambda,w)$ satisfies \eqref{eq:pQEPmin-1}.
\end{itemize}
This proves that $(\lambda,w)$ satisfies the constraints of pQEPmin \eqref{eq:pQEPmin}.
As a corollary, the optimal value of {\rm pQEPmin} \eqref{eq:pQEPmin} is
less than or equal to the optimal value of {\rm pLGopt} \eqref{eq:pLGopt}.
\end{proof}

The next lemma claims a stronger conclusion than the last statement in the previous lemma.

\begin{lemma}\label{lm:pLGopt=pQEPmin}
The optimal value of {\rm pLGopt} \eqref{eq:pLGopt} is
 equal to the optimal value of {\rm pQEPmin} \eqref{eq:pQEPmin}.
\end{lemma}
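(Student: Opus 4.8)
The plan is to squeeze the two optimal values together: Lemma~\ref{lm:pLGopt2pQEPmin} already supplies one inequality, and I only need a matching reverse one. Write $\lambda_\ast^{\mathrm{LG}}$ and $\lambda_\ast^{\mathrm{QEP}}$ for the optimal values of pLGopt~\eqref{eq:pLGopt} and pQEPmin~\eqref{eq:pQEPmin}. By Lemma~\ref{lm:pLGopt2pQEPmin} we have $\lambda_\ast^{\mathrm{QEP}}\le\lambda_\ast^{\mathrm{LG}}$, and moreover the construction there exhibits $\lambda_\ast^{\mathrm{LG}}$ itself as a real eigenvalue of QEP~\eqref{eq:pQEPmin-1} with a nonzero eigenvector, so $\lambda_\ast^{\mathrm{LG}}$ is feasible for pQEPmin. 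It therefore suffices to show that QEP~\eqref{eq:pQEPmin-1} has \emph{no} real eigenvalue (with nonzero eigenvector) strictly below $\lambda_\ast^{\mathrm{LG}}$; this forces $\lambda_\ast^{\mathrm{QEP}}\ge\lambda_\ast^{\mathrm{LG}}$ and hence equality.

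First I would record that, by Lemma~\ref{lm:pLGopt}(a), $\lambda_\ast^{\mathrm{LG}}\le\lambda_{\min}(H)=\theta_1$. Then argue by contradiction: suppose $(\lambda,w)$ satisfies the constraints of pQEPmin~\eqref{eq:pQEPmin} with $w\ne0$ and $\lambda<\lambda_\ast^{\mathrm{LG}}\le\theta_1$; in particular $\lambda\notin\eig(H)$. Using the eigen-decomposition $H=Y\Theta Y^{\T}$ from \eqref{eq:eig-decomp:H}, set $\wtd w=Y^{\T}w$ and recall $\xi_i=g_0^{\T}y_i$. The QEP constraint \eqref{eq:pQEPmin-1} becomes, componentwise, $(\theta_i-\lambda)^2\,\wtd w_i=\sigma\,\xi_i$ with the scalar $\sigma:=\gamma^{-2}g_0^{\T}w$. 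Since $\lambda\ne\theta_i$ for every $i$, each $\wtd w_i$ is determined by $\sigma$; if $\sigma=0$ then $\wtd w=0$ and hence $w=0$, a contradiction, so $\sigma\ne0$. Substituting $\wtd w_i=\sigma\,\xi_i/(\theta_i-\lambda)^2$ into the definition of $\sigma$ and cancelling $\sigma$ yields $\sum_i \xi_i^2/(\lambda-\theta_i)^2=\gamma^2$, i.e., $\chi(\lambda)=0$ for the secular function $\chi$ in \eqref{eq:sec-fun:H}.

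To close the contradiction I would reuse the monotonicity already established inside the proof of Lemma~\ref{lm:pLGopt}: $\chi$ is continuous and strictly increasing on $(-\infty,\theta_1)$, with $\chi(\mu)\to-\gamma^2<0$ as $\mu\to-\infty$. In every case treated there, $\chi(\mu)<0$ for all $\mu<\lambda_\ast^{\mathrm{LG}}$ — when $\lambda_\ast^{\mathrm{LG}}<\theta_1$ because $\lambda_\ast^{\mathrm{LG}}$ is the unique root of $\chi$ in $(-\infty,\theta_1)$ and $\chi$ increases through it, and when $\lambda_\ast^{\mathrm{LG}}=\theta_1$ because $\lim_{\mu\to\theta_1^-}\chi(\mu)\le0$ forces $\chi$ to stay negative on the whole interval. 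As $\lambda<\lambda_\ast^{\mathrm{LG}}$, this gives $\chi(\lambda)<0$, contradicting $\chi(\lambda)=0$. Combining $\lambda_\ast^{\mathrm{QEP}}\ge\lambda_\ast^{\mathrm{LG}}$ with Lemma~\ref{lm:pLGopt2pQEPmin} then yields $\lambda_\ast^{\mathrm{QEP}}=\lambda_\ast^{\mathrm{LG}}$.

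The main obstacle I anticipate is the reduction of the QEP constraint to the scalar secular equation: one must verify carefully that a nonzero eigenvector forces $\sigma\ne0$ (so that $\chi(\lambda)=0$ genuinely holds rather than collapsing trivially), and one must keep the two regimes $\lambda=\theta_1$ and $\lambda<\theta_1$ straight when invoking the monotonicity of $\chi$. Everything else is routine bookkeeping with the diagonalization in \eqref{eq:eig-decomp:H}.
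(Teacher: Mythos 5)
Your proof is correct and follows essentially the same strategy as the paper's: Lemma~\ref{lm:pLGopt2pQEPmin} supplies one inequality, and for the reverse one assumes a real eigenvalue $\lambda$ of QEP~\eqref{eq:pQEPmin-1} with $\lambda<\lambda_{\ast}^{\mathrm{LG}}\le\lambda_{\min}(H)$ (so $\lambda\notin\eig(H)$ by Lemma~\ref{lm:pLGopt}), shows $g_0^{\T}w\ne 0$, and derives a contradiction. The only real difference is the packaging of the final step: the paper explicitly forms $\what y=-(\gamma^2/g_0^{\T}\what w)(H-\what\lambda I)\what w$, checks it is feasible for pLGopt~\eqref{eq:pLGopt}, and contradicts the minimality of $\lambda_{\ast}^{\mathrm{LG}}$, whereas you diagonalize to conclude $\chi(\lambda)=0$ (in effect re-deriving Lemma~\ref{lem:qep2secluar}, which the paper proves later) and contradict the fact that $\chi<0$ on $(-\infty,\lambda_{\ast}^{\mathrm{LG}})$ — a fact that sits inside the proof of Lemma~\ref{lm:pLGopt} rather than in its statement, but which you extract correctly in both the $\lambda_{\ast}^{\mathrm{LG}}<\theta_1$ and $\lambda_{\ast}^{\mathrm{LG}}=\theta_1$ regimes; the two computations are equivalent.
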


\begin{proof}
Let $(\lambda_{\ast},y_{\ast})$ be a minimizer of {\rm pLGopt} \eqref{eq:pLGopt}, and let
$\what\lambda$ be the optimal value  of pQEPmin \eqref{eq:pQEPmin}.
By Lemma~\ref{lm:pLGopt2pQEPmin}, we have $\what\lambda\le\lambda_{\ast}$.
It suffices to show that $\widehat\lambda<\lambda_{\ast}$ cannot happen.
Assume, to the contrary, that $\widehat\lambda<\lambda_{\ast}$. By Lemma \ref{lm:pLGopt}, we have
 $\widehat\lambda<\lambda_{\min}(H)$.
In particular, $\widehat\lambda\notin \eig(H)$.
Let $(\widehat\lambda,\widehat w)$ be a minimizer of {\rm pQEPmin} \eqref{eq:pQEPmin}. By \eqref{eq:pQEPmin-1}, we have
$$
\frac 1{\gamma^2} (\what w^{\T}g_0)^2=\what w^{\T}\frac 1{\gamma^2} g_0g_0^{\T}\what w=\what w^{\T}(H-\what\lambda I)^2\what w>0,
$$
implying $g_0^{\T}\what w\neq 0$.
Let $\what y=-({\gamma^2}/{g_0^{\T}\what w})\,(H-\widehat{\lambda} I)\what w$, and observe that
\begin{subequations}\label{eq:lag2qep}
\begin{align}
(H-\widehat{\lambda} I)\what y&=-\frac{\gamma^2}{g_0^{\T}\what w}\cdot (H-\widehat{\lambda} I)^2\what w
             =-\frac{\gamma^2}{g_0^{\T}\what w}\cdot\gamma^{-2}g_0g_0^{\T}\what w=-g_0,\\
\what y^{\T}\what y&=\left(\frac{\gamma^2}{g_0^{\T}\what w}\right)^2\what w^{\T}(H-\widehat{\lambda} I)^2\what w
                    =\left(\frac{\gamma^2}{g_0^{\T}\what w}\right)^2\frac{\what w^{\T}g_0g_0^{\T}\what w}{\gamma^2}=\gamma^2,
\end{align}
\end{subequations}
i.e., $(\widehat{\lambda}, \what y)$
 satisfies the
constraints of pLGopt \eqref{eq:pLGopt}. This implies $\lambda_{\ast}\le \what\lambda$,
contradicting the assumption $\what\lambda<\lambda_{\ast}$.
Therefore, $\widehat\lambda=\lambda_{\ast}$, as expected.
\end{proof}

We are ready to establish the equivalence between pLGopt \eqref{eq:pLGopt} and  pQEPmin~\eqref{eq:pQEPmin}.

\begin{theorem}[\bf pLGopt~\eqref{eq:pLGopt}  and pQEPmin~\eqref{eq:pQEPmin} are equivalent]\label{thm:pLGopt=pQEPmin} ~
\begin{itemize}
\item[\rm (1)]
    Let $(\lambda_{\ast},y_{\ast})$ be a minimizer of
    {\rm pLGopt} \eqref{eq:pLGopt}. Then either $\lambda_{\ast}<\lambda_{\min}(H)$ or $\lambda_{\ast}=\lambda_{\min}(H)$,
    and there exists $w_{\ast}$ such that $(\lambda_{\ast},w_{\ast})$ is a minimizer of
    {\rm pQEPmin} \eqref{eq:pQEPmin}. Specifically,
    $$
    w_{\ast}=\begin{cases}
          (H-\lambda_{\ast} I)^{-1}y_{\ast}, &\quad\mbox{if $\lambda_{\ast}<\lambda_{\min}(H)$}, \\
          \mbox{the corresponding eigenvector of $H$}, &\quad\mbox{if $\lambda_{\ast}=\lambda_{\min}(H)$}.
        \end{cases}
    $$

\item[\rm (2)] Conversely, if $(\lambda_{\ast} ,w_{\ast})$ is a
    minimizer of {\rm pQEPmin} \eqref{eq:pQEPmin}, then there exists $y_{\ast}$
    such that $(\lambda_{\ast},y_{\ast})$ is a minimizer of
    {\rm pLGopt}  \eqref{eq:pLGopt}. Specifically,
    $$
    y_{\ast}=\begin{cases}
          -({\gamma^2}/{g_0^{\T}w_{\ast}})\,(H-\lambda_{\ast} I)w_{\ast}, &\quad\mbox{if $g_0^{\T}w_{\ast}\neq 0$}, \\
          x_{\ast} + \sqrt{\gamma^2-\|x_{\ast}\|^2}\, ({w_{\ast}}/{\|w_{\ast}\|}), &\quad\mbox{if $g_0^{\T}w_{\ast}=0$},
        \end{cases}
    $$
    where $x_{\ast}=-(H-\lambda_{\ast} I)^{\dag} g_0$ in the case $g_0^{\T}w_{\ast}=0$, and it is guaranteed that $\|x_{\ast}\|\le\gamma$.
%
\end{itemize}
\end{theorem}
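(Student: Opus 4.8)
The plan is to lean on the two supporting lemmas already established: Lemma~\ref{lm:pLGopt2pQEPmin}, which manufactures a feasible $w$ of pQEPmin from any feasible $(\lambda,y)$ of pLGopt, and Lemma~\ref{lm:pLGopt=pQEPmin}, which states that pLGopt and pQEPmin have the \emph{same} optimal value $\lambda_\ast$. Once the optimal values are known to coincide, proving the equivalence of \emph{minimizers} reduces to checking that each advertised transfer formula maps a minimizer of one problem to a merely \emph{feasible} point of the other at the common value $\lambda_\ast$; feasibility at $\lambda_\ast$ then upgrades automatically to optimality.

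For part~(1), let $(\lambda_\ast,y_\ast)$ minimize pLGopt. Lemma~\ref{lm:pLGopt}(a) gives $\lambda_\ast\le\lambda_{\min}(H)$, hence the stated dichotomy $\lambda_\ast<\lambda_{\min}(H)$ or $\lambda_\ast=\lambda_{\min}(H)$. Applying Lemma~\ref{lm:pLGopt2pQEPmin} to $(\lambda_\ast,y_\ast)$ produces a $w_\ast$ satisfying~\eqref{eq:pQEPmin-1}, and its piecewise definition there specializes exactly to the claimed formula once we identify $\lambda_\ast<\lambda_{\min}(H)$ with $\lambda_\ast\notin\eig(H)$ and $\lambda_\ast=\lambda_{\min}(H)$ with $\lambda_\ast\in\eig(H)$. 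Because $\lambda_\ast$ is the optimal value of pQEPmin by Lemma~\ref{lm:pLGopt=pQEPmin}, the feasible pair $(\lambda_\ast,w_\ast)$ is a minimizer.

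For part~(2), let $(\lambda_\ast,w_\ast)$ minimize pQEPmin; by Lemma~\ref{lm:pLGopt=pQEPmin} together with Lemma~\ref{lm:pLGopt}(a), $\lambda_\ast$ is the pLGopt optimum and $\lambda_\ast\le\lambda_{\min}(H)$. I split on $g_0^{\T}w_\ast$. When $g_0^{\T}w_\ast\ne0$, the choice $y_\ast=-(\gamma^2/g_0^{\T}w_\ast)(H-\lambda_\ast I)w_\ast$ is precisely the construction already verified in~\eqref{eq:lag2qep}: substituting the relation $(H-\lambda_\ast I)^2w_\ast=\gamma^{-2}g_0g_0^{\T}w_\ast$ yields $(H-\lambda_\ast I)y_\ast=-g_0$ and $y_\ast^{\T}y_\ast=\gamma^2$, so $(\lambda_\ast,y_\ast)$ is pLGopt-feasible and hence a minimizer. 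I will emphasize that this algebra never inverts $H-\lambda_\ast I$, so it is valid whether or not $\lambda_\ast\in\eig(H)$.

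The genuinely new case, and the main obstacle, is $g_0^{\T}w_\ast=0$. Here~\eqref{eq:pQEPmin-1} collapses to $(H-\lambda_\ast I)^2w_\ast=0$, and since $H$ is symmetric this forces $(H-\lambda_\ast I)w_\ast=0$, so $\lambda_\ast\in\eig(H)$ with $w_\ast$ in the eigenspace $\cU$; combined with $\lambda_\ast\le\lambda_{\min}(H)$ this pins down $\lambda_\ast=\lambda_{\min}(H)$. I then invoke Lemma~\ref{lm:pLGopt}(b): since $\lambda_\ast=\lambda_{\min}(H)$ is the pLGopt optimum, it guarantees both $g_0\perp\cU$ and $\|x_\ast\|=\|(H-\lambda_\ast I)^{\dag}g_0\|\le\gamma$, so $\sqrt{\gamma^2-\|x_\ast\|^2}$ is real and the stated $y_\ast=x_\ast+\sqrt{\gamma^2-\|x_\ast\|^2}\,(w_\ast/\|w_\ast\|)$ is well defined. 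Feasibility then rests on two orthogonality facts: $g_0\perp\cU$ means $g_0\in\cR(H-\lambda_\ast I)$, giving $(H-\lambda_\ast I)x_\ast=-g_0$, and appending the null-space vector $w_\ast$ leaves this untouched, so~\eqref{eq:pLGopt-1} holds; while $x_\ast\in\cU^{\perp}$ and $w_\ast\in\cU$ are orthogonal, so the Pythagorean identity gives $\|y_\ast\|^2=\|x_\ast\|^2+(\gamma^2-\|x_\ast\|^2)=\gamma^2$, which is~\eqref{eq:pLGopt-2}. Thus $(\lambda_\ast,y_\ast)$ is pLGopt-feasible at the optimal value and hence a minimizer, completing the equivalence.
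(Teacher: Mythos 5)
Your proposal is correct and follows essentially the same route as the paper: part (1) as a direct consequence of Lemmas~\ref{lm:pLGopt2pQEPmin} and \ref{lm:pLGopt=pQEPmin}, and part (2) by splitting on $g_0^{\T}w_{\ast}$, reusing \eqref{eq:lag2qep} in the nonzero case and constructing $y_{\ast}=x_{\ast}+\sqrt{\gamma^2-\|x_{\ast}\|^2}\,(w_{\ast}/\|w_{\ast}\|)$ in the zero case. The only cosmetic difference is that you justify $\|x_{\ast}\|\le\gamma$ via Lemma~\ref{lm:pLGopt}(b), whereas the paper argues that $x_{\ast}$ is the minimum-norm solution of \eqref{eq:pLGopt-1} and hence no longer than the feasible $y$ of norm $\gamma$ guaranteed by Lemma~\ref{lm:pLGopt=pQEPmin}; both are valid, and your explicit verification of the orthogonality facts is a welcome expansion of the paper's ``it can be verified that.''
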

\begin{proof}
Item (1) is a consequence of Lemmas~\ref{lm:pLGopt2pQEPmin} and \ref{lm:pLGopt=pQEPmin}.

Consider item (2). Suppose $(\lambda_{\ast} ,w_{\ast})$ is a
minimizer of pQEPmin \eqref{eq:pQEPmin}. By Lemma~\ref{lm:pLGopt=pQEPmin}, it suffices to
show that  there exists $y_{\ast}$
    such that $(\lambda_{\ast},y_{\ast})$ satisfies the constraints of pLGopt  \eqref{eq:pLGopt}.
\begin{itemize}
\item Case $g_0^{\T}w_{\ast}\neq 0$:
     The equations in \eqref{eq:lag2qep} hold with substitutions
     $$
     \widehat{\lambda}\to \lambda_{\ast}, \quad
     \what y\to y_{\ast}=-({\gamma^2}/{g_0^{\T}w_{\ast}})\,(H-\lambda_{\ast} I)w_{\ast}.
     $$
     So $(\lambda_{\ast},y_{\ast})$ satisfies the constraints of pLGopt  \eqref{eq:pLGopt}.

\item Case $g_0^{\T}w_{\ast}=0$:
By \eqref{eq:pQEPmin-1}, we find that
     $(H-\lambda_{\ast}I)^2w_{\ast}=0$, implying $(H-\lambda_{\ast}I)w_{\ast}=0$ since
     $H-\lambda_{\ast}I$ is real symmetric. Hence $\lambda_{\ast}\in\eig(H)$ and $w_{\ast}$ is
     an associated eigenvector. Let
     $x_{\ast}$ be the minimum norm solution of $(H-\lambda_{\ast} I)x_{\ast}=-g_0$.
     Note that we already know  $\lambda_\ast$ is the optimal value of pLGopt \eqref{eq:pLGopt}, which means there exists $y$ such that $(\lambda_\ast,y)$ satisfies \eqref{eq:pLGopt-1} and $\|y\|=\gamma$. On the other hand, $x$ is minimal norm solution of \eqref{eq:pLGopt-1}, so $\|x\|\le\|y\|=\gamma$. Then it can be verified that
     $(\lambda_{\ast},y_{\ast})$ with $y_{\ast} = x_{\ast} + \sqrt{\gamma^2-\|x_{\ast}\|^2}\, ({w_{\ast}}/{\|w_{\ast}\|})$
     satisfies the constraints of pLGopt  \eqref{eq:pLGopt}. 
\end{itemize}
This proves that $(\lambda_{\ast},y_{\ast})$ satisfies the constraints of pLGopt  \eqref{eq:pLGopt}. In addition, by Lemma \ref{lm:pLGopt=pQEPmin}, $\lambda_\ast$ is the optimal value of pLGopt \eqref{eq:pLGopt}, which proves the result.
\end{proof}

The following theorem is about the uniqueness of the solution
for pLGopt \eqref{eq:pLGopt}.

\begin{theorem}[\bf Uniqueness of the minimizer for pLGopt \eqref{eq:pLGopt}]\label{thm:pLGoptunique}
Let $(\lambda_\ast,w_\ast)$ be a  minimizer of {\rm pQEPmin} \eqref{eq:pQEPmin}.
\begin{itemize}
\item[\rm (1)] If $g_0^{\T}w_\ast\neq 0$ for all possible minimizers for {\rm pQEPmin} \eqref{eq:pQEPmin},
      then $\lambda_\ast<\lambda_{\min}(H)$ and the minimizer of {\rm pLGopt} \eqref{eq:pLGopt} is unique.
\item[\rm (2)] If there exists a minimizer for {\rm pQEPmin} \eqref{eq:pQEPmin} such that $g_0^{\T}w_\ast=0$, then
       $\lambda_\ast=\lambda_{\min}(H)$ and the minimizer of {\rm pLGopt} \eqref{eq:pLGopt} is unique if and only if $\|x_\ast\|=\gamma$, where $x_{\ast}=-(H-\lambda_{\ast} I)^{\dag} g_0$.
\end{itemize}
\end{theorem}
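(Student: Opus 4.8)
The plan is to treat the two items separately, leaning on Lemma~\ref{lm:pLGopt=pQEPmin}, which tells us that pLGopt~\eqref{eq:pLGopt} and pQEPmin~\eqref{eq:pQEPmin} share the same optimal value $\lambda_{\ast}$. Consequently every minimizer of pLGopt has first component equal to this common $\lambda_{\ast}$, so the uniqueness question reduces entirely to counting the vectors $y$ that satisfy the two constraints \eqref{eq:pLGopt-1}--\eqref{eq:pLGopt-2} at the fixed value $\lambda=\lambda_{\ast}$. The whole argument is then a matter of locating $\lambda_{\ast}$ relative to $\lambda_{\min}(H)$ and analyzing the solution set of the (possibly singular) linear system $(H-\lambda_{\ast}I)y=-g_0$.

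For item~(1), I would first rule out $\lambda_{\ast}=\lambda_{\min}(H)$, since Lemma~\ref{lm:pLGopt}(a) already gives $\lambda_{\ast}\le\lambda_{\min}(H)$. Arguing by contradiction, if $\lambda_{\ast}=\lambda_{\min}(H)$, then Theorem~\ref{thm:pLGopt=pQEPmin}(1) produces a minimizer of pQEPmin whose vector part $w_{\ast}$ is an eigenvector of $H$ for $\lambda_{\min}(H)$, so $w_{\ast}\in\cU$; but Lemma~\ref{lm:pLGopt}(b) forces $g_0\,\bot\,\cU$ in the equality case, whence $g_0^{\T}w_{\ast}=0$, contradicting the standing hypothesis that $g_0^{\T}w_{\ast}\neq 0$ for every minimizer. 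Therefore $\lambda_{\ast}<\lambda_{\min}(H)$, the matrix $H-\lambda_{\ast}I$ is nonsingular, and constraint \eqref{eq:pLGopt-1} pins down $y_{\ast}=-(H-\lambda_{\ast}I)^{-1}g_0$ uniquely, giving uniqueness of the pLGopt minimizer.

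For item~(2), starting from a pQEPmin minimizer with $g_0^{\T}w_{\ast}=0$, I would reuse the computation inside the proof of Theorem~\ref{thm:pLGopt=pQEPmin}(2): from \eqref{eq:pQEPmin-1} one gets $(H-\lambda_{\ast}I)^2w_{\ast}=0$, and symmetry of $H-\lambda_{\ast}I$ upgrades this to $(H-\lambda_{\ast}I)w_{\ast}=0$, so $\lambda_{\ast}\in\eig(H)$; together with $\lambda_{\ast}\le\lambda_{\min}(H)$ this yields $\lambda_{\ast}=\lambda_{\min}(H)=\theta_1$. It then remains to count the solutions of $(H-\theta_1 I)y=-g_0$ with $\|y\|=\gamma$. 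Consistency of this system is guaranteed by $g_0\,\bot\,\cU$ (Lemma~\ref{lm:pLGopt}(b)), so its general solution is $y=x_{\ast}+\eta$ with $\eta\in\cN(H-\theta_1 I)=\cU$ and minimum-norm part $x_{\ast}=-(H-\theta_1 I)^{\dag}g_0\in\cR(H-\theta_1 I)=\cU^{\bot}$. Orthogonality of $x_{\ast}$ to $\cU$ gives $\|y\|^2=\|x_{\ast}\|^2+\|\eta\|^2$, and Lemma~\ref{lm:pLGopt}(b) also supplies $\|x_{\ast}\|\le\gamma$, so the norm constraint becomes $\|\eta\|^2=\gamma^2-\|x_{\ast}\|^2$.

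The desired equivalence then reads off directly: when $\|x_{\ast}\|=\gamma$ the only admissible $\eta$ is $0$, forcing the unique minimizer $y_{\ast}=x_{\ast}$; when $\|x_{\ast}\|<\gamma$ the set $\{\eta\in\cU:\|\eta\|^2=\gamma^2-\|x_{\ast}\|^2\}$ is nonempty (as $\dim\cU\ge 1$) and contains at least two points, so the minimizer is not unique. The one place that demands genuine care is item~(2): one must justify that $x_{\ast}$ is orthogonal to all of $\cU$ so that the Pythagorean splitting is valid, and that $\cU$ genuinely contributes additional solutions precisely when $\|x_{\ast}\|<\gamma$; both rest on the pseudoinverse identities for the symmetric singular matrix $H-\theta_1 I$ and on $\dim\cU\ge 1$. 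Everything else is bookkeeping layered on top of the equivalences already proved.
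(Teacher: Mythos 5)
Your proposal is correct and follows essentially the same route as the paper: rule out $\lambda_{\ast}=\lambda_{\min}(H)$ in item (1) by producing a pQEPmin minimizer with $g_0^{\T}w_{\ast}=0$ (the paper gets this directly from $(H-\lambda_{\ast}I)^2w_{\ast}=0$ in the QEP rather than via Lemma~\ref{lm:pLGopt}(b), but both are one-line deductions), then invoke nonsingularity of $H-\lambda_{\ast}I$; and in item (2) derive $\lambda_{\ast}=\lambda_{\min}(H)$ from symmetry and reduce uniqueness to whether $\|x_{\ast}\|=\gamma$. Your item-(2) argument is in fact slightly more complete than the paper's, which only observes that the parametrized family $x_{\ast}+\sqrt{\gamma^2-\|x_{\ast}\|^2}\,(w_{\ast}/\|w_{\ast}\|)$ is non-unique, whereas you characterize the full solution set $\{x_{\ast}+\eta:\eta\in\cU,\ \|\eta\|^2=\gamma^2-\|x_{\ast}\|^2\}$ via the Pythagorean splitting.
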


\begin{proof}
\begin{itemize}
\item[\rm (1)] First we prove $\lambda_\ast<\lambda_{\min}(H)$. Suppose it is not true, i.e., $\lambda_\ast=\lambda_{\min}(H)$, let $w_\ast$ be an eigenvector of $H$ corresponding with eigenvalue $\lambda_{\min}(H)$, then by Theorem \ref{thm:pLGopt=pQEPmin}, $(\lambda_\ast, w_\ast)$ is a minimizer of pQEPmin \eqref{eq:pQEPmin}. Since QEP \eqref{eq:pQEPmin-1} leads to $\gamma^{-2}g_0g_0^{\T}w_\ast=(H-\lambda_\ast I)^2w_\ast=0$ and $w_\ast\neq 0$, we have $g_0^{\T}w_\ast=0$, which is contradictory to our assumption that $g_0^{\T}w_\ast\neq 0$ for all possible minimizers $(\lambda_\ast,w_\ast)$ of pQEPmin \eqref{eq:pQEPmin}. Therefore, $\lambda_\ast<\lambda_{\min}(H)$.

In this case $(\lambda_\ast,x_\ast=-(H-\lambda_\ast I)^{-1}g_0)$ is the unique minimizer of pLGopt \eqref{eq:pLGopt} since the $H-\lambda_\ast I$ is nonsingular and $x_\ast$ is the unique solution of \eqref{eq:pLGopt-1}.
\item[\rm (2)] Making use of \eqref{eq:pQEPmin-1}, we have
$$
(H-\lambda_\ast I)^2w_\ast=\gamma^{-2}g_0g_0^{\T}w_\ast=0
\quad\Rightarrow\quad
(H-\lambda_\ast I)w_\ast=0
$$
because $H-\lambda_\ast I$ is real symmetric. Therefore $\lambda_\ast\in\eig(H)$, which yields $\lambda_{\ast}=\lambda_{\min}(H)$. Note that $x_\ast$ is unique and $w_\ast$ can be chosen arbitrarily in the eigenspance of $H$ corresponding with eigenvalue $\lambda_{\min}(H)$, so  $w_\ast$ is not unique. Therefore, $y_\ast= x_{\ast} + \sqrt{\gamma^2-\|x_{\ast}\|^2}\, ({w_{\ast}}/{\|w_{\ast}\|})$ is unique if and only if $\|x_\ast\|=\gamma$.

\end{itemize}
\end{proof}

\begin{remark}\label{rm:incomplete}
In \cite{gagv:1989}, the authors investigate
the relationship between the problems
\begin{align}
{\mbox{pLG:}\quad} & (H-\lambda I)y=-g_0,\,\, \|y\|=\gamma, \label{eq:pLG} \\
{\mbox{pQEP:}\quad} & (H-\lambda I)^2w=\gamma^{-2}g_0g_0^{\T}w,\,\, \lambda\in\bbR,\,\, w\ne 0. \label{eq:pQEP}
\end{align}
They differ from pLGopt and pQEPmin, respectively, just without taking the min over $\lambda$.
%
The following results were obtained there:
\begin{enumerate}
\item If $(\lambda,y)$ is a solution of pLG \eqref{eq:pLG}, then there exists $w$ such that $(\lambda,w)$ is a solution of pQEP \eqref{eq:pQEP}.
\item Suppose that $(\lambda,w)$ is a solution of pQEP \eqref{eq:pQEP}.
\begin{itemize}
\item If $\lambda\notin\text{eig}(H)$, then there exists $y$ such that $(\lambda,y)$ is a solution of pLG \eqref{eq:pLG}.
\item If $\lambda\in\text{eig}(H)$, then  there exists $y$ such that $(\lambda,y)$ is a solution of pLG \eqref{eq:pLG} if and only if $\|(H-\lambda I)^{\dag}g_0\|\le \gamma$.
\end{itemize}
\end{enumerate}
Consequently, these results provide no guarantee that for any solution $(\lambda,w)$ of
pQEP \eqref{eq:pQEP}, there exists a corresponding solution $(\lambda,y)$
of pLG \eqref{eq:pLG}. Nonetheless, the authors stated without any proof that for the
solution $(\lambda_\ast,w_\ast)$ of pQEP \eqref{eq:pQEP} with
$\lambda_\ast$ being the smallest eigenvalue of pQEP \eqref{eq:pQEP}, there does exist a solution
$(\lambda_\ast,y_\ast)$ of pLGopt \eqref{eq:pLGopt}, a conclusion that doesn't look like a straightforward
one to us. Because of that,
in Theorem \ref{thm:pLGopt=pQEPmin} we rigorously proved
that for any minimizer $(\lambda_\ast,w_\ast)$ of
pQEPmin \eqref{eq:pQEPmin}, there exists $y_\ast$
such that $(\lambda_\ast,y_\ast)$ is a minimizer of pLGopt \eqref{eq:pLGopt}.
\hfill $\Box$
\end{remark}

Next we will establish an important result in Theorem~\ref{thm:pQEPmin=lefteig} below that says the leftmost eigenvalue of
QEP \eqref{eq:pQEPmin-1} is real. We begin by establishing a close relationship in Lemma~\ref{lem:qep2secluar}
between the zeros of the secular function $\chi(\lambda)$ in \eqref{eq:sec-fun:H} and the eigenvalues
of QEP \eqref{eq:pQEPmin-1},
and then using  the relation to expose an eigenvalue distribution
property of QEP \eqref{eq:pQEPmin-1} in Lemmas~\ref{lm:noeig} and \ref{lm:existeig}, in preparing for proving
our main result in Theorem~\ref{thm:pQEPmin=lefteig}.

\begin{lemma}\label{lem:qep2secluar}
Suppose $\lambda\notin \eig(H)$,
$\lambda$ (possibly complex) is an eigenvalue of {\em QEP} \eqref{eq:pQEPmin-1} if and only if $\chi(\lambda)=0$, where $\chi(\lambda)$ is defined in \eqref{eq:sec-fun:H}.
\end{lemma}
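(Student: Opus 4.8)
The plan is to exploit the special rank-one structure of the right-hand side of QEP \eqref{eq:pQEPmin-1} together with the invertibility of $H-\lambda I$ (guaranteed by the hypothesis $\lambda\notin\eig(H)$), so that the quadratic eigenvalue equation collapses to a scalar condition. Rewrite \eqref{eq:pQEPmin-1} as
$$
(H-\lambda I)^2 w = \gamma^{-2}\,g_0 g_0^{\T} w,
$$
and observe that the right-hand side is always a scalar multiple of the fixed vector $g_0$. Since $\lambda\notin\eig(H)$, the matrix $(H-\lambda I)^2$ is nonsingular, so I can left-multiply by $(H-\lambda I)^{-2}$ to get
$$
w = \gamma^{-2}\,\bigl(g_0^{\T}w\bigr)\,(H-\lambda I)^{-2} g_0.
$$
This shows that any eigenvector $w$ must be a scalar multiple of $(H-\lambda I)^{-2}g_0$.

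For the forward direction, suppose $\lambda$ is an eigenvalue, so there is some $w\neq 0$ satisfying the displayed equation. First I would argue $g_0^{\T}w\neq 0$: otherwise the equation forces $(H-\lambda I)^2 w = 0$, hence $w=0$ (using nonsingularity), a contradiction. Given $g_0^{\T}w\neq 0$, left-multiply the scalar identity $w = \gamma^{-2}(g_0^{\T}w)(H-\lambda I)^{-2}g_0$ by $g_0^{\T}$ to obtain
$$
g_0^{\T}w = \gamma^{-2}\,\bigl(g_0^{\T}w\bigr)\,g_0^{\T}(H-\lambda I)^{-2}g_0.
$$
Cancelling the nonzero factor $g_0^{\T}w$ yields $1 = \gamma^{-2}\,g_0^{\T}(H-\lambda I)^{-2}g_0$, which is exactly $\chi(\lambda)=0$ by the definition \eqref{eq:sec-fun:H}.

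For the converse, suppose $\chi(\lambda)=0$, i.e.\ $g_0^{\T}(H-\lambda I)^{-2}g_0=\gamma^2$. I would simply produce a witness eigenvector by setting $w:=(H-\lambda I)^{-2}g_0$; this is nonzero because $g_0\neq 0$ and $H-\lambda I$ is invertible. Then $g_0^{\T}w = g_0^{\T}(H-\lambda I)^{-2}g_0 = \gamma^2$, so that $\gamma^{-2}g_0 g_0^{\T}w = g_0$, while on the other side $(H-\lambda I)^2 w = (H-\lambda I)^2 (H-\lambda I)^{-2}g_0 = g_0$; hence \eqref{eq:pQEPmin-1} holds and $\lambda$ is an eigenvalue. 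I do not anticipate a serious obstacle here, since the hypothesis $\lambda\notin\eig(H)$ removes all the degenerate behaviour; the only point requiring care is the nonvanishing of $g_0^{\T}w$ in the forward direction, which is where the assumption $g_0\neq 0$ (standing throughout this subsection) together with the rank-one structure does the work.
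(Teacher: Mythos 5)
Your proof is correct and follows essentially the same route as the paper's: both directions hinge on the witness vector $(H-\lambda I)^{-2}g_0$ and on showing $g_0^{\T}w\neq 0$ via the nonsingularity of $H-\lambda I$, exactly as in the paper's argument. The only cosmetic difference is that you invert $(H-\lambda I)^2$ up front rather than pre-multiplying by $g_0^{\T}(H-\lambda I)^{-2}$, which changes nothing of substance.
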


\begin{proof}
Let $\chi(\lambda)=0$ and $\lambda\notin \eig(H)$.
Define $z=(H-\lambda I)^{-2}g_0$. Then we have $(H-\lambda I)^2z=g_0$
and
\[
g_0^{\T}z=\sum_{i=1}^{\ell}\frac{\xi_i^2}{(\theta_i-\lambda)^2}=\gamma^2
\,\,\mbox{and thus}\,\,
(H-\lambda I)^2z=g =\gamma^{-2} g g_0^{\T} z,
\]
i.e., $(\lambda,z)$ is an eigenpair of  QEP \eqref{eq:pQEPmin-1}.

On the other hand,
suppose $\lambda$ is an eigenvalue of QEP \eqref{eq:pQEPmin-1} and
$\lambda\notin \eig(H)$. Pre-multiply \eqref{eq:pQEPmin-1} by $g_0^{\T}(H-\lambda I)^{-2}$ to get
\begin{equation}\label{eq:bz}
g_0^{\T}z=\gamma^{-2} g_0^{\T}(H-\lambda I)^{-2}g_0g_0^{\T}z.
\end{equation}
We claim that $g_0^{\T} z \neq 0$. Otherwise, $(H-\lambda I)^2z=0$ by   \eqref{eq:pQEPmin-1},
which implies $(H-\lambda I)z=0$, i.e., $\lambda\in \eig(H)$, a contradiction.
So $g_0^{\T} z \neq 0$ and thus it follows from \eqref{eq:bz} that
\[
\gamma^{-2} g_0^{\T}(H-\lambda I)^{-2}g_0=1,
\]
i.e., $\lambda$ is a zero of $\chi(\lambda)$, as was to be shown.
\end{proof}


\begin{lemma}\label{lm:noeig}
{\em QEP} \eqref{eq:pQEPmin-1} has no eigenvalue $\lambda = \alpha + \ti \beta$
with $\alpha<\theta_{j_0}$ and $\beta\neq 0$,
where $\alpha,\,\beta\in\bbR$, $\ti$ is the imaginary unit, and $j_0$ is defined in \eqref{eq:j04ci}.
\end{lemma}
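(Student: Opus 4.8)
The plan is to reduce the statement to a sign analysis of the secular function via Lemma~\ref{lem:qep2secluar}, and then extract the imaginary part. First I would observe that since $H$ is real symmetric, $\eig(H)\subset\bbR$, so any candidate eigenvalue $\lambda=\alpha+\ti\beta$ with $\beta\neq 0$ automatically satisfies $\lambda\notin\eig(H)$. Hence Lemma~\ref{lem:qep2secluar} applies and tells us that $\lambda$ is an eigenvalue of QEP~\eqref{eq:pQEPmin-1} if and only if $\chi(\lambda)=0$, with $\chi$ as in \eqref{eq:sec-fun:H}. It therefore suffices to show that $\chi(\alpha+\ti\beta)\neq 0$ whenever $\alpha<\theta_{j_0}$ and $\beta\neq 0$, and I will do this by proving that its imaginary part is nonzero.

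Next I would compute $\Im[\chi(\lambda)]$ directly. Writing $a_i=\alpha-\theta_i$ and using $|\lambda-\theta_i|^2=a_i^2+\beta^2$, one has
\[
\frac{1}{(\lambda-\theta_i)^2}=\frac{(a_i^2-\beta^2)-2\ti\, a_i\beta}{(a_i^2+\beta^2)^2},
\]
so that, since the constant $-\gamma^2$ is real,
\[
\Im[\chi(\lambda)]=-2\beta\sum_{i=1}^{\ell}\frac{\xi_i^2\,(\alpha-\theta_i)}{\big((\alpha-\theta_i)^2+\beta^2\big)^2}.
\]
The point of the hypothesis $\alpha<\theta_{j_0}$ is now to fix the sign of this sum. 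By the definition \eqref{eq:j04ci} of $j_0$, we have $\xi_i=0$ for all $i<j_0$, so only indices $i\ge j_0$ contribute; for each such index $\theta_i\ge\theta_{j_0}>\alpha$, whence $\alpha-\theta_i<0$. Every surviving summand thus carries the strictly negative factor $\alpha-\theta_i$ against $\xi_i^2\ge 0$, and the $i=j_0$ term is strictly negative because $\xi_{j_0}\neq 0$. Consequently the sum is strictly negative, and together with $\beta\neq 0$ this forces $\Im[\chi(\lambda)]\neq 0$, hence $\chi(\lambda)\neq 0$ and $\lambda$ is not an eigenvalue of QEP~\eqref{eq:pQEPmin-1}.

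The computation of the imaginary part is routine, so the genuine content, and the only place requiring care, is the sign bookkeeping in the final step: the threshold $\alpha<\theta_{j_0}$ is precisely what guarantees that every contributing factor $\alpha-\theta_i$ shares the same (negative) sign, ruling out any cancellation that could make the sum vanish. I expect this to be the main (if modest) obstacle, and I would be careful to use $\theta_{j_0}$ rather than $\theta_1$ throughout: indices with $\xi_i=0$ drop out of $\chi$ entirely, so the relevant barrier is the smallest $\theta_i$ with nonzero $\xi_i$, and not necessarily $\lambda_{\min}(H)$.
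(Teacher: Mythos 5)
Your proposal is correct and follows essentially the same route as the paper's proof: invoke Lemma~\ref{lem:qep2secluar} (noting $\lambda\notin\eig(H)$ since $H$ is real symmetric), compute the imaginary part of $\chi(\alpha+\ti\beta)$, and observe that the hypothesis $\alpha<\theta_{j_0}$ forces every surviving term to have the same sign, so the imaginary part cannot vanish for $\beta\neq 0$. The only cosmetic difference is that the paper phrases this as a contradiction while you argue directly that $\chi(\lambda)\neq 0$; the computation and the sign bookkeeping are identical.
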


\begin{proof}
Suppose, to the contrary, that QEP \eqref{eq:pQEPmin-1} has an eigenvalue $\lambda = \alpha + \ti \beta$ with
$\alpha<\theta_{j_0}$ and $\beta\neq0$.
Evidently $\lambda = \alpha+\ti \beta\notin \eig(H)$ because all eigenvalues of $H$ are real.
By Lemma~\ref{lem:qep2secluar}, $\alpha+\ti \beta$ must be a zero of
the secular function $\chi(\lambda)$ in \eqref{eq:sec-fun:H}, i.e.,
\begin{align*}
0=\chi(\alpha+\ti \beta)
&=\sum_{i=1}^{\ell} \frac {\xi_i^2 }
                     {(\alpha-\theta_i+\ti \beta )^2}-\gamma^2\\
&=\sum_{i=1}^{\ell} \frac {\xi_i^2 }
                     {(\alpha-\theta_i)^2-\beta^2+2\ti (\alpha-\theta_i)\beta}-\gamma^2\\
&=\sum_{i=1}^{\ell} \frac {\xi_i^2[(\alpha-\theta_i)^2-\beta^2-2\ti (\alpha-\theta_i)\beta] }
                     {[(\alpha-\theta_i)^2-\beta^2]^2+4\beta^2(\alpha-\theta_i)^2}-\gamma^2.
\end{align*}
In particular, the imaginary part of $\chi(\alpha+\ti \beta)$ is zero, i.e.,
\begin{equation}\label{eq:ims}
\sum_{i=1}^{\ell} \frac {-2(\alpha-\theta_i)\beta \xi_i^2 }
                     {[(\alpha-\theta_i)^2-\beta^2]^2+4\beta^2(\alpha-\theta_i)^2}
=\beta \left( \sum_{i=j_0}^{\ell} \frac {-2(\alpha-\theta_i) \xi_i^2 }
                                    {[(\alpha-\theta_i)^2-\beta^2]^2+4\beta^2(\alpha-\theta_i)^2} \right) =0.
\end{equation}
Since $\alpha<\theta_i$ for all $i\geq j_0$,
$\xi_{j_0}^2>0$ and $\xi_i^2\geq 0$ for all $i>j_0$, we know
$$
\sum_{i=j_0}^{\ell} \frac {-2(\alpha-\theta_i) \xi_i^2 }
                     {[(\alpha-\theta_i)^2-\beta^2]^2+4\beta^2(\alpha-\theta_i)^2}>0.
$$
Therefore, by \eqref{eq:ims},
we conclude $\beta=0$, a contradiction.
\end{proof}


\begin{lemma}\label{lm:existeig}
{\em QEP} \eqref{eq:pQEPmin-1} has an eigenvalue  $\wtd{\lambda}<\theta_{j_0}$ (necessarily $\wtd{\lambda}\in\bbR$), where $j_0$ is defined in \eqref{eq:j04ci}.
\end{lemma}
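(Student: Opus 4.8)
The plan is to produce a real zero $\wtd{\lambda}$ of the secular function $\chi$ strictly to the left of $\theta_{j_0}$, and then promote it to an eigenvalue of QEP \eqref{eq:pQEPmin-1} using the correspondence in Lemma~\ref{lem:qep2secluar}, taking care of the exceptional situation where $\wtd{\lambda}$ happens to be an eigenvalue of $H$. The reason this is the natural route is that, by the definition \eqref{eq:j04ci} of $j_0$, every term of $\chi$ with $i<j_0$ vanishes, so $\chi$ is regular on $(-\infty,\theta_{j_0})$ and its first ``active'' pole sits exactly at $\theta_{j_0}$.

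First I would establish existence of the zero. For $\lambda<\theta_{j_0}$ and each $i$ with $\xi_i\ne0$ (necessarily $\theta_i\ge\theta_{j_0}>\lambda$), we have $\lambda-\theta_i<0$, whence
$$
\chi'(\lambda)=-2\sum_{i=j_0}^{\ell}\frac{\xi_i^2}{(\lambda-\theta_i)^3}>0,
$$
so $\chi$ is strictly increasing on the whole of $(-\infty,\theta_{j_0})$ (not merely on $(-\infty,\theta_1)$). Since $\chi(\lambda)\to-\gamma^2<0$ as $\lambda\to-\infty$ while $\chi(\lambda)\to+\infty$ as $\lambda\to\theta_{j_0}^-$ (the term $\xi_{j_0}^2/(\lambda-\theta_{j_0})^2$ blows up because $\xi_{j_0}\ne0$), the intermediate value theorem together with monotonicity yields a unique real $\wtd{\lambda}\in(-\infty,\theta_{j_0})$ with $\chi(\wtd{\lambda})=0$. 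This already delivers the real scalar $\wtd{\lambda}<\theta_{j_0}$ asserted by the lemma.

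It remains to exhibit an eigenvector. If $\wtd{\lambda}\notin\eig(H)$, Lemma~\ref{lem:qep2secluar} applies verbatim and $\wtd{\lambda}$ is an eigenvalue of QEP \eqref{eq:pQEPmin-1}. The delicate case, which I expect to be the main obstacle, is when $\wtd{\lambda}$ coincides with some eigenvalue $\theta_k<\theta_{j_0}$ of $H$; then necessarily $\xi_k=0$, so $\chi$ is still regular at $\theta_k$ and this coincidence can genuinely occur for special values of $\gamma$, yet Lemma~\ref{lem:qep2secluar} is no longer available. Here I would build an eigenvector by hand. Working in the eigenbasis $Y$, write $w=\sum_i c_i y_i$; the componentwise form of \eqref{eq:pQEPmin-1} reads $(\theta_j-\wtd{\lambda})^2 c_j=\gamma^{-2}\xi_j\,(g_0^{\T}w)$. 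Normalizing $g_0^{\T}w=1$, this forces $c_j=\gamma^{-2}\xi_j/(\theta_j-\wtd{\lambda})^2$ whenever $\theta_j\ne\wtd{\lambda}$ and leaves $c_j$ free (with $\xi_j=0$) whenever $\theta_j=\wtd{\lambda}$. The single consistency condition $\sum_i\xi_i c_i=1$ then reduces precisely to $\sum_i\xi_i^2/(\theta_i-\wtd{\lambda})^2=\gamma^2$, i.e.\ to $\chi(\wtd{\lambda})=0$, and so holds automatically. Since $\xi_{j_0}\ne0$ and $\theta_{j_0}\ne\wtd{\lambda}$ give $c_{j_0}\ne0$, the vector $w$ is nonzero, making $(\wtd{\lambda},w)$ a bona fide eigenpair. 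In both cases we obtain a real eigenvalue $\wtd{\lambda}<\theta_{j_0}$ of QEP \eqref{eq:pQEPmin-1}, which completes the proof.
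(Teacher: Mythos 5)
Your proof is correct, but it resolves the degenerate situation differently from the paper. The paper splits on whether $\theta_{j_0}=\theta_1$ or $\theta_{j_0}>\theta_1$: in the first case the secular zero lies in $(-\infty,\theta_1)$, hence below all of $\eig(H)$, so Lemma~\ref{lem:qep2secluar} applies with no further ado; in the second case it does not chase the secular zero at all, but simply observes that $(\theta_1,y_1)$ is already an eigenpair of QEP~\eqref{eq:pQEPmin-1}, since $(H-\theta_1I)^2y_1=0$ and $g_0^{\T}y_1=\xi_1=0$, and $\theta_1<\theta_{j_0}$. You instead always work with the unique zero $\wtd{\lambda}$ of $\chi$ on the larger interval $(-\infty,\theta_{j_0})$ (your extension of the monotonicity argument from $(-\infty,\theta_1)$ to $(-\infty,\theta_{j_0})$ is valid, since all terms with $i<j_0$ drop out), and then patch the failure of Lemma~\ref{lem:qep2secluar} when $\wtd{\lambda}$ collides with some $\theta_k<\theta_{j_0}$ by constructing the eigenvector componentwise in the eigenbasis; that construction is sound, the consistency condition reducing exactly to $\chi(\wtd{\lambda})=0$ and $c_{j_0}\ne0$ guaranteeing $w\ne0$. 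What your route buys is a sharper conclusion --- the eigenvalue you exhibit is always a zero of the secular function with $g_0^{\T}w\ne0$, and your computation effectively extends Lemma~\ref{lem:qep2secluar} to zeros of $\chi$ that lie in $\eig(H)$ with eigenspace orthogonal to $g_0$ --- at the cost of more bookkeeping; the paper's case split is shorter because in the only case where a collision with $\eig(H)$ could occur it sidesteps the secular function entirely.
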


\begin{proof} There are two possible cases:
\begin{itemize}
\item {Case $\theta_{j_0}=\theta_1$:}
Without loss of generality, let $\xi_1\ne 0$. Since $\chi(\lambda)$ is continuous and strictly
increasing in $(-\infty,\theta_1)$, and
\[
\lim\limits_{\lambda\rightarrow  -\infty} \chi(\lambda)=-\gamma^2<0,\,\,
\lim_{\lambda\rightarrow \theta_1^-}\chi(\lambda)
      \geq \lim_{\lambda\rightarrow \theta_1^-} \frac {\xi_1^2}{(\lambda-\theta_1)^2}-\gamma^2 =+\infty>0,
\]
there exists a zero $\wtd{\lambda}\in(-\infty,\theta_1)$ of $\chi(\lambda)$.
Evidently $\wtd{\lambda}\notin \eig(H)$, and then
by Lemma \ref{lem:qep2secluar},
$\wtd{\lambda}$ must be an eigenvalue of QEP \eqref{eq:pQEPmin-1}.

\item {Case $\theta_{j_0}>\theta_1$:}
Let $\wtd{\lambda}=\theta_1$ and $z=y_1$.
We have $(H-\wtd{\lambda} I)^2z=(H-\wtd{\lambda} I)^2y_1=0$.
Furthermore, $g_0^{\T}z=g_0^{\T}y_1=\xi_1=0$.
Therefore $(\wtd{\lambda},z)$ satisfies \eqref{eq:pQEPmin-1}, impliying $\wtd{\lambda}$
is an eigenvalue of
QEP \eqref{eq:pQEPmin-1} and $\tilde{\lambda}=\theta_1<\theta_{j_0}$.
\end{itemize}
The proof is completed.
\end{proof}

With the three lemmas above, now we are ready to prove our main result on the leftmost eigenvalue of {\rm QEP} \eqref{eq:pQEPmin-1}.

\begin{theorem}\label{thm:pQEPmin=lefteig}
The leftmost eigenvalue, by which we mean the one with
the smallest real part,  of {\rm QEP} \eqref{eq:pQEPmin-1} is real.
As a consequence, the optimal value of {\rm pQEPmin} \eqref{eq:pQEPmin} $\lambda_\ast$
is the leftmost eigenvalue of {\rm QEP} \eqref{eq:pQEPmin-1}.
\end{theorem}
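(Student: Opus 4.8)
The plan is to assemble the three preceding lemmas into a short logical argument; almost all of the analytic work has already been done in Lemmas~\ref{lem:qep2secluar}--\ref{lm:existeig}, which locate the eigenvalues of QEP \eqref{eq:pQEPmin-1} relative to the threshold $\theta_{j_0}$. First I would record that a leftmost eigenvalue actually exists, so that the statement is not vacuous. Expanding $(H-\lambda I)^2=\lambda^2 I-2\lambda H+H^2$ shows that \eqref{eq:pQEPmin-1} is a quadratic eigenvalue problem with leading coefficient $I$, which is nonsingular; hence its characteristic polynomial has degree exactly $2\ell$ and the eigenvalue set is nonempty and finite. Consequently an eigenvalue of smallest real part exists, and I denote it $\lambda_{\mathrm{left}}$.

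Next I would prove that $\lambda_{\mathrm{left}}$ is real, by playing Lemmas~\ref{lm:noeig} and~\ref{lm:existeig} against each other. By Lemma~\ref{lm:existeig} there is a genuine real eigenvalue $\wtd\lambda<\theta_{j_0}$, so whatever $\lambda_{\mathrm{left}}$ is, its real part satisfies $\Re(\lambda_{\mathrm{left}})\le\wtd\lambda<\theta_{j_0}$. Suppose, for contradiction, that $\lambda_{\mathrm{left}}$ were not real, say $\lambda_{\mathrm{left}}=\alpha+\ti\beta$ with $\alpha,\beta\in\bbR$ and $\beta\neq 0$. Then $\alpha=\Re(\lambda_{\mathrm{left}})<\theta_{j_0}$, which is precisely the configuration that Lemma~\ref{lm:noeig} rules out. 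This contradiction forces $\lambda_{\mathrm{left}}$ to be real, establishing the first assertion of the theorem.

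For the consequence I would connect $\lambda_{\mathrm{left}}$ to pQEPmin \eqref{eq:pQEPmin}. As noted in the discussion following \eqref{eq:pQEPmin}, the ``$\min$'' in pQEPmin ranges only over the real eigenvalues of the QEP, so its optimal value $\lambda_\ast$ is the smallest real eigenvalue. Having shown $\lambda_{\mathrm{left}}$ is real, it is itself a competitor in that minimization; and being leftmost, its real part is no larger than that of any eigenvalue, hence in particular no larger than any other real eigenvalue $\mu$ (whose real part is simply $\mu$). Therefore $\lambda_{\mathrm{left}}\le\mu$ for every real eigenvalue $\mu$, so $\lambda_{\mathrm{left}}$ is the smallest real eigenvalue and coincides with $\lambda_\ast$.

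I do not expect a genuine obstacle, since the eigenvalue-localization lemmas carry the argument. The one point demanding care is the bookkeeping that identifies ``smallest real part over all (possibly complex) eigenvalues'' with ``smallest real eigenvalue'': these two notions agree only \emph{because} the leftmost eigenvalue turns out to be real, so that no complex eigenvalue can undercut it. I would therefore present the existence of $\lambda_{\mathrm{left}}$ and its reality before invoking the characterization of $\lambda_\ast$, to keep the logical order clean.
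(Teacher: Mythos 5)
Your proposal is correct and follows essentially the same route as the paper: Lemma~\ref{lm:existeig} supplies a real eigenvalue $\wtd{\lambda}<\theta_{j_0}$, which forces the leftmost eigenvalue to have real part below $\theta_{j_0}$, and Lemma~\ref{lm:noeig} then excludes any nonreal eigenvalue in that half-plane. The extra remarks on existence of the leftmost eigenvalue and on identifying it with the optimal value of pQEPmin are sensible housekeeping that the paper leaves implicit, but they do not change the argument.
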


\begin{proof}
Let $\lambda_{\ast} =\alpha_{\ast} +\ti\beta_{\ast} $ be the leftmost eigenvalue.
By Lemma \ref{lm:existeig}, QEP \eqref{eq:pQEPmin-1} has
a real eigenvalue $\wtd{\lambda}$ with $\wtd{\lambda} < \theta_{j_0}$. Hence
$\alpha_{\ast} \le\wtd{\lambda}<\theta_{j_0}$, which together with Lemma \ref{lm:noeig}
tell us that $\beta_{\ast} =0$ and thus $\lambda_{\ast}\in\bbR$.
%
\end{proof}

\begin{remark}
In \cite{sihg:2004}, the authors stated without proof
that the rightmost eigenvalue
of the QEP
\begin{equation}\label{eq:remarkqep}
((W+\lambda I)^2-\delta^{-2}hh^{\T})x=0
\end{equation}
is real and positive,
where $W$ is a real symmetric matrix,
$h$ is a vector, and $\delta>0$ is a scalar.
It was pointed out in \cite{lav:2007} that the rightmost eigenvalue of \eqref{eq:remarkqep} may not always be positive and the authors proved in \cite[Theorem 4.1]{lav:2007} that the largest real eigenvalue of \eqref{eq:remarkqep} is the rightmost eigenvalue. The authors applied a maximin principle for nonlinear eigenproblems for the proof.
In Theorem~\ref{thm:pQEPmin=lefteig} we have proved the leftmost eigenvalue $\lambda_\ast$ of  \eqref{eq:pQEPmin-1} is real, i.e., there is no complex eigenvalue
of QEP~\eqref{eq:pQEPmin-1} with real part equal to $\lambda_\ast$ and nonzero complex part.
This result cannot be obtained by the approach used in \cite{lav:2007}.
\hfill $\Box$
\end{remark}

\subsection{LGopt  and QEPmin  are equivalent}\label{sec:LGopt=QEPmin}
Theorem \ref{thm:pLGopt=pQEPmin} says that
pLGopt~\eqref{eq:pLGopt} and
pQEPmin~\eqref{eq:pQEPmin} are equivalent. Previously in Lemma~\ref{lm:LGopt2QEPmin-0}, we showed that
LGopt~\eqref{eq:LGopt} and QEPmin~\eqref{eq:QEPmin} are also equivalent under
the assumptions stated there. Our goal in this subsection is to have
the assumptions of Lemma~\ref{lm:LGopt2QEPmin-0} removed.

For convenience, we restate LGopt~\eqref{eq:LGopt} and QEPmin~\eqref{eq:QEPmin} as follows:
\begin{empheq}[left={\mbox{LGopt:}\quad}\empheqlbrace]{alignat=2}
\min  ~& \lambda \tag{\ref{eq:LGopt-0}} \\
\mbox{s.t.}  ~& (PAP-\lambda I)u=-b_0,\tag{\ref{eq:LGopt-1}}\\
~&\|u\|=\gamma,\tag{\ref{eq:LGopt-2}}\\
~&u\in\cN(C^{\T});\tag{\ref{eq:LGopt-3}}
\end{empheq}
\begin{empheq}[left={\mbox{QEPmin:}\quad}\empheqlbrace]{alignat=2}
\min ~& \lambda \tag{\ref{eq:QEPmin-0}} \\
\mbox{s.t.}
~& (PAP-\lambda I)^{2}z=\gamma^{-2}b_0b_0^{\T}z,\tag{\ref{eq:QEPmin-1}}\\
~& \lambda\in\bbR,~ 0\ne z\in\cN(C^{\T}).\tag{\ref{eq:QEPmin-2}}
\end{empheq}

Recall  $S_1$ and $S_2$ as  defined in \eqref{eq:s1s2} and $H$ and $g$
as defined in \eqref{eq:def:gH}. Before stating our main result in this subsection, we need two lemmas.
The first one is  about an eigen-relationship between
$PAP$ and $H$ and the second one is on the relationships among $PAP-\lambda I$, $H-\lambda I$,  $(PAP-\lambda I)^\dag$ and $(H-\lambda I)^\dag$.

\begin{lemma}\label{lem:eigpaph1}
$(\lambda, s)$ is an eigenpair of $H$ if and only if $(\lambda,S_1s)$ is an eigenpair of $PAP$ with
$S_1s\in\mathcal{N}(C^{\T})$.
\end{lemma}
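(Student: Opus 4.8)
The plan is to reduce both implications to the block decomposition already recorded in \eqref{eq:decomp:PAP-1}. Expanding $S^{\T}PAPS=\diag(H,0)$ and using $S=[S_1,\ S_2]$ gives the compact identity $PAP=S_1HS_1^{\T}$, and hence, since $S_1^{\T}S_1=I_{n-m}$ (a consequence of $S$ being orthogonal), the single relation $PAPS_1=S_1H$ that carries the whole argument. The other ingredient I will use is the isometry property $\|S_1s\|=\|s\|$, again from $S_1^{\T}S_1=I_{n-m}$, which guarantees $S_1s\ne 0$ if and only if $s\ne 0$ and so lets me speak of genuine eigenpairs in both directions. Membership $S_1s\in\cN(C^{\T})$ is automatic from $\cR(S_1)=\cN(C^{\T})$ and requires no work.

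For the forward direction I would begin with $Hs=\lambda s$, $s\ne 0$, and simply compute
$$
PAP(S_1s)=(PAPS_1)s=S_1Hs=\lambda\,S_1s,
$$
which, together with $S_1s\ne 0$ and $S_1s\in\cN(C^{\T})$, shows that $(\lambda,S_1s)$ is an eigenpair of $PAP$ of the stated form. For the converse, assuming $PAP(S_1s)=\lambda\,S_1s$ with $S_1s\ne 0$, I would pre-multiply by $S_1^{\T}$ and invoke $H=S_1^{\T}PAPS_1$ from \eqref{eq:def:gH} together with $S_1^{\T}S_1=I_{n-m}$ to obtain
$$
Hs=S_1^{\T}PAPS_1s=\lambda\,S_1^{\T}S_1s=\lambda s,
$$
with $s\ne 0$ because $\|s\|=\|S_1s\|\ne 0$.

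I do not expect any genuine obstacle here: the lemma is essentially a restatement of the block decomposition \eqref{eq:decomp:PAP-1}, and the proof consists of the two short computations above. The only point that warrants a line of care is the bookkeeping of nonzeroness of the eigenvectors, so that the word \emph{eigenpair} is used legitimately on both sides of the equivalence; this is entirely handled by the isometry $\|S_1s\|=\|s\|$.
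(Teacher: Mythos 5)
Your proof is correct and takes essentially the same route as the paper, whose entire proof is the one-line remark that the lemma is a consequence of the block decomposition \eqref{eq:decomp:PAP-1}; you have simply written out the two computations ($PAPS_1=S_1H$ for the forward direction, pre-multiplication by $S_1^{\T}$ for the converse) that this remark leaves implicit. The care you take with nonzeroness of the eigenvectors via $\|S_1s\|=\|s\|$ is a welcome detail the paper omits.
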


\begin{proof}
This is a consequence of the decomposition \eqref{eq:decomp:PAP-1}.
\end{proof}

\begin{lemma}\label{lem:2pinv}
For any $\lambda\in\bbR$, $(PAP-\lambda I)S_1=S_1(H-\lambda I)
$ and $(PAP-\lambda I)^{\dag} S_1=S_1(H-\lambda I)^{\dag}$.
\end{lemma}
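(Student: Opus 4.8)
The goal is to establish the two matrix identities in Lemma~\ref{lem:2pinv}:
$$
(PAP-\lambda I)S_1 = S_1(H-\lambda I), \qquad (PAP-\lambda I)^{\dag} S_1 = S_1(H-\lambda I)^{\dag}.
$$
My plan is to prove the first identity by a direct computation using the structural facts already recorded in the excerpt, and then to derive the second (the pseudoinverse identity) by verifying that the candidate $S_1(H-\lambda I)^{\dag}$ acts correctly on the block decomposition induced by $S=[S_1,\,S_2]$.

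For the first identity, I would start from $PS_1=S_1$ (recorded just after \eqref{eq:theP:properties}), so that $PAPS_1 = PA S_1$. The key is to recognize that $PAS_1 = S_1 H$: indeed, writing $I = S_1S_1^{\T}+S_2S_2^{\T}$ and using $H=S_1^{\T}AS_1$ together with $PS_2=0$, one sees that $PAS_1 = P(S_1S_1^{\T}+S_2S_2^{\T})AS_1$, and the $S_2$ term survives only through left multiplication by $P$; I expect the clean route is to observe $S_1^{\T}PAPS_1 = H$ and $S_2^{\T}PAPS_1 = 0$ from the block decomposition \eqref{eq:decomp:PAP-1}, which immediately gives $PAPS_1 = S_1 H$ since $S$ is orthogonal. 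Subtracting $\lambda S_1$ from both sides yields $(PAP-\lambda I)S_1 = S_1 H - \lambda S_1 = S_1(H-\lambda I)$, completing the first part.

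For the pseudoinverse identity, the strategy is to use the full orthogonal change of basis. In the $S$-coordinates, $S^{\T}(PAP-\lambda I)S$ is the block-diagonal matrix $\diag(H-\lambda I,\,-\lambda I)$ from \eqref{eq:decomp:PAP-1}. The Moore--Penrose inverse of an orthogonally block-diagonalized matrix is obtained by taking the pseudoinverse of each block (since the pseudoinverse commutes with orthogonal conjugation and respects orthogonal direct sums), so
$$
(PAP-\lambda I)^{\dag} = S\,\diag\!\big((H-\lambda I)^{\dag},\,(-\lambda I)^{\dag}\big)\,S^{\T}.
$$
Right-multiplying by $S_1$ and using $S^{\T}S_1 = \begin{bmatrix} I \\ 0\end{bmatrix}$ isolates the top-left block, giving $(PAP-\lambda I)^{\dag}S_1 = S_1(H-\lambda I)^{\dag}$, as desired. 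The one point that requires care is the case $\lambda=0$, where the bottom block $-\lambda I$ vanishes; its pseudoinverse is then $0$, but this block is annihilated by the multiplication by $S_1$ anyway, so the conclusion is unaffected.

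The main obstacle, and the only genuinely nontrivial step, is justifying that the pseudoinverse of the orthogonally conjugated block-diagonal matrix factors blockwise, i.e.\ that $(S M S^{\T})^{\dag} = S M^{\dag} S^{\T}$ for orthogonal $S$ and that $\diag(B_1,B_2)^{\dag} = \diag(B_1^{\dag},B_2^{\dag})$. Both are standard properties of the Moore--Penrose inverse, verifiable directly from the four Penrose conditions, so I would either cite them or dispatch them with a one-line check rather than belabor the computation. Everything else reduces to the block structure already established in \eqref{eq:decomp:PAP-1} and the relations $PS_1=S_1$, $PS_2=0$.
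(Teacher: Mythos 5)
Your proof is correct and follows essentially the same route as the paper: both reduce $PAP-\lambda I$ to block-diagonal form under the orthogonal matrix $S=[S_1,\,S_2]$ and obtain the pseudoinverse block by block. The only difference is that the paper additionally inserts the eigen-decomposition $H=Y\Theta Y^{\T}$ so that each block's pseudoinverse is that of a diagonal matrix (writing the $\lambda\ne 0$ and $\lambda=0$ cases explicitly), whereas you invoke the standard facts that $(SMS^{\T})^{\dag}=SM^{\dag}S^{\T}$ for orthogonal $S$ and that the pseudoinverse of an orthogonal direct sum is the direct sum of the pseudoinverses --- both routine consequences of the Penrose conditions, so nothing is lost.
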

\begin{proof}
Let $H=Y\Theta Y^{\T}$ be the eigen-decomposition of $H$, where $Y\in\bbR^{(n-m)\times (n-m)}$ is orthogonal
and $\Theta$ is a diagonal matrix. Then the eigen-decomposition of $PAP$ is given by
\begin{equation}\label{eq:papdiag}
PAP=[S_1\ S_2]\begin{bmatrix}
Y& 0\\ 0& I
\end{bmatrix}
\begin{bmatrix}
\Theta &0\\ 0&0
\end{bmatrix}
\begin{bmatrix}
Y^{\T}&0 \\ 0& I
\end{bmatrix}
[S_1\ S_2]^{\T}.
\end{equation}
Therefore
$
(PAP-\lambda I)S_1=S_1Y(\Theta-\lambda I)Y^{\T}=S_1(H-\lambda I).
$
On the other hand, for $\lambda\ne 0$,
$$
(PAP-\lambda I)^{\dag}
    =[S_1\ S_2]\begin{bmatrix}
                    Y&0 \\ 0& I
                \end{bmatrix}\begin{bmatrix}
                                (\Theta-\lambda I)^{\dag} &0\\
                                0&-\frac{1}{\lambda}I
                             \end{bmatrix}
                             \begin{bmatrix}
                                 Y^{\T}&0 \\ 0& I
                             \end{bmatrix}
                             [S_1\ S_2]^{\T},
$$
and for $\lambda=0$,
$$
(PAP)^{\dag}
    =[S_1\ S_2]\begin{bmatrix}
                    Y&0 \\ 0& I
                \end{bmatrix}\begin{bmatrix}
                                \Theta^{\dag} &0\\
                                0&0
                             \end{bmatrix}
                             \begin{bmatrix}
                                 Y^{\T}&0 \\ 0& I
                             \end{bmatrix}
                             [S_1\ S_2]^{\T}.
$$
Hence
$
(PAP-\lambda I)^{\dag} S_1
    =S_1Y(\Theta-\lambda I)^{\dag} Y^{\T}
    =S_1(H-\lambda I)^{\dag},
$
as was to be shown.
\end{proof}

Now we are ready to state the main result of the subsection.

\begin{theorem}[\bf LGopt~\eqref{eq:LGopt} and QEPmin~\eqref{eq:QEPmin}
are equivalent]\label{thm:LGopt=QEPmin} ~
\begin{enumerate}[\rm (1)]
\item
    Let $(\lambda_{\ast},u_{\ast})$ be a minimizer of  {\rm LGopt} \eqref{eq:LGopt}. Then
    there exists $z_{\ast}$ such that $(\lambda_{\ast},z_{\ast})$ is a minimizer of
    {\rm QEPmin} \eqref{eq:QEPmin}. Specifically,
    $$
    z_{\ast}=\begin{cases}
               (PAP-\lambda_{\ast} I)^{\dag}u_{\ast}, &\parbox[t]{9cm}{if $\lambda_{\ast}\notin\eig(PAP)$ or $\lambda_{\ast}\in\eig(PAP)$ but
          there is no corresponding eigenvector entirely in $\cN(C^{\T})$,} \\
               s, &\parbox[t]{9cm}{if $\lambda_{\ast}\in\eig(PAP)$ and there is a  corresponding eigenvector $s\in\cN(C^{\T})$.}
             \end{cases}
    $$

\item
    Let $(\lambda_{\ast} ,z_{\ast})$ be a minimizer of {\rm QEPmin} \eqref{eq:QEPmin}. Then there exists $u_{\ast}\in\bbR^n$
    such that $(\lambda_{\ast} ,u_{\ast})$
    is a minimizer of {\rm LGopt} \eqref{eq:LGopt}. Specifically,
    $$
    u_{\ast}=\begin{cases}
               -({\gamma^2}/{b_0^{\T}z_{\ast}})(PAP-\lambda_{\ast} I)z_{\ast}, &\mbox{if $b_0^{\T}z_{\ast}\neq 0$,} \\
               x_{\ast} + \sqrt{\gamma^2-\|x_{\ast}\|^2} \,({z_{\ast}}/{\|z_{\ast}\|}), &\mbox{if $b_0^{\T}z_{\ast}=0$},
             \end{cases}
    $$
    where $x_{\ast}=-(PAP-\lambda_{\ast} I)^{\dag} b_0$ in the case  $b_0^{\T}z_{\ast}=0$ and it is guaranteed that $\|x_{\ast}\|\le\gamma$.
%
\end{enumerate}
\end{theorem}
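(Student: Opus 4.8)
The plan is to lift the equivalence from the projected problems pLGopt~\eqref{eq:pLGopt} and pQEPmin~\eqref{eq:pQEPmin}, for which the equivalence is already in hand via Theorem~\ref{thm:pLGopt=pQEPmin}, up to the unprojected LGopt~\eqref{eq:LGopt} and QEPmin~\eqref{eq:QEPmin}. The bridge is the orthogonal change of variables $u=S_1y$ and $z=S_1w$: Theorem~\ref{thm:LGopt=pLGopt} says $(\lambda_\ast,u_\ast)$ minimizes LGopt iff $(\lambda_\ast,y_\ast)$ with $u_\ast=S_1y_\ast$ minimizes pLGopt, and Theorem~\ref{thm:QEPmin=pQEPmin} says $(\lambda_\ast,z_\ast)$ minimizes QEPmin iff $(\lambda_\ast,w_\ast)$ with $z_\ast=S_1w_\ast$ minimizes pQEPmin. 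Thus for item~(1) I would take a minimizer $(\lambda_\ast,u_\ast)$ of LGopt, pass to the minimizer $(\lambda_\ast,y_\ast)$ of pLGopt, invoke Theorem~\ref{thm:pLGopt=pQEPmin}(1) to get a minimizer $(\lambda_\ast,w_\ast)$ of pQEPmin, and push it back to $z_\ast=S_1w_\ast$, a minimizer of QEPmin; item~(2) runs the same chain in reverse. Since every link is an ``if and only if'' at the level of minimizers and all links share the common optimal value $\lambda_\ast$, the existence claims follow at once, and it remains only to check that the stated explicit formulas are exactly what this chain produces.

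For that verification I would assemble a short dictionary between projected and unprojected quantities. Because $b_0\in\cN(C^{\T})=\cR(S_1)$ and $g_0=S_1^{\T}b_0$, we have $b_0=S_1g_0$; because $S_1$ has orthonormal columns, $u=S_1y$ and $z=S_1w$ preserve norms ($\|u\|=\|y\|$, $\|z\|=\|w\|$) and the relevant inner products ($b_0^{\T}z=g_0^{\T}w$, so in particular $b_0^{\T}z_\ast\neq0$ iff $g_0^{\T}w_\ast\neq0$, matching the case split of item~(2)). The algebraic content is carried by Lemma~\ref{lem:2pinv}, giving $(PAP-\lambda_\ast I)S_1=S_1(H-\lambda_\ast I)$ and $(PAP-\lambda_\ast I)^{\dag}S_1=S_1(H-\lambda_\ast I)^{\dag}$. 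Using these, the formulas of Theorem~\ref{thm:pLGopt=pQEPmin} transform term by term: e.g.\ in item~(1) with $\lambda_\ast\notin\eig(H)$, $z_\ast=S_1w_\ast=S_1(H-\lambda_\ast I)^{-1}y_\ast=S_1(H-\lambda_\ast I)^{\dag}y_\ast=(PAP-\lambda_\ast I)^{\dag}S_1y_\ast=(PAP-\lambda_\ast I)^{\dag}u_\ast$; and in item~(2) the vector $x_\ast=-(PAP-\lambda_\ast I)^{\dag}b_0=-(PAP-\lambda_\ast I)^{\dag}S_1g_0=-S_1(H-\lambda_\ast I)^{\dag}g_0$ is precisely $S_1$ times the projected $x_\ast$, so the bound $\|x_\ast\|\le\gamma$ descends from the projected one and the two branches of $u_\ast$ are the $S_1$-images of the two branches of $y_\ast$.

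The one genuinely non-mechanical step is reconciling the case descriptions in item~(1). The projected dichotomy is $\lambda_\ast<\lambda_{\min}(H)$ versus $\lambda_\ast=\lambda_{\min}(H)$, i.e.\ $\lambda_\ast\notin\eig(H)$ versus $\lambda_\ast\in\eig(H)$ (using Lemma~\ref{lm:pLGopt}(a), that $\lambda_\ast\le\lambda_{\min}(H)$), whereas the theorem phrases the split in terms of $PAP$: whether $\lambda_\ast$ is an eigenvalue of $PAP$ \emph{with a corresponding eigenvector lying in $\cN(C^{\T})$}. I would close this gap with Lemma~\ref{lem:eigpaph1} together with Lemma~\ref{lm:eig(PAP)}: an eigenvalue of $PAP$ admits an eigenvector in $\cN(C^{\T})$ exactly when it belongs to $\eig(H)$, since the eigenvectors of $PAP$ not coming from $H$ lie in $\cR(C)=\cN(C^{\T})^{\perp}$. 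The edge case $\lambda_\ast=0$ is where this matters most: $0\in\eig(PAP)$ always, but $0$ has an eigenvector in $\cN(C^{\T})$ only when $H$ is singular, so the $PAP$-phrasing correctly isolates $\lambda_\ast\in\eig(H)$ (and, thanks to $b_0=S_1g_0$, the pseudoinverse identities above remain valid even at $\lambda_\ast=0$). With this identification the two branches of $z_\ast$ in item~(1) line up with the two branches of $w_\ast$ in Theorem~\ref{thm:pLGopt=pQEPmin}(1), completing the argument.
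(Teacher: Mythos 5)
Your proposal is correct and follows essentially the same route as the paper: both proofs lift the projected equivalence of Theorem~\ref{thm:pLGopt=pQEPmin} through Theorems~\ref{thm:LGopt=pLGopt} and \ref{thm:QEPmin=pQEPmin}, translate the explicit formulas using $b_0=S_1g_0$ and the intertwining identities of Lemma~\ref{lem:2pinv}, and reconcile the case split via the eigen-correspondence of Lemma~\ref{lem:eigpaph1}. Your explicit remark on the $\lambda_\ast=0$ edge case is a helpful elaboration of a point the paper leaves implicit in Lemma~\ref{lm:eig(PAP)}, but it does not change the substance of the argument.
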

\begin{proof}
We prove item (1) first.
By Theorem \ref{thm:LGopt=pLGopt}, $(\lambda_{\ast},y_{\ast})$ with $y_{\ast}=S_1^{\T}u_{\ast}$ is a minimizer of
pLGopt \eqref{eq:pLGopt}. We have two  cases to consider.
\begin{itemize}
\item[(a)] If $\lambda_{\ast}\notin\eig(PAP)$ or $\lambda_{\ast}\in\eig(PAP)$ but
          there is no corresponding eigenvector $s\in\cN(C^{\T})$, then $\lambda\notin\eig(H)$ by Lemma~\ref{lem:eigpaph1}.
Using Theorem \ref{thm:pLGopt=pQEPmin}, we conclude that
$(\lambda_{\ast},w_{\ast})$ with
$$
w_{\ast}=(H-\lambda_{\ast} I)^{-1}y_{\ast}=(H-\lambda_{\ast} I)^{\dag}y_{\ast}
$$
is a minimizer of pQEPmin~\eqref{eq:pQEPmin}.
Now use Theorem~\ref{thm:QEPmin=pQEPmin} to conclude that $(\lambda_{\ast},z_{\ast})$ with $z_{\ast}=S_1(H-\lambda_{\ast} I)^{\dag}y_{\ast}$
is a minimizer of QEPmin  \eqref{eq:QEPmin}.
By Lemma \ref{lem:2pinv},
$$
z_{\ast}=S_1(H-\lambda_{\ast} I)^{\dag}w_{\ast}=(PAP-\lambda_{\ast} I)^{\dag}S_1w_{\ast}=(PAP-\lambda_{\ast} I)^{\dag}u_{\ast}.
$$
%

\item[(b)] Suppose that  $\lambda_{\ast}\in\eig(PAP)$ and there is a  corresponding eigenvector $s\in\cN(C^{\T})$.
Then  $s=S_1r$ for some $0\ne r\in\bbR^{n-m}$.
By Lemma \ref{lem:eigpaph1}, $r$ is an eigenvector of $H$ corresponding to the eigenvalue $\lambda_{\ast}$.
Use Theorem \ref{thm:pLGopt=pQEPmin} to conclude that $(\lambda_{\ast}, w_{\ast})$ with $w_{\ast}=r$ is a minimizer
of pQEPmin \eqref{eq:pQEPmin}, which in turn, by Theorem~\ref{thm:QEPmin=pQEPmin}, yields
that  $(\lambda_{\ast},z_{\ast})$ with $z_{\ast}=s=S_1r$ is a minimizer of QEPmin \eqref{eq:QEPmin}.
\end{itemize}
Next we consider item (2). By Theorem~\ref{thm:QEPmin=pQEPmin}, $(\lambda_{\ast},w_{\ast})$ with $w_{\ast}=S_1^{\T}z_{\ast}$
is a minimizer of
pQEPmin \eqref{eq:pQEPmin}. Since $b_0,\,z_{\ast}\in\cN(C^{\T})$, we have $z_{\ast}=S_1w_{\ast}$ and $b_0^{\T}z_{\ast}=g_0^{\T}S_1^{\T}S_1w_{\ast}=g_0^{\T}w_{\ast}$.
\begin{itemize}
\item Case $b_0^{\T}z_{\ast}\neq 0$:
We have $g_0^{\T}w_{\ast}\neq 0$. By Theorem \ref{thm:pLGopt=pQEPmin},
$(\lambda_{\ast}, y_{\ast})$ with $y_{\ast}=-({\gamma^2}/{g_0^{\T}w_{\ast}})\,(H-\lambda_{\ast} I)w_{\ast}$ solves pLGopt  \eqref{eq:pLGopt}.
By Theorem \ref{thm:LGopt=pLGopt}, $(\lambda_{\ast}, u_{\ast})$
with $u_{\ast}=-({\gamma^2}/{g_0^{\T}w_{\ast}})\,S_1(H-\lambda_{\ast} I)w_{\ast}$ solves
LGopt \eqref{eq:LGopt}. Furthermore, by Lemma \ref{lem:2pinv}, $(PAP-\lambda_{\ast} I)z_{\ast}=(PAP-\lambda_{\ast} I)S_1w_{\ast}=S_1(H-\lambda_{\ast} I)w_{\ast}$.
Therefore $u_{\ast}=-({\gamma^2}/{g_0^{\T}w_{\ast}})\,S_1(H-\lambda_{\ast} I)w_{\ast}
=-({\gamma^2}/{b_0^{\T}z})\,(PAP-\lambda_{\ast} I) z_{\ast}
$.

\item Case $b_0^{\T}z_{\ast}=0$:
We have $g_0^{\T}w_{\ast}=0$ and $z_{\ast}$ is an eigenvector of $PAP$ corresponding
to its eigenvalue $\lambda_{\ast}$. By Lemma \ref{lem:eigpaph1}, $y_{\ast}=S_1^{\T}z_{\ast}$ is an eigenvector of $H$ corresponding to
its eigenvalue $\lambda_{\ast}$.  Let $s=-(H-\lambda_{\ast} I)^{\dag} g$, according to Theorem \ref{thm:pLGopt=pQEPmin},
 $\|s\|\le\gamma$ and $(\lambda_{\ast}, w_{\ast})$ with $w_{\ast}=s+\sqrt{\gamma^2-\|s\|^2}\,(y_{\ast}/\|y_{\ast}\|)$ solves pLGopt \eqref{eq:pLGopt}.
By Theorem~\ref{thm:QEPmin=pQEPmin}, $(\lambda_{\ast},u_{\ast})$ with $u_{\ast}=S_1w_{\ast}$ is a minimizer of LGopt \eqref{eq:LGopt}.
Now set
$$
x_{\ast}=S_1s=-S_1(H-\lambda_{\ast} I)^{\dag} g=-(PAP-\lambda_{\ast} I)^{\dag} b_0,
$$
and thus
$$
u_{\ast}=S_1w_{\ast}=S_1s+\sqrt{\gamma^2-\|S_1s\|^2}\,\frac{S_1y_{\ast}}{\|S_1y_{\ast}\|}
      =x_{\ast} + \sqrt{\gamma^2-\|x_{\ast}\|^2}\, \frac{z_{\ast}}{\|z_{\ast}\|},
$$
as expected.
\end{itemize}
This completes the proof.
\end{proof}

We note that proving the equivalence between
LGopt \eqref{eq:LGopt} and QEPmin \eqref{eq:QEPmin} is
of theoretical interest.
The proof in \cite{gagv:1989} is incomplete since in Remark \ref{rm:incomplete} we mentioned that they did not prove that pLGopt \eqref{eq:pLGopt} and pQEPmin \eqref{eq:pQEPmin} are equivalent.
Here we provided a complete proof in Theorem \ref{thm:LGopt=QEPmin}.

Returning to the original CRQopt \eqref{eq:CRQopt}, we observe that
if $(\lambda_{\ast},u_{\ast})$ solves LGopt \eqref{eq:LGopt}, then $n_0+u_{\ast}$ solves CRQopt \eqref{eq:CRQopt}. Therefore
immediately we obtain the following theorem.

\begin{theorem}\label{thm:CRQopt=QEPmin}
Suppose $(\lambda_{\ast},z_{\ast})$ is a minimizer of
{\rm QEPmin} \eqref{eq:QEPmin}. Then
a minimizer $v_*$ of {\rm CRQopt \eqref{eq:CRQopt}} is given by
$$
v_*=\begin{cases}
        n_0-({\gamma^2}/{b_0^{\T}z_{\ast}})\,(PAP-\lambda_{\ast} I)z_{\ast}, &\quad\mbox{if $b_0^{\T}z_{\ast}\neq 0$}, \\
        n_0+x_{\ast} + \sqrt{\gamma^2-\|x_{\ast}\|^2}\, ({z_{\ast}}/{\|z_{\ast}\|}), &\quad\mbox{if $b_0^{\T}z_{\ast}=0$},
    \end{cases}
$$
where $x_{\ast}=-(PAP-\lambda_{\ast} I)^{\dag} b_0$ in the case of $b_0^{\T}z_{\ast}=0$ and it is guaranteed that $\|x_{\ast}\|\le\gamma$.
\end{theorem}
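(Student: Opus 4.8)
The plan is to obtain this result purely by composing the chain of equivalences already established, reading them backward from QEPmin to CRQopt rather than proving anything new from scratch. First I would start with a minimizer $(\lambda_\ast,z_\ast)$ of QEPmin~\eqref{eq:QEPmin} and invoke item (2) of Theorem~\ref{thm:LGopt=QEPmin}, which produces a vector $u_\ast\in\bbR^n$ such that $(\lambda_\ast,u_\ast)$ is a minimizer of LGopt~\eqref{eq:LGopt}, together with the explicit case formula
\[
u_\ast=\begin{cases}
  -({\gamma^2}/{b_0^{\T}z_\ast})\,(PAP-\lambda_\ast I)z_\ast, &\quad\mbox{if $b_0^{\T}z_\ast\neq 0$}, \\
  x_\ast+\sqrt{\gamma^2-\|x_\ast\|^2}\,({z_\ast}/{\|z_\ast\|}), &\quad\mbox{if $b_0^{\T}z_\ast=0$},
\end{cases}
\]
where $x_\ast=-(PAP-\lambda_\ast I)^{\dag}b_0$ in the second case and the guarantee $\|x_\ast\|\le\gamma$ is already supplied by that theorem.

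Next I would feed this minimizer of LGopt into the converse direction of Theorem~\ref{thm:CQopt=LGopt}, which asserts that $v_\ast=n_0+u_\ast$ is then a minimizer of CQopt~\eqref{eq:CQopt}. Finally, Theorem~\ref{thm:CRQopt=CQopt} states that the minimizers of CQopt and CRQopt coincide exactly, so the same $v_\ast$ is a minimizer of CRQopt~\eqref{eq:CRQopt}. The displayed two-case formula for $v_\ast$ is then nothing more than the substitution $v_\ast=n_0+u_\ast$ with the two branches of the $u_\ast$ formula inserted, and the inequality $\|x_\ast\|\le\gamma$ that keeps the square root real is inherited verbatim from Theorem~\ref{thm:LGopt=QEPmin}.

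I do not anticipate a genuine obstacle here: the statement is a corollary assembled from Theorems~\ref{thm:CRQopt=CQopt}, \ref{thm:CQopt=LGopt}, and \ref{thm:LGopt=QEPmin}, and the substantive work—establishing existence of the corresponding $u_\ast$ and the bound $\|x_\ast\|\le\gamma$ in the degenerate case $b_0^{\T}z_\ast=0$—has already been done inside Theorem~\ref{thm:LGopt=QEPmin}. The only point requiring a moment of care is purely notational: verifying that the expression for $u_\ast$ transcribed from Theorem~\ref{thm:LGopt=QEPmin}(2), once shifted by $n_0$, matches the stated expression for $v_\ast$ in both branches. Since each branch is a direct copy with an added $n_0$ term, this check is immediate, and the proof reduces to stringing the three equivalences together in order.
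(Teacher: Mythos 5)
Your proposal is correct and is exactly how the paper obtains this result: the theorem is stated as an immediate corollary of Theorem~\ref{thm:LGopt=QEPmin}(2) combined with the fact (via Theorems~\ref{thm:CQopt=LGopt} and \ref{thm:CRQopt=CQopt}) that if $(\lambda_\ast,u_\ast)$ solves LGopt then $n_0+u_\ast$ solves CRQopt. Your reading-backward composition of the equivalence chain, including inheriting $\|x_\ast\|\le\gamma$ from Theorem~\ref{thm:LGopt=QEPmin}, matches the paper's argument.
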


What the next theorem says is that solving QEPmin \eqref{eq:QEPmin}
is equivalent to calculating the leftmost eigenvalue of  QEP \eqref{eq:QEPmin-1}
among those having
eigenvectors\footnote{This does not exclude the possibility that they may have eigenvectors not in  $\cN(C^{\T})$.}
in $\cN(C^{\T})$. This result paves the way for the use of a Krylov subspace method to
calculate the minimizer of QEPmin \eqref{eq:QEPmin} in Section~\ref{sec-crq-alg} ahead.

\begin{theorem}\label{thm:leftmosteiggen}
If $(\lambda_{\ast},z_{\ast})$ is a minimizer of {\rm QEPmin} \eqref{eq:QEPmin},
then $\lambda_{\ast}$ is the leftmost eigenvalue
of {\rm QEP} \eqref{eq:QEPmin-1} among those having eigenvectors in $\cN(C^{\T})$.
\end{theorem}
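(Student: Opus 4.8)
The plan is to transfer the whole question to the projected problem pQEPmin \eqref{eq:pQEPmin}, where the leftmost-eigenvalue result Theorem~\ref{thm:pQEPmin=lefteig} is already in hand, and then translate the eigenvalue structure of the large QEP \eqref{eq:QEPmin-1} back through the orthogonal change of variables built from $S=[S_1,\,S_2]$. First I would invoke Theorem~\ref{thm:QEPmin=pQEPmin} to note that the optimal value $\lambda_{\ast}$ of QEPmin \eqref{eq:QEPmin} equals the optimal value of pQEPmin \eqref{eq:pQEPmin}, and then Theorem~\ref{thm:pQEPmin=lefteig} to identify that common value with the leftmost eigenvalue of the small QEP \eqref{eq:pQEPmin-1}, which is moreover real. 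It then remains only to show that the leftmost eigenvalue of QEP \eqref{eq:QEPmin-1} \emph{among those admitting an eigenvector in} $\cN(C^{\T})$ coincides with the leftmost eigenvalue of QEP \eqref{eq:pQEPmin-1}.

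The heart of the argument is to match these two eigenvalue sets exactly. I would take an arbitrary $z\in\bbR^n$ and decompose it as $z=S_1w+S_2p$ with $w=S_1^{\T}z$ and $p=S_2^{\T}z$, then apply $S^{\T}$ to $(PAP-\lambda I)^2z-\gamma^{-2}b_0b_0^{\T}z$. Using the block structure \eqref{eq:decomp:PAP} precisely as in the derivation of \eqref{eq:2qepeq}, and the identity $b_0^{\T}z=g_0^{\T}w$, the transformed residual splits into a top block $(H-\lambda I)^2w-\gamma^{-2}g_0g_0^{\T}w$ and a bottom block $\lambda^2 p$. Since $S^{\T}$ is orthogonal, $(\lambda,z)$ is an eigenpair of QEP \eqref{eq:QEPmin-1} if and only if $(H-\lambda I)^2w=\gamma^{-2}g_0g_0^{\T}w$ and $\lambda^2 p=0$.

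Imposing the requirement $z\in\cN(C^{\T})=\cR(S_1)$ forces $p=0$ and $w=S_1^{\T}z\neq 0$, so the bottom block vanishes automatically and the top block is exactly the constraint of QEP \eqref{eq:pQEPmin-1}. Reading this in both directions yields the desired matching: every eigenvalue of QEP \eqref{eq:QEPmin-1} possessing an eigenvector in $\cN(C^{\T})$ is an eigenvalue of QEP \eqref{eq:pQEPmin-1} (through $w=S_1^{\T}z$), and conversely every eigenvalue of QEP \eqref{eq:pQEPmin-1} with eigenvector $w$ produces the $\cN(C^{\T})$-eigenvector $z=S_1w$ of QEP \eqref{eq:QEPmin-1}. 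Hence the two sets coincide as subsets of $\bbC$, their leftmost elements agree, and combining with the first paragraph gives that $\lambda_{\ast}$ is the leftmost eigenvalue of QEP \eqref{eq:QEPmin-1} among those with eigenvectors in $\cN(C^{\T})$.

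I expect the main obstacle to be the bookkeeping around $\lambda=0$, since there the bottom block $\lambda^2 p$ vanishes for \emph{every} $p$, so QEP \eqref{eq:QEPmin-1} acquires spurious eigenvectors $S_2p\in\cR(C)$ lying outside $\cN(C^{\T})$. The restriction ``among those having eigenvectors in $\cN(C^{\T})$'' is precisely what discards these, and I would state explicitly that $0$ enters the restricted eigenvalue set only when it is genuinely an eigenvalue of QEP \eqref{eq:pQEPmin-1}; whenever $\lambda\neq 0$ the bottom block already forces $p=0$, so the matching is automatic and the spurious $\lambda=0$ directions never corrupt the identification of the leftmost restricted eigenvalue with $\lambda_{\ast}$.
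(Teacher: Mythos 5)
Your proposal is correct and follows essentially the same route as the paper: the paper's proof likewise matches the eigenvalues of QEP \eqref{eq:QEPmin-1} having eigenvectors in $\cN(C^{\T})$ with those of QEP \eqref{eq:pQEPmin-1} via the block decomposition underlying \eqref{eq:2qepeq}, and then concludes by combining Theorems~\ref{thm:QEPmin=pQEPmin} and \ref{thm:pQEPmin=lefteig}. Your explicit treatment of the spurious $\lambda=0$ eigenvectors in $\cR(S_2)$ is a welcome detail that the paper's terser argument leaves implicit, but it does not change the substance of the proof.
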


\begin{proof}
Following the argument in the proof of Theorem~\ref{thm:QEPmin=pQEPmin}, we find that
the set of eigenvalues of QEP \eqref{eq:QEPmin-1} that have eigenvectors in $\in\cN(C^{\T})$ and
the set of eigenvalues of QEP \eqref{eq:pQEPmin-1} are the same.
The conclusion is an immediate consequence of Theorems~\ref{thm:QEPmin=pQEPmin} and \ref{thm:pQEPmin=lefteig}.
\end{proof}


\subsection{Summary}
Starting with CRQopt~\eqref{eq:CRQopt}, we have introduced five
equivalent optimization problems. Figure~\ref{fig:eqiv-opts}
summarizes the relationships of these problems.
The edge ``$\longleftrightarrow$'' in
Figure~\ref{fig:eqiv-opts}
connecting two optimization problems indicates that we have
an equivalent relationship in the previous subsections.
We note that CRQopt~\eqref{eq:CRQopt} and CQopt~\eqref{eq:CQopt}
share the same minimizers $v_{\ast}$, while correspondingly
the minimizer for LGopt~\eqref{eq:LGopt} is $u_{\ast}=Pv_{\ast}$.
Slightly more efforts are needed to describe
corresponding minimizers for other equivalent optimization problems
as shown in Figure~\ref{fig:eqiv-opts}.
The optimal values for the objective functions of
LGopt~\eqref{eq:LGopt}, pLGopt~\eqref{eq:pLGopt}, QEPmin~\eqref{eq:QEPmin},
and pQEPmin~\eqref{eq:pQEPmin}
are all the same. The proof of Theorem \ref{thm:LGopt=QEPmin}
relies on Theorems \ref{thm:LGopt=pLGopt}, \ref{thm:QEPmin=pQEPmin}, and \ref{thm:pLGopt=pQEPmin}.

\begin{figure}
\begin{center}
\begin{picture}(450,60)(-10,-35)
%
\put(0,-5){\framebox[1.1\width]{\small CRQopt~\eqref{eq:CRQopt}}}
\put(78,5){\scriptsize Theorem \ref{thm:CRQopt=CQopt}}
\put(70,0){\vector(1,0){60}}
\put(130,-0){\vector(-1,0){60}}
\put(130,-5){\framebox[1.1\width]{\small CQopt~\eqref{eq:CQopt}}}
%
\put(158,14){\scriptsize Theorem \ref{thm:CQopt=LGopt}}
\put(190,0){\vector(1,1){28}}
\put(218,28){\vector(-1,-1){28}}
%
\put(220,32){\framebox[1.1\width]{\small LGopt~\eqref{eq:LGopt}}}
\put(220,-35){\framebox[1.1\width]{\small pLGopt~\eqref{eq:pLGopt}}}
\put(300,32){\vector(1,0){65}}
\put(343,32){\vector(-1,0){60}}
\put(300,36){\scriptsize Theorem \ref{thm:LGopt=QEPmin}}

\put(300,-35){\vector(1,0){65}}
\put(348,-35){\vector(-1,0){60}}
\put(300,-30){\scriptsize Theorem \ref{thm:pLGopt=pQEPmin}}

\put(365,32){\framebox[1.1\width]{\small QEPmin~\eqref{eq:QEPmin}}}
\put(365,-35){\framebox[1.1\width]{\small pQEPmin~\eqref{eq:pQEPmin}}}
\put(240,13){\vector(0,-1){38}}
\put(240,-15){\vector(0,1){38}}
\put(245,-5){\scriptsize Theorem \ref{thm:LGopt=pLGopt}}

\put(380,13){\vector(0,-1){38}}
\put(380,-15){\vector(0,1){38}}
\put(385,-5){\scriptsize Theorem \ref{thm:QEPmin=pQEPmin}}
\end{picture}
\end{center}
\caption{Equivalence of optimization problems}
\label{fig:eqiv-opts}
\end{figure}
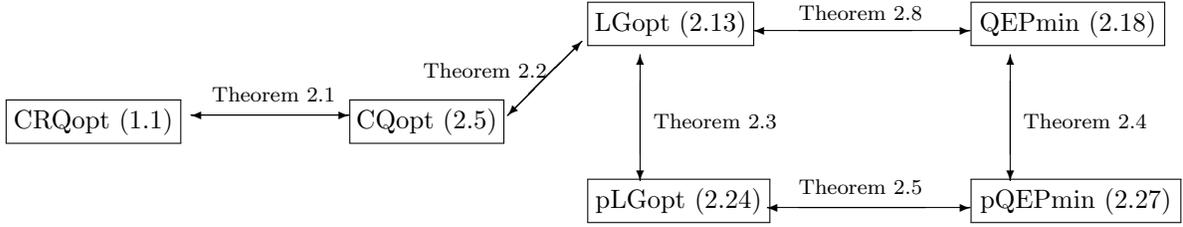

\subsection{Easy and hard cases}\label{sec-crq-theory-easyhard}

Motivated by the treatments
of the trust-region subproblem \cite{moso:1983,zhsl:2017},
QEPmin \eqref{eq:QEPmin} can be classified into two categories:
the {\em easy\/} case and the {\em hard\/} case, defined as follows.

\begin{definition}\label{dfn:hard-easy}
QEPmin \eqref{eq:QEPmin} is in the {\em hard\/} case
if it has a minimizer $(\lambda_{\ast},z_{\ast})$ with $b_0^{\T}z_{\ast}=0$.
Otherwise, QEPmin \eqref{eq:QEPmin} is in the {\em easy\/} case.
Furthermore, any one of the equivalent optimization problems as shown
in Figure~\ref{fig:eqiv-opts}
is said to be in the {\em hard\/} or {\em easy\/} case
if the corresponding {\rm QEPmin\/} is.
\end{definition}

This notion of hardness and easiness exists has its historical reason in dealing with
the trust-region subproblem. The hard case is not really hard as its name suggests
when it comes to numerical computation. It is just a
degenerate and rare case that needs special attention.
The easy case is a generic one.
Consider the hard case, let $\cV$ be the maximal eigenspace of $PAP$ corresponding to eigenvalue $\lambda_{\ast}$,
then $b_0\,\bot\, \cV$ by Theorem~\ref{thm:hard-case}. This creates difficulties to our later Lanczos method  to solve
QEPmin \eqref{eq:QEPmin} in that the Krylov subspace
$\mathcal{K}_k(PAP,b_0)\subset \cV^{\perp}$ for any $k$. So in theory there is no vector in $\mathcal{K}_k(PAP,b_0)$
can approximate any eigenvector $z\in \cV$ well.

In Theorems~\ref{thm:hard-case} and \ref{thm:hard-case:2} below, we 
present a number of characterizations
about the {\em hard\/} case.

\begin{lemma}\label{rk:hard-case:pQEPmin}
{\rm QEPmin} \eqref{eq:QEPmin} is in the {\em hard\/} case if and only if
{\rm pQEPmin}~\eqref{eq:pQEPmin} has a minimizer $(\lambda_{\ast}, w_*)$ satisfying
$g_0^{\T}w_*=0$.
\end{lemma}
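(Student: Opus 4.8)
The plan is to reduce everything to the already-established equivalence in Theorem~\ref{thm:QEPmin=pQEPmin}, using a single bookkeeping identity relating the inner products $b_0^{\T}z$ and $g_0^{\T}w$ under the correspondence $z=S_1w$. The whole statement should then fall out without any new analysis of the QEP.

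First I would record the key identity. Because $b_0\in\cN(C^{\T})=\cR(S_1)$, the block decomposition \eqref{eq:decomp:PAP-2} gives $S_2^{\T}b_0=0$, and hence $b_0=SS^{\T}b_0=S_1(S_1^{\T}b_0)=S_1g_0$. Consequently, whenever $z=S_1w$, we have
$$
b_0^{\T}z=(S_1g_0)^{\T}(S_1w)=g_0^{\T}S_1^{\T}S_1w=g_0^{\T}w,
$$
using $S_1^{\T}S_1=I$. This shows that the scalar condition defining the hard case, $b_0^{\T}z_{\ast}=0$, is exactly matched by the condition $g_0^{\T}w_{\ast}=0$ on the pQEPmin side, under the correspondence $z_{\ast}=S_1w_{\ast}$.

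Next I would invoke Theorem~\ref{thm:QEPmin=pQEPmin} in both directions. For the forward implication, suppose QEPmin \eqref{eq:QEPmin} is in the hard case, so it has a minimizer $(\lambda_{\ast},z_{\ast})$ with $b_0^{\T}z_{\ast}=0$. Since $z_{\ast}\in\cN(C^{\T})=\cR(S_1)$, setting $w_{\ast}=S_1^{\T}z_{\ast}$ yields $z_{\ast}=S_1S_1^{\T}z_{\ast}=Pz_{\ast}=z_{\ast}$, i.e.\ $z_{\ast}=S_1w_{\ast}$; by Theorem~\ref{thm:QEPmin=pQEPmin}, $(\lambda_{\ast},w_{\ast})$ is a minimizer of pQEPmin \eqref{eq:pQEPmin}, and by the identity above $g_0^{\T}w_{\ast}=b_0^{\T}z_{\ast}=0$. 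For the converse, suppose pQEPmin has a minimizer $(\lambda_{\ast},w_{\ast})$ with $g_0^{\T}w_{\ast}=0$; then Theorem~\ref{thm:QEPmin=pQEPmin} gives that $(\lambda_{\ast},z_{\ast})$ with $z_{\ast}=S_1w_{\ast}$ is a minimizer of QEPmin, and again the identity gives $b_0^{\T}z_{\ast}=g_0^{\T}w_{\ast}=0$, so QEPmin is in the hard case by Definition~\ref{dfn:hard-easy}.

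There is no genuinely hard step here; the content is entirely in the identity $b_0=S_1g_0$ and in the fact that the minimizer correspondence of Theorem~\ref{thm:QEPmin=pQEPmin} is a bijection that carries $z_{\ast}=S_1w_{\ast}$ and preserves the relevant inner product. The one point requiring a little care is making sure the \emph{existence} of a minimizer satisfying the zero-inner-product condition transfers correctly in both directions, rather than merely the existence of some minimizer; this is handled by checking that the specific minimizer produced by Theorem~\ref{thm:QEPmin=pQEPmin} is the image of the given one under $z_{\ast}=S_1w_{\ast}$, so the flag $b_0^{\T}z_{\ast}=0$ (respectively $g_0^{\T}w_{\ast}=0$) is attached to matching minimizers, not to unrelated ones.
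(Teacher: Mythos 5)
Your proposal is correct and follows essentially the same route as the paper: both rely on the minimizer correspondence $z_{\ast}=S_1w_{\ast}$ from Theorem~\ref{thm:QEPmin=pQEPmin} together with the identity $b_0=S_1g_0$ to conclude $b_0^{\T}z_{\ast}=g_0^{\T}w_{\ast}$. Your write-up is somewhat more explicit about checking both directions, but the content is identical to the paper's proof.
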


\begin{proof}
To see this, we let $(\lambda_{\ast},z_{\ast})$ be
a minimizer QEPmin \eqref{eq:QEPmin}
satisfying $b_0^{\T}z_{\ast}=0$. By Theorem~\ref{thm:QEPmin=pQEPmin}, we know that
$z_*$ and $w_*$ are related by $z_*=S_1w_*$. Since also $b_0=S_1g_0$,
$b_0^{\T}z_{\ast}=g_0^{\T}w_*$.
\end{proof}

\begin{theorem}\label{thm:hard-case}
Suppose that {\rm QEPmin} \eqref{eq:QEPmin} is in the {\em hard\/} case, and let $(\lambda_{\ast},z_{\ast})$
be a minimizer such that $b_0^{\T}z_{\ast}=0$. Then we have the following statements:
\begin{enumerate}[\rm (1)]
  \item $\lambda_{\ast}=\lambda_{\min}(H)$, the smallest eigenvalue of $H$;
  \item $g_0\,\bot\,\cU$, where $\cU$ is the eigenspace of $H$ associated with its eigenvalue $\lambda_{\min}(H)$;
  \item $b_0\,\bot\,\cV$, where $\cV$ is the eigenspace of $PAP$ associated with its eigenvalue $\lambda_{\min}(H)\in\eig(PAP)$.
\end{enumerate}
\end{theorem}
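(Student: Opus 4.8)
The plan is to transport the entire argument to the projected problem pQEPmin~\eqref{eq:pQEPmin}, where the null-space constraint is already gone, prove the three claims in terms of $H$ and $g_0$, and translate only at the very end. By Lemma~\ref{rk:hard-case:pQEPmin}, the hard-case hypothesis is equivalent to pQEPmin~\eqref{eq:pQEPmin} having a minimizer $(\lambda_{\ast},w_{\ast})$ with $g_0^{\T}w_{\ast}=0$, and by Theorem~\ref{thm:QEPmin=pQEPmin} we may take $z_{\ast}=S_1w_{\ast}$. Two facts will be used repeatedly: $b_0=S_1g_0$ (because $b_0\in\cN(C^{\T})=\cR(S_1)$ and $g_0=S_1^{\T}b_0$), and the block identities $PAPS_1=S_1H$, $PAPS_2=0$ coming from \eqref{eq:decomp:PAP-1}.

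For item (1), I would substitute $g_0^{\T}w_{\ast}=0$ into the QEP constraint \eqref{eq:pQEPmin-1}, which kills its right-hand side and leaves $(H-\lambda_{\ast}I)^2w_{\ast}=0$; since $H-\lambda_{\ast}I$ is real symmetric this forces $(H-\lambda_{\ast}I)w_{\ast}=0$, so $\lambda_{\ast}\in\eig(H)$ and in particular $\lambda_{\ast}\ge\lambda_{\min}(H)$. On the other hand $\lambda_{\ast}$ is also the optimal value of pLGopt~\eqref{eq:pLGopt} by Lemma~\ref{lm:pLGopt=pQEPmin}, so Lemma~\ref{lm:pLGopt}(a) gives $\lambda_{\ast}\le\lambda_{\min}(H)$. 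The two inequalities pin down $\lambda_{\ast}=\lambda_{\min}(H)$. Item (2) is then immediate: with $\lambda_{\ast}=\lambda_{\min}(H)$ in hand, the characterization in Lemma~\ref{lm:pLGopt}(b) (equivalently, the contrapositive of part (c)) yields $g_0\perp\cU$.

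Item (3) is the step I expect to require the most care, and it is the main obstacle. First, $\lambda_{\min}(H)\in\eig(PAP)$ by Lemma~\ref{lm:eig(PAP)}, so $\cV$ is well defined. I would take an arbitrary $v\in\cV$ and decompose it as $v=S_1s+S_2t$ using the orthogonal matrix $S=[S_1,\,S_2]$. Applying $PAP$ and using $PAPS_1=S_1H$, $PAPS_2=0$, the eigen-relation $PAPv=\lambda_{\min}(H)\,v$ becomes $S_1Hs=\lambda_{\min}(H)(S_1s+S_2t)$; premultiplying by $S_1^{\T}$ and using $S_1^{\T}S_1=I$, $S_1^{\T}S_2=0$ gives $Hs=\lambda_{\min}(H)\,s$, i.e.\ $s\in\cU$. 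Finally, since $b_0=S_1g_0\in\cR(S_1)$ is orthogonal to $\cR(S_2)$, one has $b_0^{\T}v=g_0^{\T}s$, which vanishes by item (2). Hence $b_0\perp\cV$.

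The subtlety hidden in item (3), and the reason the decomposition is the right tool, is that when $\lambda_{\min}(H)=0$ the eigenspace $\cV$ of $PAP$ is \emph{strictly larger} than $S_1\cU$: by Lemma~\ref{lm:eig(PAP)} it then also contains all of $\cR(C)=\cN(C^{\T})^{\perp}=\cR(S_2)$, the span of the $m$ spurious zero eigenvalues. The argument above handles this uniformly, because the $S_2t$ component of any $v\in\cV$ is automatically orthogonal to $b_0\in\cR(S_1)$, so only the $S_1$-component contributes—and that component was shown to lie in $\cU$, where it is annihilated by $g_0$. Consequently no separate case split between $\lambda_{\min}(H)=0$ and $\lambda_{\min}(H)\neq 0$ is needed, which is precisely the technical point that earlier treatments glossed over.
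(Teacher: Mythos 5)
Your proposal is correct, and its overall skeleton coincides with the paper's: both pass to pQEPmin via Lemma~\ref{rk:hard-case:pQEPmin}, obtain item (1) from the identity $(H-\lambda_{\ast}I)^2w_{\ast}=0$ together with $\lambda_{\ast}\le\lambda_{\min}(H)$ (you re-derive this inline, whereas the paper simply cites Theorem~\ref{thm:pLGoptunique}), and obtain item (2) from Lemma~\ref{lm:pLGopt}. The only genuine divergence is in item (3). The paper invokes Lemma~\ref{lm:eig(PAP)} to describe $\cV$ explicitly and then splits into two cases, $\cV=S_1\cU$ when $\lambda_{\min}(H)\ne 0$ and $\cV=S_1\cU+\cR(S_2)$ when $\lambda_{\min}(H)=0$, checking $b_0\,\bot\,\cV$ separately in each. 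You instead decompose an arbitrary $v\in\cV$ as $v=S_1s+S_2t$, read off $Hs=\lambda_{\min}(H)s$ from the block structure \eqref{eq:decomp:PAP-1}, and conclude $b_0^{\T}v=g_0^{\T}s=0$ uniformly; the $S_2t$ component is discarded because $b_0\in\cR(S_1)$, regardless of whether $\lambda_{\min}(H)$ vanishes. This buys a case-free argument (and, implicitly, a self-contained proof of the inclusion $\cV\subseteq S_1\cU+\cR(S_2)$ rather than an appeal to the lemma), at the cost of being marginally less transparent about what $\cV$ actually is; the paper's version makes the structure of $\cV$ explicit, which it reuses in the later discussion of detecting the hard case. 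Both are complete and correct.
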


\begin{proof}
By Lemma~\ref{rk:hard-case:pQEPmin}, {\rm pQEPmin}~\eqref{eq:pQEPmin} has a minimizer $(\lambda_{\ast}, w_*)$ satisfying
$g_0^{\T}w_*=0$. Theorem~\ref{thm:pLGoptunique} immediately leads to
item (1).  Item (2) is a corollary of Lemma~\ref{lm:pLGopt}.

For item (3), it follows from Lemma~\ref{lm:eig(PAP)} that if $\lambda_{\min}(H)\ne 0$, then
$\cV=S_1\cU$.
Since $b_0=S_1g_0$ and $g_0\,\bot\,\cU$ by item (2), we conclude that
$b_0\,\bot\, S_1\cU$ .
If, however, $\lambda_{\min}(H)=0$, then $\cV=S_1\cU+\cR(S_2)$.
Since again $g_0\,\bot\,\cU$ by item (2) and also $b_0\,\bot\, \cR(S_2)$, we still have
$b_0\,\bot\,\cV$.
\end{proof}

\begin{theorem}\label{thm:hard-case:2}
{\rm QEPmin} \eqref{eq:QEPmin} is in the {\em hard\/} case if and only if
\begin{equation}\label{eq:hard-case:character}
g_0\,\bot\,\cU\,\,\mbox{and}\,\,
\|[H-\lambda_{\min}(H) I]^{\dagger} g_0\|_2\le \gamma,
\end{equation}
where $\cU$ is as defined in {\rm Theorem~\ref{thm:hard-case}}.
\end{theorem}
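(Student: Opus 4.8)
The plan is to recognize that the characterization \eqref{eq:hard-case:character} is word-for-word the equivalent condition for $\lambda_{\ast}=\lambda_{\min}(H)$ already supplied by Lemma~\ref{lm:pLGopt}(b). Thus the entire theorem reduces to the single equivalence that {\rm QEPmin}~\eqref{eq:QEPmin} is in the hard case if and only if its optimal value satisfies $\lambda_{\ast}=\lambda_{\min}(H)$. The key bookkeeping observation, which makes this reduction legitimate, is that $\lambda_{\ast}$ is the \emph{common} optimal value of {\rm pLGopt}~\eqref{eq:pLGopt}, {\rm pQEPmin}~\eqref{eq:pQEPmin}, and {\rm QEPmin}~\eqref{eq:QEPmin} (equality of optimal values was established in Lemma~\ref{lm:pLGopt=pQEPmin} and the surrounding theorems), so that Lemma~\ref{lm:pLGopt}(b) may be applied to the pLGopt minimizer $(\lambda_{\ast},y_{\ast})$ regardless of which of these problems produced $\lambda_{\ast}$.

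For the forward implication, I would assume {\rm QEPmin}~\eqref{eq:QEPmin} is in the hard case. Theorem~\ref{thm:hard-case}(1) then yields $\lambda_{\ast}=\lambda_{\min}(H)$ directly. Feeding this equality into the ``only if'' half of Lemma~\ref{lm:pLGopt}(b) produces both $g_0\,\bot\,\cU$ and $\|[H-\lambda_{\min}(H)I]^{\dagger}g_0\|_2\le\gamma$, which is precisely \eqref{eq:hard-case:character}.

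For the converse, I would assume \eqref{eq:hard-case:character} and invoke the ``if'' half of Lemma~\ref{lm:pLGopt}(b) to obtain $\lambda_{\ast}=\lambda_{\min}(H)$, so the common optimal value of {\rm pQEPmin}~\eqref{eq:pQEPmin} equals $\lambda_{\min}(H)\in\eig(H)$. I would then exhibit an explicit minimizer of {\rm pQEPmin} that certifies the hard case: pick any unit eigenvector $w_{\ast}\in\cU$ of $H$ associated with $\lambda_{\min}(H)$. Since $(H-\lambda_{\ast}I)w_{\ast}=0$ forces $(H-\lambda_{\ast}I)^2w_{\ast}=0$, and since $g_0\,\bot\,\cU$ forces $g_0^{\T}w_{\ast}=0$ so that $\gamma^{-2}g_0g_0^{\T}w_{\ast}=0$, the pair $(\lambda_{\ast},w_{\ast})$ satisfies the QEP constraint \eqref{eq:pQEPmin-1} with $w_{\ast}\ne 0$; because $\lambda_{\ast}=\lambda_{\min}(H)$ is the optimal value of {\rm pQEPmin}, this feasible pair is in fact a minimizer, and it has $g_0^{\T}w_{\ast}=0$. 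Lemma~\ref{rk:hard-case:pQEPmin} then transfers this back to conclude {\rm QEPmin}~\eqref{eq:QEPmin} is in the hard case.

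I expect essentially no deep obstacle, since the earlier lemmas have already done the heavy lifting. The only points demanding care are the bookkeeping of the optimal value $\lambda_{\ast}$ across the equivalent problems (so that Lemma~\ref{lm:pLGopt}(b) may be applied to it), and the observation in the converse that an eigenvector of $H$ at $\lambda_{\min}(H)$ is genuinely \emph{feasible} for {\rm pQEPmin}~\eqref{eq:pQEPmin} precisely because the hypothesis $g_0\,\bot\,\cU$ makes the right-hand side $\gamma^{-2}g_0g_0^{\T}w_{\ast}$ vanish; without that orthogonality the constructed pair need not satisfy the QEP constraint.
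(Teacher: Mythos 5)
Your proposal is correct and follows essentially the same route as the paper: both directions reduce to the equivalence $\lambda_{\ast}=\lambda_{\min}(H)\iff\eqref{eq:hard-case:character}$ via Lemma~\ref{lm:pLGopt}(b), and the converse exhibits a pQEPmin minimizer that is an eigenvector of $H$ in $\cU$ (your explicit feasibility check is exactly what the paper's appeal to Theorem~\ref{thm:pLGopt=pQEPmin} encapsulates) before invoking Lemma~\ref{rk:hard-case:pQEPmin}.
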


\begin{proof}
If QEPmin \eqref{eq:QEPmin} is in the {\em hard\/} case,
then
its optimal value (which is also the one of LGopt \eqref{eq:LGopt}) $\lambda_*=\lambda_{\min}(H)$. This can only happen when
\eqref{eq:hard-case:character} holds. On the other hand, if \eqref{eq:hard-case:character} holds, then $\lambda_*=\lambda_{\min}(H)$
by Lemma~\ref{lm:pLGopt}. By Theorem~\ref{thm:pLGopt=pQEPmin}, pQEPmin~\eqref{eq:pQEPmin}
has a minimizer $(\lambda_*,w_*)$, where $Hw_*=\lambda_* w_*$. Thus $g_0^{\T}w_*=0$ because $g_0\,\bot\,\cU$ and
$w_*\in\cU$. Hence QEPmin \eqref{eq:QEPmin} is in the {\em hard\/} case by Lemma~\ref{rk:hard-case:pQEPmin}.
\end{proof}

When QEPmin \eqref{eq:QEPmin} is in the easy case, the situation is much simpler to characterize.

\begin{theorem}\label{thm:unique}
{\rm CRQopt} \eqref{eq:CRQopt} has a unique minimizer when {\rm QEPmin} \eqref{eq:QEPmin} is in the easy case.
\end{theorem}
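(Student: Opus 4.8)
The plan is to propagate uniqueness backwards along the chain of equivalences summarized in Figure~\ref{fig:eqiv-opts}, starting from pLGopt~\eqref{eq:pLGopt}, where uniqueness has essentially already been characterized in Theorem~\ref{thm:pLGoptunique}. The first step is to translate the easy-case hypothesis into a statement about pQEPmin~\eqref{eq:pQEPmin}. By Definition~\ref{dfn:hard-easy}, the easy case means QEPmin~\eqref{eq:QEPmin} has \emph{no} minimizer $(\lambda_{\ast},z_{\ast})$ with $b_0^{\T}z_{\ast}=0$. Lemma~\ref{rk:hard-case:pQEPmin} then shows this is equivalent to pQEPmin~\eqref{eq:pQEPmin} having no minimizer $(\lambda_{\ast},w_{\ast})$ with $g_0^{\T}w_{\ast}=0$; that is, every minimizer of pQEPmin satisfies $g_0^{\T}w_{\ast}\neq 0$. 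This is precisely the hypothesis of Theorem~\ref{thm:pLGoptunique}(1), which yields $\lambda_{\ast}<\lambda_{\min}(H)$ and the uniqueness of the minimizer of pLGopt~\eqref{eq:pLGopt}.

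Next I would carry this uniqueness up to LGopt~\eqref{eq:LGopt} and CQopt~\eqref{eq:CQopt}. Theorem~\ref{thm:LGopt=pLGopt} establishes a bijection between minimizers of pLGopt and LGopt via $u_{\ast}=S_1 y_{\ast}$ and $y_{\ast}=S_1^{\T}u_{\ast}$, which is a genuine one-to-one correspondence because $S_1$ has orthonormal columns. Hence LGopt~\eqref{eq:LGopt} also has a unique minimizer $(\lambda_{\ast},u_{\ast})$. To pass to CQopt~\eqref{eq:CQopt}, I would invoke Theorem~\ref{thm:CQopt=LGopt}: any CQopt minimizer $v_{\ast}$ produces the LGopt minimizer with $u_{\ast}=Pv_{\ast}$. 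If $v_1,v_2$ were two CQopt minimizers, both would map to the same unique $u_{\ast}$, forcing $Pv_1=Pv_2$. Since every feasible $v$ of CQopt lies in $n_0+\cN(C^{\T})$ and $Pn_0=0$, we have $v=n_0+Pv$, so $v_1=n_0+Pv_1=n_0+Pv_2=v_2$. Therefore CQopt~\eqref{eq:CQopt} has a unique minimizer.

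Finally, Theorem~\ref{thm:CRQopt=CQopt} identifies the minimizers of CRQopt~\eqref{eq:CRQopt} and CQopt~\eqref{eq:CQopt} exactly, so the uniqueness transfers verbatim to CRQopt, completing the argument. I expect the only mildly delicate step to be the CQopt-to-LGopt transfer: unlike the clean algebraic bijection $u_{\ast}=S_1y_{\ast}$ between pLGopt and LGopt, here one must use the affine constraint $v\in n_0+\cN(C^{\T})$ together with $Pn_0=0$ to recover $v$ uniquely from $Pv$, rather than relying on an injectivity that holds on all of $\bbR^n$. The remaining links are either bijections of minimizer sets or exact identifications, so uniqueness is preserved at each stage without further effort.
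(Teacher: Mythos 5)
Your proposal is correct and follows essentially the same route as the paper's own proof: translate the easy-case hypothesis into the condition $g_0^{\T}w_{\ast}\neq 0$ for all pQEPmin minimizers, invoke Theorem~\ref{thm:pLGoptunique} for uniqueness of the pLGopt minimizer, and push that uniqueness back through Theorems~\ref{thm:LGopt=pLGopt}, \ref{thm:CQopt=LGopt} and \ref{thm:CRQopt=CQopt}. The only difference is that you spell out the LGopt-to-CQopt step (recovering $v$ from $Pv$ via $v=n_0+Pv$) which the paper leaves implicit.
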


\begin{proof}
Suppose that QEPmin \eqref{eq:QEPmin} is in the easy case.
By Definition~\ref{dfn:hard-easy},
all minimizers $(\lambda_\ast,w_\ast)$ of pQEPmin \eqref{eq:pQEPmin}
satisfy $g_0^{\T}w_\ast\neq 0$. Theorem~\ref{thm:pLGoptunique} guarantees that
pLGopt \eqref{eq:pLGopt} has a unique minimizer.
Consequently,  the
minimizer of LGopt \eqref{eq:LGopt} is unique
by Theorem \ref{thm:LGopt=pLGopt} and so is
the minimizer of CRQopt \eqref{eq:CRQopt}.
\end{proof}




We use the remaining part of this subsection to explain how
CRQopt \eqref{eq:CRQopt} and the well-known
trust-region subproblem (TRS) are related.

We have already proved in
Theorem \ref{thm:CRQopt=CQopt} that CRQopt \eqref{eq:CRQopt} is
equivalent to CQopt \eqref{eq:CQopt}.
Set $u=Pv$. Solving CQopt \eqref{eq:CQopt}
is equivalent to solving
\begin{subequations}\label{eq:CQtrust}
\begin{empheq}[left={\mbox{}}\empheqlbrace]{alignat=2}
\min         ~& u^{\T}PAPu+2u^{\T}b_0,\label{eq:CQtrust-0}\\
\mbox{s.t.}  ~& \|u\|=\gamma, \label{eq:CQtrust-1}\\
             ~& u\in \cN(C^{\T}).\label{eq:CQtrust-2}
\end{empheq}
\end{subequations}
Let $H$ and $g_0$ be defined in \eqref{eq:def:gH} and $S_1$ be defined in \eqref{eq:s1s2}.
Then $u$ is a minimizer of optimization
problem \eqref{eq:CQtrust} if and only if $y=S_1^{\T}u$ is a minimizer of
the following equality constrained optimization problem
\begin{subequations}\label{eq:pCQtrust}
\begin{empheq}[left={\mbox{}\quad}\empheqlbrace]{alignat=2}
\min         ~& y^{\T}Hy+2y^{\T}g_0,\label{eq:pCQtrust-0}\\
\mbox{s.t.}  ~& \|y\|=\gamma. \label{eq:pCQtrust-1}
\end{empheq}
\end{subequations}
The Lagrange equations for \eqref{eq:pCQtrust} is exactly
the same as pLGopt \eqref{eq:pLGopt}.
The problem \eqref{eq:pCQtrust} is similar to TRS
\begin{subequations}\label{eq:pCQtrust1}
\begin{empheq}[left={\mbox{}\quad}\empheqlbrace]{alignat=2}
\min         ~& y^{\T}Hy+2y^{\T}g_0,\label{eq:pCQtrust1-0}\\
\mbox{s.t.}  ~& \|y\|\le\gamma, \label{eq:pCQtrust1-1}
\end{empheq}
\end{subequations}
except that its constraint is an equality instead of an inequality.
When $H$ is not positive definite, solution of \eqref{eq:pCQtrust}
and TRS \eqref{eq:pCQtrust1} are  exactly the same. But when $H$ is positive definite,
we need to check whether $\|H^{-1}g_0\|<\gamma$. If so, $H^{-1}g_0$, instead of the minimizer of \eqref{eq:pCQtrust},
is the minimizer of TRS \eqref{eq:pCQtrust1}. If, however, $\|H^{-1}g_0\|\geq\gamma$,
then the minimizer of TRS \eqref{eq:pCQtrust1} is the same as that of \eqref{eq:pCQtrust}.

Lemma 2.1 in \cite{hage:2001} shows that $y$ is the \eqref{eq:pCQtrust1} of
\eqref{eq:pCQtrust}
if and only if there exists $\widehat{\lambda}\in\bbR$ such that $(\widehat{\lambda},y)$ satisfies the constraints of
pLGopt \eqref{eq:pLGopt} and $H-\widehat{\lambda} I$ is positive semi-definite.
According to Lemma \ref{lm:pLGopt}, the optimal value of pLGopt \eqref{eq:pLGopt}
satisfies {$\lambda_\ast\le\lambda_{\min}(H)$}, which indicates that $H-\lambda_\ast I$ is positive semi-definite.
Therefore, solving the equality constrained problem \eqref{eq:pCQtrust} is equivalent to solving pLGopt \eqref{eq:pLGopt}.

As we have mentioned, the terms ``{\em easy\/}'' and ``{\em hard\/}''
were adopted from the treatments of the trust-region
subproblem \cite{moso:1983,zhsl:2017}, where the term ``easy''
means the associated case is easy to explain, not
implying the case is easy to solve, however.
A more detailed connection with TRS \eqref{eq:pCQtrust1}
is as follows.
\begin{enumerate}
\item In the easy case of QEPmin \eqref{eq:QEPmin}, $b_0^{\T}z_{\ast}\neq 0$ for all mimimizers $(\lambda_\ast,z_\ast)$.
        By Theorem~\ref{thm:QEPmin=pQEPmin}, $z_{\ast}=S_1w_{\ast}$ for some $w_{\ast}\in\bbR^{n-m}$ and thus
        $g_0^{\T}w_{\ast}=b_0^{\T}S_1w_{\ast}=b_0^{\T}z_{\ast}\ne 0$.
By Theorem \ref{thm:pLGoptunique}, $\lambda_\ast<\lambda_{\min}(H)$, and thus $(\lambda_{\ast}, y_{\ast})$
with $y_{\ast}=(H-\lambda_{\ast} I)^{-1}g_0$ is the unique minimizer of
pLGopt \eqref{eq:pLGopt}. Hence $y_{\ast}$ is the unique minimizer
of \eqref{eq:pCQtrust}, {which is related to the easy case of TRS \eqref{eq:pCQtrust1}}.

         \item In the hard case of QEPmin \eqref{eq:QEPmin},
        there exists a minimizer $(\lambda_{\ast},z_{\ast})$
such that $b_0^{\T}z_{\ast}= 0$.
        Again by Theorem~\ref{thm:QEPmin=pQEPmin}, $z_{\ast}=S_1w_{\ast}$ for some $w_{\ast}\in\bbR^{n-m}$ and
        $g_0^{\T}w_{\ast}= 0$.
        By Theorem~\ref{thm:pLGopt=pQEPmin}, a minimizer
of pLGopt \eqref{eq:pLGopt} is given by
        $$
        (\lambda_{\ast}, y_{\ast})
        \quad\mbox{with}\,\,  y_{\ast}= x_{\ast} + \sqrt{\gamma^2-\|x_{\ast}\|^2}\, \frac{w_{\ast}}{\|w_{\ast}\|},
        $$
        where $x_{\ast}=-(H-\lambda_{\ast} I)^{\dag} g_0$ and it is guaranteed that $\|x_{\ast}\|\le\gamma$.
        Therefore, in general a minimizer of  \eqref{eq:pCQtrust} can be
        expressed by $y_{\ast} = x_{\ast} + \sqrt{\gamma^2-\|x_{\ast}\|^2}\, ({w_{\ast}}/{\|w_{\ast}\|})$,
        {which is related to hard case of TRS \eqref{eq:pCQtrust1}}.

\end{enumerate}
It is known that the generalized Lanczos method does not work for
TRS \eqref{eq:pCQtrust1} in the hard case \cite[Theorem 4.6]{zhsl:2017}.
A restarting strategy was proposed
to overcome the difficulty, but it was commented that the strategy
computationally is very expensive
for large scale problems~\cite[Theorem 5.8]{golr:1999}.

In the next section,
we present that the Lanczos algorithms
for CRQopt~\eqref{eq:CRQopt}, which
resemble the generalized Lanczos
method for TRS and are suitable for handling the easy case.
However, with some additional effort, the hard case can be detected.
In the rest of this article, we mostly focus only on the easy case.

\section{Lanczos algorithm}\label{sec-crq-alg}

As was shown in Section~\ref{sec-crq-theory}, solving CRQopt \eqref{eq:CRQopt} is
equivalent to solving LGopt \eqref{eq:LGopt} or QEPmin \eqref{eq:QEPmin}.
In this section we present algorithms to solve CRQopt \eqref{eq:CRQopt}
by solving LGopt \eqref{eq:LGopt} and QEPmin \eqref{eq:QEPmin}.
We first review the Lanczos procedure in section \ref{sec-prelim-lan},
then we apply the procedure to reduce
LGopt \eqref{eq:LGopt} and QEPmin \eqref{eq:QEPmin}, and finally solve
the reduced LGopt and QEPmin to yield approximations to the minimizer of the original CRQopt \eqref{eq:CRQopt}.  Besides,
we prove the finite step stopping
property of the proposed algorithms and comment on how to detect
the hard case.

\subsection{Lanczos process}\label{sec-prelim-lan}
We review the standard symmetric Lanczos process \cite{demm:1997,govl:2013,parl:1998,saad:1992}.
 Given a real symmetric matrix $M\in\bbR^{n\times n}$ and
a starting vector $r_0\in\bbR^n$,
the Lanczos process partially computes
the decomposition $MQ=QT$, where $T\in\bbR^{n\times n}$
is symmetric and tridiagonal, $Q\in\bbR^{n\times n}$
is orthogonal and the first column of $Q$ is parallel to $r_0$.

Specifically, let $Q=[q_1,q_2,\ldots,q_n]$ and denote by
$\alpha_i$ for $1\le i\le n$
 the diagonal entries of $T$,
and by $\beta_i$ for $2\le i\le n$
the sub-diagonal and super-diagonal entries of $T$.
The Lanczos process goes as follows: set
$q_1 = r_0/\|r_0\|$, and equate the first column of both sides of the equation $MQ=QT$ to get
\begin{equation}\label{eq:Lanczos-step-1}
Mq_1=q_1\alpha_1+q_2\beta_2.
\end{equation}
Pre-multiply both sides of the equation \eqref{eq:Lanczos-step-1} by $q_1^{\T}$ to get $\alpha_1=q_1^{\T}Mq_1$, and then let
$$
\widehat q_2=Mq_1-q_1\alpha_1,\,\,
\beta_2=\|\widehat q_2\|.
$$
Now if $\beta_2>0$, we set $q_2=\widehat q_2/\beta_2$; otherwise the process breaks down. In general for $j\ge 2$, equating the $j$th
column of both sides of the equation $MQ=QT$ leads to
\begin{equation}\label{eq:Lanczos-step-j}
Mq_j=q_{j-1}\beta_j+q_j\alpha_j+q_{j+1}\beta_{j+1}.
\end{equation}
Up to this point, $q_i$ for $1\le i\le j$, $\alpha_i$ for $1\le i\le j-1$, and $\beta_i$ for $2\le i\le j$ have already
been determined. Pre-multiply both sides of the equation \eqref{eq:Lanczos-step-j} by $q_j^{\T}$ to get $\alpha_j=q_j^{\T}Mq_j$, and then let
$$
\widehat q_{j+1}=Mq_j-q_{j-1}\beta_j-q_j\alpha_j,\,\,
\beta_{j+1}=\|\widehat q_{j+1}\|.
$$
Now if $\beta_{j+1}>0$, we set $q_{j+1}=\widehat q_{j+1}/\beta_{j+1}$; otherwise the process breaks down. The process can be compactly
expressed
by\footnote {We sacrifice
             slightly mathematical rigor in writing \eqref{eq:Lanczos-compact} in exchange for simplicity and convenience, since
             $q_{k+1}$ cannot be determined unless also $\beta_{k+1}>0$.}
\begin{equation}\label{eq:Lanczos-compact}
MQ_k=Q_kT_k+\beta_{k+1} q_{k+1}e_k^{\T}
\end{equation}
assuming the process encounters no breakdown for the first $k$ steps, i.e., no $\beta_i=0$ for $2\le i\le k$, where
$$
Q_k=[q_1,q_2,\ldots,q_k],\,\,
T_k=Q_k^{\T} MQ_k=\left[\begin{array}{ccccc}
      \alpha_1& \beta_2 & &  & \\
      \beta_2 & \alpha_2 & \beta_3 &  &  \\
      & \ddots & \ddots & \ddots & \\
      &  & \beta_{k-1}& \alpha_{k-1} & \beta_k \\
      &  &  & \beta_k & \alpha_k
      \end{array}\right].
$$
Furthermore, the column space $\cR(Q_k)$ is the same as the $k$th Krylov subspace
$$
\mathcal{K}_k(M,r_0):=\subspan(r_0, Mr_0,\cdots,M^{k-1} r_0).
$$
In the case of a breakdown with $\beta_{k+1}=0$,
$MQ_k=Q_kT_k$ and $\cR(Q_k)$ is an invariant subspace of $M$.

\subsection{Solving LGopt  }\label{sec-crq-alg-projlag}
In this subsection, we first use \eqref{eq:Lanczos-compact}  obtained by the Lanczos process with $M=PAP$
to reduce LGopt~\eqref{eq:LGopt}, and then
solve the reduced LGopt via an approach based on a secular equation solver.

\subsubsection{Dimensional reduction of LGopt  }
For the dimensional reduction of LGopt \eqref{eq:LGopt},
we restate the Lagrange equations \eqref{eq:LGopt-1}
and \eqref{eq:LGopt-1}  here
\begin{equation}\label{eq:LGoptre}
(PAP-\lambda I)u=-b_0,\quad \|u\|=\gamma,\quad Pu=u,
\end{equation}
where we include the constraint $Pu=u$ since
we are only interested in those vectors $u\in\cN(C^{\T})$.

Apply the Lanczos process with
$M=PAP$ and the starting vector $r_0=b_0$ to get \eqref{eq:Lanczos-compact} with $M=PAP$. It then follows that
for any scalar $\lambda$
\[
Q_k^{\T}(PAP-\lambda I)Q_k=T_k-\lambda I
\quad \mbox{and} \quad
Q_k^{\T}b_0=\|b_0\|e_1.
\]
Consequently, we arrive at the reduced  LGopt \eqref{eq:LGopt}
\begin{subequations}\label{eq:rLGopt}
\begin{empheq}[left={\mbox{rLGopt:}\quad}\empheqlbrace]{alignat=2}
\min ~& \lambda  \label{eq:rLGopt-0}\\
\mbox{s.t.} ~&(T_k -\lambda I) x =-\|b_0\|e_1, \label{eq:rLGopt-1} \\
~& \|x\| =\gamma.  \label{eq:rLGopt-2}
\end{empheq}
\end{subequations}

A couple of comments are for the efficiency of the Lanczos process with
$M=PAP$. In the process, we have to calculate matrix-vector products $Mx=P(A(Pq_j))$ efficiently.
For that purpose, it suffices for us to
be able to calculate the product $Pc$ efficiently for any given $c\in\bbR^n$.
In fact
$$
Pc=q_j-CC^{\dag}c=q_j-Cy,
$$
where $y=C^{\dag}c$ is the minimum-norm solution
of the least squares problem
\begin{equation}\label{eq:lsp}
y=\arg\min_{z\in\bbR^m}\|Cz-c\|_2,
\end{equation}
which can be computed by using
the QR decomposition of $C \in\bbR^{n\times m}$
or an iterative method such as LSQR \cite{fosa:2011,pasa:1982,saun:1995}.
Another cost-saving observation due to \cite{gozz:2000}
is that for the matrix-vector product
$Mq_j=P(A(Pq_j))$, the first application of $P$ in $Pq_j$
can be skipped due to the fact that
if the initial vector $b_0\in\cN(C^{\T})$, then
$Pq_j=q_j$ for all $1\le j\le k+1$.

We end this subsection by pointing out rLGopt \eqref{eq:rLGopt}
cannot fall into the hard case. The same phenomenon happens to the tridiagonal TRS generated by the generalized Lanczos method \cite[Theorem 5.3]{golr:1999} as well.
Let the eigen-decomposition of $T_k$ be
\begin{equation}\label{eq:EigD4Tk}
T_k=Y\Theta Y^{\T},\,\, Y^{\T}Y=I_k,\,\,\Theta=\diag(\vartheta_1,\vartheta_2,\ldots,\vartheta_k),
\end{equation}
where we suppress the dependency of $Y$, $\Theta$, and $\vartheta_j$ on $k$ for notational
convenience. Further, we arrange $\vartheta_j$ in nondecreasing order, i.e.,
$\vartheta_1\le\vartheta_2\le\cdots\le\vartheta_k$ and
$Y=[y_1,y_2,\cdots,y_k]$.
Let $\mu^{(k)}$ be the optimal value of rLGopt \eqref{eq:rLGopt}.

\begin{theorem}\label{thm:nondegenerate0}
Suppose that $\beta_{j}\neq 0$ for $j=2,3,\ldots, k$ in
the Lanczos process.  Then $\mu^{(k)}<\vartheta_1 \equiv \lambda_{\min}(T_k)$, and
{\rm rLGopt} cannot fall into the hard case.
\end{theorem}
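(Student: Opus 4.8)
The plan is to recognize rLGopt~\eqref{eq:rLGopt} as an instance of the generic pLGopt~\eqref{eq:pLGopt} and then invoke Lemma~\ref{lm:pLGopt} directly. Indeed, the Lanczos relation \eqref{eq:Lanczos-compact} with $M=PAP$ and starting vector $b_0$ gives $Q_k^{\T}(PAP)Q_k=T_k$ and $Q_k^{\T}b_0=\|b_0\|e_1$, so the constraints \eqref{eq:rLGopt-1}--\eqref{eq:rLGopt-2} are precisely those of a pLGopt problem in which $H$ is replaced by $T_k$ and $g_0$ by $\|b_0\|e_1$. Writing the eigendecomposition \eqref{eq:EigD4Tk} of $T_k$, the associated coefficients are $\xi_i=\|b_0\|\,e_1^{\T}y_i$. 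In this language the whole theorem comes down to checking the single hypothesis of Lemma~\ref{lm:pLGopt}(c): that $\|b_0\|e_1\notperp\cU$, where $\cU$ denotes the eigenspace of $T_k$ belonging to its smallest eigenvalue $\vartheta_1=\lambda_{\min}(T_k)$.

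The essential structural input is that the non-degeneracy hypothesis $\beta_j\neq 0$ for $j=2,3,\ldots,k$ makes $T_k$ an \emph{unreduced} symmetric tridiagonal matrix. For such a matrix the leading component of every eigenvector is nonzero \cite{govl:2013,parl:1998}; the quick argument is that if an eigenvector $v$ satisfied $v_{(1)}=0$, then reading the consecutive rows of $(T_k-\vartheta I)v=0$ and cancelling the nonzero subdiagonal entries $\beta_2,\ldots,\beta_k$ would force $v_{(2)}=v_{(3)}=\cdots=0$, contradicting $v\neq 0$. In particular $e_1^{\T}y_1\neq 0$, hence $\xi_1\neq 0$. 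Since $y_1\in\cU$, this already yields $\|b_0\|e_1\notperp\cU$. (Unreducedness also makes every eigenvalue simple, so $\cU=\subspan(y_1)$ and the index $j_0$ of \eqref{eq:j04ci} equals $1$, but this refinement is not needed for the argument.)

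With the hypothesis verified, Lemma~\ref{lm:pLGopt}(c) applied to the pair $(T_k,\|b_0\|e_1)$ gives at once $\mu^{(k)}<\vartheta_1=\lambda_{\min}(T_k)$, which is the first assertion. For the second, recall that rLGopt falls into the hard case only if its optimal value coincides with $\lambda_{\min}(T_k)$: by the characterization of the hard case (Theorem~\ref{thm:hard-case}(1), read for the reduced pair, or equivalently the ``only if'' direction of Theorem~\ref{thm:hard-case:2}) the hard case forces $\|b_0\|e_1\perp\cU$ and $\mu^{(k)}=\lambda_{\min}(T_k)$. Both are contradicted by what we have shown---$\|b_0\|e_1\notperp\cU$ and the strict inequality $\mu^{(k)}<\lambda_{\min}(T_k)$---so rLGopt must lie in the easy case.

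The one place where real work occurs is the middle paragraph: establishing that $T_k$ is unreduced and, consequently, that the eigenvector $y_1$ for its smallest eigenvalue has a nonzero first component. This is exactly where the hypothesis $\beta_j\neq 0$ enters, and it is the only obstacle; the fact is classical and the short inductive cancellation argument above settles it. Everything before and after is bookkeeping that recasts rLGopt as a pLGopt and quotes the already-proved Lemma~\ref{lm:pLGopt} and the hard-case theory of Section~\ref{sec-crq-theory-easyhard}.
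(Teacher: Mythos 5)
Your proposal is correct and follows essentially the same route as the paper: identify rLGopt with a pLGopt instance, use the unreducedness of $T_k$ (from $\beta_j\neq 0$) to get $e_1^{\T}y_1\neq 0$, invoke Lemma~\ref{lm:pLGopt} for the strict inequality $\mu^{(k)}<\vartheta_1$, and conclude via Theorem~\ref{thm:hard-case}(1) that the hard case is excluded. The only difference is that you spell out the classical cancellation argument for the nonzero first eigenvector component where the paper simply cites Parlett.
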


\begin{proof}
It is well-known that the first components of
all eigenvectors $y_i$ of irreducible $T_k$
are nonzero \cite[p.140]{parl:1998}. In particular,
$e_1^{\T}y_1\neq 0$. Lemma \ref{lm:pLGopt} immediately leads to
$\mu^{(k)}<\vartheta_1$.

Since $\mu^{(k)}<\lambda_{\min}(T_k)$ by Theorem \ref{thm:hard-case}(1),
we conclude that rLGopt cannot fall into the hard case.
\end{proof}

\subsubsection{Solving rLGopt }

Now we explain how to solve rLGopt \eqref{eq:rLGopt}. Suppose that $\beta_{j}\neq 0$ for $j=2,3,\ldots, k$, and let
the eigen-decomposition of $T_k$ be given by \eqref{eq:EigD4Tk}.

\begin{theorem}\label{thm:nondegenerate}
The optimal value $\mu^{(k)}$ of {\rm rLGopt} \eqref{eq:rLGopt}
is the smallest root of the secular
function
\begin{equation}\label{eq:seculartk}
\what\chi(\lambda)=\|b_0\|^2e_1^{\T}(T_k-\lambda I)^{-2}e_1-\gamma^2= \sum_{i=1}^k\frac{\zeta_i^2}{(\lambda-\vartheta_i)^2}-\gamma^2,
\end{equation}
where $\zeta_i=\|b_0\|e_1^{\T}y_i$ for $i=1,2,\cdots,k$. Furthermore,
\begin{equation}  \label{eq:rQEPminsol}
(\mu^{(k)},x^{(k)}) =
(\mu^{(k)},\, -\|b_0\|(T_k-\mu^{(k)} I)^{-1}e_1)
\end{equation}
is a  minimizer of {\rm rLGopt} \eqref{eq:rLGopt}.
\end{theorem}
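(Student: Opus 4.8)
The plan is to recognize rLGopt~\eqref{eq:rLGopt} as a concrete instance of the general pLGopt~\eqref{eq:pLGopt} analyzed in Section~\ref{sec:pLGopt=pQEPmin}, and then to invoke Lemma~\ref{lm:pLGopt} almost verbatim. Under the identification $H\to T_k$, $g_0\to\|b_0\|e_1$, and dimension $\ell\to k$, the constraints \eqref{eq:pLGopt-1}--\eqref{eq:pLGopt-2} become exactly \eqref{eq:rLGopt-1}--\eqref{eq:rLGopt-2}. With this substitution the secular function $\chi$ of \eqref{eq:sec-fun:H} turns into precisely $\what\chi$ of \eqref{eq:seculartk}, because $\xi_i=g_0^{\T}y_i=\|b_0\|e_1^{\T}y_i=\zeta_i$ and $\theta_i=\vartheta_i$. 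Thus the full analytic apparatus of Lemma~\ref{lm:pLGopt} is available to rLGopt, and it only remains to decide which of its cases is operative.

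The key step is to certify that case~(c) of Lemma~\ref{lm:pLGopt} applies, i.e., that $g_0=\|b_0\|e_1\notperp\cU$, where $\cU$ is the eigenspace of $T_k$ associated with $\lambda_{\min}(T_k)=\vartheta_1$. First I would record the standard fact that, because $T_k$ is irreducible (all $\beta_j\neq 0$ for $2\le j\le k$), its eigenvalues are simple and the first entry of every eigenvector is nonzero \cite[p.140]{parl:1998}; this is the same property already exploited in the proof of Theorem~\ref{thm:nondegenerate0}. In particular $e_1^{\T}y_1\neq 0$, so $\zeta_1\neq 0$. Since simplicity of $\vartheta_1$ forces $\cU=\subspan(y_1)$, the nonvanishing of $\zeta_1$ is exactly the statement $g_0\notperp\cU$.

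With $g_0\notperp\cU$ in hand, Lemma~\ref{lm:pLGopt}(c) delivers all three conclusions at once: the optimal value satisfies $\mu^{(k)}<\lambda_{\min}(T_k)=\vartheta_1$ (consistent with Theorem~\ref{thm:nondegenerate0}), $\mu^{(k)}$ is the smallest root of the secular function $\what\chi$, and the minimizing vector is $x^{(k)}=-(T_k-\mu^{(k)}I)^{-1}g_0=-\|b_0\|(T_k-\mu^{(k)}I)^{-1}e_1$, which is \eqref{eq:rQEPminsol}. The existence of the minimizer is not a separate worry, since the proof of Lemma~\ref{lm:pLGopt} constructs it explicitly in this case.

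I do not expect a genuine obstacle here: the monotonicity and limiting behaviour of the secular function that underpin the existence and uniqueness of its smallest root in $(-\infty,\vartheta_1)$ were already handled inside Lemma~\ref{lm:pLGopt}. The only points requiring care are the bookkeeping of the reduction (matching $H$, $g_0$, and the $\xi_i$ to their reduced counterparts $T_k$, $\|b_0\|e_1$, and $\zeta_i$) and the irreducibility argument certifying $\zeta_1\neq 0$; once these are in place the result is immediate.
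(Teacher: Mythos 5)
Your proposal is correct and follows essentially the same route as the paper: the paper's proof also identifies rLGopt as an instance of pLGopt, invokes Theorem~\ref{thm:nondegenerate0} (which rests on the same irreducibility argument you give for $e_1^{\T}y_1\neq 0$) to place $\mu^{(k)}$ strictly below $\lambda_{\min}(T_k)$, and then concludes by Lemma~\ref{lm:pLGopt}. Your write-up merely makes the bookkeeping of the identification and the case-(c) verification more explicit than the paper does.
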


\begin{proof}
{\rm rLGopt} \eqref{eq:rLGopt} takes the same form as pLGopt~\eqref{eq:pLGopt}. By Theorem~\ref{thm:nondegenerate0},
$\mu^{(k)}<\lambda_{\min}(T_k)$. The conclusions of the lemma are now consequences of Lemma~\ref{lm:pLGopt}.
\end{proof}

Theorem~\ref{thm:nondegenerate} naturally leads to a method for solving rLGopt \eqref{eq:rLGopt}
through calculating the smallest root of the secular
function $\what\chi(\lambda)$. Algorithm~\ref{eq:rLGopt} outlines the method,
based on an efficient secular equation solver in Appendix~\ref{sec:secularEq}.


\begin{algorithm}
\caption{Solving rLGopt \eqref{eq:rLGopt}}
\label{alg:LGopt}
\begin{algorithmic}[1]
\Require $T_k\in\bbR^{k\times k}$, $\|b_0\|$, $\gamma>0$, and tolerance $\epsilon$;
\Ensure $(\mu^{(k)}, x^{(k)})$, approximate minimizer of rLGopt~\eqref{eq:rLGopt};
            \State Compute the eigenvalues $\theta_1\le\theta_2\le\cdots\le\theta_k$ of $T_k$ and the corresponding eigenvectors $y_1,\cdots,y_k$;
            \State  $\xi_i\gets \|b_0\|e_1^{\T}y_i$ for $i= 1,2,\cdots,k$;
            \State  $\delta_0\gets \frac{1}{\gamma}\sqrt{\sum_{i= 1}^k\xi_i^2}$, $\alpha^{(0)}\gets \theta_1-\delta_0$, $\beta^{(0)}\gets \theta_1$ and
              $\eta\gets \gamma^2-\sum_{i= 2}^k \frac{\xi_i^2}{([\theta_1-\delta_0]-\theta_i)^2}$;
               \State {\bf if $\eta>0$ then $\lambda^{(0)}\gets \theta_1-|\xi_1|/\sqrt{\eta}$ else $\lambda^{(0)}\gets \theta_1-\delta_0/2$};
            \For {$j = 0,1,2,\ldots$}
            \State  $\chi\gets \sum_{i= 1}^k\frac{\xi_i^2}{(\lambda^{(j)}-\theta_i)^2}-\gamma^2$;
            \State {\bf if $\chi>0$ then $\alpha^{(j+1)}\gets \alpha^{(j)}$, $\beta^{(j+1)}\gets \lambda^{(j)}$ else $\alpha^{(j+1)}\gets \lambda^{(j)}$, $\beta^{(j+1)}\gets \beta^{(j)}$};
            \State  $a\gets (\lambda^{(j)}-\theta_1)^3\sum_{i= 1}^n\frac{\xi_i^2}{(\lambda^{(j)}-\theta_i)^3}$, $b\gets (\lambda^{(j)}-\theta_1)\sum_{i= 1}^n\frac{\xi_i^2}{(\lambda^{(j)}-\theta_i)^3}-\chi$;
            \If{$b>0$}
            \State  $\lambda_1\gets \theta_1-\sqrt{a/b}$;
            \State {\bf if $\lambda_1\in(\alpha^{(j+1)},\beta^{(j+1)})$ then $\lambda^{(j+1)}\gets \lambda_1$ else $\lambda^{(j+1)}\gets (\alpha^{(j+1)}+\beta^{(j+1)})/2$};
            \Else
            \State  $\lambda^{(j+1)}\gets (\alpha^{(j+1)}+\beta^{(j+1)})/2$;
            \EndIf
            \State {\bf if $|\lambda^{(j+1)}-\lambda^{(j)}|<\epsilon$ then stop};
            \EndFor
\State {\bf return}
$(\mu^{(k)},x^{(k)}) = (\lambda^{(j+1)}, -(T_k-\mu^{(k)} I)^{-1}\|b_0\|e_1)$
as a solution of rLGopt~\eqref{eq:rLGopt}.
\end{algorithmic}
\end{algorithm}

Although Theorem \ref{thm:nondegenerate} assures us that the hard case cannot happen for rLGopt \eqref{eq:rLGopt},
cases where $|e_1^{\T}y_1|$ is very tiny are possible. Such a nearly hard case has to be treated with care,
a subject of further future study.

\begin{remark}\label{rmk:gltrs}
Let us discuss the relationship between
solving rLGopt \eqref{eq:rLGopt} and
solving TRS by a generalized Lanczos (GLTRS) method proposed in \cite{golr:1999}.
GLTRS projects a similar  problem to \eqref{eq:CQtrust-0} and \eqref{eq:CQtrust-1} by
a Krylov subspace to yield a small-size problem.
Ignoring \eqref{eq:CQtrust-2} for the moment, we run  the Lanczos process with $M=PAP$
and the starting vector be $r_0=b_0$ to generate the orthonormal basis matrix
$Q_k$ and the tridiagonal matrix $T_k$. Since $b_0\in\cN(C^{\T})$, it can be verified that
$\cR(Q_k)\subset\cN(C^{\T})$, which means that \eqref{eq:CQtrust-2} is automatically taken care of.
Project  \eqref{eq:CQtrust-0} and \eqref{eq:CQtrust-1} onto
the column space of $Q_k$ and we arrive at the following equality
constrained optimization problem:
\begin{subequations}\label{eq:rCQtrust}
\begin{empheq}[left=\empheqlbrace]{alignat=2}
\min         ~& x^{\T}T_kx+2x^{\T}\|g_0\|e_1,\label{eq:rCQtrust-0}\\
\mbox{s.t.}  ~& \|x\|=\gamma. \label{eq:rCQtrust-1}
\end{empheq}
\end{subequations}
Problem \eqref{eq:rCQtrust} is similar to the tridiagonal TRS generated
by GLTRS except that the constraint here is equality instead of inequality.
Solving \eqref{eq:rCQtrust} by the method of the Lagrangian multipliers leads to   exactly
rLGopt \eqref{eq:rLGopt}.
\hfill $\Box$
\end{remark}

\subsubsection{Solving LGopt }
After computing $(\mu^{(k)},x^{(k)})$, the minimizer
of rLGopt \eqref{eq:rLGopt}, we deduce an approximate minimizer of LGopt~\eqref{eq:LGopt}:
\begin{equation}\label{eq:approx-sol:LGopt}
(\mu^{(k)},u^{(k)}) = (\mu^{(k)}, Q_kx^{(k)})
\end{equation}
It can be verified that
\begin{equation}\label{eq:k-LGopt}
\|u^{(k)}\|=\|x^{(k)}\|=\gamma,\,\,
u^{(k)}\in\mathcal{R}(Q_k)\subset\cN(C^{\T}).
\end{equation}
That is the pair in \eqref{eq:approx-sol:LGopt} satisfies
the constraints \eqref{eq:LGopt-2} and \eqref{eq:LGopt-3}.

The accuracy of this approximate minimizer
$(\mu^{(k)},u^{(k)})$ can be measured by the residual vector
\begin{equation}\label{eq:res-LGopt}
r_k^{\LGopt}=(PAP-\mu^{(k)} I)u^{(k)}+b_0.
\end{equation}
For simplicity, we may assume that $(\mu^{(k)},x^{(k)})$ satisfies
the constraint of rLGopt~\eqref{eq:rLGopt} exactly,
in particular $(T_k -\mu^{(k)} I) x^{(k)} =-\|b_0\|e_1$,
since it is reasonable to assume that the error in
 $(\mu^{(k)},u^{(k)})$ as an approximate minimizer
of LGopt~\eqref{eq:LGopt} is much larger than the error in $(\mu^{(k)},x^{(k)})$ as the computed minimizer
of rLGopt \eqref{eq:rLGopt}.
Subsequently, we have the following expression for the
residual vector $r_k^{\LGopt}$, similar to the one on
the generalized Lanczos method for TRS \cite{golr:1999}.

\begin{proposition}\label{prop:res-LGopt}
Suppose that the approximate minimizer $(\mu^{(k)},x^{(k)})$
of {\rm rLGopt}~\eqref{eq:rLGopt} satisfies
the constraints of {\rm rLGopt}~\eqref{eq:rLGopt} exactly. We have
\begin{equation}\label{eq:res-LGopt:1}
r_k^{\LGopt} = \beta_{k+1} q_{k+1}e_k^{\T}x^{(k)}.
\end{equation}
\end{proposition}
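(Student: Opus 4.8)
The plan is to substitute the compact Lanczos relation \eqref{eq:Lanczos-compact} (run with $M = PAP$ and starting vector $r_0 = b_0$) directly into the defining expression \eqref{eq:res-LGopt} for $r_k^{\LGopt}$, and then collapse the result using the exactness hypothesis $(T_k - \mu^{(k)} I) x^{(k)} = -\|b_0\| e_1$. Before computing, I would record the two ingredients that feed the algebra. Since the process is started with $q_1 = b_0/\|b_0\|$, the right-hand side vector can be written as $b_0 = \|b_0\| q_1 = Q_k(\|b_0\| e_1)$; and by \eqref{eq:approx-sol:LGopt} the approximate LGopt minimizer is $u^{(k)} = Q_k x^{(k)}$.

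With these in hand, I would expand the residual and apply the Lanczos identity $PAP\cdot Q_k = Q_k T_k + \beta_{k+1} q_{k+1} e_k^{\T}$:
\begin{align*}
r_k^{\LGopt}
&= (PAP - \mu^{(k)} I) Q_k x^{(k)} + \|b_0\| Q_k e_1 \\
&= (PAP\cdot Q_k) x^{(k)} - \mu^{(k)} Q_k x^{(k)} + \|b_0\| Q_k e_1 \\
&= Q_k (T_k - \mu^{(k)} I) x^{(k)} + \beta_{k+1} q_{k+1} \left(e_k^{\T} x^{(k)}\right) + \|b_0\| Q_k e_1 .
\end{align*}
The exactness hypothesis then turns the first term into $Q_k(T_k - \mu^{(k)} I)x^{(k)} = -\|b_0\| Q_k e_1$, which cancels the last term and leaves the claimed identity $r_k^{\LGopt} = \beta_{k+1} q_{k+1} e_k^{\T} x^{(k)}$.

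The computation is entirely routine and there is no genuine obstacle; the result is really a bookkeeping consequence of the three-term recurrence. The only point requiring care is to invoke the hypothesis precisely as stated, namely that $(\mu^{(k)}, x^{(k)})$ satisfies the reduced Lagrange equation \eqref{eq:rLGopt-1} with \emph{equality}. This idealization matters because in practice $x^{(k)}$ is only an approximate root of the secular equation \eqref{eq:seculartk}, so the proposition is deliberately isolating the projection error — captured by the single term $\beta_{k+1} q_{k+1} e_k^{\T} x^{(k)}$ — from the (much smaller) error incurred in solving rLGopt itself, as the discussion preceding the statement explains. It is worth noting in passing that, because $b_0 \in \cN(C^{\T})$ and $M = PAP$, we have $\cR(Q_k) \subset \cN(C^{\T})$ in agreement with \eqref{eq:k-LGopt}, but this containment plays no role in the residual identity and need not be used.
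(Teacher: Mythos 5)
Your proposal is correct and follows exactly the paper's own argument: substitute $u^{(k)}=Q_kx^{(k)}$ into the residual, apply the Lanczos relation \eqref{eq:Lanczos-compact} to replace $PAP\,Q_k$ by $Q_kT_k+\beta_{k+1}q_{k+1}e_k^{\T}$, and cancel $-\|b_0\|Q_ke_1$ against $b_0$ via the exactness hypothesis. There is nothing to add.
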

\begin{proof}
We have by \eqref{eq:Lanczos-compact}
\begin{align*}
r_k^{\LGopt}&=(PAP-\mu^{(k)} I)Q_kx^{(k)}+b_0  \\
     &=[Q_k(T_k-\mu^{(k)} I)+\beta_{k+1} q_{k+1}e_k^{\T}]x^{(k)}+b_0  \\
     &=-Q_k\|b_0\|e_1+\beta_{k+1} q_{k+1}e_k^{\T}x^{(k)}+b_0  \\
     &=\beta_{k+1} q_{k+1}e_k^{\T}x^{(k)},
\end{align*}
as was to be shown.
\end{proof}

In deciding if $r_k^{\LGopt}$ is sufficiently small,
a sensible way is to check some kind of
normalized residual. In view of  \eqref{eq:res-LGopt},
a reasonable one is
\begin{equation}\label{eq:NRes-LGopt}
\NRes_k^{\LGopt}:=\frac {\|r_k^{\LGopt}\|}{(\|A\|+|\mu^{(k)}|)\|x^{(k)}\|+\|b_0\|}
     =\frac {|\beta_{k+1}|\, |e_k^{\T}x^{(k)}|}{(\|A\|+|\mu^{(k)}|)\|x^{(k)}\|+\|b_0\|}=:\delta_k^{\LGopt}.
\end{equation}
The Lanczos process is stopped
if $\delta_k^{\LGopt}\le\epsilon$, a prescribed tolerance.
In summary, the Lanczos algorithm for
solving LGopt~\eqref{eq:LGopt} is given in Algorithm~\ref{alg:solveLGopt}.

\begin{algorithm}
\caption{Solving LGopt \eqref{eq:LGopt}}
\label{alg:solveLGopt}
\begin{algorithmic}[1]
\Require
$A\in\bbR^{n\times n}$, $C\in\bbR^{n\times m}$, $b_0\in\bbR^n$, $\gamma>0$,  and tolerance $\epsilon$;
\Ensure $(\mu^{(k)}, u^{(k)})$,  approximate minimizer of LGopt~\eqref{eq:LGopt};
\State $\beta_1 \gets \|b_0\|$;
\State {\bf if $\beta_1=0$ then stop};
\State $q_1 \gets r_0/\beta_1$, $q_0 \gets 0$;
\For {$k=1,2,\ldots$}
\State $\hat q\gets Aq_k$, $\hat q\gets P\hat q$, $\hat q\gets \hat q-\beta_k q_{k-1}$;
\State $\alpha_k\gets q_k^{\T} \hat q$, $\hat q\gets \hat q-\alpha_k q_k$, $\beta_{k+1} \gets \|\hat q\|$;

\State compute the minimizer $(\mu^{(k)},x^{(k)})$ of
rLGopt~\eqref{eq:rLGopt} by Algorithm \ref{alg:LGopt};

\State {\bf if $\delta_k^{\LGopt}\le\epsilon$ then stop};

\State $q_{k+1} \gets \hat q/\beta_{k+1}$;

\EndFor
\State $Q_k=[q_1,q_2,\cdots,q_k]$;
\State {\bf return} $(\mu^{(k)}, u^{(k)})$ with $u^{(k)}=Q_kx^{(k)}$
as an approximate minimizer of LGopt~\eqref{eq:LGopt}.
\end{algorithmic}
\end{algorithm}

\subsection{Solving QEPmin }\label{sec-crq-alg-projqep}
In this section, we propose our Lanczos algorithm for the numerical solution of  QEPmin~\eqref{eq:QEPmin}.
It follows the same idea as the previous subsection. First, we reduce
QEPmin~\eqref{eq:QEPmin}   to a smaller problem by projection, and then solve
the reduced QEPmin by an eigensolver.
One immediate advantage of doing so is the availability of
mature eigensolvers for use to solve the underlying QEP.
Independently, QEPmin \eqref{eq:QEPmin} is of interest of its own, e.g., it plays a role in
solving the total least square problems \cite{lav:2007,sihg:2004}.

\subsubsection{Dimensional reduction of QEPmin}
The Lanczos process is natural as a method to solve QEP \eqref{eq:QEPmin-1} for its leftmost eigenvalue
and the corresponding eigenvector.
For convenience, we restate QEP \eqref{eq:QEPmin-1} here:
\begin{equation}\label{eq:QEP2proj}
(PAP-\lambda I)^{2}z=\gamma^{-2}b_0b_0^{\T}z, \quad Pz=z.
\end{equation}
Note that we have added the constraint $Pz=z$ since
we are only interested in those eigenvectors $z\in\cN(C^{\T})$.

Now we discuss how to perform the
dimensional reduction of the QEP \eqref{eq:QEP2proj}
via the projection onto the Krylov subspace generated
by the Lanczos process described in Section~\ref{sec-prelim-lan}.
Let $Q_k$ be the orthogonal matrix and $T_k$ be
the tridiagonal matrix generated by
$k$ steps of the Lanczos process with the matrix
$M=PAP$ and the starting vector $b_0$. We will again have \eqref{eq:Lanczos-compact}, i.e.,
\begin{equation}\label{eq:b0b0:proj}
PAPQ_k=Q_kT_k+\beta_{k+1} q_{k+1}e_k^{\T}
\quad \mbox{and} \quad
Q_k^{\T}b_0b_0^{\T}Q_k=\|b_0\|^2 e_1e_1^{\T}.
\end{equation}
By a straightforward calculation, we have
\begin{align}
(PAP-\lambda I)^{2}Q_k
  &=(PAP-\lambda I)\big[Q_k(T_k-\lambda I)+\beta_{k+1} q_{k+1}e_k^{\T}\big]  \nonumber\\
  &=\big[Q_k(T_k-\lambda I)+\beta_{k+1} q_{k+1}e_k^{\T}\big](T_k-\lambda I)+(PAP-\lambda I)\beta_{k+1} q_{k+1}e_k^{\T}  \nonumber\\
  &=Q_k(T_k-\lambda I)^2+\beta_{k+1}
q_{k+1}e_k^{\T}(T_k-\lambda I)+\beta_{k+1}(PAP-\lambda I) q_{k+1}e_k^{\T}
     \label{eq:(PAP)sq-proj}
\end{align}
and
\begin{align}
Q_k^{\T}(PAP-\lambda I)^{2}Q_k
  &=(T_k-\lambda I)^2+0+\beta_{k+1}Q_k^{\T}(PAP-\lambda I) q_{k+1}e_k^{\T} \nonumber \\
  &=(T_k-\lambda I)^2+\beta_{k+1}\big[Q_k(T_k-\lambda I)+\beta_{k+1} q_{k+1}e_k^{\T}\big]^{\T} q_{k+1}e_k^{\T} \nonumber\\
  &=(T_k-\lambda I)^2+\beta_{k+1}^2e_ke_k^{\T}. \label{eq:LHS-proj}
\end{align}
By \eqref{eq:b0b0:proj} and \eqref{eq:LHS-proj},
naturally one would like to take the reduced QEP \eqref{eq:QEP2proj} to be
\begin{equation}\label{eq:QEPproj'ed0}
\big[(T_k-\lambda I)^2+\beta_{k+1}^2e_ke_k^{\T}\big]w={\gamma^{-2}}{\|b_0\|^2}e_1e_1^{\T} w.
\end{equation}
Unfortunately, this reduced QEP may not have any real eigenvalue,
not to mention that the leftmost eigenvalue is guaranteed to be real, as demonstrated by
Example~\ref{ex:leftmost} below.
To overcome it, we propose to drop
the term $\beta_{k+1}^2e_ke_k^{\T}$
in \eqref{eq:LHS-proj} and use the following
reduced QEP
\begin{equation}\label{eq:QEPproj'ed}
(T_k-\lambda I)^2w={\gamma^{-2}}{\|b_0\|^2}e_1e_1^{\T} w.
\end{equation}
Since it has the same form as the QEP in pQEPmin~\eqref{eq:pQEPmin-1},
the leftmost eigenvalue of the reduced QEP \eqref{eq:QEPproj'ed}
is guaranteed to be real by Theorem~\ref{thm:pQEPmin=lefteig}.

It can be seen that the corresponding reduced QEPmin \eqref{eq:QEPmin} to  QEP~\eqref{eq:QEPproj'ed} is given by
\begin{subequations}\label{eq:rQEPmin}
\begin{empheq}[left={\mbox{rQEPmin:}\quad}\empheqlbrace]{alignat=2}
\min ~& \lambda  \label{eq:rQEPmin-0}\\
\mbox{s.t.} ~&(T_k-\lambda I)^2w={\gamma^{-2}}{\|b_0\|^2}e_1e_1^{\T}w, \label{eq:rQEPmin-1} \\
~& \lambda\in\bbR, w\ne 0.  \label{eq:rQEPmin-2}
\end{empheq}
\end{subequations}
We note that the Lanczos process of $PAP$ on $b_0$
is the same as, upon a linear transformation by $S_1^{\T}$,
that of $H$ on $g_0$ in pQEPmin~\eqref{eq:pQEPmin}. Therefore,
rQEPmin \eqref{eq:rQEPmin} can be viewed as a reduced-form
of pQEPmin~\eqref{eq:pQEPmin}. 

\begin{example}\label{ex:leftmost}
Let $A=\diag(1,2,3,4,5)$,
$C=[0.65,1,0.68,1.13,-0.23]^{\T}$
and $b= [1]$. The  eigenvalues of QEP~\eqref{eq:QEPmin-1} and \eqref{eq:QEPmin-2} in QEPmin, computed by MATLAB, are
$$
{\tt 0.8333},\,\,
{\tt 1.6493},\,\,
{\tt 2.0000},\,\,
{\tt 2.9916 \pm 0.2369i},\,\,
{\tt 3.8786},\,\,
{\tt 4.8236},\,\,
{\tt 5.1196}.
$$
We see the leftmost eigenvalue {\tt 0.8333}$\in\bbR$.
Apply the Lanczos process with $k=2$ leads to a $2\times 2$
QEP \eqref{eq:QEPproj'ed0} whose eigenvalues are computed to be
$$
{\tt 1.8124 \pm 0.4172i},\,\,{\tt 3.3714 \pm 0.2547i},
$$
both are genuine complex numbers!
In contrast, the eigenvalues of QEP~\eqref{eq:QEPproj'ed}
are
$$
{\tt 1.1429},\,\,
{\tt 2.2661},\,\,
{\tt 2.8915},\,\,
{\tt 4.0672},
$$
all of which are real.
\end{example}

\subsubsection{Solving rQEPmin}
To solve rQEPmin \eqref{eq:QEPproj'ed},  we first linearize it into a linear eigenvalue problem (LEP).
The reader is referred to  \cite[Chapter 1]{golr:1982} for many different
ways to linearize a general polynomial eigenvalue problem. Our rQEPmin \eqref{eq:QEPproj'ed} takes
a rather particular form, and we use similar ideas but slightly  different linearization.
Specifically, we let $y=(T_k-\lambda I)w$ and $s=\begin{bmatrix}y\\ w\end{bmatrix}$. Then
QEP \eqref{eq:rQEPmin-1}  can be converted to
the following LEP:
\begin{equation}\label{eq:QEPproj'ed-lin}
\begin{bmatrix}
T_k&-{\gamma^{-2}}{\|b_0\|^2}e_1e_1^{\T}\\-I&T_k\end{bmatrix}s=\lambda s.
\end{equation}
At this point, one can use a standard eigensolver 
to find the leftmost real eigenvalue $\mu^{(k)}$
of LEP \eqref{eq:QEPproj'ed-lin} and
its corresponding eigenvector
$s^{(k)}=\begin{bmatrix}y^{(k)}\\ w^{(k)}\end{bmatrix}$.
Subsequently, an approximate optimizer of rQEPmin \eqref{eq:rQEPmin}
is given by $(\mu^{(k)}, w^{(k)})$.

\subsubsection{Solving QEPmin}
The minimizer $(\mu^{(k)}, w^{(k)})$
of rQEPmin \eqref{eq:rQEPmin} yields an approximate minimizer of
QEPmin~\eqref{eq:QEPmin} as
\begin{equation}\label{eq:approx-eigenpair:QEPmin}
(\mu^{(k)},z^{(k)} )
= (\mu^{(k)}, Q_k w^{(k)}).
\end{equation}
The accuracy of this pair $(\mu^{(k)},z^{(k)} )$ as an approximate minimizer
can be measured by the norm of the following
the residual vector
\begin{equation}\label{eq:rk}
r_k^{\QEPmin} =\left(PAP - \mu^{(k)} I\right)^2 z^{(k)} - \gamma^{-2}b_0b_0^{\T}z^{(k)}.
\end{equation}
The following proposition shows that this residual vector
can be efficiently obtained during computation.

\begin{proposition}\label{prop:res-QEPmin}
Suppose that $(\mu^{(k)},w^{(k)})$ is an exact minimizer of
{\rm rQEPmin} \eqref{eq:rQEPmin} and $y^{(k)}=(T_k-\mu^{(k)} I)w^{(k)}$. Then
\begin{equation}\label{eq:res-QEPmin}
r_k^{\QEPmin} = \beta_{k+1} q_{k+1}e_k^{\T}y^{(k)}+\beta_{k+1} (PAP-\mu^{(k)} I) q_{k+1}e_k^{\T}w^{(k)}.
\end{equation}
\end{proposition}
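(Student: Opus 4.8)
The plan is to compute $r_k^{\QEPmin}$ directly by substituting $z^{(k)} = Q_k w^{(k)}$ and exploiting the squared-operator Lanczos identity already derived in \eqref{eq:(PAP)sq-proj}. Evaluating that identity at $\lambda = \mu^{(k)}$ and applying both sides to $w^{(k)}$ on the right gives
$$(PAP - \mu^{(k)} I)^2 z^{(k)} = Q_k(T_k - \mu^{(k)} I)^2 w^{(k)} + \beta_{k+1} q_{k+1} e_k^{\T}(T_k - \mu^{(k)} I) w^{(k)} + \beta_{k+1}(PAP - \mu^{(k)} I) q_{k+1} e_k^{\T} w^{(k)}.$$
Since $y^{(k)} = (T_k - \mu^{(k)} I) w^{(k)}$, the middle term is already $\beta_{k+1} q_{k+1} e_k^{\T} y^{(k)}$, and the last term matches the second term of the claimed formula \eqref{eq:res-QEPmin}. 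So everything reduces to showing that the remaining in-subspace contribution $Q_k(T_k - \mu^{(k)} I)^2 w^{(k)}$ is exactly cancelled by the rank-one term $\gamma^{-2} b_0 b_0^{\T} z^{(k)}$.

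The key step is this cancellation. First I would invoke the constraint \eqref{eq:rQEPmin-1} satisfied by the exact minimizer $(\mu^{(k)}, w^{(k)})$, namely $(T_k - \mu^{(k)} I)^2 w^{(k)} = \gamma^{-2}\|b_0\|^2 e_1 e_1^{\T} w^{(k)}$, to rewrite the in-subspace term as $\gamma^{-2}\|b_0\|^2 Q_k e_1 e_1^{\T} w^{(k)}$. Then I would use the two elementary consequences of the Lanczos construction with starting vector $b_0$: $Q_k e_1 = q_1 = b_0/\|b_0\|$ and $Q_k^{\T} b_0 = \|b_0\| e_1$. The first gives $\|b_0\|^2 Q_k e_1 = \|b_0\|\, b_0$, while the second gives $b_0^{\T} z^{(k)} = b_0^{\T} Q_k w^{(k)} = \|b_0\| e_1^{\T} w^{(k)}$. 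Substituting both, the in-subspace term becomes $\gamma^{-2} \|b_0\|\, b_0\, e_1^{\T} w^{(k)}$ and the projected quadratic term $\gamma^{-2} b_0 b_0^{\T} z^{(k)}$ becomes $\gamma^{-2} b_0 \|b_0\| e_1^{\T} w^{(k)}$; these coincide.

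Finally I would assemble the pieces: the two coinciding terms cancel in $r_k^{\QEPmin} = (PAP - \mu^{(k)} I)^2 z^{(k)} - \gamma^{-2} b_0 b_0^{\T} z^{(k)}$, leaving only $\beta_{k+1} q_{k+1} e_k^{\T} y^{(k)} + \beta_{k+1}(PAP - \mu^{(k)} I) q_{k+1} e_k^{\T} w^{(k)}$, which is precisely \eqref{eq:res-QEPmin}. I do not anticipate a genuine obstacle here; the argument is a short direct computation whose only subtlety is recognizing that the reduced equation \eqref{eq:rQEPmin-1} is exactly what makes the in-subspace contribution match the rank-one term $\gamma^{-2} b_0 b_0^{\T} z^{(k)}$, so that the residual is concentrated entirely in the ``escaped'' direction $q_{k+1}$ and its image under $PAP - \mu^{(k)} I$, closely mirroring Proposition~\ref{prop:res-LGopt}.
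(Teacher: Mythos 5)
Your proposal is correct and follows essentially the same route as the paper's proof: expand $(PAP-\mu^{(k)}I)^2 Q_k w^{(k)}$ via \eqref{eq:(PAP)sq-proj}, use $Q_ke_1=b_0/\|b_0\|$ and $Q_k^{\T}b_0=\|b_0\|e_1$ to write the rank-one term as $\gamma^{-2}\|b_0\|^2 Q_ke_1e_1^{\T}w^{(k)}$, and cancel it against $Q_k(T_k-\mu^{(k)}I)^2w^{(k)}$ using the reduced constraint \eqref{eq:rQEPmin-1}. No gaps.
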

\begin{proof}
Keeping \eqref{eq:(PAP)sq-proj} in mind, we find that
\begin{align*}
r_k^{\QEPmin}&=\left(PAP - \mu^{(k)} I\right)^2 Q_k w^{(k)} - \gamma^{-2}b_0b_0^{\T}Q_k w^{(k)} \\
  &\stackrel{{\text{ \eqref{eq:(PAP)sq-proj}}}}{=}Q_k(T_k-\mu^{(k)} I)^2w^{(k)}+\beta_{k+1} q_{k+1}e_k^{\T}(T_k-\mu^{(k)} I)w^{(k)}\\
  &\qquad+\beta_{k+1}({PAP}-\mu^{(k)} I) q_{k+1}e_k^{\T}w^{(k)}  - Q_k\frac {\|b_0\|^2}{\gamma^2}e_1e_1^{\T}w^{(k)} \\
  &\stackrel{{\text{ \eqref{eq:rQEPmin-1}}}}{=}\beta_{k+1} q_{k+1}e_k^{\T}(T_k-\mu^{(k)} I)w^{(k)}+\beta_{k+1}({PAP}-\mu^{(k)} I) q_{k+1}e_k^{\T}w^{(k)} \\
  &=\beta_{k+1} q_{k+1}e_k^{\T}y^{(k)}+\beta_{k+1}({PAP}-\mu^{(k)} I) q_{k+1}e_k^{\T}w^{(k)},
\end{align*}
as expected.
\end{proof}

We note that if the $(k+1)$st step are carried out in the
Lanczos process \eqref{eq:Lanczos-compact},
then the term $(PAP-\mu^{(k)} I) q_{k+1}$
in \eqref{eq:res-QEPmin} can be expressed as a
linear combination of $q_k$, $q_{k+1}$, and $q_{k+2}$.
We propose to use the following normalized residual norm as a stopping criterion for the
Lanczos process:
\begin{subequations}\label{eq:NRes-QEPmin}
\begin{align}
\NRes_k^{\QEPmin} & :=
\frac {\|r_k^{\QEPmin}\|}
{[(\|A\|+|\mu^{(k)}|)^2+\gamma^{-2}\|b_0\|^2\big]\|w^{(k)}\|_2} \label{eq:NRes-QEPmin-1} \\
& \le \frac{|\beta_{k+1}|\big[ |e_k^{\T}y^{(k)}|+
(\|A\|+|\mu^{(k)}|)\,|e_k^{\T}w^{(k)}|\big]}
{[(\|A\|+|\mu^{(k)}|)^2+\gamma^{-2}\|b_0\|^2\big]\|w^{(k)}\|_2}
=:\delta_k^{\QEPmin}.    \label{eq:NRes-QEPmin-2}
\end{align}
\end{subequations}
The Lanczos algorithm for solving QEPmin~\eqref{eq:QEPmin} is summarized in Algorithm~\ref{alg:solveQEP}.

\begin{algorithm}
\caption{Solving QEPmin~\eqref{eq:QEPmin}}
\label{alg:solveQEP}
\begin{algorithmic}[1]
\Require $A\in\bbR^{n\times n}$, $C\in\bbR^{n\times m}$, $b_0\in\bbR^n$, $\gamma>0$, and tolerance $\epsilon$;
\Ensure $(\mu^{(k)}, z^{(k)})$, approximate minimizer of QEPmin~\eqref{eq:QEPmin}
            \State $\beta_1 \gets \|b_0\|$;
            \State {\bf if $\beta_1=0$ then stop};
            \State $q_1 \gets r_0/\beta_1$, $q_0 \gets 0$;
            \For {$k= 1,2,\ldots$}
              \State $\hat q\gets Aq_k$, $\hat q\gets P\hat q$, $\hat q\gets \hat q-\beta_k q_{k-1}$;
                \State $\alpha_k\gets q_k^{\T} \hat q$, $\hat q\gets \hat q-\alpha_k q_k$, $\beta_{k+1} \gets \|\hat q\|$;

                \State compute the leftmost eigenpair  $(\mu^{(k)},s)$ of LEP~\eqref{eq:QEPproj'ed-lin};
               \State $y^{(k)}\gets s_{(1:k)}$, $w^{(k)}\gets s_{(k+1:2k)}$;
               \State {\bf if $\delta_k^{\QEPmin}\le\epsilon$ then stop};
               \State $q_{k+1} \gets \hat q/\beta_{k+1}$;
            \EndFor
            \State $Q_k=[q_1,q_2,\cdots,q_k]$;
            \State $z^{(k)} = Q_k w^{(k)}$ and $u^{(k)}=-\frac{\gamma^2}{\|b_0\|e_1^{\T}w^{(k)}}Q_ky^{(k)}$;
            \State \Return $(\mu^{(k)}, z^{(k)})$ as an approximated minimizer of QEPmin \eqref{eq:QEPmin}
                   and, as a by-product, $(\mu^{(k)}, u^{(k)})$ as an approximated minimizer of LGopt \eqref{eq:LGopt}.
\end{algorithmic}
\end{algorithm}

It remains to explain why $(\mu^{(k)}, u^{(k)})$ at Line 14 of  Algorithm \ref{alg:solveQEP} is an approximated minimizer of LGopt \eqref{eq:LGopt}.
Let $(\mu^{(k)},\begin{bmatrix}y^{(k)}\\ w^{(k)}\end{bmatrix})$
be the leftmost eigenpair of LEP~\eqref{eq:QEPproj'ed-lin}.
By Theorem~\ref{thm:nondegenerate}, $\mu^{(k)}\notin\text{eig}(T_k)$, and
so  $(T_k-\mu^{(k)}I)^2w^{(k)}\neq 0$ and $e_1^{\T}w^{(k)}\neq 0$.
Through a straightforward application of Theorem \ref{thm:pLGopt=pQEPmin} to
rLGopt \eqref{eq:rLGopt} and rQEPmin~\eqref{eq:rQEPmin},
we find that $(\mu^{(k)},x^{(k)})$ is the minimizer
of rLGopt \eqref{eq:rLGopt} where
\begin{equation}\label{eq:rQEPmin2rLGopt}
x^{(k)}=-\frac{\gamma^2}{\|b_0\|e_1^{\T}w^{(k)}}(T_k-\mu^{(k)}I)w^{(k)}=-\frac{\gamma^2}{\|b_0\|e_1^{\T}w^{(k)}}y^{(k)}.
\end{equation}
Therefore, as a by-product, an approximate minimizer of
LGopt \eqref{eq:LGopt} is given by
\begin{equation} \label{eq:ubyQEPmin}
(\mu^{(k)}, u^{(k)})
= \left(\mu^{(k)},\,\, -\frac{\gamma^2}{\|b_0\|e_1^{\T}w^{(k)}}Q_ky^{(k)}\right).
\end{equation}

\subsection{Lanczos algorithm for CRQopt}\label{sec:alg4CRQopt}

Having obtained approximate minimizers of LGopt~\eqref{eq:LGopt}
and QEPmin~\eqref{eq:QEPmin}, by Theorem~\ref{thm:CQopt=LGopt} we can recover
an approximate minimizer
of CRQopt \eqref{eq:CRQopt} as
\begin{equation}\label{eq:approx-CRQopt}
v^{(k)}=n_0 + u^{(k)}.
\end{equation}
where
$u^{(k)}$ is given
by \eqref{eq:approx-sol:LGopt} if via solving LGopt~\eqref{eq:LGopt} or
by \eqref{eq:ubyQEPmin} if via solving QEPmin~\eqref{eq:QEPmin}.
The overall algorithm called {\em the Lanczos Method},
is outlined in Algorithm~\ref{alg:overall}.

\begin{algorithm}
\caption{Solving CRQopt \eqref{eq:CRQopt}} \label{alg:overall}
\begin{algorithmic}[1]
\Require $A\in \bbR^{ n\times n}$, $C\in \bbR^{ n\times m}$
with full column rank, $b\in\bbR^m$,
tolerance  $\epsilon$;

\Ensure  approximate minimizer $v$ of CRQopt~\eqref{eq:CRQopt};

\State $n_0\gets (C^{\T})^{\dag}b$ (by, e.g., the QR decomposition of $C$);

\State {\bf if $\|n_0\|>1$ then output {\em no solution}};

\State {\bf if $\|n_0\|=1$ then $v \gets n_0$ and output $v$};

\If {$\|n_0\|<1$}
\State $\gamma\gets \sqrt{1-\|n_0\|^2}$,
       $q\gets An_0$, $b_0\gets (I-CC^{\dag})q$;

\State compute an approximate solution of LGopt \eqref{eq:LGopt} $(\mu^{(k)}, u^{(k)})$ by
 Algorithm~\ref{alg:solveLGopt} or \ref{alg:solveQEP}

\State {\bf return} $v^{(k)}=n_0+u^{(k)}$, approximate minimizer of CRQopt~\eqref{eq:CRQopt};
            \EndIf
\end{algorithmic}
\end{algorithm}

\subsubsection{Finite step stopping property}
As in many Lanczos type methods for
numerical linear algebra problems \cite{demm:1997,govl:2013,parl:1998,saad:1992},
Algorithm~\ref{alg:overall} also enjoys a finite-step-stopping property
in the exact arithmetic, i.e., it
will deliver an exact solution in at most $n$ steps.
It is an excellent theoretic property but of little or no
practical significance for large scale problems.  We often expect
that the Lanczos process would stop much sooner before
the $n$th step for otherwise the method would be deemed too
expensive to be practical.

We will show the property using LGopt~\eqref{eq:LGopt} as an example,
which, for convenience, is restated here.
\begin{empheq}[left={\mbox{LGopt:}\quad}\empheqlbrace]{alignat=2}
\min  ~& \lambda \tag{\ref{eq:LGopt-0}} \\
\mbox{s.t.}  ~& (PAP-\lambda I)u=-b_0,\tag{\ref{eq:LGopt-1}}\\
~&\|u\|=\gamma,\tag{\ref{eq:LGopt-2}}\\
~&u\in\cN(C^{\T}).\tag{\ref{eq:LGopt-3}}
\end{empheq}
Let $(\lambda_\ast,u_\ast)$ be the minimizer of LGopt \eqref{eq:LGopt}
and $k_{\max}$ be the smallest $k$ such that $\beta_{k+1}=0$
in the Lancozs process, namely the Lanczos process breaks down at step $k=k_{\max}$.
We will prove that $\mu^{(k_{\max})}=\lambda_\ast$ and $u^{(k_{\max})}=u_\ast$.

We have already shown in \eqref{eq:k-LGopt} that
the second and third constraints of LGopt~\eqref{eq:LGopt}
are satisfied by $u^{(k_{\max})}$.
Besides, since $\beta_{k_{\max}+1}=0$, $r_{k_{\max}}^{\LGopt}=0$
by Proposition~\ref{prop:res-LGopt}, i.e.,
the first constraint of LGopt \eqref{eq:LGopt} holds.
It remains to show that $\mu^{(k_{\max})}=\lambda_\ast$.

\begin{lemma}\label{thm:ldkminroot}
 $\mu^{(k_{\max})}$ is the smallest root of
 \begin{equation}\label{eq:chitielde}
 {\widetilde{\chi}(\lambda)}:=g^{\T}[(H-\lambda I)^{\dag}]^2g^{\T}-\gamma^2.
 \end{equation}
In addition, if {\rm LGopt} \eqref{eq:LGopt} is in the easy case,
then $\mu^{(k_{\max})}=\lambda_{\ast}$, where $(\lambda_{\ast},z_\ast)$ is  the minimizer of {\rm LGopt} \eqref{eq:LGopt}.
\end{lemma}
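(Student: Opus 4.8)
The plan is to reduce the whole statement to a comparison of secular functions and to exploit the fact that the Lanczos breakdown at step $k_{\max}$ (i.e. $\beta_{k_{\max}+1}=0$) makes $\cR(Q_{k_{\max}})$ an invariant subspace of $PAP$. Recall that the Lanczos process here runs on $M=PAP$ with starting vector $b_0\in\cN(C^{\T})$, and, after transformation by $S_1^{\T}$, it is identical to the Lanczos process for $H$ on $g_0$ (with $\|b_0\|=\|g_0\|$); so the tridiagonal matrix $T_{k_{\max}}$ and the data entering the reduced problem are exactly the $H$--$g_0$ quantities. Since $\beta_j\neq0$ for $2\le j\le k_{\max}$, the matrix $T_{k_{\max}}$ is irreducible, and by Theorems~\ref{thm:nondegenerate0} and \ref{thm:nondegenerate} we have $\mu^{(k_{\max})}<\vartheta_1\equiv\lambda_{\min}(T_{k_{\max}})$, with $\mu^{(k_{\max})}$ the unique root of the reduced secular function $\what\chi$ in \eqref{eq:seculartk} on $(-\infty,\vartheta_1)$. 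Hence the first assertion follows once I show $\what\chi\equiv\widetilde\chi$ on $(-\infty,\vartheta_1)$, where $\widetilde\chi$ is \eqref{eq:chitielde} (reading $g=g_0$).

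The key step is exactly this identity. At breakdown, writing $\wtd Q$ for the $H$-Lanczos basis, one has $H\wtd Q=\wtd Q\,T_{k_{\max}}$ and $g_0=\wtd Q(\|g_0\|e_1)$, so $\cR(\wtd Q)$ is $H$-invariant and, for $\lambda\notin\eig(H)$, $(H-\lambda I)^{-1}\wtd Q=\wtd Q(T_{k_{\max}}-\lambda I)^{-1}$. Applying this twice and using $\wtd Q^{\T}\wtd Q=I$ gives $g_0^{\T}(H-\lambda I)^{-2}g_0=\|g_0\|^2 e_1^{\T}(T_{k_{\max}}-\lambda I)^{-2}e_1$, i.e. $\widetilde\chi(\lambda)=\what\chi(\lambda)$ whenever $\lambda\notin\eig(H)$. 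For $\lambda<\vartheta_1$, every eigenvalue $\theta_j$ of $H$ with $g_0^{\T}y_j\neq0$ is an eigenvalue of $T_{k_{\max}}$, hence $\ge\vartheta_1>\lambda$; therefore neither $\what\chi$ nor $\widetilde\chi$ (the latter built from the pseudoinverse, which discards precisely the components of $g_0$ absent from the reachable subspace) has a pole on $(-\infty,\vartheta_1)$. Both are continuous there and agree on the dense subset avoiding $\eig(H)$, so they agree on all of $(-\infty,\vartheta_1)$. Since $\what\chi<0$ to the left of $\mu^{(k_{\max})}$, this shows $\mu^{(k_{\max})}$ is the smallest root of $\widetilde\chi$, proving the first claim unconditionally.

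For the easy case I would first argue $\lambda_{\ast}<\lambda_{\min}(H)$. By Lemma~\ref{lm:pLGopt}(b) together with Theorem~\ref{thm:hard-case:2}, the equality $\lambda_{\ast}=\lambda_{\min}(H)$ holds \emph{precisely} in the hard case; hence in the easy case $\lambda_{\ast}<\lambda_{\min}(H)=\theta_1$. Consequently $\lambda_{\ast}\notin\eig(H)$, so the pLGopt constraint \eqref{eq:pLGopt-1} forces $y_{\ast}=-(H-\lambda_{\ast}I)^{-1}g_0$ and \eqref{eq:pLGopt-2} gives $\|y_{\ast}\|=\gamma$, whence $\chi(\lambda_{\ast})=\|y_{\ast}\|^2-\gamma^2=0$ for the secular function $\chi$ of \eqref{eq:sec-fun:H}. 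Because $\chi$ is strictly increasing from $-\gamma^2$ on $(-\infty,\theta_1)$, $\lambda_{\ast}$ is its unique, hence smallest, root there; and since no $\theta_j$ equals any $\lambda<\theta_1$, we have $\chi\equiv\widetilde\chi$ on $(-\infty,\theta_1)$. Thus $\lambda_{\ast}$ is the smallest root of $\widetilde\chi$, which by the first part is $\mu^{(k_{\max})}$.

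I expect the main obstacle to be the identity $\what\chi\equiv\widetilde\chi$ to the left of $\vartheta_1$: one must handle carefully that $\widetilde\chi$ uses the Moore--Penrose pseudoinverse $(H-\lambda I)^{\dag}$ while the reduced problem uses the ordinary inverse of $T_{k_{\max}}-\lambda I$, and verify that the \emph{inactive} eigenvalues of $H$ (those with $g_0^{\T}y_j=0$, including any lying below $\vartheta_1$) are exactly the ones missing from $\eig(T_{k_{\max}})$. Matching the spectral data of $(T_{k_{\max}},e_1)$ with the $g_0$-weighted spectral data of $H$ on the reachable subspace — via the invariant-subspace resolvent identity above (equivalently, moment matching) — is what makes the two functions literally equal on $(-\infty,\vartheta_1)$ rather than merely sharing a root.
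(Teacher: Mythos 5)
Your proof is correct and follows essentially the same route as the paper: reduce to the secular function $\what\chi$ of $T_{k_{\max}}$ via Theorem~\ref{thm:nondegenerate}, use the invariance of $\cR(Q_{k_{\max}})$ at breakdown to identify $\what\chi$ with $\widetilde\chi$ on $(-\infty,\vartheta_1)$, and in the easy case combine $\lambda_\ast<\lambda_{\min}(H)$ with strict monotonicity of the secular function; the paper carries out the identification by completing $Q_{k_{\max}}$ to an orthogonal matrix and computing with block pseudoinverses, which is the same content as your resolvent-commutation argument. The one step you flag as needing verification---that every eigenvalue of $H$ whose eigenvector is not orthogonal to $g_0$ appears in $\eig(T_{k_{\max}})$---is true and in fact already follows from your own identity, since $\what\chi$ is bounded on $(-\infty,\vartheta_1)$ while $\widetilde\chi$ would blow up near any such eigenvalue lying below $\vartheta_1$.
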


\begin{proof}
Let  $\vartheta_1\le\vartheta_2\le\cdots\le\vartheta_{k_{\max}}$ be the eigenvalues of $T_{k_{\max}}$ and let $y_1,y_2,\cdots,y_{k_{\max}}$ be the corresponding
orthonormal eigenvectors.
Expand $\|b_0\|e_1=\sum_{i=1}^{k_{\max}}\zeta_iy_i$ and define the secular function
\begin{equation}\label{ex:explicitseculart}
{\widehat{\chi}(\lambda)=\|b_0\|^2e_1^{\T}(T_{k_{\max}}-\lambda I)^{-2}e_1-\gamma^2= \sum_{i=1}^{k_{\max}} \frac{\zeta_i^2 }{(\lambda-\vartheta_i )^2}-\gamma^2.}
\end{equation}
By Theorem \ref{thm:nondegenerate}, $\mu^{(k_{\max})}<\vartheta_1$.
Apply Lemma \ref{lem:qep2secluar} with $H=T_{k_{\max}}$ and $g=\|b_0\|e_1$ to conclude that
$\mu^{(k_{\max})}$ is a root of the secular function \eqref{ex:explicitseculart}.
Since ${\widehat{\chi}}(\lambda)$ is strictly increasing in $(-\infty,\mu^{(k_{\max})})$, $\mu^{(k_{\max})}$ is the smallest root of ${\widehat{\chi}}(\lambda)$.

Expand $Q_{k_{\max}}$ to form an the orthogonal matrix $\widehat{Q}:=[Q_{k_{\max}},\ Q_{\perp}]\in\bbR^{n\times n}$
and let $T=\what Q^{\T}PAP\what Q$. Since the column space of $Q_{k_{\max}}$ is an invariant subspace of $PAP$, we have
$$
T=\begin{bmatrix}
T_{k_{\max}}& \\
 & T_{\perp}
\end{bmatrix}.
$$
Let $S=[S_1, S_2]$ be defined in \eqref{eq:s1s2}, and let
$H=S_1^{\T}PAPS_1$ and $g_0=S_1^{\T}b_0$. For any $\lambda<\vartheta_1$, we have
\begin{equation*}
\begin{split}
{\widehat{\chi}}(\lambda) &=\|b_0\|e_1^{\T}[(T_{k_{\max}}-\lambda I)^{-1}]^2\|b_0\|e_1-\gamma^2\\
&=\|b_0\|e_1^{\T}[(T-\lambda I)^{\dag}]^2\|b_0\|e_1-\gamma^2\\
&=b_0^{\T}\widehat{Q}\widehat{Q}^{\T}[(PAP-\lambda I)^{\dag}]^2\widehat{Q}\widehat{Q}^{\T}b_0-\gamma^2\\
&= b_0^{\T}[(PAP-\lambda I)^{\dag}]^2b_0-\gamma^2 \\
&= b_0^{\T}SS^{\T}[(PAP-\lambda I)^{\dag}]^2SS^{\T}b_0-\gamma^2 \\
&=[g_0^{\T}\ 0]\begin{bmatrix}
[(H-\lambda I)^{\dag}]^2 &0\\
0&[(-\lambda I)^{\dag}]^2
\end{bmatrix}[g_0^{\T}\ 0]^{\T}-\gamma^2\\
&=g_0^{\T}[(H-\lambda I)^{\dag}]^2g_0-\gamma^2=:{\widetilde{\chi}(\lambda)}.\\
\end{split}
\end{equation*}
Therefore, ${\widetilde{\chi}(\lambda)}=0$ and  ${\widetilde{\chi}(\lambda)}<0$ for $\lambda<\mu^{(k_{\max})}$, implying $\mu^{(k_{\max})}$
is the smallest root of ${\widetilde{\chi}(\lambda)}$.

On the other hand,
by the definition of the easy case, $b_0^{\T}z_{\ast}\neq 0$ for all possible minimizers $(\lambda_\ast,z_\ast)$ of QEPmin \eqref{eq:QEPmin}.
Theorem~\ref{thm:QEPmin=pQEPmin} says that $z_{\ast}=S_1w_{\ast}$ for some $w_{\ast}\in\bbR^{n-m}$ and thus
$g^{\T}w_{\ast}=b_0^{\T}S_1w_{\ast}=b_0^{\T}z_{\ast}\ne 0$. By Theorem \ref{thm:pLGoptunique}, $\lambda_{\ast}<\lambda_{\min}(H)$.
Therefore, it is related to case (1) or subcase (i) in case (2) of the proof in Lemma \ref{lm:pLGopt}, for which $\lambda_{\ast}$ is the smallest root of $\widetilde{\chi}(\lambda)$, and thus
$\lambda_{\ast}=\mu^{(k_{\max})}$.
\end{proof}


Theorem~\ref{thm:unique} guarantees that the minimizer of CRQopt \eqref{eq:CRQopt} is unique if
QEPmin \eqref{eq:QEPmin} is in the easy case.  We also have established a finite step stopping property
for Algorithm~\ref{alg:overall} as detailed in the following theorem, since $k_{\max}\le n$.

\begin{corollary}\label{cor:nondekmax}
Suppose {\em QEPmin \eqref{eq:QEPmin}} is in the easy case, and
let $(\mu^{(k)},w^{(k)})$ be the minimizer of
{\rm rQEPmin} \eqref{eq:rQEPmin}. Define
$u^{(k)}$ as in \eqref{eq:approx-sol:LGopt} and $k_{\max}$ is the smallest $k$ such that $\beta_{k+1}=0$.
Then $\left(\mu^{(k_{\max})},u^{(k_{\max})}\right)$
solves {\rm LGopt} \eqref{eq:LGopt},
and $v^{(k_{\max})}=u^{(k_{\max})}+n_0$
is the unique  minimizer of {\em CRQopt \eqref{eq:CRQopt}}.
\end{corollary}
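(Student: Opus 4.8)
The plan is to assemble ingredients that are already in place: show that $(\mu^{(k_{\max})},u^{(k_{\max})})$ is feasible for LGopt~\eqref{eq:LGopt}, show that its objective value equals the optimal value $\lambda_\ast$, and then transport this conclusion to CRQopt~\eqref{eq:CRQopt} through the chain of equivalences together with the uniqueness result. Since this is a corollary, the bulk of the work has been done in Lemma~\ref{thm:ldkminroot} and the surrounding discussion, and the remaining task is essentially a careful bookkeeping of prior statements.

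First I would record that, by Definition~\ref{dfn:hard-easy}, the easy-case hypothesis on QEPmin~\eqref{eq:QEPmin} is equivalent to LGopt~\eqref{eq:LGopt} being in the easy case, so Lemma~\ref{thm:ldkminroot} is available. Because $k_{\max}$ is the first breakdown index, we have $\beta_j\neq 0$ for $2\le j\le k_{\max}$, so Theorem~\ref{thm:nondegenerate0} gives $\mu^{(k_{\max})}<\lambda_{\min}(T_{k_{\max}})$; in particular $\mu^{(k_{\max})}\notin\eig(T_{k_{\max}})$ and $e_1^{\T}w^{(k_{\max})}\neq 0$. Hence the passage \eqref{eq:rQEPmin2rLGopt} from the rQEPmin minimizer $w^{(k_{\max})}$ to the rLGopt minimizer $x^{(k_{\max})}$ is valid, and the vector $u^{(k_{\max})}=Q_{k_{\max}}x^{(k_{\max})}$ of \eqref{eq:approx-sol:LGopt} coincides with the by-product form \eqref{eq:ubyQEPmin}.

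Next I would verify feasibility. Constraints \eqref{eq:LGopt-2} and \eqref{eq:LGopt-3} hold by \eqref{eq:k-LGopt}, since $\|u^{(k_{\max})}\|=\|x^{(k_{\max})}\|=\gamma$ and $u^{(k_{\max})}\in\mathcal{R}(Q_{k_{\max}})\subset\cN(C^{\T})$. For \eqref{eq:LGopt-1}, Proposition~\ref{prop:res-LGopt} gives $r_{k_{\max}}^{\LGopt}=\beta_{k_{\max}+1}q_{k_{\max}+1}e_{k_{\max}}^{\T}x^{(k_{\max})}$, which vanishes because $\beta_{k_{\max}+1}=0$; thus $(PAP-\mu^{(k_{\max})}I)u^{(k_{\max})}=-b_0$ holds exactly. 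Therefore $(\mu^{(k_{\max})},u^{(k_{\max})})$ satisfies all constraints of LGopt~\eqref{eq:LGopt}. By Lemma~\ref{thm:ldkminroot}, in the easy case $\mu^{(k_{\max})}=\lambda_\ast$, the optimal value of LGopt, so this feasible pair attains the minimum and hence is a minimizer of LGopt~\eqref{eq:LGopt}.

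Finally I would transport the result back: by the converse direction of Theorem~\ref{thm:CQopt=LGopt} together with Theorem~\ref{thm:CRQopt=CQopt}, the vector $v^{(k_{\max})}=n_0+u^{(k_{\max})}$ is a minimizer of CRQopt~\eqref{eq:CRQopt}, and Theorem~\ref{thm:unique} guarantees it is the unique minimizer in the easy case. I expect no serious obstacle, as every component is already established; the only points demanding care are that the residual identity of Proposition~\ref{prop:res-LGopt} is invoked precisely at the exact breakdown step $k_{\max}$, where $\beta_{k_{\max}+1}=0$ turns the first LGopt constraint into an exact rather than approximate equality, and that the two descriptions of $u^{(k_{\max})}$ (via rLGopt and via rQEPmin) are reconciled so that Lemma~\ref{thm:ldkminroot} may legitimately be applied.
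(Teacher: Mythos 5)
Your proposal is correct and follows essentially the same route as the paper: feasibility of $(\mu^{(k_{\max})},u^{(k_{\max})})$ from \eqref{eq:k-LGopt} and Proposition~\ref{prop:res-LGopt} at the breakdown step, optimality from Lemma~\ref{thm:ldkminroot} in the easy case, and uniqueness from Theorem~\ref{thm:unique}. Your extra care in reconciling the rQEPmin-derived and rLGopt-derived forms of $u^{(k_{\max})}$ via \eqref{eq:rQEPmin2rLGopt} is a small bookkeeping point the paper leaves implicit, but it does not change the argument.
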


\subsubsection{Hard case}\label{sec-crq-alg-hard}
The hard case is characterized by Theorem~\ref{thm:hard-case:2} and we translate  $g_0\,\bot\,\cU$ into $b_0\,\bot\,\cV$,
where $\cV$ is the eigenspace of $PAP$ associated with its eigenvalue $\lambda_{\min}(H)$. For this reason,
$\mathcal{K}_k(PAP,b_0)$ will contain no eigen-information of $PAP$ associated with $\lambda_{\min}(H)$. Nonetheless, rLGopt~\eqref{eq:rLGopt} and
rQEPmin~\eqref{eq:rQEPmin}  can be still formed and solved to yield approximations
to the original CRQopt~\eqref{eq:CRQopt} with suitable stoping criteria satisfied. But the approximations will be utterly
wrong if it is indeed in the hard case. Hence in practice it is important to detect when the hard case occurs.

Denote by $(\lambda_{\ast},z_\ast)$  the minimizer of {\rm LGopt} \eqref{eq:LGopt}.
In the easy case, the smallest root of ${\widetilde{\chi}(\lambda)}$ is $\lambda_\ast$ and $\lambda_\ast<\lambda_{\min}(H)$,
while in the hard case, $\lambda_\ast=\lambda_{\min}(H)$ and the smallest root of ${\widetilde{\chi}(\lambda)}$ defined in \eqref{eq:chitielde} is greater than or equal to $\lambda_{\min}(H)$.
Since $\mu^{(k)}$ converges to $\mu^{(k_{\max})}$, eventually whether  $\mu^{(k)}<\lambda_{\min}(H)$
provide a reasonably good test to see if it is the easy case.
Therefore, we propose to detect hard case as follows:

\begin{enumerate}
  \item Solve rLGopt~\eqref{eq:rLGopt} or rQEPmin~\eqref{eq:rQEPmin}.
  \item Run the Lanczos process with $M=PAP$ with $r_0=P c$, where $c\in\bbR^n$ is random to compute
        $\lambda_{\min}(H)$ of $PAP$ and its associated eigenvector $\tilde z$;
  \item Check if the optimal value of rLGopt \eqref{eq:rLGopt} or rQEPmin \eqref{eq:rQEPmin} is greater than or equal to $\lambda_{\min}(H)$ within a prescribed
        accuracy.
  \item If it is, then QEPmin~\eqref{eq:QEPmin} is in the hard case; Compute an approximation $\tilde x$
       of $x_{\ast}=-(PAP-\lambda_{\ast} I)^{\dag} b_0$ as follows:
        $$
        \tilde y=\arg\min_{y\in\bbR^k}\left\|\begin{bmatrix}
                                              T_k \\
                                              \beta_{k+1}e_k^{\T}
                                            \end{bmatrix}y+\|b_0\|e_1\right\|,\,\,
        \tilde x=Q_k\tilde y.
        $$
        Finally an approximate minimizer of LGopt~\eqref{eq:LGopt} is given by
        $\tilde x + \sqrt{\gamma^2-\|\tilde x\|^2} \,({\tilde z}/{\|\tilde z\|})$.
\end{enumerate}
A remark is in order for item 2 above. Because of the randomness in $c$, with probability $1$, $r_0=Pc$ will
have a significant component in $S_1\cU$, where $\cU$ is as defined in Theorem~\ref{thm:hard-case}. Thus
$\lambda_{\min}(H)$ will get computed.

\section{Convergence analysis of the Lanczos algorithm}
\label{sec-crq-conv}
In this section, we present a convergence analysis of
the Lanczos algorithm (Algorithm~\ref{alg:overall})
for solving  CRQopt \eqref{eq:CRQopt} in the easy case.
Let $h(v)=v^{\T}Av$ be the objective function of CRQopt \eqref{eq:CRQopt}, $v_\ast$ be the unique solution of CRQopt \eqref{eq:CRQopt} and $(\lambda_\ast,u_\ast)$ be the solution of LGopt \eqref{eq:LGopt}. Our main results are
upper bounds on the
errors $h(v^{(k)})-h(v_\ast)$, $\|v^{(k)}-v_\ast\|$ and $|\mu^{(k)}-\lambda_\ast|$,
where $v^{(k)}$, defined in \eqref{eq:approx-CRQopt}, is the $k$th approximation by Algorithm~\ref{alg:overall} and $(\mu^{(k)},x^{(k)})$ is the solution of rLGopt \eqref{eq:rLGopt}.
Our technique is analogous to that in \cite{zhsl:2017}.

We start by establishing an optimality property of
$v^{(k)}$, as an approximation of $v_{\ast}$, that  minimizes $h(v)$
over $n_0+\mathcal{K}_k(PAP,b_0)$.

\begin{theorem}\label{thm:uopt}
Let $v^{(k)}$ be defined in \eqref{eq:approx-CRQopt}. Then it holds that
\begin{equation}\label{eq:vhtr}
h(v^{(k)})=\min_{v\in n_0+\mathcal{K}_k(PAP,b_0),\|v\|=1} h(v).
\end{equation}
\end{theorem}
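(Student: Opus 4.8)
The plan is to reduce the sphere-constrained minimization on the right-hand side of \eqref{eq:vhtr} to the reduced quadratic problem \eqref{eq:rCQtrust}, whose Lagrangian formulation is precisely rLGopt \eqref{eq:rLGopt}, and then to identify the already-computed pair $x^{(k)}$ as its minimizer. First I would parametrize the feasible set. Since $\cR(Q_k)=\mathcal{K}_k(PAP,b_0)$, every $v\in n_0+\mathcal{K}_k(PAP,b_0)$ can be written as $v=n_0+Q_kc$ for some $c\in\bbR^k$. Because $b_0\in\cN(C^{\T})$ and $PAP$ maps $\cN(C^{\T})$ into itself, we have $\mathcal{K}_k(PAP,b_0)\subseteq\cN(C^{\T})$ (as noted in Remark~\ref{rmk:gltrs}), so $Pv=Q_kc$ and $v=n_0+Q_kc$ is exactly the orthogonal decomposition \eqref{eq:v}. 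Consequently, by \eqref{eq:decomp-vAv:2} and the orthonormality of the columns of $Q_k$, $\|v\|^2=\|n_0\|^2+\|Q_kc\|^2=\|n_0\|^2+\|c\|^2$, so the unit-norm constraint $\|v\|=1$ is equivalent to $\|c\|=\gamma$.

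Next I would rewrite the objective. By \eqref{eq:decomp-vAv:1} together with \eqref{eq:fun-f:dfn}, for any feasible $v$ we have $h(v)=v^{\T}Av=f(Pv)+n_0^{\T}An_0$; substituting $Pv=Q_kc$ gives
$$
h(v)=c^{\T}\!\left(Q_k^{\T}PAPQ_k\right)c+2\|b_0\|\,e_1^{\T}c+n_0^{\T}An_0=c^{\T}T_kc+2\|b_0\|\,e_1^{\T}c+n_0^{\T}An_0,
$$
where I use $Q_k^{\T}PAPQ_k=T_k$ from \eqref{eq:Lanczos-compact} and $Q_k^{\T}b_0=\|b_0\|e_1$ (the latter because the starting vector is $r_0=b_0$, so $q_1=b_0/\|b_0\|$). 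Thus, up to the additive constant $n_0^{\T}An_0$, minimizing $h(v)$ over $n_0+\mathcal{K}_k(PAP,b_0)$ subject to $\|v\|=1$ is identical to minimizing $c^{\T}T_kc+2\|b_0\|e_1^{\T}c$ subject to $\|c\|=\gamma$, which is exactly the reduced problem \eqref{eq:rCQtrust}.

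Finally, I would identify the minimizer. As established in the discussion surrounding \eqref{eq:rCQtrust} in Remark~\ref{rmk:gltrs}, solving \eqref{eq:rCQtrust} by the method of Lagrange multipliers yields exactly rLGopt \eqref{eq:rLGopt}, whose minimizer is $(\mu^{(k)},x^{(k)})$. Taking $c=x^{(k)}$ then produces $v=n_0+Q_kx^{(k)}=n_0+u^{(k)}=v^{(k)}$ by \eqref{eq:approx-sol:LGopt} and \eqref{eq:approx-CRQopt}, so $v^{(k)}$ attains the minimum and \eqref{eq:vhtr} follows. The step I expect to be the main obstacle is this last one: I must confirm that the smallest-Lagrange-multiplier solution selected by rLGopt is the genuine \emph{global} minimizer of the nonconvex sphere-constrained quadratic \eqref{eq:rCQtrust}, not merely a stationary point. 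This rests on the second-order optimality condition that $T_k-\mu^{(k)}I$ be positive semidefinite, which is precisely what Theorem~\ref{thm:nondegenerate0} supplies through the strict inequality $\mu^{(k)}<\lambda_{\min}(T_k)$ (making $T_k-\mu^{(k)}I$ positive definite and $x^{(k)}$ the unique global minimizer). Hence the obstacle is cleanly resolved by the fact, already proved, that rLGopt cannot fall into the hard case.
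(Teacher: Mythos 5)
Your proof is correct and follows the same overall reduction as the paper's: write every feasible point as $n_0+Q_kc$, note that $\mathcal{K}_k(PAP,b_0)\subseteq\cN(C^{\T})$ turns the unit-sphere constraint into $\|c\|=\gamma$ and the objective into $c^{\T}T_kc+2\|b_0\|e_1^{\T}c+n_0^{\T}An_0$, and identify the resulting sphere-constrained quadratic with rLGopt~\eqref{eq:rLGopt}. The one step where you take a different route is the certification that the smallest-multiplier stationary pair $(\mu^{(k)},x^{(k)})$ gives the \emph{global} minimizer of the nonconvex reduced problem: the paper reruns the monotonicity argument of Lemma~\ref{lem:minlambda} (the objective is strictly increasing in the Lagrange multiplier over stationary pairs, and the minimizer on the compact sphere must be such a pair), whereas you invoke the second-order sufficiency condition, using Theorem~\ref{thm:nondegenerate0} to get $\mu^{(k)}<\lambda_{\min}(T_k)$ and hence positive definiteness of $T_k-\mu^{(k)}I$. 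Both are valid; your argument leans on the standard trust-region global-optimality characterization (the same fact the paper cites from Hager in its discussion of \eqref{eq:pCQtrust1}) rather than proving it from scratch, and as a small bonus it also yields uniqueness of the reduced minimizer, while the paper's version is self-contained within its own Lemma~\ref{lem:minlambda}. If you wanted to make your write-up fully self-contained you would add the one-line identity $\ell(y)-\ell(x^{(k)})=(y-x^{(k)})^{\T}(T_k-\mu^{(k)}I)(y-x^{(k)})$ for $\|y\|=\gamma$, but this is a presentational point, not a gap.
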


\begin{proof}
Recall that $(\mu^{(k)},x^{(k)})$ solves rLGopt~\eqref{eq:rLGopt}.
Consider the optimization problem
\begin{subequations}\label{eq:rCQopt}
\begin{empheq}[left=\empheqlbrace]{alignat=2}
\min ~ & \ell(x):=x^{\T} T_k x+2\|b_0\|e_1^{\T}x,\\
\mbox{s.t.} ~ & \|x\| =\gamma.
\end{empheq}
\end{subequations}

By the theory of Lagrangian multipliers, we find the Lagrangian equations for \eqref{eq:rCQopt} are
\begin{equation} \label{eq:rCQopt-LGeqs}
(T_k -\lambda I)x =-\|b_0\|e_1,\quad
 \|x\| =\gamma.
\end{equation}

Following the same argument as we did to prove Lemma \ref{lem:minlambda},
we can reach the same conclusion that
$\ell(x)$ is strictly increasing with respect to $\lambda$ in the solution pair $(\lambda,x)$ of
\eqref{eq:rCQopt-LGeqs}. Therefore, in order to minimize $\ell(x)$,
we need to find the smallest Lagrangian multiplier satisfying \eqref{eq:rCQopt-LGeqs}. Hence, solving \eqref{eq:rCQopt} is
equivalent to solving rLGopt \eqref{eq:rLGopt} for which $(\mu^{(k)},x^{(k)})$ is
a minimizer and thus $x^{(k)}$ solves \eqref{eq:rCQopt},  where $x^{(k)}$ is defined in \eqref{eq:rQEPmin2rLGopt}.

By  definition, $u^{(k)}=Q_kx^{(k)}$ and $v^{(k)}=u^{(k)}+n_0$.
For any $v\in n_0+\mathcal{K}_k(PAP,b_0)$ with $\|v\|=1$, let
\begin{equation}\label{eq:u=v-n0}
u=v-n_0\in\mathcal{K}_k(PAP,b_0)\subset\cN(C^{\T}).
\end{equation}
Hence $Pu=u$, $\|u\|=\gamma$, and $u=Q_k \widetilde{u}$ for some $\wtd u\in\bbR^k$. We have
$v=u+n_0=Pu+n_0$ and
\begin{align*}
h(v)& 
=(Pu+n_0)^{\T}A(Pu+n_0) \\
&=u^{\T} PAP u+2b_0^{\T} u+n_0^{\T}An_0 \\
&=\widetilde{u}^{\T} Q_k^{\T} PAP Q_k \widetilde{u}+2b_0^{\T} Q_k \widetilde{u}+n_0^{\T}An_0 \\
&=\widetilde{u}^{\T} T_k \widetilde{u}+2\|b_0\|e_1^{\T} \widetilde{u}+n_0^{\T}An_0 \\
&\geq [x^{(k)}]^{\T} T_k x^{(k)}+2\|b_0\|e_1^{\T} x^{(k)}+n_0^{\T}An_0
         \qquad \mbox{(since $x^{(k)}$ solves \eqref{eq:rCQopt})} \\
&=[x^{(k)}]^{\T} Q_k^{\T} PAP Q_kx^{(k)}+2b_0^{\T} Q_k x^{(k)}+n_0^{\T}An_0 \\
&=[u^{(k)}]^{\T} PAP u^{(k)}+2b_0^{\T} u^{(k)}+n_0^{\T}An_0 \\
&=(u^{(k)}+n_0)^{\T}A(u^{(k)}+n_0)\\
&=h(v^{(k)}).
\end{align*}
Since $v\in n_0+\mathcal{K}_k(PAP,b_0)$ with $\|v\|=1$ but otherwise is arbitrary, \eqref{eq:vhtr} holds.
\end{proof}


%

Recall that  $H$ and $g_0$ are defined in \eqref{eq:def:gH} and $S_1$, $S_2$  in \eqref{eq:s1s2}.
Let
$\theta_{\min}$ and $\theta_{\max}$ be the smallest and
the largest eigenvalue of $H$, respectively,  $v_{\ast}$ be the minimizer of CRQopt \eqref{eq:CRQopt}, and
$\lambda_{\ast}$ be the optimal objective value of  LGopt \eqref{eq:LGopt}. Then
$$
(\lambda_{\ast},u_{\ast})\quad\mbox{with}\,\, u_{\ast}=Pv_{\ast}=v_{\ast}-n_0
$$
is a minimizer of LGopt \eqref{eq:LGopt}. Set
$$
\kappa\equiv\kappa(H-\lambda_{\ast} I):=\frac{\theta_{\max}-\lambda_{\ast}}{\theta_{\min}-\lambda_{\ast}}.
$$
To estimate $h(v^{(k)})-h(v_\ast)$, $\|v^{(k)}-v_\ast\|$ and $|\mu^{(k)}-\lambda_\ast|$, we first establish
a lemma that provides a way to bound $h(v^{(k)})-h(v_\ast)$, $\|v^{(k)}-v_\ast\|$ and $|\mu^{(k)}-\lambda_\ast|$ in terms of any
nonzero $v\in n_0+\mathcal{K}_k(PAP,b_0)$.
\begin{lemma}\label{lm:LM4cvg}
For any nonzero $v\in n_0+\mathcal{K}_k(PAP,b_0)$, we have
\begin{subequations}\label{eq:rLGopt-UB}
\begin{align}
0\le h(v^{(k)})-h(v_{\ast})&\le 4\|H-\lambda_{\ast} I\|_2\cdot \|v-v_{\ast}\|_2^2, \label{eq:upperboundf} \\
\|v^{(k)}-v_{\ast}\|&\le 2\sqrt{\kappa}\, \|v-v_{\ast}\|_2,  \label{eq:upperboundv} \\
|\mu^{(k)}-\lambda_{\ast}|&\le\frac 1{\gamma^2}\left[ 4\|H-\lambda_{\ast} I\|_2\cdot \|v-v_{\ast}\|_2^2+2\sqrt{\kappa}\,\|b_0\|_2\cdot \|v-v_{\ast}\|_2\right].
                              \label{eq:upperbound-lambda}
\end{align}
\end{subequations}
\end{lemma}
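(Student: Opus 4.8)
The plan is to reduce the three bounds to a single quadratic identity on $\cN(C^{\T})$ and then import the optimality of $v^{(k)}$ from Theorem~\ref{thm:uopt}. First I would record the variational identity: writing $w=n_0+u$ with $u\in\cN(C^{\T})$ and $\|u\|=\gamma$ (so $\|w\|=1$), and recalling that $u_{\ast}=v_{\ast}-n_0$ satisfies $(PAP-\lambda_{\ast}I)u_{\ast}=-b_0$, a completion-of-squares computation gives
\[
h(w)-h(v_{\ast})=(w-v_{\ast})^{\T}(PAP-\lambda_{\ast}I)(w-v_{\ast}),
\]
because on $\cN(C^{\T})$ the minimizer of $u\mapsto u^{\T}PAPu+2u^{\T}b_0$ is exactly $u_{\ast}=-(PAP-\lambda_{\ast}I)^{\dag}b_0$. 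Since $\lambda_{\ast}<\theta_{\min}$ in the easy case (by Lemma~\ref{lm:pLGopt} and Theorem~\ref{thm:pLGoptunique}), $PAP-\lambda_{\ast}I$ is positive definite on $\cN(C^{\T})=\cR(S_1)$, where it acts as $H-\lambda_{\ast}I$; writing $w-v_{\ast}=S_1d$ the right-hand side equals $d^{\T}(H-\lambda_{\ast}I)d$ and is therefore squeezed between $(\theta_{\min}-\lambda_{\ast})\|w-v_{\ast}\|^2$ and $\|H-\lambda_{\ast}I\|_2\|w-v_{\ast}\|^2$. This already yields the lower bound $0\le h(v^{(k)})-h(v_{\ast})$.

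Second I would handle the normalization, which is the crux of obtaining bounds in terms of an \emph{arbitrary} $v\in n_0+\mathcal{K}_k(PAP,b_0)$: given such $v=n_0+u$, I rescale $u$ to $\widetilde u=\gamma\,u/\|u\|$ and set $\widetilde v=n_0+\widetilde u$, which is a unit vector still lying in $n_0+\mathcal{K}_k(PAP,b_0)$. Because $\widetilde u$ is the nearest point to $u$ on the sphere of radius $\gamma$ and $u_{\ast}$ also lies on that sphere, $\|\widetilde u-u\|\le\|u_{\ast}-u\|$, so the triangle inequality gives $\|\widetilde v-v_{\ast}\|=\|\widetilde u-u_{\ast}\|\le 2\|v-v_{\ast}\|$. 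Combining this with the optimality $h(v^{(k)})\le h(\widetilde v)$ from Theorem~\ref{thm:uopt} and the upper estimate of the identity proves \eqref{eq:upperboundf}; feeding \eqref{eq:upperboundf} back into the lower estimate of the identity (applied to $w=v^{(k)}$) and using $\|H-\lambda_{\ast}I\|_2=\theta_{\max}-\lambda_{\ast}$ yields $\|v^{(k)}-v_{\ast}\|^2\le 4\kappa\|v-v_{\ast}\|^2$, which is \eqref{eq:upperboundv}.

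Finally, for \eqref{eq:upperbound-lambda} I would start from the Rayleigh-type formula $\mu^{(k)}=\gamma^{-2}(u^{(k)})^{\T}(PAP\,u^{(k)}+b_0)$, obtained by premultiplying the reduced Lagrange equation $(T_k-\mu^{(k)}I)x^{(k)}=-\|b_0\|e_1$ by $(x^{(k)})^{\T}$ and translating back through $u^{(k)}=Q_kx^{(k)}$ and $Q_k^{\T}b_0=\|b_0\|e_1$; the analogous identity \eqref{eq:u->lambda:LGopt} holds for $(\lambda_{\ast},u_{\ast})$. Subtracting and rewriting in terms of $f$ gives $\gamma^2(\mu^{(k)}-\lambda_{\ast})=[h(v^{(k)})-h(v_{\ast})]-b_0^{\T}(v^{(k)}-v_{\ast})$, after which \eqref{eq:upperboundf} bounds the first term and \eqref{eq:upperboundv} together with Cauchy--Schwarz bounds $|b_0^{\T}(v^{(k)}-v_{\ast})|$ by $2\sqrt{\kappa}\,\|b_0\|_2\|v-v_{\ast}\|$, completing the proof after dividing by $\gamma^2$.

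The step I expect to be the main obstacle is the normalization estimate $\|\widetilde u-u_{\ast}\|\le 2\|u-u_{\ast}\|$: everything else is bookkeeping around the clean quadratic identity, but bridging from Theorem~\ref{thm:uopt}, which only controls $h$ over \emph{unit-norm} members of the Krylov affine space, to a bound phrased in terms of an arbitrary $v$ in that space is exactly where the constant $2$ (hence the $4$ and $2\sqrt{\kappa}$ in the statement) enters, and it must be argued carefully via the nearest-point property of the radial projection onto the sphere $\|u\|=\gamma$.
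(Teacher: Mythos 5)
Your proposal is correct and follows essentially the same route as the paper's proof: the quadratic identity $h(w)-h(v_{\ast})=(w-v_{\ast})^{\T}(PAP-\lambda_{\ast}I)(w-v_{\ast})$ on unit vectors of $n_0+\cN(C^{\T})$ (the paper obtains it via $2r^{\T}u_{\ast}=-\|r\|^2$), the radial rescaling $\widetilde u=\gamma u/\|u\|$ with the bound $\|\widetilde v-v_{\ast}\|\le 2\|v-v_{\ast}\|$ (your nearest-point-on-the-sphere argument is equivalent to the paper's estimate $\|u-\widetilde u\|=\bigl|\|u\|-\gamma\bigr|\le\|u-u_{\ast}\|$), the optimality of $v^{(k)}$ from Theorem~\ref{thm:uopt}, and the identity $\gamma^2(\mu^{(k)}-\lambda_{\ast})=[h(v^{(k)})-h(v_{\ast})]-b_0^{\T}(v^{(k)}-v_{\ast})$ followed by Cauchy--Schwarz. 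The only cosmetic difference is that you package the upper and lower quadratic estimates into one identity and use it also for the nonnegativity of $h(v^{(k)})-h(v_{\ast})$, where the paper instead invokes monotonicity of the minimum over nested feasible sets.
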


\begin{proof}
For $v\in n_0+\mathcal{K}_k(PAP,b_0)$, let
\begin{equation}\label{eq:LM4cvg:pf-1}
u=v-n_0\in\mathcal{K}_k(PAP,b_0),\,\,
\wtd u=\gamma u/\|u\|,\,\,
\wtd v=n_0+\wtd u\in n_0+\mathcal{K}_k(PAP,b_0).
\end{equation}
First, we have $|\|u\|-\gamma|=|\|u\|-\|u_{\ast}\||\le\|u-u_{\ast}\|=\|v-v_{\ast}\|$, which leads to
\begin{equation}\label{eq:hatvdelta}
\left|1-\frac{\gamma}{\|u\|}\right|\le\frac{\|v-v_{\ast}\|}{\|u\|}.
\end{equation}
Let  $r=\wtd v-v_{\ast}$. We  have
\begin{align}
\|r\|=\|v_{\ast}-\wtd v\|
   &\le \|v_{\ast}-v\|+\|v-\wtd v\| \nonumber\\
   &\le \|v_{\ast}-v\|+\|u-\wtd u\| \nonumber\\
   &=\|v_{\ast}-v\|+\left\|u-\frac{\gamma u}{\|u\|}\right\| \nonumber\\
   &=\|v_{\ast}-v\|+\|u\|\times\left|1-\frac{\gamma}{\|u\|}\right| \nonumber\\
   &\le 2\|v_{\ast}-v\|, \label{eq:LM4cvg:pf-2}
\end{align}
where we have used \eqref{eq:hatvdelta} to infer  the last inequality.

The first inequality in \eqref{eq:upperboundf} holds because
$$
h(v^{(k)})=\min_{v\in n_0+\mathcal{K}_k(PAP,b_0),\,\|v\|=1} h(v)\geq \min_{v\in n_0+\cN(C^{\T}),\,\|v\|=1} h(v)=h(v_{\ast}).
$$
Let $f(u)=u^{\T}Au+2u^{\T}b_0$, it can be verified that
$h(v)=h(u+n_0)=f(u)+n_0^{\T}An_0$. Therefore,
\begin{equation}\label{eq:LM4cvg:pf-3}
\wtd u-u_{\ast}=\wtd v-v_{\ast}=r,\,\,
h(\wtd v)-h(v_{\ast})=f(\wtd u)-f(u_{\ast}).
\end{equation}
Set $s=S_1^{\T}r$. It follows from  $r\in\cN(C^{\T})$ that $r=S_1s$ and $\|s\|=\|r\|$.
Noting that $\wtd v$ satisfies the constraint of CRQopt~\eqref{eq:CRQopt} and that $\wtd u=u_*+r$, we have
\begin{alignat}{2}
0\le h(v^{(k)})-h(v_{\ast})
&\le h(\wtd v)-h(v_{\ast}) \\
&\stackrel{{\text{ \eqref{eq:LM4cvg:pf-3}}}}{=} f(\wtd u)-f(u_{\ast})=f(u_*+r)-f(u_{\ast})  \nonumber\\
&=r^{\T} PAP r+2r^{\T}(PAPu_*+b_0) \nonumber\\
&=r^{\T} PAP r+2\lambda_{\ast} r^{\T} u_{\ast} \label{eq:bovndf''}\\
&=r^{\T} (PAP-\lambda_{\ast} I)r \label{eq:bovndf'}\\
&= s^{\T}S_1^{\T}(PAP-\lambda_{\ast} I)S_1s \nonumber\\
&=s^{\T} (H-\lambda_{\ast} I)s \nonumber\\
&\le \|H-\lambda_{\ast} I\|\|s\|^2=\|H-\lambda_{\ast} I\|\|r\|^2 \nonumber\\
&\stackrel{{\text{\eqref{eq:LM4cvg:pf-2}}}}{\le} 4\|H-\lambda_{\ast} I\|\|v_{\ast}-v\|^2, \label{eq:bovndf}
\end{alignat}
yielding the second inequality in \eqref{eq:upperboundf},
where we have used $(PAP-\lambda_* I)u_*=-b_0$ to get \eqref{eq:bovndf''} and
$$
\|r\|^2+2r^{\T}u_{\ast}=\|u_{\ast}+r\|^2-\|u_{\ast}\|^2=\|\wtd u\|^2-\|u_{\ast}\|^2=0
$$
to obtain $2r^{\T}u_{\ast}=-r^{\T}r$ and then \eqref{eq:bovndf'}.

Next we prove \eqref{eq:upperboundv}. Define
$$
\widetilde{f}(u):=f(u)-\lambda_*u^{\T}u=u^{\T}(PAP-\lambda_{\ast} I) u+2u^{\T}b_0.
$$
Noticing
$(PAP-\lambda_{\ast} I)u_{\ast}+b_0=0$ by \eqref{eq:LGopt-1}, let ${u}^{(k)}={v}^{(k)}-n_0$,
we have
$$
\widetilde{f}(u^{(k)})=\widetilde{f}(u_{\ast})+(u^{(k)}-u_{\ast})^{\T}(PAP-\lambda_{\ast} I)(u^{(k)}-u_{\ast}).
$$
Therefore
$$
\widetilde{f}(u^{(k)})-\widetilde{f}(u_{\ast})
       \geq (\theta_{\min}-\lambda_{\ast})\|u^{(k)}-u_{\ast}\|^2
        =(\theta_{\min}-\lambda_{\ast})\|v^{(k)}-v_{\ast}\|^2.
$$
On the other hand,
$$
\widetilde{f}(u^{(k)})-\widetilde{f}(u_{\ast})
           =[f(u^{(k)})+\lambda_{\ast} \|u^{(k)}\|^2]-[f(u_{\ast})+\lambda_{\ast} \|u_{\ast}\|^2]
           =f(u^{(k)})-f(u_{\ast})=h(v^{(k)})-h(v_{\ast}),
$$
yielding
\begin{equation}\label{eq:LM4cvg:pf-5}
(\theta_{\max}-\lambda_{\ast})\|v^{(k)}-v_{\ast}\|^2
      \le  h(v^{(k)})-h(v_{\ast})\le 4\|H-\lambda_{\ast} I\| \|v-v_*\|^2,
\end{equation}
which leads to \eqref{eq:upperboundv}.

To prove \eqref{eq:upperbound-lambda}, we pre-multiply $(PAP-\lambda_* I)u_*=-b_0$ by $u_*^{\T}$ and use
$u_*^{\T}u_*=\gamma^2$ to get
\begin{equation}\label{eq:LM4cvg:pf-6}
\gamma^2\lambda_*=u_*^{\T}PAPu_*+u_*^{\T}b_0=v_*^{\T}PAPv_*+v_*^{\T}b_0,
\end{equation}
since $Pv_*=u_*$ and $Pb_0=b_0$.
By \eqref{eq:decomp-vAv:1}, we have $h(v_*)=v_*^{\T}PAPv_*+2v_*^{\T}b_0+n_0^{\T}An_0$ and thus
$$
\gamma^2\lambda_*=h(v_*)-v_*^{\T}b_0-n_0^{\T}An_0.
$$
On the other hand, it follows from rLGopt \eqref{eq:rLGopt} that $[x^{(k)}]^{\T}T_kx^{(k)}+\|b_0\|_2[x^{(k)}]^{\T}e_1=\gamma^2\mu^{(k)}$. Plug in
$$
T_k=Q_k^{\T}PAPQ_k,\,\,
u^{(k)}=Q_kx^{(k)},\,\,
Q_k^{\T}b_0=\|b_0\|_2e_1,\,\,
v^{(k)}=u^{(k)}+n_0
$$
to get
\begin{equation}\label{eq:LM4cvg:pf-7}
\gamma^2\mu^{(k)}=h(u^{(k)})-[u^{(k)}]^{\T}b_0=h(v^{(k)})-[v^{(k)}]^{\T}b_0-n_0^{\T}An_0.
\end{equation}
It follows from \eqref{eq:LM4cvg:pf-6} and \eqref{eq:LM4cvg:pf-7} that
 \begin{equation}\label{eq:cauchy}
 \left|\mu^{(k)}-\lambda_\ast\right|=\frac{1}{\gamma^2}\left|h(v^{(k)})-h(v_\ast)-b_0^{\T}(v^{(k)}-v_\ast)\right| \le \frac{1}{\gamma^2}\left[|h(v^{(k)})-h(v_\ast)|+\|b_0\|_2\|v^{(k)}-v_\ast\|_2\right],
 \end{equation}
which combined with \eqref{eq:upperboundf} and \eqref{eq:upperboundv} yield \eqref{eq:upperbound-lambda}.
\end{proof}

The inequalities in \eqref{eq:rLGopt-UB} hold for any $v\in n_0+\mathcal{K}_k(PAP,b_0)$ which, in general can be expressed
as
$$
v=n_0+\phi_{k-1}(PAP)b_0,
$$
where $\phi_{k-1}(\,\cdot\,)$ is a polynomial of degree $k-1$.
By judicially picking
certain $\phi_{k-1}$, meaningful upper bounds on $h(v^{(k)})-h(v_\ast)$, $\|v^{(k)}-v_\ast\|$ and $|\mu^{(k)}-\lambda_\ast|$
are readily obtained. These upper bounds expose the convergence behavior of $v^{(k)}$.
The next theorem contains our main results of the section.

\begin{theorem}\label{thm:nondeer}
Suppose {\rm CRQopt} \eqref{eq:CRQopt} is in the easy case,  and let $v_\ast$ be its minimizer.
Let  $(\lambda_\ast,u_\ast)$ be the minimizer of the corresponding
{\rm LGopt} \eqref{eq:LGopt},  and, for its corresponding
{\rm pLGopt} \eqref{eq:pLGopt}, let $\theta_{\min}$ and $\theta_{\max}$ be the smallest
and largest eigenvalue of $H$, respectively, and set
$$
\kappa=\kappa(H-\lambda_{\ast} I):=\frac{\theta_{\max}-\lambda_{\ast}}{\theta_{\min}-\lambda_{\ast}}.
$$
Then the following statements hold:
\begin{enumerate}[{\rm (a)}]
\item The sequence $\lbrace h(v^{(k)}) \rbrace$ is nonincreasing;
\item For $k\le k_{\max}$, the smallest $k$ such that $\beta_{k+1}=0$,
      \begin{subequations}\label{eq:rLGopt-UBs}
      \begin{align}
      0\le h(v^{(k)})-h(v_{\ast})
       &\le 16\gamma^2\|H-\lambda_{\ast} I\|_2\,\left[\Gamma_{\kappa}^k+\Gamma_{\kappa}^{-k}\right]^{-2}, \label{eq:rLGopt-UBs-1}\\
      \|v^{(k)}-v_{\ast}\|_2
        &\le 4\gamma\sqrt{\kappa}\left[\Gamma_{\kappa}^k+\Gamma_{\kappa}^{-k}\right]^{-1}, \label{eq:rLGopt-UBs-2} \\
      |\mu^{(k)}-\lambda_{\ast}|&\le 16\|H-\lambda_{\ast} I\|_2\,\left[\Gamma_{\kappa}^k+\Gamma_{\kappa}^{-k}\right]^{-2}
                      +\frac 4{\gamma}\|b_0\|_2\sqrt{\kappa}\left[\Gamma_{\kappa}^k+\Gamma_{\kappa}^{-k}\right]^{-1}, \label{eq:rLGopt-UBs-3}
      \end{align}
      \end{subequations}
      where 
      \begin{equation}\label{eq:Gamma}
      \Gamma_{\kappa}:=\frac {\sqrt{\kappa}+1}{\sqrt{\kappa}-1}.
      \end{equation}
\end{enumerate}
\end{theorem}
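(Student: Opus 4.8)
The plan is to derive everything from the variational characterization in Theorem~\ref{thm:uopt} and the perturbation estimates in Lemma~\ref{lm:LM4cvg}. Statement (a) is immediate: the Krylov subspaces are nested, $\mathcal{K}_k(PAP,b_0)\subseteq\mathcal{K}_{k+1}(PAP,b_0)$, so the feasible sets $\{v\in n_0+\mathcal{K}_k(PAP,b_0):\|v\|=1\}$ grow with $k$, and the minimum $h(v^{(k)})$ over the smaller set cannot be smaller than $h(v^{(k+1)})$ over the larger one; hence $\{h(v^{(k)})\}$ is nonincreasing. For (b), Lemma~\ref{lm:LM4cvg} reduces all three bounds to exhibiting a single $v\in n_0+\mathcal{K}_k(PAP,b_0)$ with $\|v-v_\ast\|_2$ as small as possible: substituting such a $v$ into \eqref{eq:upperboundf}, \eqref{eq:upperboundv}, \eqref{eq:upperbound-lambda} produces exactly the right-hand sides of \eqref{eq:rLGopt-UBs-1}--\eqref{eq:rLGopt-UBs-3}, once we show $\|v-v_\ast\|_2\le 2\gamma\,[\Gamma_\kappa^k+\Gamma_\kappa^{-k}]^{-1}$.

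To produce such a $v$, I would parametrize every $v\in n_0+\mathcal{K}_k(PAP,b_0)$ as $v=n_0+\phi_{k-1}(PAP)b_0$ with $\deg\phi_{k-1}\le k-1$, and pass to reduced coordinates through $S_1$. Since $b_0=S_1g_0$, $PAP\,S_1=S_1H$ (Lemma~\ref{lem:2pinv}), and $u_\ast=S_1y_\ast$ with $y_\ast=-(H-\lambda_\ast I)^{-1}g_0$ (valid in the easy case, where $\lambda_\ast<\theta_{\min}=\lambda_{\min}(H)$ by Theorem~\ref{thm:pLGoptunique}), the isometry $S_1$ gives $\|v-v_\ast\|_2=\|\phi_{k-1}(H)g_0-y_\ast\|_2$. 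Expanding $g_0=\sum_i\xi_i y_i$ in the eigenbasis of $H$ and introducing the degree-$\le k$ polynomial $q(t):=1+(t-\lambda_\ast)\phi_{k-1}(t)$, which satisfies $q(\lambda_\ast)=1$, I obtain $\phi_{k-1}(\theta_i)+\frac{1}{\theta_i-\lambda_\ast}=\frac{q(\theta_i)}{\theta_i-\lambda_\ast}$, and therefore
\[
\|v-v_\ast\|_2^2=\sum_{i}\frac{\xi_i^2}{(\theta_i-\lambda_\ast)^2}\,q(\theta_i)^2 .
\]
The crucial simplification is the secular identity: since $\lambda_\ast$ is the smallest root of $\chi$ in the easy case (Lemma~\ref{lm:pLGopt}(c)), $\sum_i \xi_i^2/(\theta_i-\lambda_\ast)^2=\gamma^2$, so $\|v-v_\ast\|_2^2\le\gamma^2\max_{t\in[\theta_{\min},\theta_{\max}]}q(t)^2$.

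It then remains to choose $\phi_{k-1}$, equivalently $q$, to minimize $\max_{[\theta_{\min},\theta_{\max}]}|q|$ subject to $q(\lambda_\ast)=1$. This is the classical one-point Chebyshev minimax problem, solved by the shifted Chebyshev polynomial: with $\mu=\frac{2\lambda_\ast-\theta_{\min}-\theta_{\max}}{\theta_{\max}-\theta_{\min}}$ (so $\mu<-1$ because $\lambda_\ast<\theta_{\min}$), the minimal value equals $1/|\scrT_k(\mu)|$, where $\scrT_k$ is the degree-$k$ Chebyshev polynomial of the first kind. A short computation gives $|\mu|=\frac{\kappa+1}{\kappa-1}$ and $\sqrt{\mu^2-1}=\frac{2\sqrt{\kappa}}{\kappa-1}$, whence $\rho:=|\mu|+\sqrt{\mu^2-1}=\frac{(\sqrt{\kappa}+1)^2}{\kappa-1}=\Gamma_\kappa$ and $|\scrT_k(\mu)|=\tfrac12(\Gamma_\kappa^k+\Gamma_\kappa^{-k})$. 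Thus some $v$ satisfies $\|v-v_\ast\|_2\le 2\gamma[\Gamma_\kappa^k+\Gamma_\kappa^{-k}]^{-1}$, and feeding this into \eqref{eq:upperboundf}--\eqref{eq:upperbound-lambda} yields \eqref{eq:rLGopt-UBs-1}--\eqref{eq:rLGopt-UBs-3}.

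I expect the Chebyshev minimax step to be the main obstacle — not the extremal bound itself, which is standard, but two bookkeeping points: verifying that the secular identity $\sum_i\xi_i^2/(\theta_i-\lambda_\ast)^2=\gamma^2$ legitimately replaces the awkward weights by $\gamma^2$ (this is precisely where the easy-case hypothesis $\lambda_\ast<\theta_{\min}$ is essential, since it both guarantees $\lambda_\ast$ is a root of $\chi$ and places $\mu$ outside $[-1,1]$), and carrying out the algebra that identifies the Chebyshev growth factor $\rho$ with $\Gamma_\kappa=(\sqrt{\kappa}+1)/(\sqrt{\kappa}-1)$, so that the generic geometric rate turns into the stated $[\Gamma_\kappa^k+\Gamma_\kappa^{-k}]^{-1}$.
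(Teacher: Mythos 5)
Your proposal is correct and follows essentially the same route as the paper: item (a) from nested Krylov subspaces, and item (b) by feeding a Chebyshev-polynomial choice of $v\in n_0+\mathcal{K}_k(PAP,b_0)$ into Lemma~\ref{lm:LM4cvg}, using $\lambda_\ast<\lambda_{\min}(H)$ (easy case) to write $S_1^{\T}v_\ast=-(H-\lambda_\ast I)^{-1}g_0$ and obtain $\|v-v_\ast\|\le 2\gamma\bigl[\Gamma_\kappa^k+\Gamma_\kappa^{-k}\bigr]^{-1}$. The only cosmetic differences are that you normalize the polynomial at $\lambda_\ast$ on $[\theta_{\min},\theta_{\max}]$ and invoke the secular identity $\sum_i\xi_i^2/(\theta_i-\lambda_\ast)^2=\gamma^2$, whereas the paper shifts to $H-\lambda_\ast I$, normalizes at $0$, and uses the operator-norm bound together with $\|S_1^{\T}v_\ast\|=\gamma$ — the two are equivalent.
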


\begin{proof}
 Item (a) holds because for any $0 \le k \le k_{\max}$,
$$
h(v^{(k)})=\min_{v\in n_0+\mathcal{K}_k(PAP,b_0),\,\|v\|=1} h(v)
                  \geq\min_{v\in n_0+\mathcal{K}_{k+1}(PAP,b_0),\,\|v\|=1} h(v)=h({v}^{(k+1)}).
$$
Before we prove item (b), we note that $(\lambda_{\ast},S_1^{\T}v_{\ast})$ solves pLGopt~\eqref{eq:pLGopt}.
In particular, since pLGopt~\eqref{eq:pLGopt} is in the easy case,
\begin{equation}\label{eq:nondeer:pf-1}
S_1^{\T}v_{\ast}=-(H-\lambda_{\ast} I)^{-1}g_0.
\end{equation}
Consider now $v\in n_0+\mathcal{K}_k(PAP,b_0)$.
Then
%
%
$S_1^{\T} v\in\mathcal{K}_k(H,g_0)=\mathcal{K}_k(H-\lambda_{\ast} I,g_0)$.
Therefore by \eqref{eq:nondeer:pf-1}
\begin{align}
S_1^{\T}v-S_1^{\T}v_{\ast} &=\phi_{k-1}(H-\lambda_{\ast} I)g+(H-\lambda_{\ast} I)^{-1}g_0 \nonumber\\
&=[\phi_{k-1}(H-\lambda_{\ast} I)\,(H-\lambda_{\ast} I)+I](H-\lambda_{\ast} I)^{-1}g_0 \nonumber\\
&=-\psi_k(H-\lambda_{\ast} I)S_1^{\T}v_{\ast}, \label{eq:nondeer:pf-2}
\end{align}
where $\phi_{k-1}$ is a polynomial of degree $k-1$, and $\psi_k(t)=1+t\phi_{k-1}(t)$, a polynomial of degree $k$, that satisfies
$\psi_k(0)=1$.
Note that $\psi_k(0)=1$ but otherwise $\psi_k$ is an arbitrary polynomial of degree $k$, offering the freedom that we will
take advantage of in a moment.

Given that $v_{\ast}$ solves CRQopt~\eqref{eq:CRQopt}, we have
$$
\gamma=\|Pv_{\ast}\|=\|S_1S_1^{\T}v_{\ast}\|=\|S_1^{\T}v_{\ast}\|.
$$
Thus
\begin{align}
\min_{v\in n_0+\mathcal{K}_k(PAP,b_0)} \|v-v_{\ast}\|
     &=\min_{v\in n_0+\mathcal{K}_k(PAP,b_0)} \|S_1^{\T}v-S_1^{\T}v_{\ast}\| \qquad(\mbox{use \eqref{eq:nondeer:pf-2}})\nonumber\\
&\le \gamma\min_{\psi_k(0)=1}\|\psi_k(H-\lambda_{\ast} I)\| \nonumber\\
&\le \gamma\min_{\psi_k(0)=1}\max_{1\le i\le n-m}|\psi_k(\theta_i-\lambda_{\ast})| \label{eq:nondeer:pf-2a}\\
&\le \gamma\min_{\psi_k(0)=1}\max_{t\in[\theta_{\min}-\lambda_{\ast},\theta_{\max}-\lambda_{\ast}]}|\psi_k(t)|. \label{eq:nondeer:pf-3}
\end{align}
The inequality \eqref{eq:nondeer:pf-3} holds for any polynomial $\psi_k$ of degree $k$ such that $\psi_k(0)=1$. For the purpose
of establishing upper bounds, we will pick one that is defined through the $k$th Chebyshev polynomial of the first kind:
\begin{subequations}\label{eq:Cheb1st}
\begin{alignat}{2}
\scrT_k(t)
 &=\cos(k\arccos t)&&\mbox{for $|t|\le 1$}, \label{eq:Cheb-sml}\\
 &=\frac 12\left[\left(t+\sqrt{t^2-1}\right)^k
       +\left(t+\sqrt{t^2-1}\right)^{-k}\right]&\quad&\mbox{for $|t|\ge 1$.} \label{eq:Cheb-big}
\end{alignat}
\end{subequations}
Specifically, we take
\begin{equation}\label{eq:psi-1}
\psi_k(t)=\scrT_k \left(\frac {2t-(\alpha+\beta)}{\beta-\alpha}\right)\left/\scrT_k\left(\frac {-(\alpha+\beta)}{\beta-\alpha}\right)\right.,
\end{equation}
where
$\alpha=\theta_{\min}-\lambda_{\ast}$ and $\beta=\theta_{\max}-\lambda_{\ast}$. Evidently, $\psi_k(0)=1$, and
for $t\in[\theta_{\min}-\lambda_{\ast},\theta_{\max}-\lambda_{\ast}]=[\alpha,\beta]$,
we have
$$
|2t-(\alpha+\beta)|=\left||t+\lambda_\ast-\theta_{\min}|-|t+\lambda_\ast-\theta_{\max}|\right|\le |\theta_{\max}-\theta_{\min}|=\beta-\alpha.
$$
Therefore,
${[2t-(\alpha+\beta)]}/{(\beta-\alpha)}\in[-1,1],
$
and thus for $t\in[\alpha,\beta]$ \cite{li:2006c08}
\begin{equation}\label{eq:nondeer:pf-5}
|\psi_k(t)|\le\left|\scrT_k \left(\frac {-(\alpha+\beta)}{\beta-\alpha}\right)\right|^{-1}
             =\left|\scrT_k\left(\frac {\kappa+1}{\kappa-1}\right)\right|^{-1}
             =2\left[\Gamma_{\kappa}^k+\Gamma_{\kappa}^{-k}\right]^{-1}.
\end{equation}
Minimize the right-most quantities in \eqref{eq:rLGopt-UB} over $v\in n_0+\mathcal{K}_k(PAP,b_0)$,
utilize \eqref{eq:nondeer:pf-3} and \eqref{eq:nondeer:pf-5} to get the inequalities in \eqref{eq:rLGopt-UBs}.
\end{proof}

We end this section with remarks regarding the results in Theorem \ref{thm:nondeer}.

\begin{remark}\label{rm:bound1}
 The rate of convergence for our Lanczos algorithm depends on $\kappa$.
 Recall that $\kappa=\frac{\theta_{\max}-\lambda_{\ast}}{\theta_{\min}-\lambda_{\ast}}$.
 When $\lambda_{\ast}$ is far away from $\theta_{\min}$, we may regard that CRQopt \eqref{eq:CRQopt} is far from hard case.
 In this case, $\kappa$ moves towards $1$, and we expect faster convergence of our Lanczos algorithm.
 However, when CRQopt \eqref{eq:CRQopt} is near hard case, i.e., $\theta_{\min}\approx\lambda_{\ast}$, $\kappa$ is large,
 and Theorem \ref{thm:nondeer} suggests slow convergence. These conclusions derived  from Theorem~\ref{thm:nondeer} are consistent
with the numerical observations in \cite{hage:2001}
that ``a Lanczos type process seems to be very effective when the problem
is far from the hard case''. We provide an example in example \ref{ex:sharp} later to illustrate
the relationship between the rate of convergence and $\kappa$.
\hfill $\Box$
      \end{remark}

\begin{remark}\label{rm:bound2}
For most examples, the bounds suggested in \eqref{eq:rLGopt-UBs-1} and \eqref{eq:rLGopt-UBs-2} are sharp.
However, there are some cases where the bounds suggested in \eqref{eq:rLGopt-UBs-1} and \eqref{eq:rLGopt-UBs-2} are pessimistic. This occurs for near-hard situations where $\lambda_\ast\approx
 \theta_{\min}$ and sometimes the Lanczos method can still enjoy fast convergence, even though
the bounds in \eqref{eq:rLGopt-UBs-1} and \eqref{eq:rLGopt-UBs-2} do not suggest so.
One of such situations is when
$$
\kappa_+:= \frac{\theta_{\max}-\lambda_{\ast}}{\theta_2-\lambda_{\ast}}
$$
is small, even though $\theta_{\min}\approx\lambda_{\ast}$
and thus $\kappa$ is huge, where $\theta_2$ is the second smallest eigenvalue of $H$. This suggests that
the bounds by \eqref{eq:rLGopt-UBs-1} and \eqref{eq:rLGopt-UBs-2} have room for improvement. In fact, instead of
\eqref{eq:psi-1}, we may choose
\begin{equation}\label{eq:psi-t}
\psi_k(t)=\frac {t-\alpha}{-\alpha} \cdot
\scrT_{k-1}\left(\frac {2t-(\alpha_++\beta)}{\beta-\alpha_+}\right)\left/
\scrT_{k-1}\left(\frac {-(\alpha_++\beta)}{\beta-\alpha_+}\right)\right.,
\end{equation}
where $\alpha$ and $\beta$ are as before, and $\alpha_+=\theta_2-\lambda_{\ast}$. Evidently, again $\psi_k(0)=1$, but now
$\psi_k(\theta_1-\lambda_\ast)=0$. We have
\begin{equation}\label{eq:bound2}
\begin{aligned}
\max_{1\le i\le n-m}|\psi_k(\theta_i-\lambda_{\ast})|
   &=\max_{2\le i\le n-m}|\psi_k(\theta_i-\lambda_{\ast})|
   \le\max_{t\in[\alpha_+,\beta]}|\psi_k(t)|
   \\
  &\le \max_{t\in[\alpha_+,\beta]}\left|\frac{t-\alpha}{-\alpha}\right|\cdot
 2 \left[\Gamma_{\kappa_+}^{(k-1)}+\Gamma_{\kappa_+}^{-(k-1)}\right]^{-1} \\
& =\frac{2(\theta_{\max}-\theta_{\min})}{\theta_{\min}-\lambda_\ast}\left[\Gamma_{\kappa_+}^{(k-1)}+\Gamma_{\kappa_+}^{-(k-1)}\right]^{-1}.
\end{aligned}
\end{equation}
It combined with \eqref{eq:nondeer:pf-2a} will lead to   bounds
\begin{subequations}\label{eq:bound21}
\begin{align}
 h(v^{(k)})-h(v_{\ast})&\le \frac {16\gamma^2\|H-\lambda_{\ast} I\|_2(\theta_{\max}-\theta_{\min})}{(\theta_{\min}-\lambda_\ast)}\,\left[\Gamma_{\kappa_+}^{(k-1)}+\Gamma_{\kappa_+}^{-(k-1)}\right]^{-2}, \label{eq:bound211} \\
\|v^{(k)}-v_{\ast}\|_2 &\le 4\gamma\sqrt{\kappa}\frac{\theta_{\max}-\theta_{\min}}{\theta_{\min}-\lambda_\ast}\left[\Gamma_{\kappa_+}^{(k-1)}+\Gamma_{\kappa_+}^{-(k-1)}\right]^{-1}, \label{eq:bound212} \\
|\mu^{(k)}-\lambda_{\ast}|&\le\frac{\theta_{\max}-\theta_{\min}}{\theta_{\min}-\lambda_\ast}\left[ 16\|H-\lambda_{\ast} I\|_2\,\left[\Gamma_{\kappa_+}^{(k-1)}+\Gamma_{\kappa_+}^{-(k-1)}\right]^{-2}\right.\nonumber \\
    &\hspace{3cm}\left.+\frac{4}{\gamma}\|b_0\|_2\sqrt{\kappa}\left[\Gamma_{\kappa_+}^{(k-1)}+\Gamma_{\kappa_+}^{-(k-1)}\right]^{-1}\right].
                              \label{eq:bound213}
\end{align}
\end{subequations}
which can be much sharper than the ones by \eqref{eq:rLGopt-UBs-1} and \eqref{eq:rLGopt-UBs-2} and they will be sharper if
$\theta_{\min}\approx\lambda_{\ast}$ and there is a reasonably gap between $\theta_{\min}$ and $\theta_2$.
We show such an example later in example \ref{ex:nearhard}.
\hfill $\Box$
\end{remark}

\begin{remark}\label{rm:pessimistic}
In our numerical experiments, we observed that
the bound \eqref{eq:rLGopt-UBs-3} often decays much slower than $|\mu^{(k)}-\lambda_\ast|$.
Recall that in obtaining \eqref{eq:rLGopt-UBs-3},  we used
\begin{equation}\label{eq:pessimistic}
\left|b_0^{\T}(v^{(k)}-v_\ast)\right|\le\|b_0\|\left\|v^{(k)}-v_\ast\right\|
\end{equation}
in \eqref{eq:cauchy}.
It turns out that $\|b_0\|\left\|v^{(k)}-v_\ast\right\|$ decays much slower than $\left|b_0^{\T}(v^{(k)}-v_\ast)\right|$,
as evidenced by our numerical tests.
While at this point we don't know how to estimate  $\left|b_0^{\T}(v^{(k)}-v_\ast)\right|$
much more than accurately than via the inequality \eqref{eq:pessimistic}, we offer a plausible explanation as follows.
Let $u^{(k)}=v^{(k)}-n_0$ and $u_\ast=v_\ast-n_0$. Since $u_\ast^{\T} u_\ast=[u^{(k)}]^{\T} u^{(k)}=\gamma^2$, we have
\begin{equation}\label{eq:uuer}
\begin{aligned}
\left|u_\ast^{\T}(v^{(k)}-v_\ast)\right|&=\left|u_\ast^{\T} u^{(k)}-u_\ast^{\T} u_\ast\right|=\frac12\left|2u_\ast^{\T} u^{(k)}-u_\ast^{\T} u_\ast- [u^{(k)}]^{\T} u^{(k)}\right|\\
&=\frac12\left\| u^{(k)}-u_\ast\right\|_2^2=\frac12\left\| v^{(k)}-v_\ast\right\|_2^2.
\end{aligned}
\end{equation}
By \eqref{eq:rLGopt-UBs-2}, $\left\| v^{(k)}-v_\ast\right\|_2^2$ is of
order $O\left(\left[\Gamma_{\kappa}^k+\Gamma_{\kappa}^{-k}\right]^{-2}\right)$,
and thus $\left|u_\ast^{\T}(v^{(k)}-v_\ast)\right|$ is also of order
$O\left(\left[\Gamma_{\kappa}^k+\Gamma_{\kappa}^{-k}\right]^{-2}\right)$
as \eqref{eq:uuer} suggests. Let $\theta_1\le\theta_2\le\cdots\le\theta_{n-m}$ be the eigenvalues of $PAP$ restricted to
the subspace $\cR(P)$,
$y_1,y_2,\cdots,y_{n-m}$ be the corresponding orthonormal eigenvectors in $\cR(P)$,  $u_\ast=\sum_{i=1}^{n-m} \xi_iy_i$,
and $v^{(k)}-v_\ast=u^{(k)}-u_\ast=\sum_{i=1}^{n-m} \epsilon_iy_i$. Then we have
$$
\left|u_\ast^{\T}(v^{(k)}-v_\ast)\right|=\left|\sum_{i=1}^{n-m}  \xi_i\epsilon_i\right|.
$$
On the other hand $b_0=-(PAP-\lambda_\ast I)u_\ast=-\sum_{i=1}^{n-m} (\theta_i-\lambda_\ast )\xi_iy_i$
and thus
$$
\left|b_0^{\T}(v^{(k)}-v_\ast)\right|=\left|\sum_{i=1}^n (\theta_i-\lambda_\ast)\xi_i\epsilon_i\right|.
$$
Note that sequence $\{\theta_i-\lambda_\ast\}$ is positive and  increasing for the easy case
and sequence $\{\xi_iy_i\}$ oscillates for most cases  in practice. Therefore,
when $\kappa(PAP-\lambda_\ast I)=\frac{\theta_{n-m}-\lambda_\ast}{\theta_1-\lambda_\ast}$
is modest, i.e., the difference between $\theta_i-\lambda_\ast$ for different $i$ is modest,
we expect that the difference between
$\left|b_0^{\T}(v^{(k)}-v_\ast)\right|=\left|\sum_{i=1}^{n-m} (\theta_i-\lambda_\ast)\xi_i\epsilon_i\right|$
and $\left|u_\ast^{\T}(v^{(k)}-v_\ast)\right|=\left|\sum_{i=1}^{n-m}  \xi_i\epsilon_i\right|$ is small.
Therefore, the convergence rate of $\left|b_0^{\T}(v^{(k)}-v_\ast)\right|$ can be
similar to the convergence rate of $\left|u_\ast^{\T}(v^{(k)}-v_\ast)\right|$,
which is $O\left(\left[\Gamma_{\kappa}^k+\Gamma_{\kappa}^{-k}\right]^{-2}\right)$.
Plausibly, we have explained why the bound \eqref{eq:rLGopt-UBs-3} decays
much slower than the actual  $|\mu^{(k)}-\lambda_\ast|$.
 \hfill $\Box$
\end{remark}

\section{Numerical examples -- sharpness of error bounds}\label{sec-crq-ex}
In this section, we demonstrate the sharpness of our convergence error bounds 
in Theorem~\ref{thm:nondeer} for the Lanczos algorithm (Algorithm~\ref{alg:overall}) 
for solving  CRQopt \eqref{eq:CRQopt}. For that purpose, we first test examples 
that are hard for the Lanczos algorithm. The basic idea
is similar to that in \cite{li:2010}. Also shown are the history of the normalized residual
$\NRes_k^{\QEPmin}$ and its upper bound $\delta_k^{\QEPmin}$ in \eqref{eq:NRes-QEPmin-2}.
All numerical examples were carried out in MATLAB.

\subsection{Construction of difficult CRQopt problems}\label{sec-crq-ex-matrix}

The convergence analysis of the Lanczos algorithm
(Algorithm~\ref{alg:overall}) for solving CRQopt \eqref{eq:CRQopt}
presented in  Theorem \ref{thm:nondeer} indicates
that the convergence behavior is determined by
the spectral distribution of the matrix $H$ defined in
pLGopt \eqref{eq:pLGopt} and the optimal value $\lambda_\ast$ of LGopt \eqref{eq:LGopt}.
Therefore, we construct matrices $A$, $C$ and vector $b$ through
constructing matrices $H$ and $g_0$ of
pLGopt \eqref{eq:pLGopt}.

\paragraph{$H$ and $g_0$.}
It is not a secret that
approximations by the Lanczos procedure converge most slowly when the eigenvalues of these matrices
distribute like the zeros or the extreme nodes of Chebyshev polynomials
of the first kind \cite{li:2006c08,li:2007a08,li:2010,zhsl:2017}.
In what follows,
we describe one set of test matrix-vector pair $(H,g_0)$
using the extreme nodes of Chebyshev polynomials
of the first kind.

The $\ell$th Chebyshev polynomials of the first kind $\scrT_{\ell}(t)$
has $\ell+1$ extreme points in $[-1,1]$, defined by
\begin{equation}\label{eq:ChebENodes}
\tau_{j\ell}=\cos\vartheta_{j\ell},
\quad \mbox{with} \quad
\vartheta_{jl}=\frac {j}{\ell}\pi
\quad \mbox{for} \quad j = 0, 1, \ldots, \ell.
\end{equation}
At these extreme points, $|\scrT_{\ell}(\tau_{j\ell})|=1$.
Given scalars $\alpha$ and $\beta$ such that $\alpha<\beta$, set
\begin{equation}\label{eq:omega-tau}
\omega=\frac {\beta-\alpha}2, \quad
\tau=-\frac {\alpha+\beta}{\beta-\alpha}.
\end{equation}
The so-called {\em the $\ell$th translated Chebyshev extreme nodes\/}
on $[\alpha,\beta]$ are given by \cite{li:2006c08,li:2007a08}
\begin{equation}\label{eq:transChebENodes}
\tau_{j\ell}^{\trans}=\omega (\tau_{j\ell}-\tau)
\quad \mbox{for} \quad j = 0, 1, \ldots, \ell.
\end{equation}
It can be verified that $\tau^{\trans}_{0\ell}=\beta$
and $\tau^{\trans}_{\ell\ell}=\alpha$.

Given integers $n$ and $m$ with $m<n$,
and the interval $[\alpha,\beta]$, we take
\begin{equation}\label{eq:Hg0-eg}
H=\diag\left(\tau_{0\,n-m-1}^{\trans},\tau_{1\,n-m-1}^{\trans},\dots,\tau_{n-m-1\,n-m-1}^{\trans} \right).
\end{equation}
Now we construct $g_0=[g_1,g_2,\cdots,g_{n-m}]^{\T}\in\bbR^{n-m}$. Recall that the eigenvector of $H$ corresponding
to the smallest eigenvalue is $e_{n-m}$.
In order to make pLGopt \eqref{eq:pLGopt} in the easy case,
we need to make $g_0$ not perpendicular to that eigenvector $e_{n-m}$, i.e., $g_{n-m} \neq 0$.
As a simple choice, we take
\begin{equation} \label{eq:g0}
g_0=[1,1,\cdots,1]^{\T}\in\bbR^{n-m}.
\end{equation}

\paragraph{$A$, $C$ and $b$.}
With $H$ and $g_0$ set, we construct matrices $A$, $C$ and vector $b$ in the following way:
\begin{enumerate}
\item Pick $0<\zeta<1$, and $a\in\bbR^m$ with $\|a\|=1/\zeta$;
  \item Pick a random $C\in\bbR^{n\times m}$ and compute its QR decomposition
  \begin{equation}\label{eq:qrc}
  C=\kbordermatrix{ & \sss m & \sss n-m \\
                          & S_2 & S_1}\times\kbordermatrix{ & \sss m \\
                                       \sss m & R \\
                                       \sss n-m & 0}\equiv S_2R.
  \end{equation}
\item Let  $b=\zeta^2 R^{\T}a$.
\item Take $A_{12}=g_0a^{\T}$, $A_{22}=\eta I_m$
      with $\eta=(g_0^{\T}H^{-1}g_0)/\zeta^2$.

\item Set $A=S\begin{bmatrix}
        H & A_{12} \\
        A_{12}^{\T} & A_{22}
      \end{bmatrix}S^{\T}$, where $S=[S_1,S_2]$.
\end{enumerate}
Note that by the construction, the matrix $A$ is positive semidefinite when $H$ is positive definite.
This is because the Schur complement  of  $H$ in the matrix
$\begin{bmatrix}
        H & A_{12} \\
        A_{12}^{\T} & A_{22}
  \end{bmatrix}$:
\begin{align*}
A_{22}-A_{12}^{\T}H^{-1}A_{12}
& =A_{22}-ag_0^{\T}H^{-1}g_0a^{\T}=A_{22}-(g_0^{\T}H^{-1}g_0)aa^{\T} \\
& = \eta I -(g_0^{\T}H^{-1}g_0)aa^{\T}
= (g_0^{\T}H^{-1}g_0)(\|a\|^2 I - aa^{\T})
\end{align*}
is positive semidefinite since $H$ is positive definite and $g_0^{\T}H^{-1}g_0>0$.

\paragraph{Verification.}
Now we verify that CRQopt \eqref{eq:CRQopt} with $A$, $C$, $b$
constructed from the process above will yield
pLGopt \eqref{eq:pLGopt} with matrices $H$ and $g_0$ and scalar $\gamma=\sqrt{1-\zeta^2}$, as desired.

Recall the definitions in  \eqref{eq:def:gH}:
\begin{equation}\label{eq:def:gH2}
g_0=S_1^{\T}b_0,\,\, H=S_1^{\T}PAPS_1=S_1^{\T}AS_1\in\bbR^{(n-m)\times (n-m)}.
\end{equation}
By the construction of $A$,
$S_1^{\T}AS_1=H$, which is consistent with $H$ defined in \eqref{eq:def:gH2}.
Further recall that
$P$ is a projection matrix onto $\mathcal{N}(C^{\T})$ and the columns of $S_1$ form an orthonormal basis of $\mathcal{N}(C^{\T})$.
So $P=S_1S_1^{\T}$.
In addition, by the QR factorization \eqref{eq:qrc}, $(C^{\T})^\dag=S_2R^{-\T}$, and so $n_0=(C^{\T})^\dag b=S_2R^{-\T} b$.
By the definition of matrix $A$, $S_1^{\T}A S_2=A_{12}$,
we have
\begin{equation}\label{eq:g0verify}
S_1^{\T}b_0=S_1^{\T}PAn_0=S_1^{\T}S_1S_1^{\T}AS_2R^{-\T} b=S_1^{\T}A S_2R^{-\T} b=\zeta^2 A_{12}a=\zeta^2g_0a^{\T}a=g_0.
\end{equation}
which is consistent with $g_0$ defined in \eqref{eq:def:gH2}.
Finally,
$$\gamma=\sqrt{1-\|n_0\|^2}=\sqrt{1-\|S_2R^{-\T} b\|^2}=\sqrt{1-\|R^{-\T} b\|^2}=\sqrt{1-\|\zeta^2 a\|^2}=\sqrt{1-\zeta^2 }.$$

\subsection{Numerical results}
For testing purpose, we compute a solution $v_{\ast}$
by the direct method in \cite{gagv:1989}
as a reference (exact) solution; otherwise it is generally unknown. We also
compute
$\kappa = \frac{\lambda_{\max}(H)-\lambda_\ast}{\lambda_{\min}(H)-\lambda_\ast}$
to examine our error bounds in Theorem~\ref{thm:nondeer}.

The Lanczos algorithm (Algorithm~\ref{alg:overall}) is applied
to solve CRQopt \eqref{eq:CRQopt} via QEPmin \eqref{eq:QEPmin} and via LGopt \eqref{eq:LGopt}.
For each computed $v^{(k)}$,
the $k$th iteration, we compute relative errors
\[
\err_1 = \frac{|(v^{(k)})^{\T}Av^{(k)}-v_\ast^{\T}Av_\ast|}{|v_\ast^{\T}Av_\ast|},
\quad
\err_2 = \|v^{(k)}-v_\ast\|,
\quad \mbox{and} \quad
\err_3=\frac{|\mu^{(k)}-\lambda_\ast|}{|\lambda_\ast|}.
\]
Since $\|v_\ast\|=1$, the absolute error
$\err_2$ is also  relative.
The stoping criterion for solving QEPmin \eqref{eq:QEPmin} is either
$\delta_k^{\QEPmin}< 10^{-15}$ or the  number of Lanczos steps reaches ${\tt maxit}=200$,
where $\delta_k^{\QEPmin}$ is defined in \eqref{eq:NRes-QEPmin}.
The stoping criterion for solving LGopt \eqref{eq:LGopt} is either
$\NRes_k^{\LGopt}<10^{-15}$ or the  number of Lanczos steps reaches ${\tt maxit}=200$.
%

\begin{example} \label{sec-crq-ex-correct}
In this example, we test the correctness and convergence behavior of the Lanczos algorithm to solve CRQopt \eqref{eq:CRQopt}.
Let $n=1100$, $m=100$, $\alpha=1$, $\beta=100$ or $1000$, and construct $H$   as in \eqref{eq:Hg0-eg} and $g_0$  as in \eqref{eq:g0}. For $(A, C, b)$, let
$\zeta=0.9$ and $a$ be random vector normalized to have norm $1/\zeta$ and then the rest follows Subsection \ref{sec-crq-ex-matrix}
in  constructing $A$, $C$ and $b$.

\begin{figure}
\begin{center}
\includegraphics[width=0.45\textwidth]{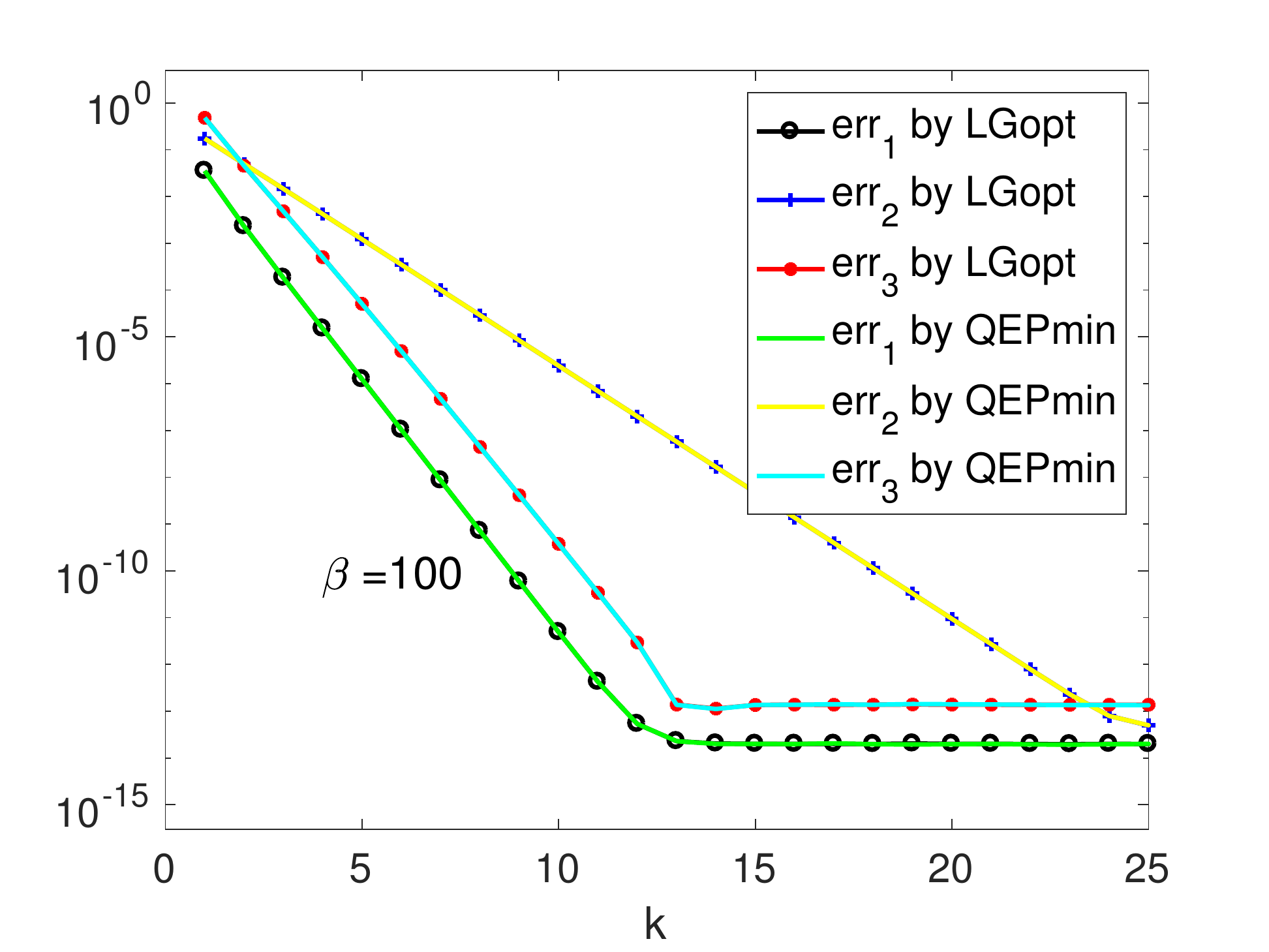} \quad
\includegraphics[width=0.45\textwidth]{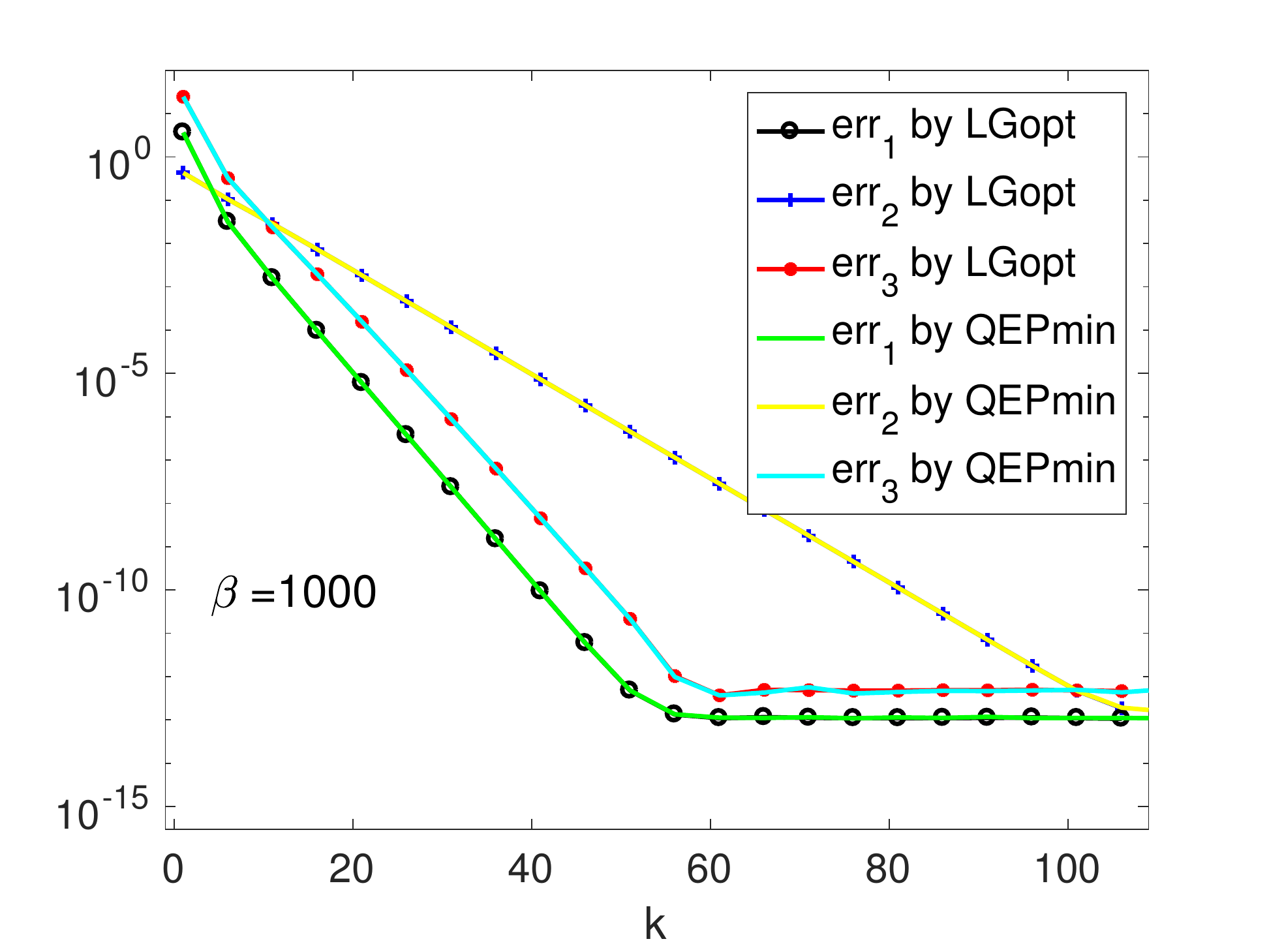}
\end{center}
\caption{Example~\ref{sec-crq-ex-correct}: history of
$\err_1$, $\err_2$
and  $\err_3$ for the cases where $\beta=100$ (left) and $\beta=1000$ (right).
}
\label{fig:correct}
\end{figure}

The convergence histories for  $\err_1$, $\err_2$
and  $\err_3$ are plotted in Figure \ref{fig:correct}. It can be seen that all
converge to the machine precision. 
Also  $\err_1$, $\err_2$ and $\err_3$  are the same, respectively, at every iteration whether
CRQopt \eqref{eq:CRQopt} is solved  via QEPmin \eqref{eq:QEPmin} or LGopt \eqref{eq:LGopt}, which is
consistent with our theory that solving rLGopt \eqref{eq:rLGopt} is equivalent
to solving rQEPmin \eqref{eq:rQEPmin}.
\end{example}

\begin{example} \label{ex:sharp}
We illustrate the sharpness of the
error bounds \eqref{eq:rLGopt-UBs} in Theorem~\ref{thm:nondeer} and the relationship
between the convergence rate of our Lanczos algorithm and $\kappa$.

\begin{figure}
\begin{center}
\includegraphics[width=0.45\textwidth]{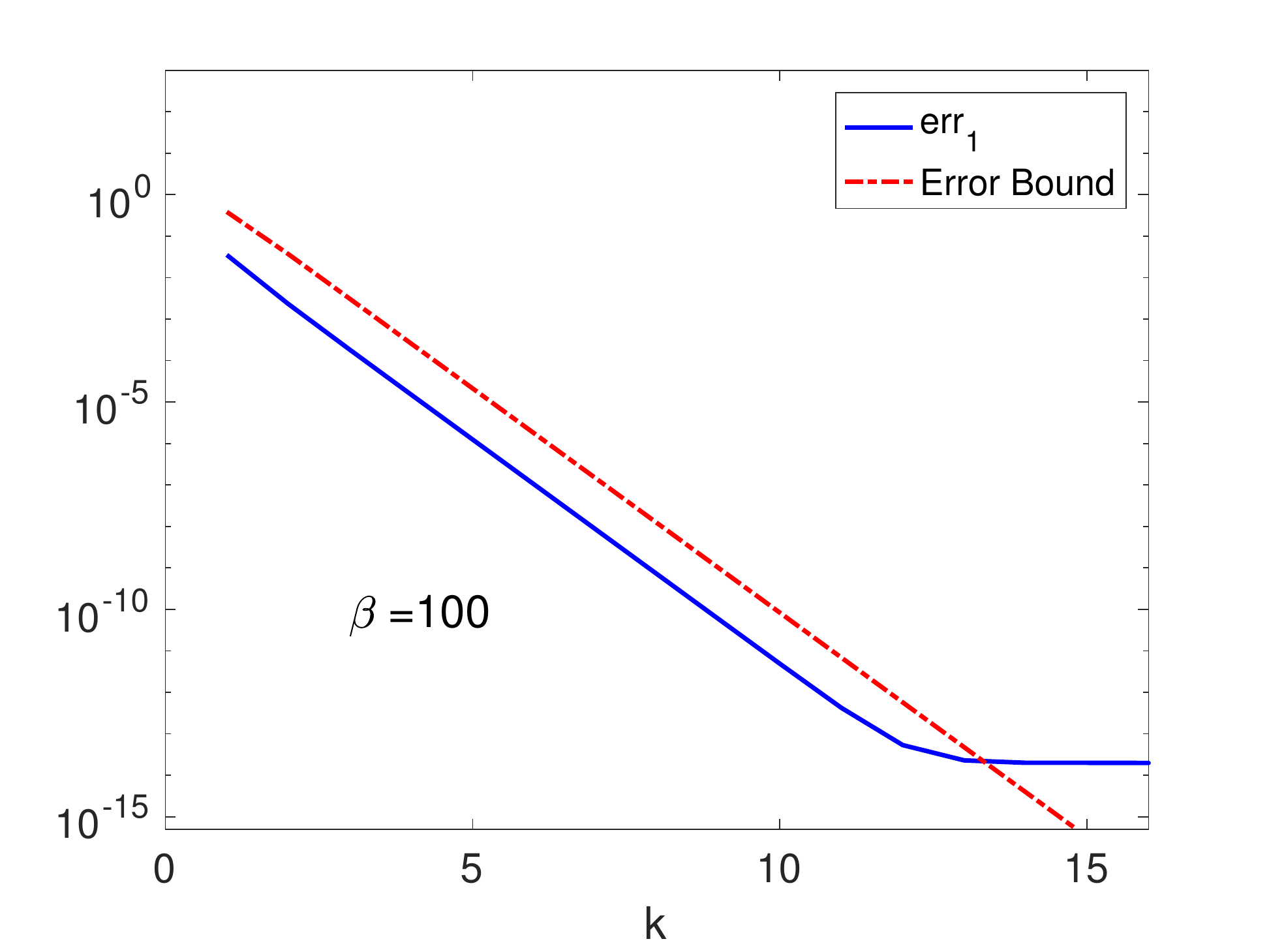}\quad\,
\includegraphics[width=0.45\textwidth]{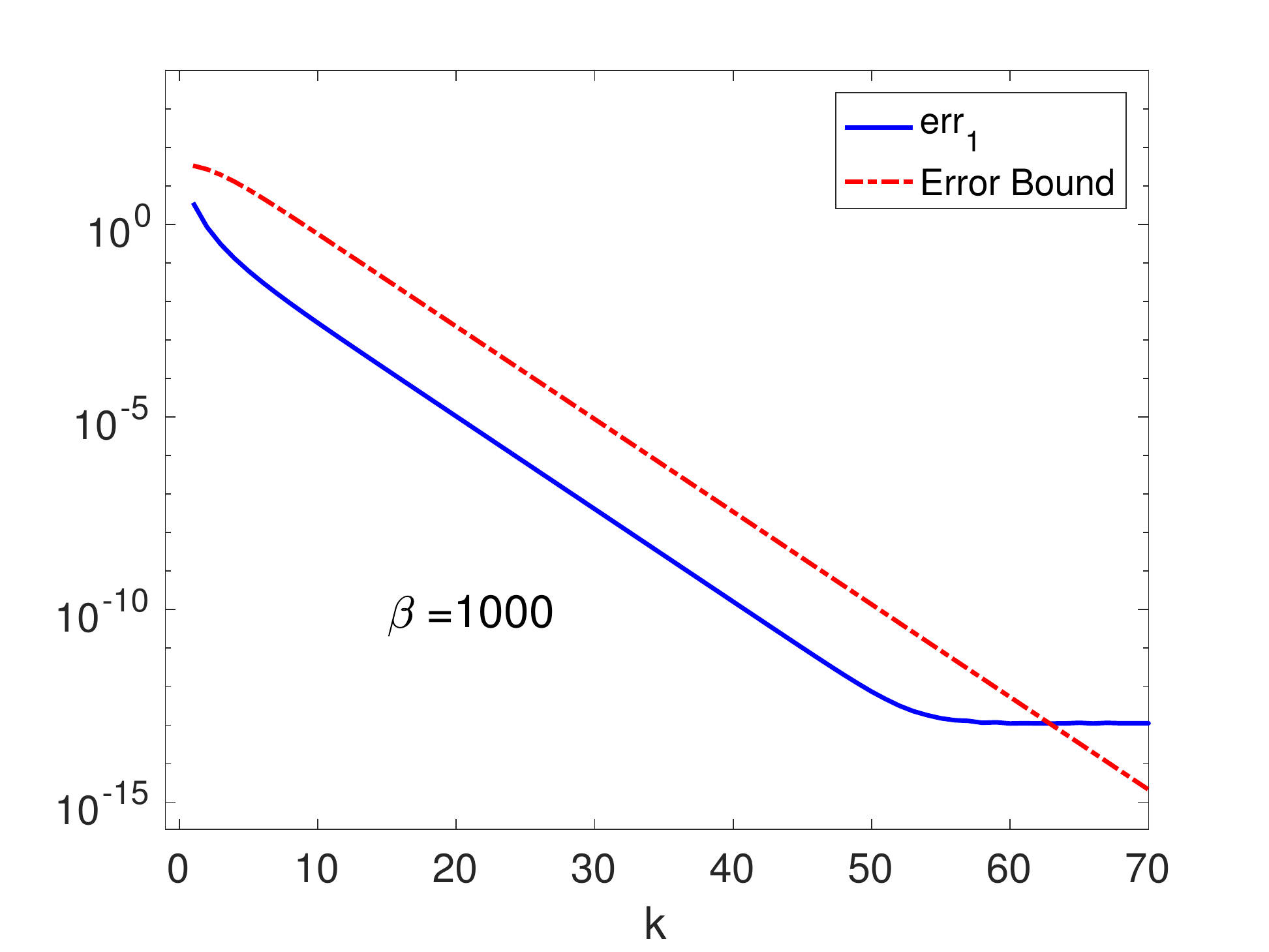} \\
\includegraphics[width=0.45\textwidth]{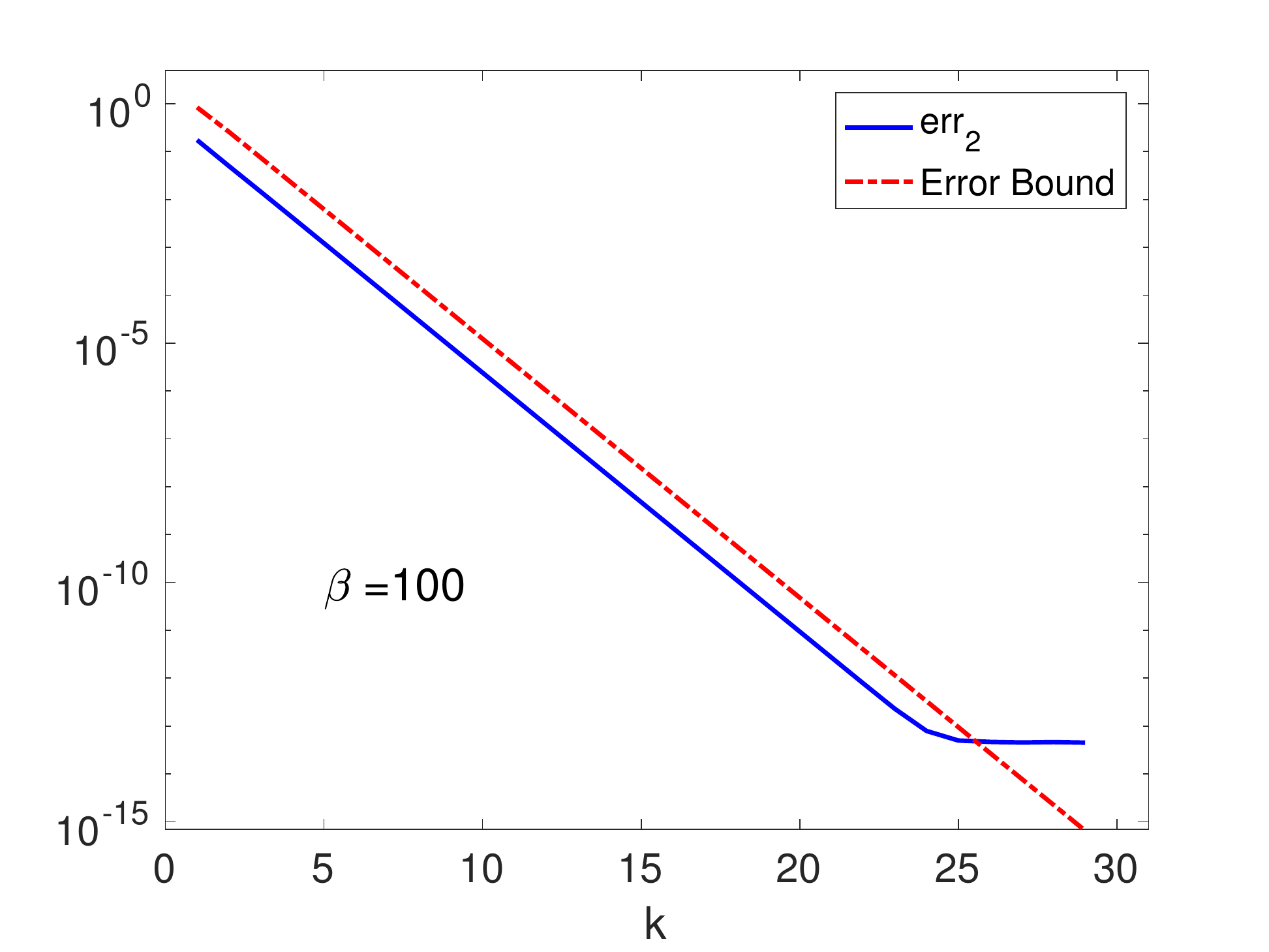}\quad\,
\includegraphics[width=0.45\textwidth]{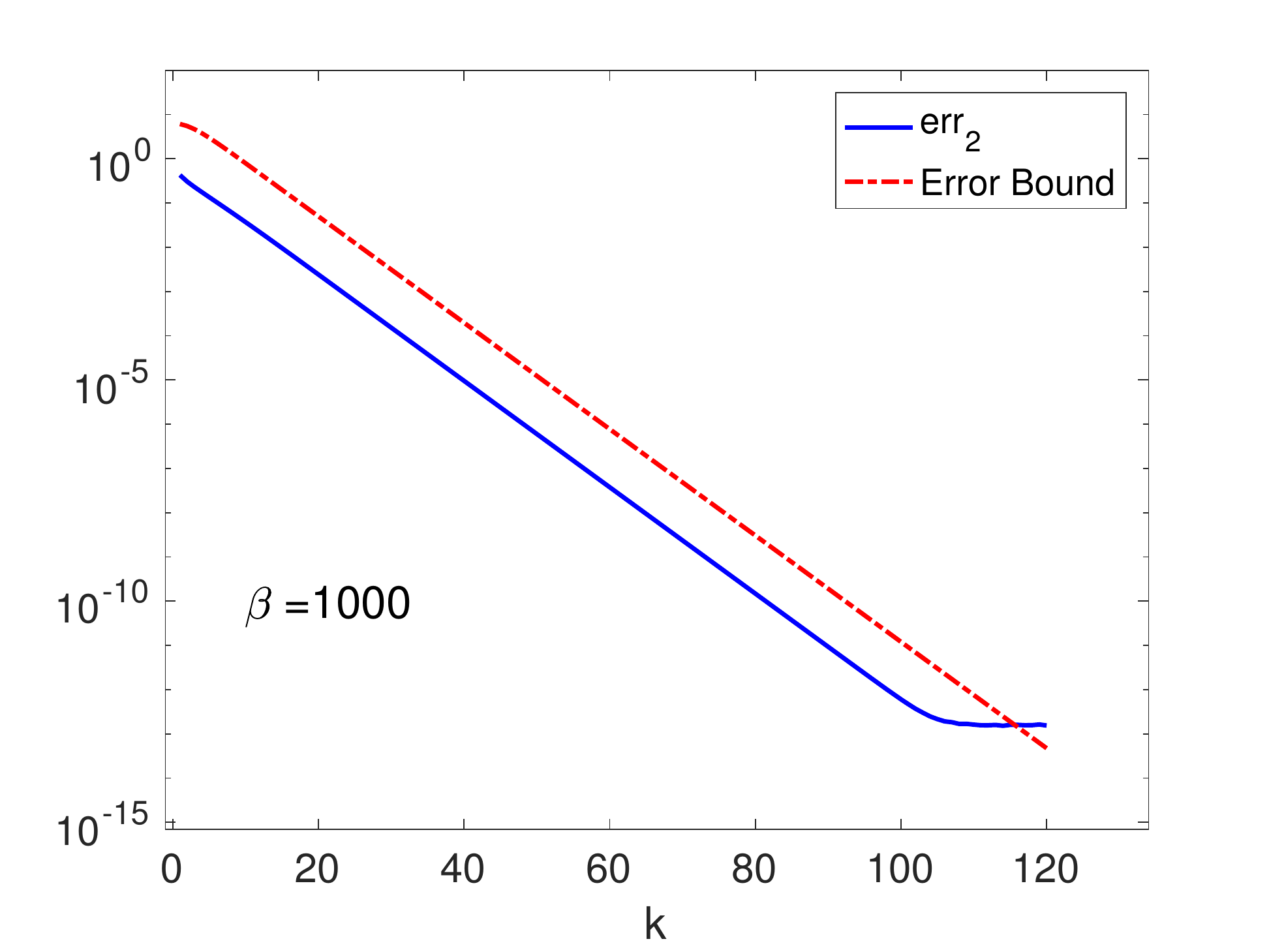}\\
\includegraphics[width=0.45\textwidth]{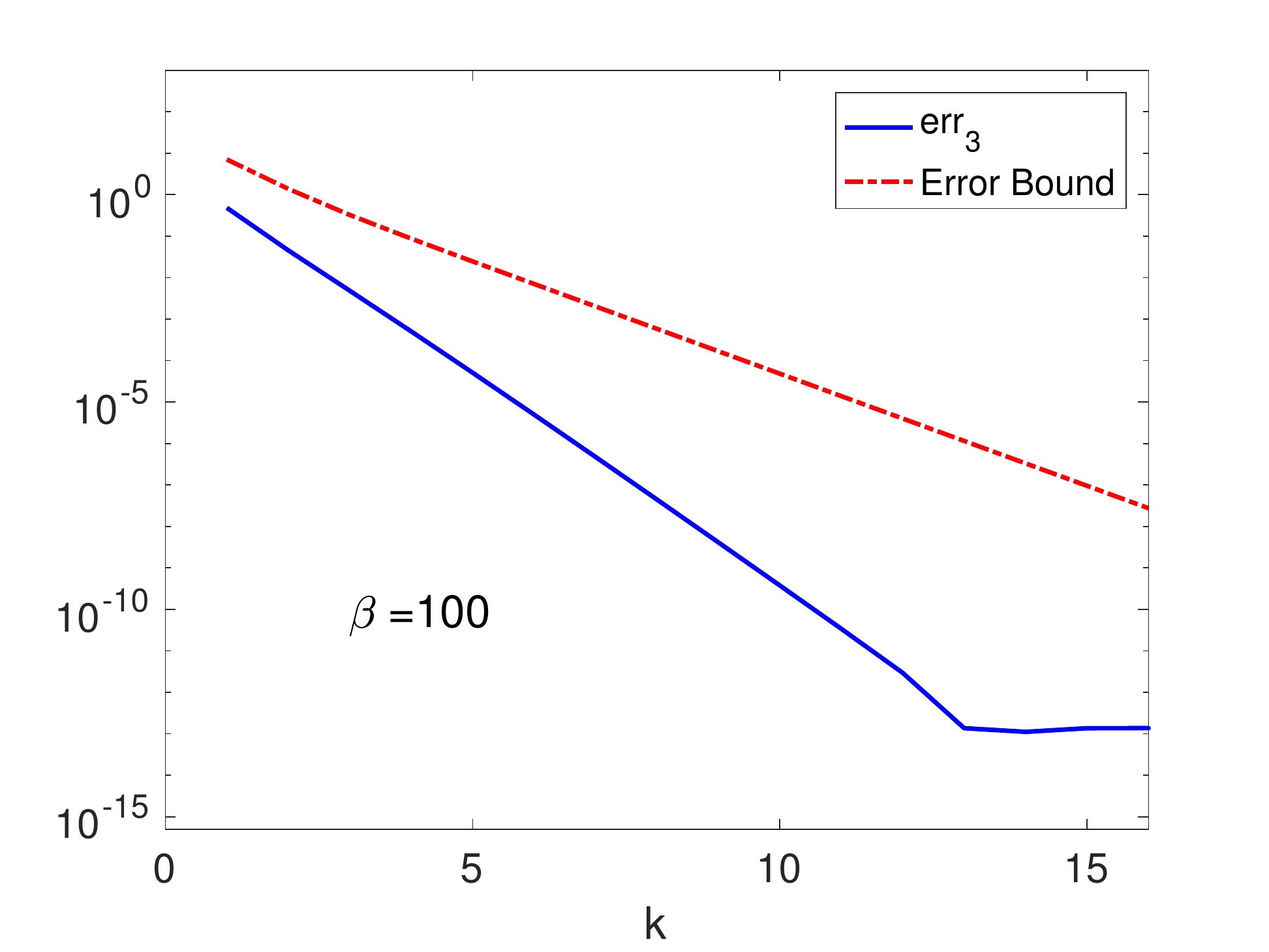}\quad\,
\includegraphics[width=0.45\textwidth]{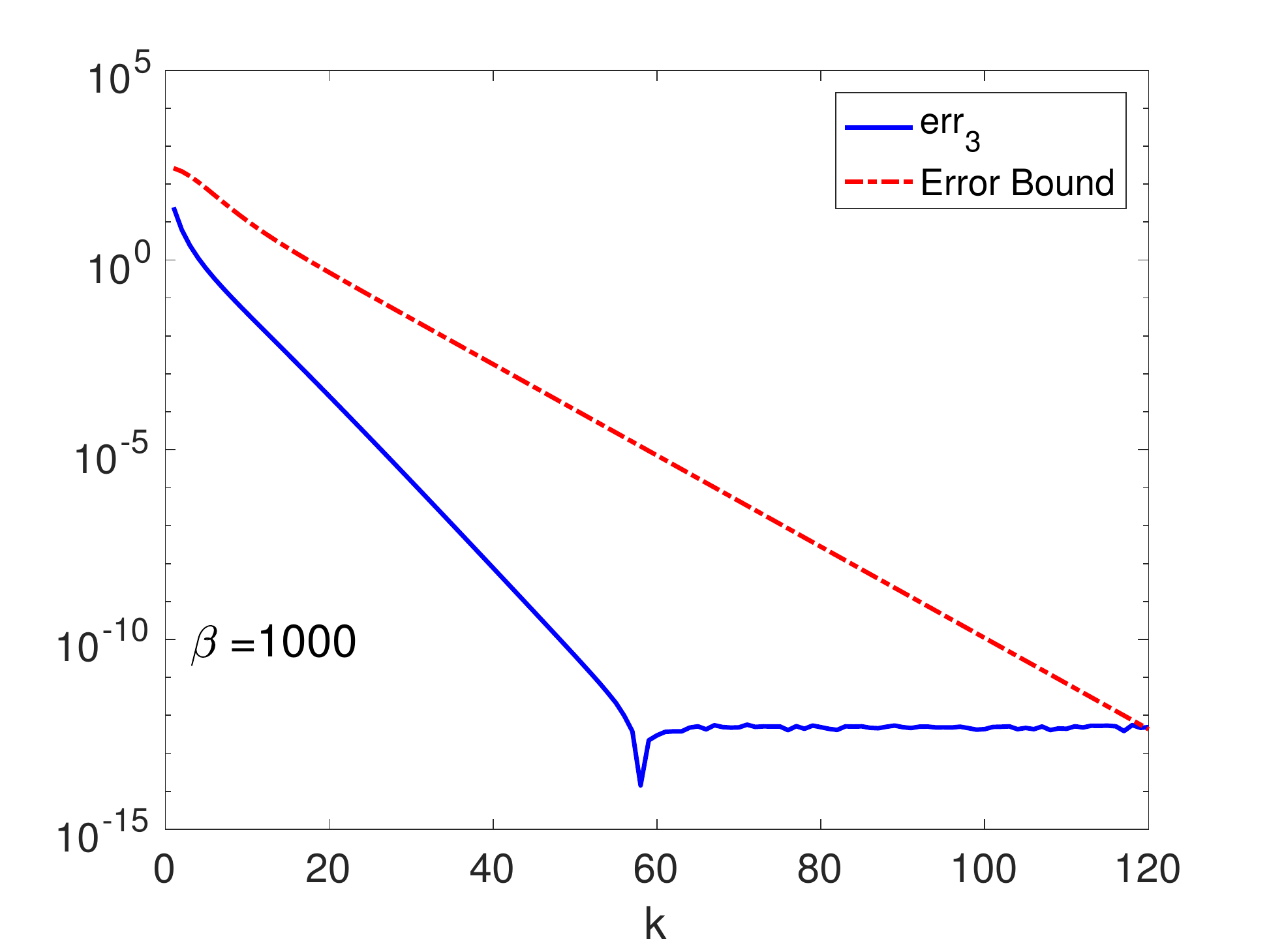}
\end{center}
\caption{Example~\ref{ex:sharp}: histories for $\err_1$ (first row), $\err_2$ (second row), $\err_3$ (third row) and their upper bounds for $\beta=100$ (left column) and $\beta=1000$ (right column).}
\label{fig:sharp}
\end{figure}

The same test matrices as in Example~\ref{sec-crq-ex-correct},
with $\beta = 100$ and  $1000$ are used.
We solve CRQopt \eqref{eq:CRQopt} by solving QEPmin \eqref{eq:QEPmin}
and choose the same parameters as in Example~\ref{sec-crq-ex-correct}. For $\alpha=1$ and $\beta=100$,
We calculate
$$
(\lambda_\ast,\kappa)
  =\begin{cases}
    (-42.6007,3.2706), &\quad\mbox{for $(\alpha,\beta)=(1,100)$}; \\
    (-18.2629,52.8613), &\quad\mbox{for $(\alpha,\beta)=(1,1000)$}.
   \end{cases}
$$
Judging from the corresponding $\kappa$, we expect our Lanczos algorithm will converge faster for the case $\beta=100$ than the case
$\beta=1000$. We plot in Figure \ref{fig:sharp} the convergence histories for
\begin{align*}
\err_1\,\,&\mbox{and its upper bound
       $\frac {16\gamma^2\|H-\lambda_{\ast} I\|}{v_{\ast}^{\T}Av_{\ast}}\,\left[\Gamma_{\kappa}^k+\Gamma_{\kappa}^{-k}\right]^{-2}$ by \eqref{eq:rLGopt-UBs-1}}, \\
\err_2\,\,&\mbox{and its upper bound $4\gamma\sqrt{\kappa}\left[\Gamma_{\kappa}^k+\Gamma_{\kappa}^{-k}\right]^{-1}$ by \eqref{eq:rLGopt-UBs-2}},\\
\err_3\,\,&\mbox{and its upper bound $\frac{16}{|\lambda_\ast|}\|H-\lambda_{\ast} I\|\,\left[\Gamma_{\kappa}^k+\Gamma_{\kappa}^{-k}\right]^{-2}+\frac{4}{\gamma|\lambda_\ast|}\sqrt{\kappa}\left[\Gamma_{\kappa}^k+\Gamma_{\kappa}^{-k}\right]^{-1}$ by \eqref{eq:rLGopt-UBs-3}}.
\end{align*}
The bounds for $\err_1$ and $\err_2$ by \eqref{eq:rLGopt-UBs-1} and \eqref{eq:rLGopt-UBs-2}
for both $\beta=100$ and $\beta=1000$ appear sharp.  However, the bound for $\err_3$ by \eqref{eq:rLGopt-UBs-3}
is pessimistic. In the plots, $\err_3$ goes to $0$ at about a similar  rate of $\err_1$, but the bounds by
\eqref{eq:rLGopt-UBs-2} and \eqref{eq:rLGopt-UBs-3} for $\err_3$ progress at the same rate as the bound by \eqref{eq:rLGopt-UBs-1}
for $\err_2$.
We unsuccessfully tried to establish a better bound for $\err_3$ to reflect what we just witnessed, but we offered
a plausible explanation  in Remark \ref{rm:pessimistic}.

As expected, $\err_1$, $\err_2$ and $\err_3$ go to $0$ faster for the case $\beta=100$ than the case  $\beta=1000$.
It is consistent with our convergence results in Theorem \ref{thm:nondeer} that our Lanczos algorithm for CRQopt \eqref{eq:CRQopt}
converges faster when $\kappa$ is smaller.
\end{example}

\begin{example}\label{ex:nearhard}
We consider an example  where the error bounds in Theorem \ref{thm:nondeer} are pessimistic, while those by \eqref{eq:bound21}
can correctly reveal the speed of convergence. This occurs
when CRQopt  is a ``{\em nearly hard case}'', i.e.,
where the optimal value of the corresponding
pLGopt \eqref{eq:pLGopt} $\lambda_\ast \approx \lambda_{\min}(H)$.
Specifically, we choose $n=1100$, $m=100$, $\zeta=0.9$, $a$  a random
vector with the norm $1/\zeta$, and
$$
H=\diag(\tau_{0\,n-m-2}^{\trans},\tau_{1\,n-m-2}^{\trans},\dots,\tau_{n-m-2\,n-m-2}^{\trans},1)
$$
with   $(\alpha,\beta)=(2,1000)$ in \eqref{eq:omega-tau} and \eqref{eq:transChebENodes}, and
$$g_0=\left[e^{\eta},e^{2\eta},\cdots,e^{(n-m)\eta}\right]^{\T}$$
where $\eta=-5\times 10^{-3}$.
In this case, $\lambda_{\min}(H)=1$ and $\lambda_\ast=0.9845$, so $\lambda_{\min}(H)\approx\lambda_\ast$ and
thus it is a nearly hard case. It is computed that%
\begin{figure}
\begin{center}
\includegraphics[width=0.31\textwidth]{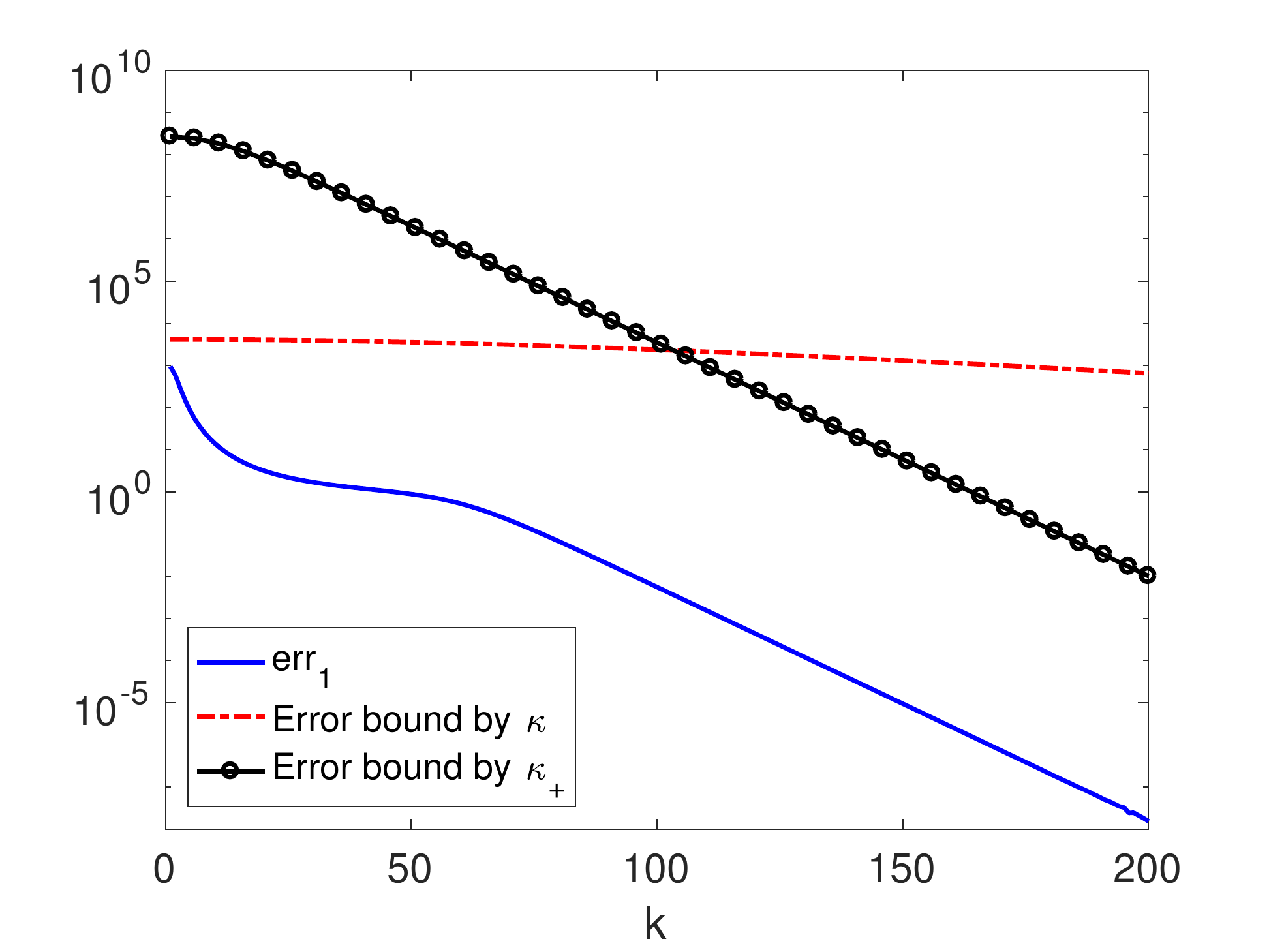}\quad\,
\includegraphics[width=0.31\textwidth]{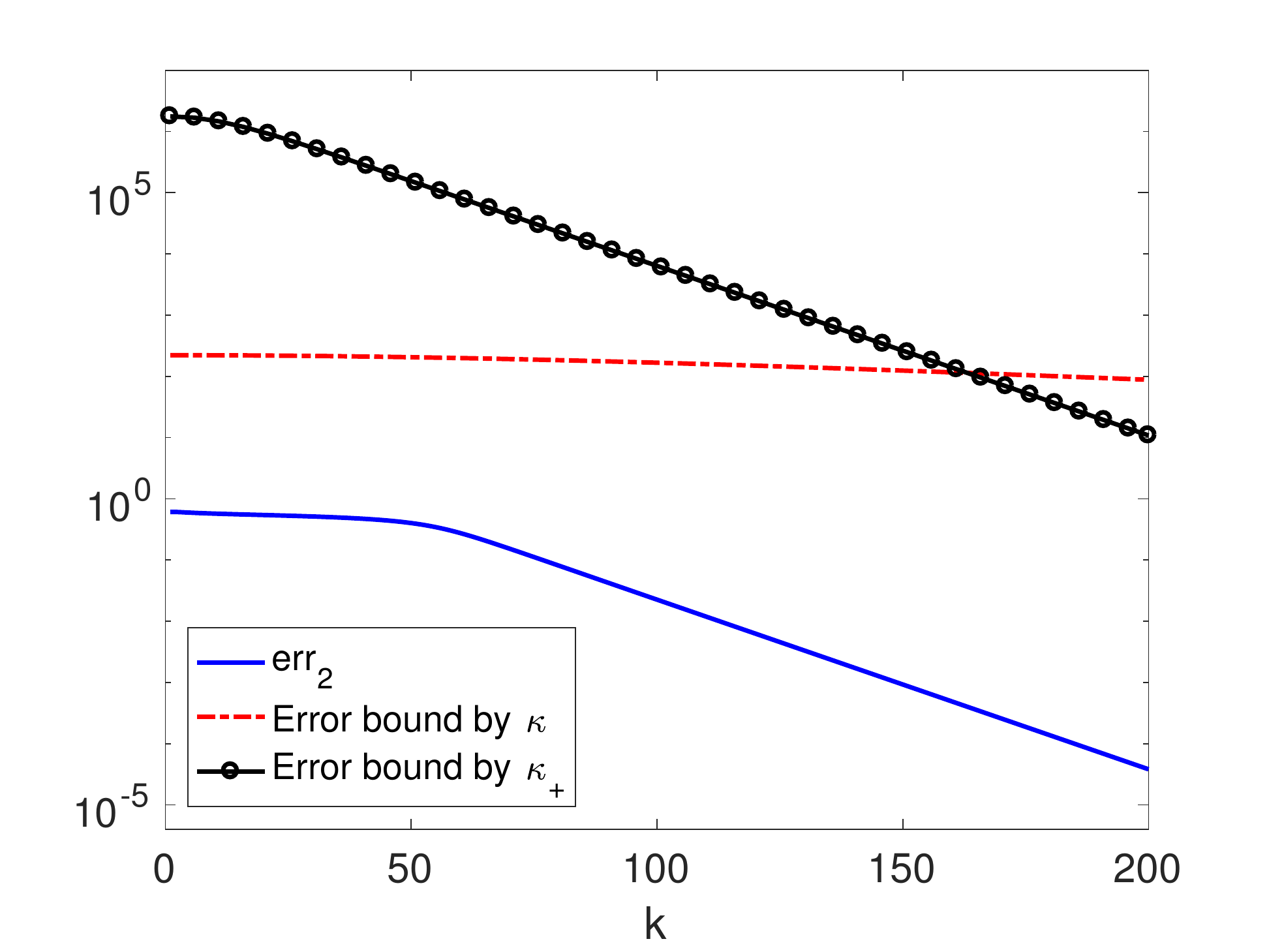}
\includegraphics[width=0.31\textwidth]{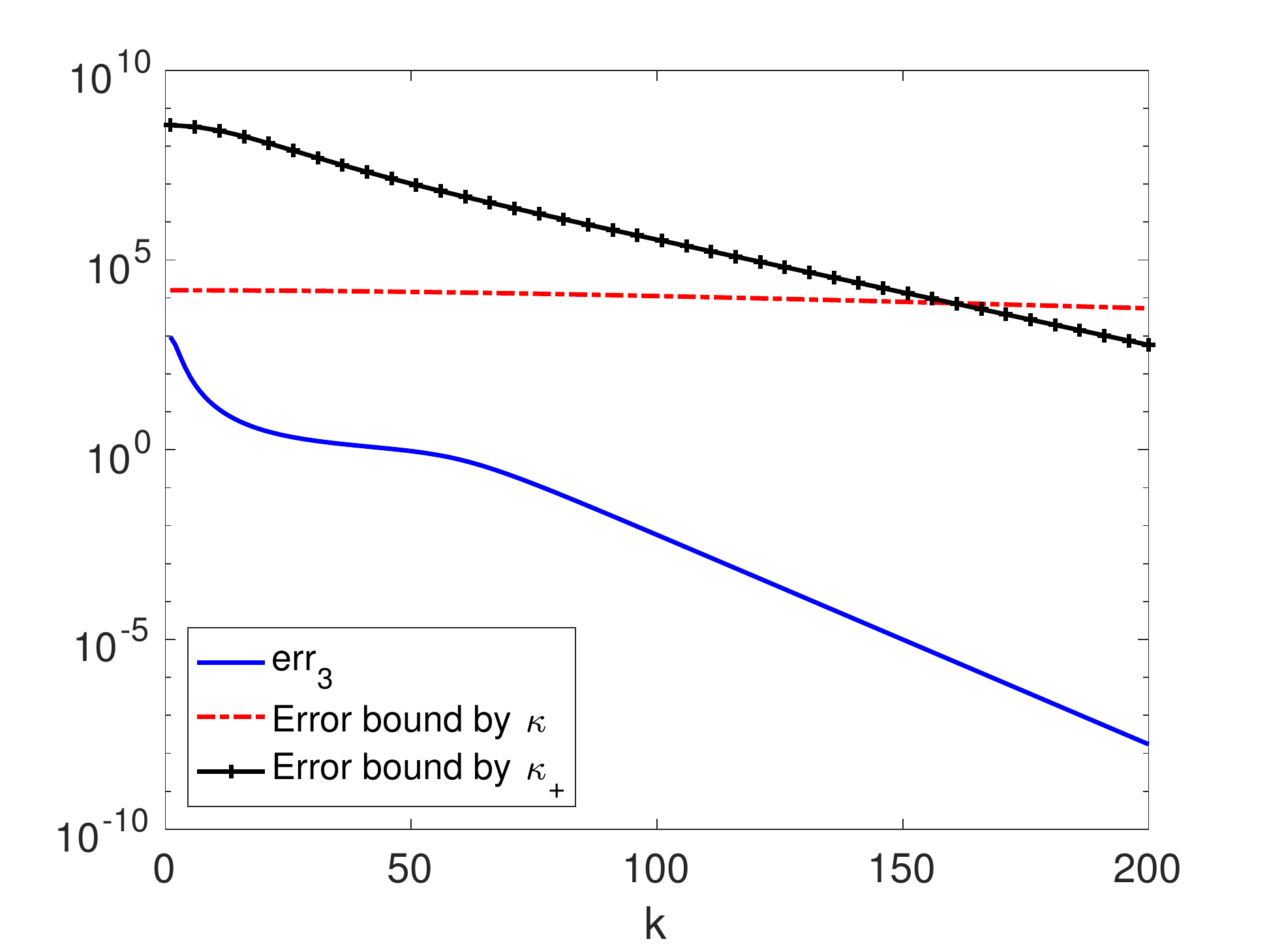}
\end{center}
\caption{Example~\ref{ex:nearhard}:  histories of $\err_1$, $\err_2$, $err_3$ and their upper bounds. ``Error bound by $\kappa$'' and ``Error bound by $\kappa_+$'' means upper bounds in \eqref{eq:rLGopt-UBs} and \eqref{eq:bound21}, respectively.}
\label{fig:notsharp}
\end{figure}
$$
\kappa = \frac{\lambda_{\max}(H)-\lambda_\ast}
              {\lambda_{\min}(H)-\lambda_\ast}
 \approx 6.4466\times 10^4
$$
which is big.  We solve the associated CRQopt \eqref{eq:CRQopt} via QEPmin \eqref{eq:QEPmin}.
 In Figure \ref{fig:notsharp}, we plot the convergence history:
\begin{align*}
\err_1&\mbox{, its upper bounds
       $\frac {16\gamma^2\|H-\lambda_{\ast} I\|}{v_{\ast}^{\T}Av_{\ast}}\,\left[\Gamma_{\kappa}^k+\Gamma_{\kappa}^{-k}\right]^{-2}$
                by \eqref{eq:rLGopt-UBs-1}, and}\\
       &\mbox{$\frac{16\gamma^2\|H-\lambda_{\ast} I\|(\theta_{\max}-\theta_{\min})}{(\theta_{\min}-\lambda_\ast){v_{\ast}^{\T}Av_{\ast}}}\,\left[\Gamma_{\kappa_+}^{(k-1)}+\Gamma_{\kappa_+}^{-(k-1)}\right]^{-2}$ by \eqref{eq:bound211}}, \\
\err_2&\mbox{, its upper bounds $4\gamma\sqrt{\kappa}\left[\Gamma_{\kappa}^k+\Gamma_{\kappa}^{-k}\right]^{-1}$ by \eqref{eq:rLGopt-UBs-2}, and}\\
&\mbox{$4\gamma\sqrt{\kappa}\frac{\theta_{\max}-\theta_{\min}}{\theta_{\min}-\lambda_\ast}\left[\Gamma_{\kappa_+}^{(k-1)}+\Gamma_{\kappa_+}^{-(k-1)}\right]^{-1}$ by \eqref{eq:bound212}},\\
\err_3&\mbox{, its upper bounds $\frac{16}{|\lambda_\ast|}\|H-\lambda_{\ast} I\|\,\left[\Gamma_{\kappa}^k+\Gamma_{\kappa}^{-k}\right]^{-2}+\frac{4\|b_0\|}{\gamma|\lambda_\ast|}\sqrt{\kappa}\left[\Gamma_{\kappa}^k+\Gamma_{\kappa}^{-k}\right]^{-1}$ by \eqref{eq:rLGopt-UBs-3}, and}\\
     &\mbox{$\frac{\theta_{\max}-\theta_{\min}}{|\lambda_\ast|(\theta_{\min}-\lambda_\ast)}\left[ 16\|H-\lambda_{\ast} I\|\,\left[\Gamma_{\kappa_+}^{(k-1)}+\Gamma_{\kappa_+}^{-(k-1)}\right]^{-2}+\frac{4}{\gamma}\|b_0\|\sqrt{\kappa}\left[\Gamma_{\kappa_+}^{(k-1)}+\Gamma_{\kappa_+}^{-(k-1)}\right]^{-1}\right]$}\\&\mbox{ by \eqref{eq:bound213}}.
\end{align*}
It can be observed that
The error bounds by Theorem \ref{thm:nondeer} decay much slower than $\err_1$, $\err_2$ and $\err_3$ in this ``near hard case''.
This is an example for which  $\kappa$ is large but $\kappa_+$ is small:
$$
\kappa_+:= \frac{\theta_{\max}-\lambda_{\ast}}{\theta_2-\lambda_{\ast}}\approx  983.7702,
$$
As commented in Remark \ref{rm:bound2}, sharper bounds like ones by \eqref{eq:bound21} should be used.
They are also included in Figure \ref{fig:notsharp}. We can see that the bounds \eqref{eq:bound21} correctly reflect
the speed of convergence, but they are bigger than the corresponding errors by several order of magnitudes.
\end{example}

\begin{example} \label{sec-crq-ex-res}
In this example, we test the effectiveness of the residual bound $\delta_k^{\QEPmin}$ in \eqref{eq:NRes-QEPmin}.
We use the same test problem as in Example \ref{sec-crq-ex-correct} for both  $\beta=100$  and $\beta=1000$.
We
run our Lanczos algorithm for QEPmin \eqref{eq:QEPmin} and record the residual $\NRes_k^{\QEPmin}$
and its bound  $\delta_k^{\QEPmin}$ defined in \eqref{eq:NRes-QEPmin} for every Lanczos step.
They are plotted in Figure \ref{fig:QEPres}. We observe that
both $\NRes_k^{\QEPmin}$ and
$\delta_k^{\QEPmin}$
in \eqref{eq:NRes-QEPmin}
converge to $0$ at the same rate, suggesting $\delta_k^{\QEPmin}$ is an very effective upper bound of
the residual $\NRes_k^{\QEPmin}$.
\begin{figure}
\begin{center}
\includegraphics[width=0.45\textwidth]{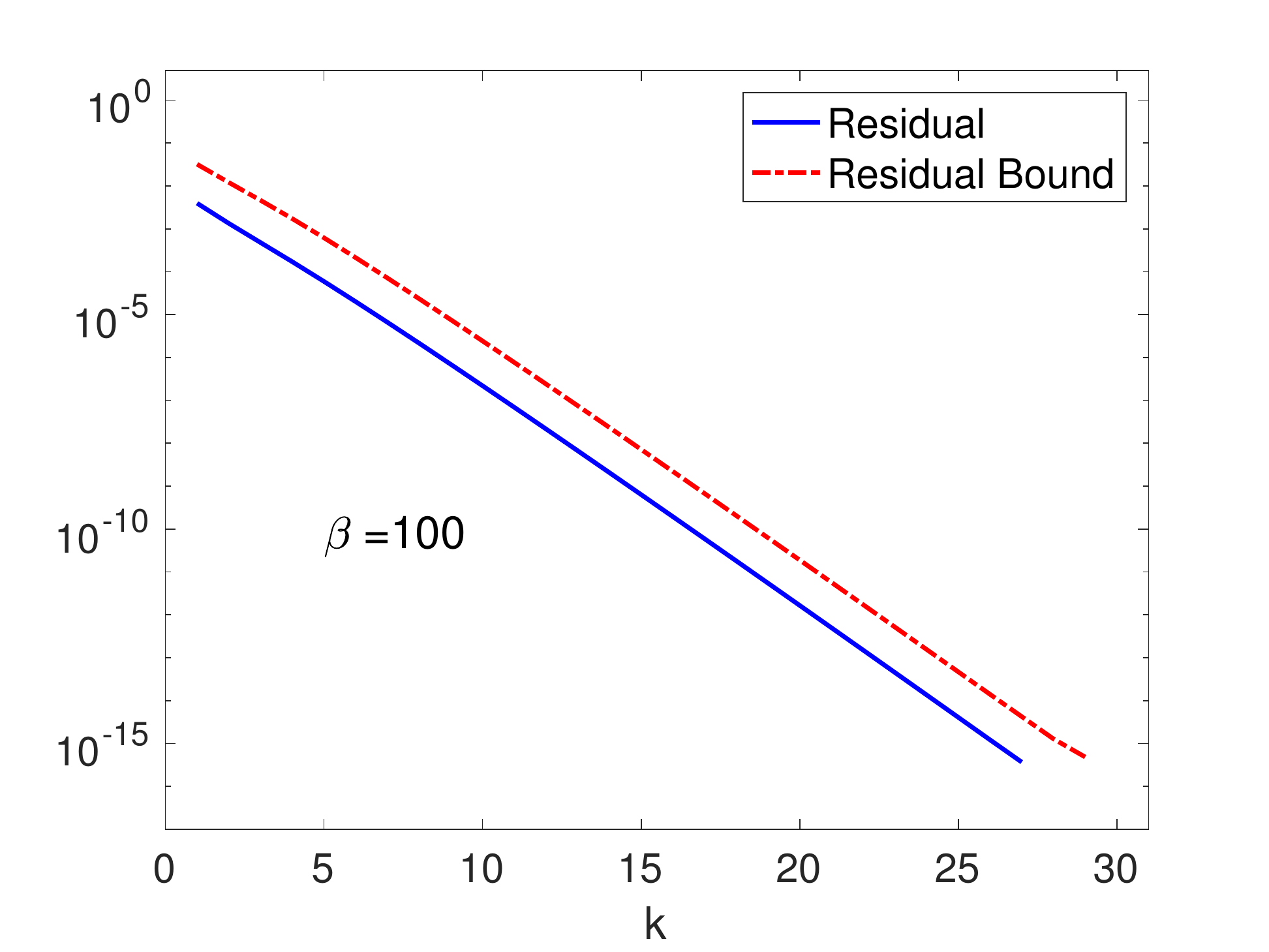}\quad\,
\includegraphics[width=0.45\textwidth]{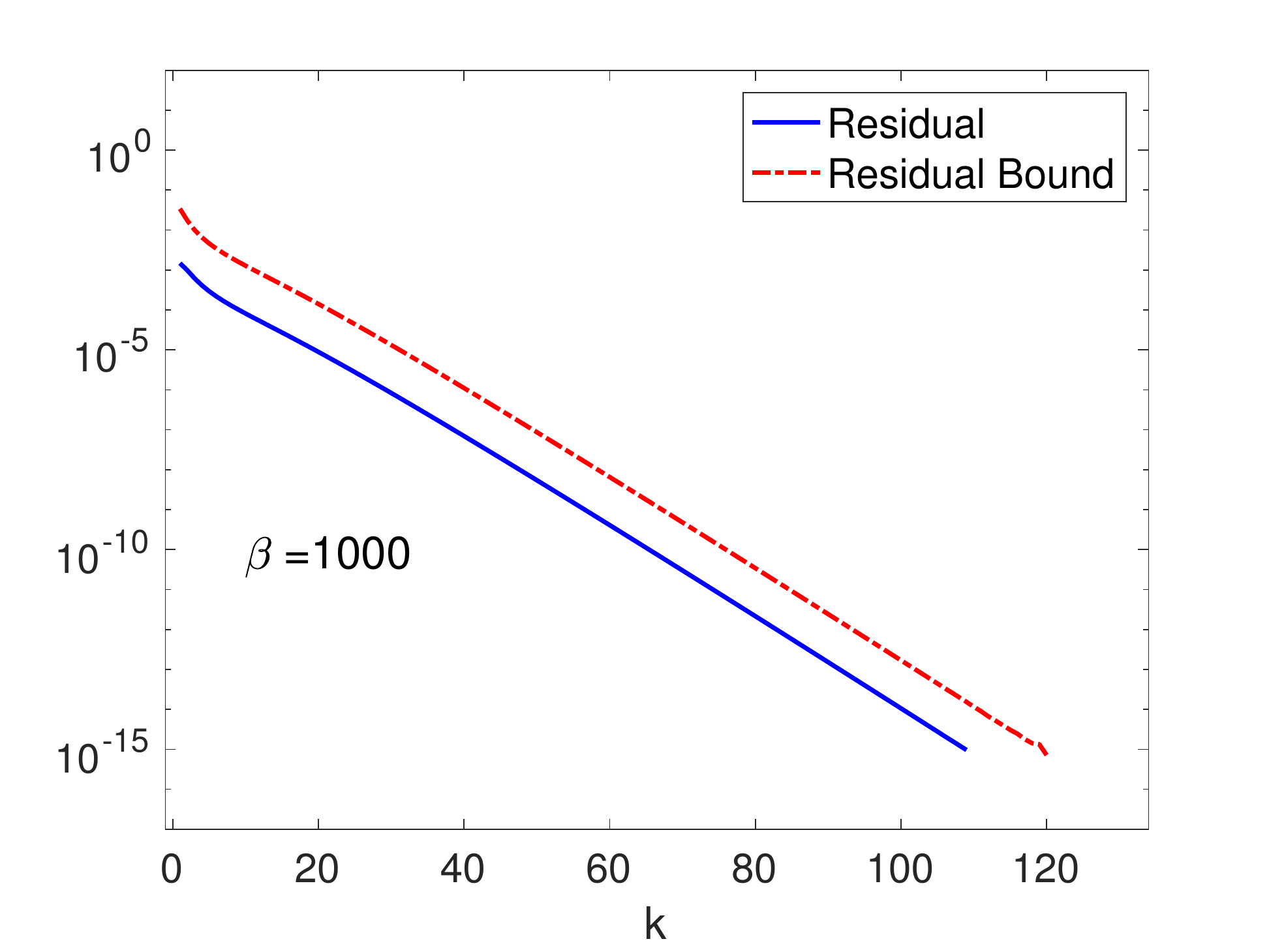}
\end{center}
\caption{Example~\ref{sec-crq-ex-res}: relative residual of QEP $\NRes_k^{\QEPmin}$  and the bound of the relative  residual $\delta_k^{\QEPmin}$ for the case where $\beta=100$ (left) and $\beta=1000$ (right).}
\label{fig:QEPres}
\end{figure}
\end{example}



\section{Application to the constrained clustering}\label{sec-app}
In this section, we
use semi-supervised learning for clustering
as an application of CRQopt~\eqref{eq:CRQopt}.
We first discuss unconstrained clustering
in Section \ref{sec-app-unconstrain} and
then discuss a new model for constrained clustering
in Section \ref{sec-app-constrain}.
Numerical experiments are
shown in Sections \ref{sec-app-setting} and \ref{sec-app-result}.

\subsection{Unconstrained clustering}\label{sec-app-unconstrain}
Clustering is an important technique for data analysis and
is widely used in machine learning \cite[Chapter 14.5.3]{frht:2001},
bioinformatics \cite{pem:2005}, social science \cite{miss:2007} and
image analysis \cite{shmj:2000}. Clustering uses some similarity metric
to group data into different categories. In this section,
we discuss the normalized cut,  a spectral clustering method that
are popular for image segmentation \cite{shmj:2000,vou:2007}.

Given an undirected graph $G=(\cV,\cE)$ whose edge weights are represented by an affinity matrix $W=[w_{ij}]$, we define the  {\em cut\/} of a
partition on its vertices $\cV$ into
two disjoint sets $\cA$ and $\cB$, i.e., $\cA\cup \cB=\cV$, $\cA\cap \cB=\emptyset$ as
\begin{equation}\label{eq:cut}
\cut(\cA,\cB)=\sum_{i\in\cA, j\in\cB} w_{ij}.
\end{equation}
Intuitively one would minimize the $\cut$ to achieve an
optimal bipartition of the graph $G$, but it often results in a partition $(\cA,\cB)$ with one of them
containing only a few isolated vortices in the graph while the other containing the rest.
Such a bipartition is not balanced and not useful in practice.
To avoid such an unnatural
bias that leads to small sets of isolated  vortices,
the following {\em normalized cut\/} \cite{shmj:2000}
is introduced: 
\begin{equation}\label{eq:ncut}
\Ncut(\cA,\cB)=\frac{\cut(\cA,\cB)}{\vol(\cA)}
           +\frac{\cut(\cA,\cB)}{\vol(\cB)},
\end{equation}
where
$$
\vol(\cA)=\sum_{i\in\cA, j\in \cV}w_{ij}
\quad \mbox{and} \quad
\vol(\cB)=\sum_{i\in \cB, j\in \cV}w_{ij}.
$$
It turns out that minimizing $\Ncut(\cA,\cB)$ usually yields a more balanced bipartition.
Let
$$
c_+=\sqrt{\frac{\vol(\cB)}{\vol(\cA)\cdot
 \vol(\cV)}}
\quad \mbox{and} \quad
c_-=-\sqrt{\frac{\vol(\cA)}{\vol(\cB)
    \cdot \vol(\cV)}},
$$
and $x\in\bbR^n$ ($n=|\cV|$, the cardinality of $\cV$) be the 
indicator vector for bipartition $(\cA,\cB)$, i.e.,
\begin{equation}\label{eq:labelcut}
x_{(i)}=\left\{
\begin{aligned}
c_+,\quad i\in \cA,\\
c_-,\quad i\in \cB,
\end{aligned}
\right.
\end{equation}
and $D$ be a diagonal matrix with the row sums of $W$ on the diagonal, i.e., $D=\diag(W\bone)$.
Then it can be verified that
$$
\Ncut(\cA,\cB) = x^{\T}(D-W)x, \quad
x^{\T}Dx=1, \quad
(Dx)^{\T}{\bf 1}=0, 
$$
where ${\bf 1}$ is a vector of ones. Therefore in order to minimize $\Ncut(\cA,\cB)$, we will
solve
the following combinatorial optimization problem
\begin{subequations}\label{eq:ncut1}
\begin{empheq}[left={\empheqlbrace}]{alignat=2}
\min~& x^{\T}(D-W)x,\\
\mbox{s.t.}~& x_{(i)}\in \{c_+,\, c_-\},\\
& x^{\T}Dx=1,\\
& (Dx)^{\T} {\bf 1}=0.
\end{empheq}
\end{subequations}
However, the problem \eqref{eq:ncut1} is a discrete optimization problem and known to be NP-complete.
A common practice to make it numerical feasible is to relax $x$ to
a real vector and solve instead the
following optimization problem
\begin{subequations}\label{eq:ncut2}
\begin{empheq}[left={\empheqlbrace}]{alignat=2}
\min~& x^{\T}(D-W)x,\\
\mbox{s.t.}~& x^{\T}Dx=1,\\
& (Dx)^{\T} {\bf 1}=0, \\
& x \in \bbR^n.
\end{empheq}
\end{subequations}
Under the assumption that $D$ is positive definite,
by the Courant-Fisher variational principle \cite[Sec 8.1.1]{govl:2013},
solving \eqref{eq:ncut2} is equivalent to finding the eigenvector
$x$ corresponding to the second smallest eigenvalue of
the generalized symmetric definite eigenproblem
$$(D-W)x=\lambda Dx.$$

Note that the setting here is  different from the one
in \cite{shmj:2000}, where the indicator vector
$x_{(i)}\in\{1,-b\}$ and
$b=\frac{\vol(\cA)}{\vol(\cB)}$. Instead of minimizing a quotient of two quadratic functions in \cite{shmj:2000}, we use the constraint that $x^{\T}Dx=1$.
The model \eqref{eq:ncut1} is similar to the one
in  \cite[section 5.1]{vou:2007}, where they use the number of
vertices in the sets $\cA$ and $\cB$ instead of the volumes.
The model \eqref{eq:ncut1} is derived in a similar way to
the derivation in \cite[section 5.1]{vou:2007}.

\subsection{Constrained clustering}\label{sec-app-constrain}
When partial grouping information is known in advance,
we can use  partial grouping information to set up different models for better clustering.
These models are known as constrained clustering.
Existing methods for constrained spectral clustering includes
implicitly incorporating the constraints into
Laplacians \cite{chc:2015,jixb:2017} and imposing the constraints
in linear forms \cite{erof:2011,xuls:2009,yus:2004} or
bilinear forms \cite{waqd:2014}.

We encode the partial grouping information into a linear constraint, which
can be either homogeneous \cite{yus:2004}
or nonhomogeneous \cite{erof:2011,xuls:2009}.
In \cite{erof:2011}, the authors set up a model
where the objective function is the quotient of two quadratic
functions and used hard coding for the known associations of pixels
to specific classes in terms of linear constraints.
In \cite{xuls:2009}, the authors used a model for which
the objective function is quadratic and encoded
known labels by linear constraints.
This  is an approach that we  take to set up the model.

Let $\mathcal{I}=\{i_1,\cdots,i_{\ell}\}$ be the index set for which we have
the prior information such as $\cI\subseteq \cA$. According
to \eqref{eq:labelcut},  we  set $x_{(i)}=c_+$ for $i\in \mathcal{I}$.
Similarly, let $\mathcal{J}=\{j_1,\cdots,j_k\}$ be the index set
for which we have the prior information that $\cJ\subseteq\cB$, and
we set $x_{(j)}=c_-$ for $j\in \mathcal{J}$.
This leads to the following discrete constrained normalized cut problem
\begin{subequations}\label{eq:ncutcon1}
\begin{empheq}[left={\empheqlbrace}]{alignat=2}
\min~& x^{\T}(D-W)x,\\
\mbox{s.t.}~& x_{(i)}\in\left\{c_+,\,c_-\right\},\\
& x^{\T}Dx=1,\\
& (Dx)^{\T} {\bf 1}=0,\\
& x_{(i)}=c_+ \quad\mbox{for}\quad i\in \mathcal{I},\\
& x_{(i)}=c_- \quad\mbox{for}\quad i\in \mathcal{J}.
\end{empheq}
\end{subequations}
However, there are two imminent issues associated with the model \eqref{eq:ncutcon1}:
\begin{enumerate}
  \item the combinatorial optimization \eqref{eq:ncutcon1} is NP-hard;
  \item the model is incomplete because to calculate $c_+$ and $c_-$
we need to know $\vol(\cA)$ and $\vol(\cB)$, which are unknown before the clustering.
\end{enumerate}
Common workarounds, which we use, are as follows.
For the first issue, we relax the model \eqref{eq:ncutcon1}
by allowing $x$ to be a real vector, i.e., $x \in \mathbb{R}^n$. 
For the second issue, we use
${\frac{\vol(\mathcal{J})}{\vol(\mathcal{I})}}$
as an estimate of  ${\frac{\vol(\cB)}{\vol(\cA)}}$ to get
\[
c_+\approx\widehat{c}_+=\sqrt{\frac{\vol(\mathcal{J})}
                 {\vol(\mathcal{I})\cdot\vol(\cV)}},\,\,
c_-\approx\widehat{c}_-=-\sqrt{\frac{\vol(\mathcal{I})}
            {\vol(\mathcal{J})\cdot\vol(\cV)}}.
\]
By these relaxation, we reach a computational feasible model:
\begin{subequations}\label{eq:ncutcon2}
\begin{empheq}[left={\empheqlbrace}]{alignat=2}
\min~& x^{\T}(D-W)x,\\
\mbox{s.t.}~& x^{\T}Dx=1,\\
& (Dx)^{\T} {\bf 1}=0,\\
& x_{(i)}=\widehat{c}_+, \quad i\in \mathcal{I},\\
& x_{(i)}=\widehat{c}_-. \quad i\in \mathcal{J},
\end{empheq}
\end{subequations}
The last three equations are linear constraints and can be collectively written as
 a linear system of equations:
$$N^{\T}x=b.$$
Let $v=D^{1/2}x$, and define
\[
A=D^{-1/2}(D-W)D^{-1/2}
\quad \mbox{and} \quad
C=D^{1/2}N.
\]
Then the optimization problem \eqref{eq:ncutcon2}
is turned into CRQopt \eqref{eq:CRQopt} with matrices $A$, $C$ and $b$ just defined.

\subsection{Numerical results}\label{sec-app-result}

\paragraph{Experimental setting.}\label{sec-app-setting}


For a grayscale image, we can construct
a weighted graph $G=(\cV,\cE)$ by taking each pixel
as a node and connecting each pair $(i,j)$ of
pixel
$i$ and $j$ by an edge with a weight given by
\begin{equation}\label{eq:weight}
w_{ij}=e^{-\frac{\|F(i)-F(j)\|_2^2}{\delta_F}}\times
\begin{cases}
1 &\quad  \mbox{if }\|X(i)-X(j)\|_\infty<r,\\
0 &\quad  \mbox{otherwise},
\end{cases}
\end{equation}
where $\delta_F$ and $r$ are chosen parameters,
$F$ is the brightness value and $X$ is the location of a
pixel \cite{shmj:2000}.\footnote {In a 2-D image, 
pixel $i$ may naturally be represented by $(i_x,i_y)$ where $i_x$ and $i_y$ are two integers.}
In our experiment, we take
$$
\delta_F=\delta \max_{i,j}\|F(i)-F(j)\|_2^2
$$
for some parameter $\delta$ to be specified.

The definition of weight in \eqref{eq:weight} ensures that every pixel is connected with an edge to at most $(2r+1)^2$
other pixels.
As shown
in Table \ref{Table:parameter}, in our experiments, $r$ is taken either $5$ or $10$, and thus
the weight matrix $W$ is sparse, which in turn makes the matrix $A$
in CRQopt \eqref{eq:CRQopt} sparse, too. Note that for the example Crab, the contrast between the upper right of the object and the background is not significant. Therefore, we choose $r$ to be twice as much as other examples to ensure the weight matrix correctly reflect the connectivity of the graph. In addition, in our experiments $\delta$ is around $0.1$, to be consistent with the statement in \cite{shmj:2000} that
``$\delta_F$ is typically set to 10 to 20 percent of the total
range of the feature distance function''. Besides, size  $m$
of linear constraints is relatively
small compared with the number of pixels $n$, yielding CRQopt \eqref{eq:CRQopt} with $m\ll n$.

\begin{table}[H]
\caption{The number of pixels $n$,
parameters $\delta$ and $r$ and size $m$ of linear constraints.
      }
\label{Table:parameter}
\centerline{
\begin{tabular}[b]{|@{\hspace{5pt}}c@{\hspace{5pt}}|c|c|c|c|c|} \hline
Image  & Number of pixels $n$  & $\delta$  &  $r$ &
 $m$ \\\hline
Flower  &  30,000  &  0.1 &  5&  24\\\hline
Road  &  50,268  &  0.1 &  5&  46\\\hline
Crab  &  143,000  &  0.1 &  10&  32\\\hline
Camel  &  240,057  &  0.08 &  5&  24\\\hline
Dog  &  395,520  &  0.1 &  5&  33\\\hline
Face1  &  562,500  &  0.1 &  5&  31\\\hline
Face2  &  922,560  &  0.1 &  5&  19\\\hline
Daisy  &  1,024,000  &  0.08 &  5&  29\\\hline
Daisy2  &  1,024,000 &  0.08 &  5&  59 \\\hline
\end{tabular}
}
\end{table}

All experiments were conducted on a PC with Intel Core i7-4770K CPU@3.5GHz
and 16-GB RAM. CRQopt \eqref{eq:CRQopt} is 
solved via solving QEPmin \eqref{eq:QEPmin}. In our tests, we choose the maximum Lanczos steps
${\tt maxit}=300$ and use $\delta_k^{\QEPmin} < 8\times 10^{-5}$ as the stopping criterion. Besides, we choose the minimum Lanczos steps ${\tt minit}=120$ and check the stopping conditions every $5$ Lanczos steps to reduce the cost of checking the stopping conditions.

\paragraph{Quality of the model.}
We apply the model \eqref{eq:ncutcon2} and Lanczos algorithm
for CRQopt \eqref{eq:CRQopt} on different kinds of images and show the
results for segmentation and the computed eigenvector
in Figure \ref{fig:imagecut}.
We can see that the image cut results of the  model \eqref{eq:ncutcon2}
indeed agree with our natural visual separation of the object and the background.
Daisy and Daisy2 are the same image but with two different ways of prior partial labeling.
For both ways of prior partial labelling, the computed image cuts look equally well.
Table \ref{Table:runtimeimages2} displays the wall-clock runtime and the numbers of Lanczos steps used for the images.

\begin{figure}
\begin{center}
\includegraphics[width=1.2in]{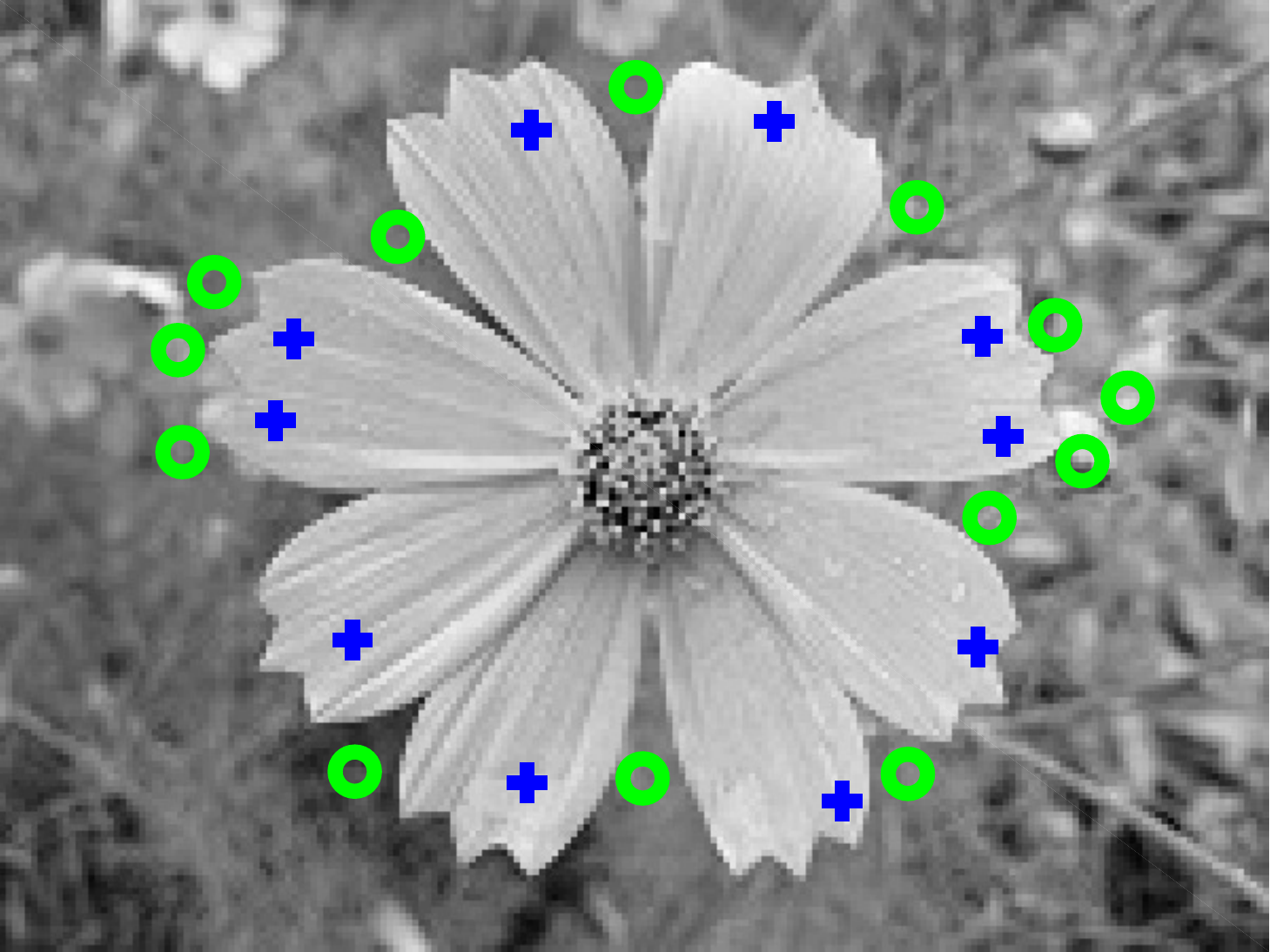}
\includegraphics[width=1.2in]{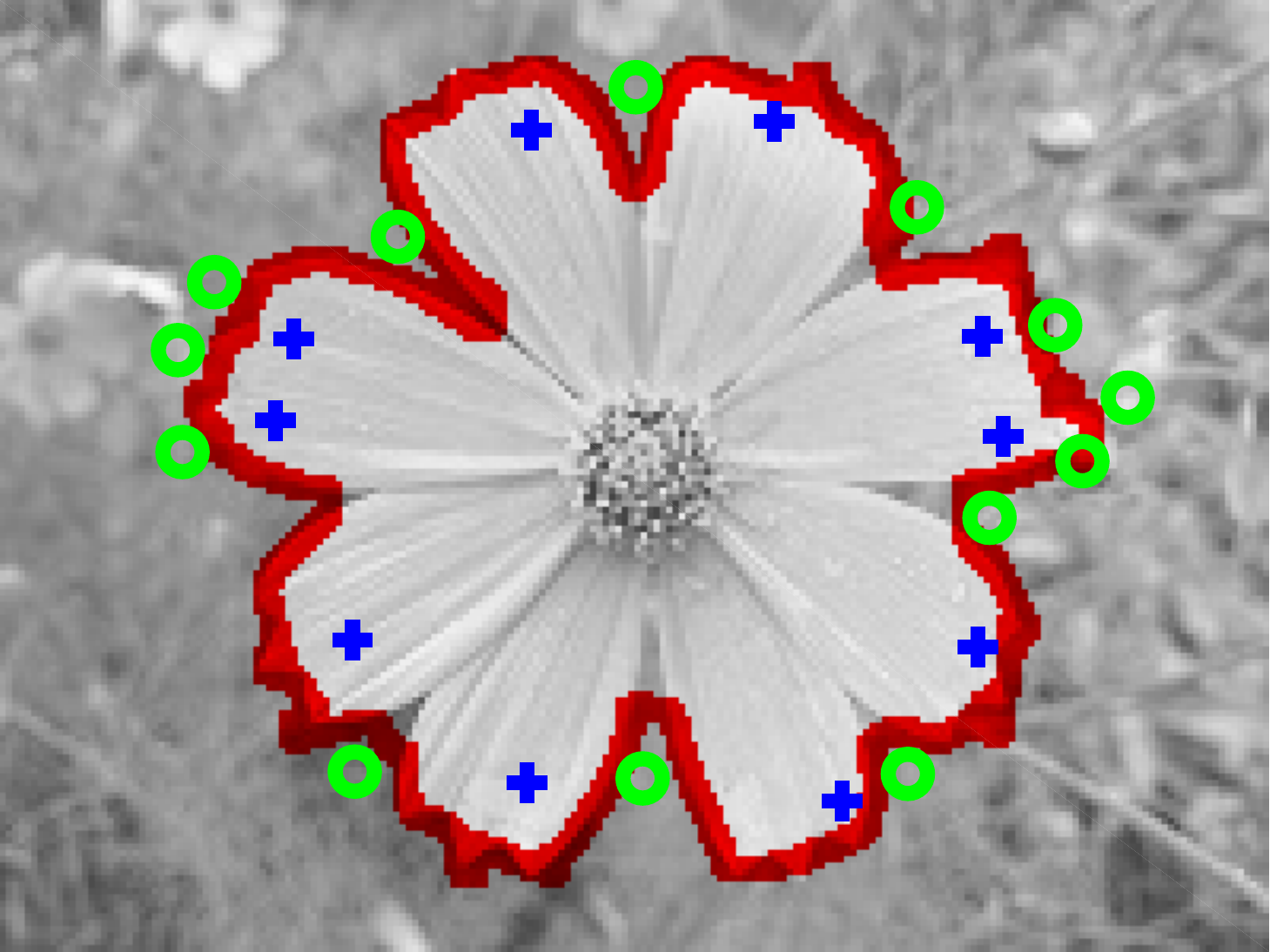}
\includegraphics[width=1.2in]{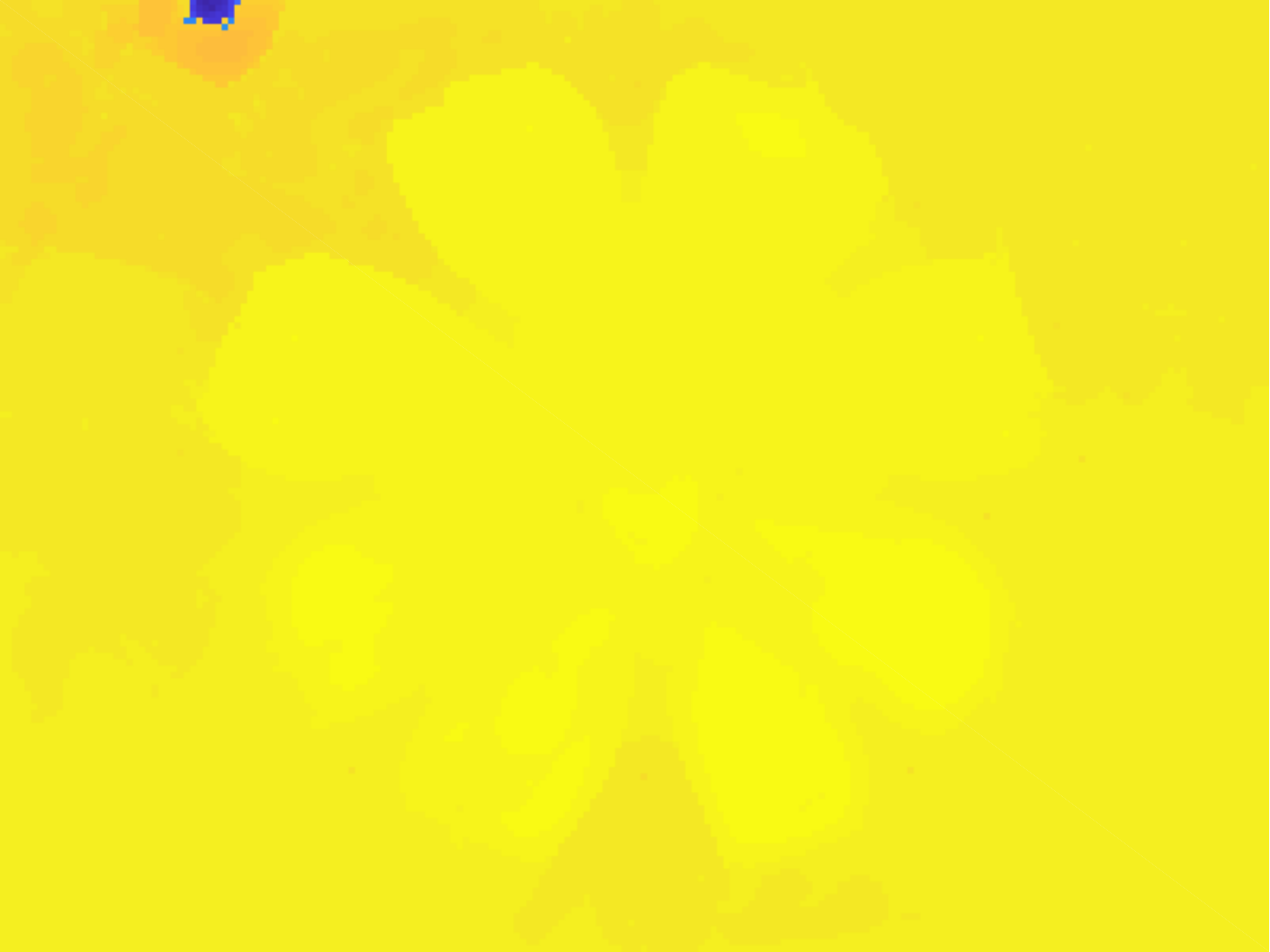}\\
\includegraphics[width=1.2in]{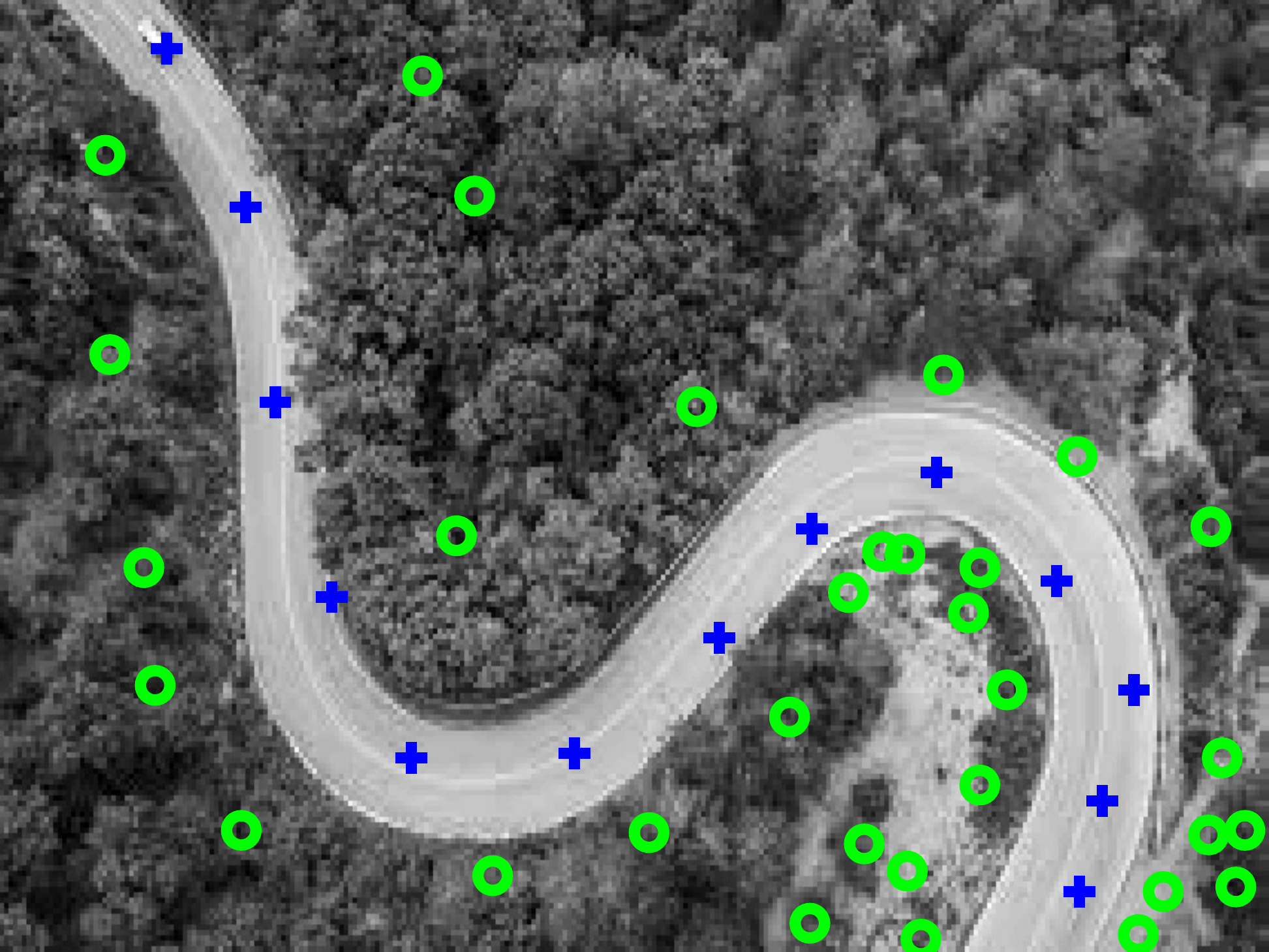}
\includegraphics[width=1.2in]{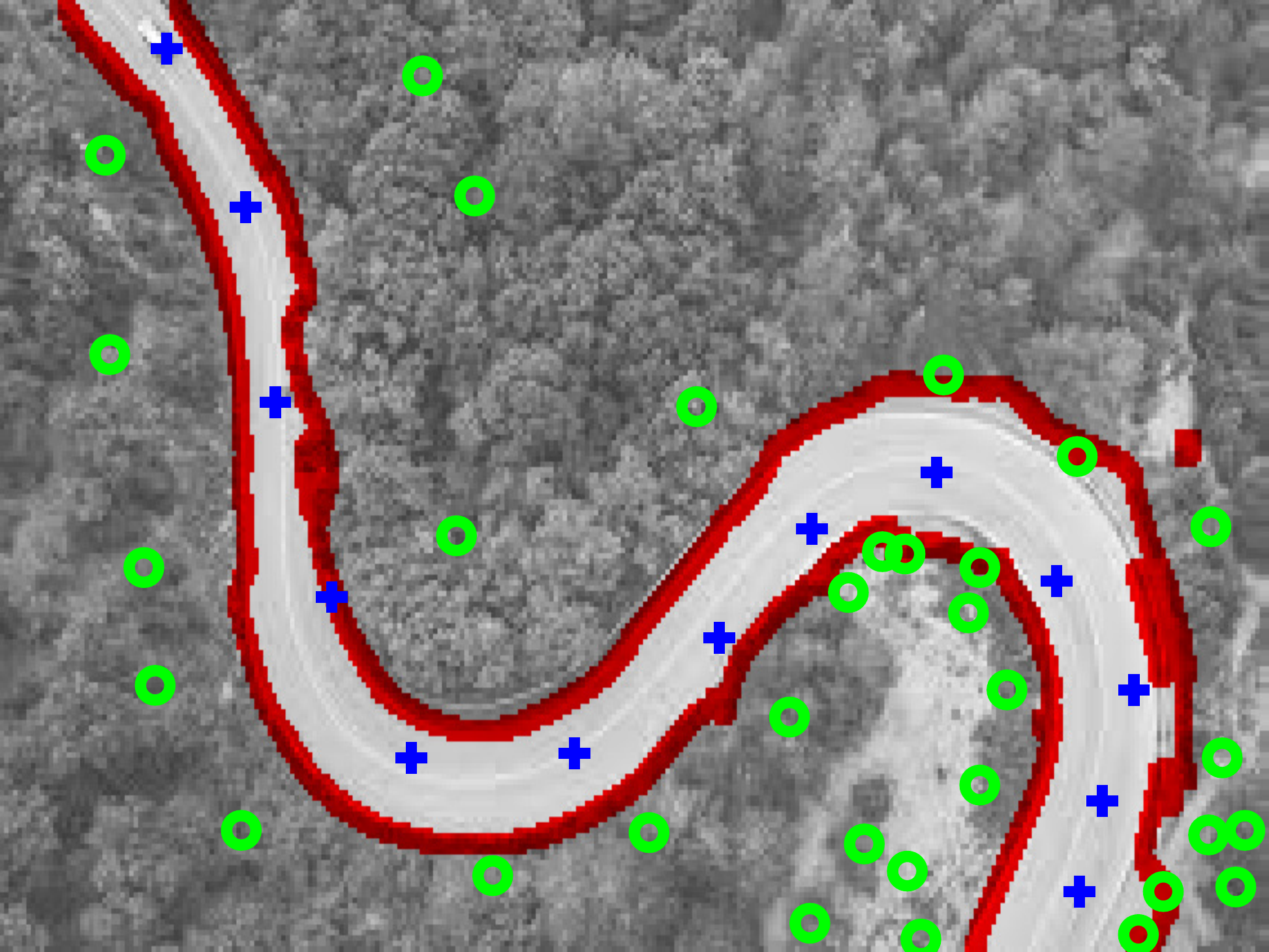}
\includegraphics[width=1.2in]{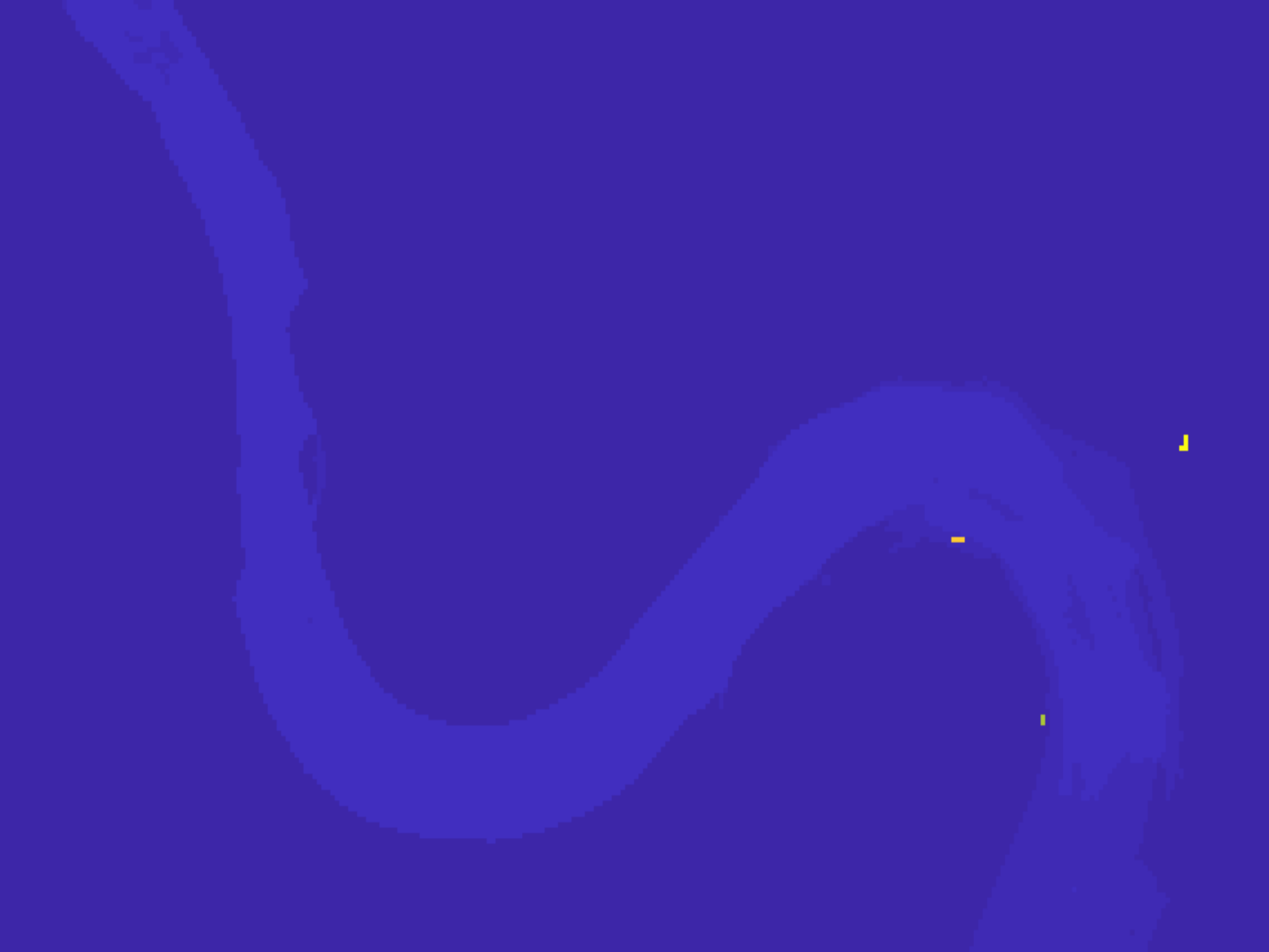}\\
\includegraphics[width=1.2in]{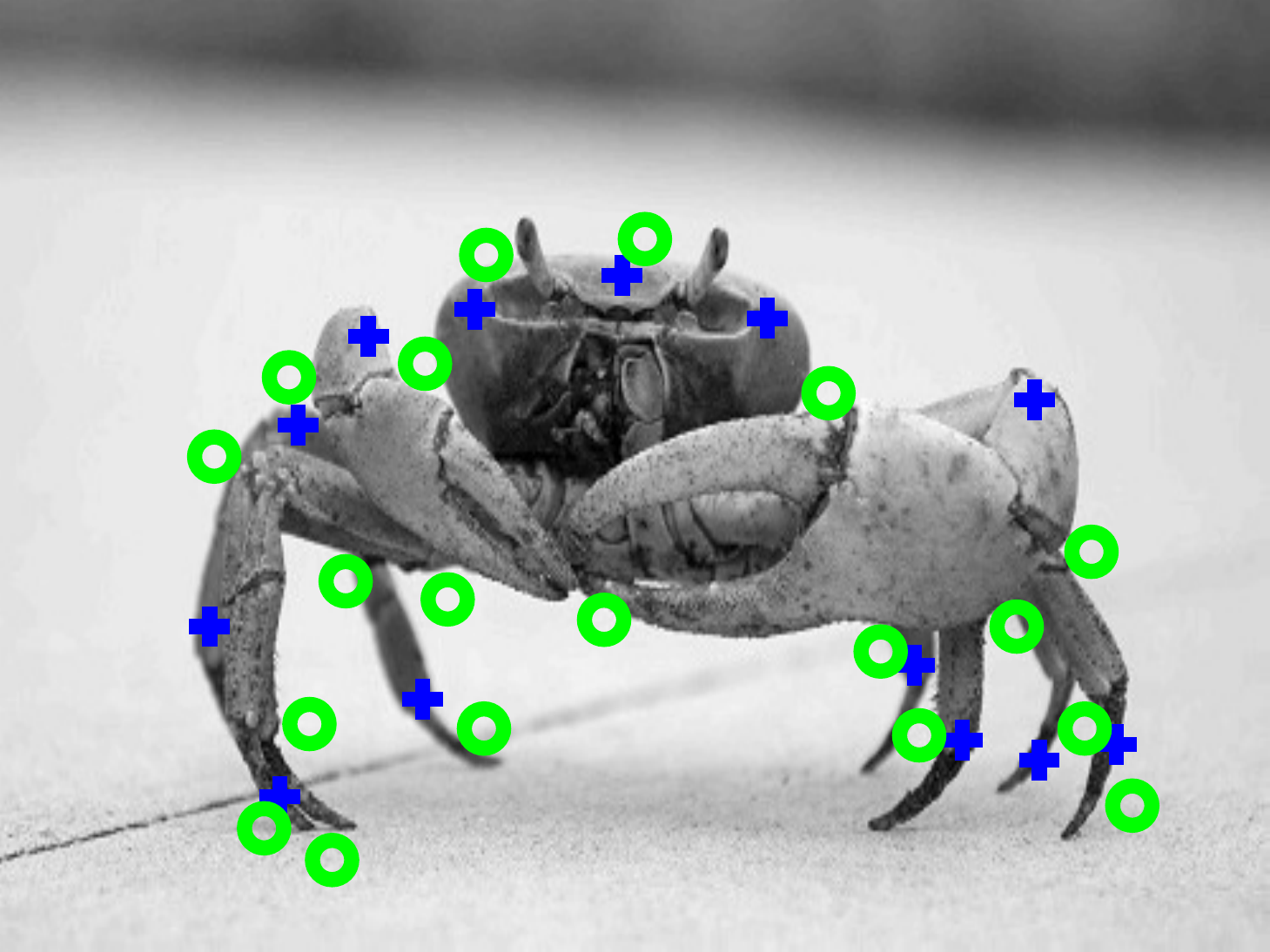}
\includegraphics[width=1.2in]{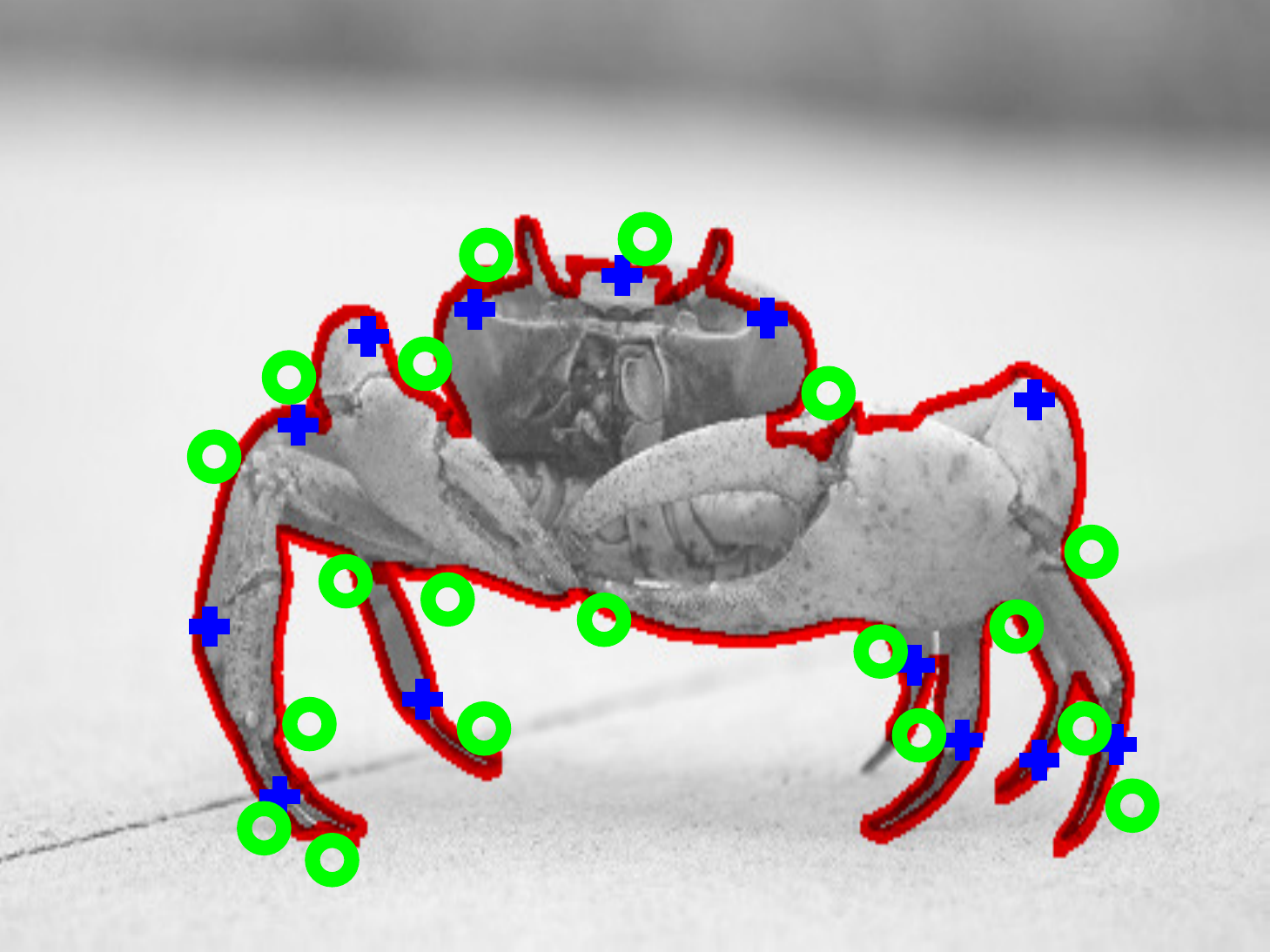}
\includegraphics[width=1.2in]{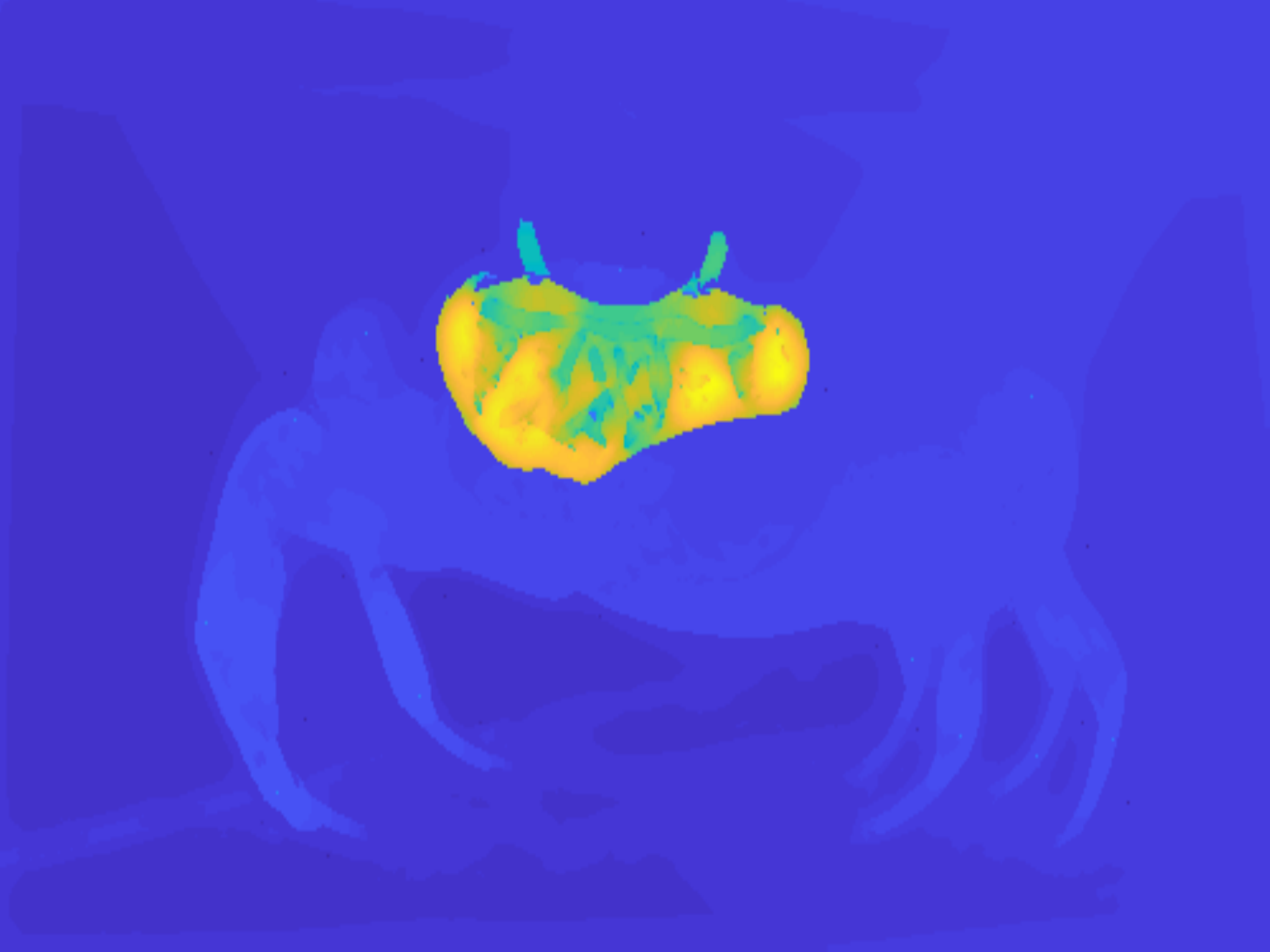}\\
\includegraphics[width=1.2in]{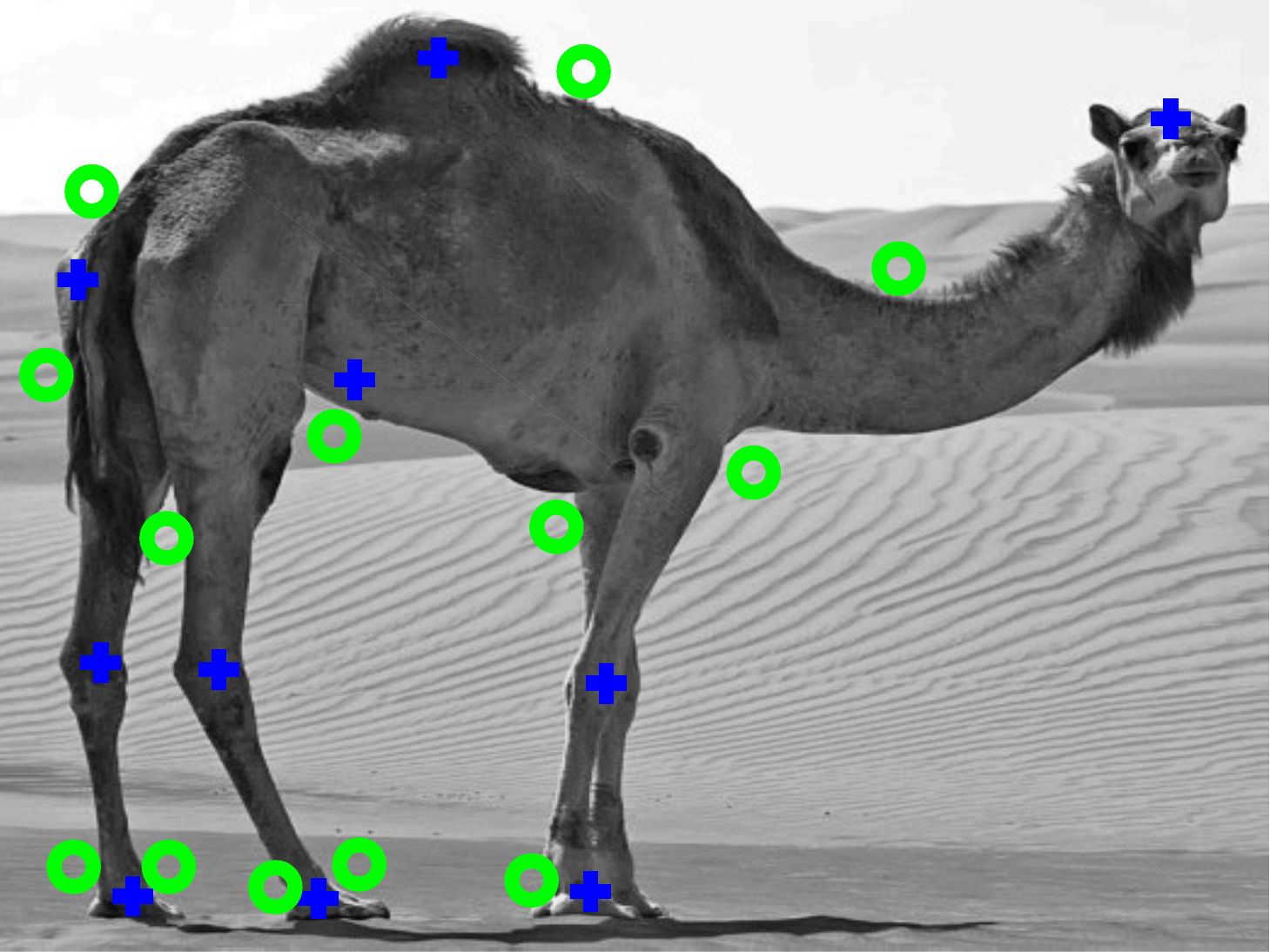}
\includegraphics[width=1.2in]{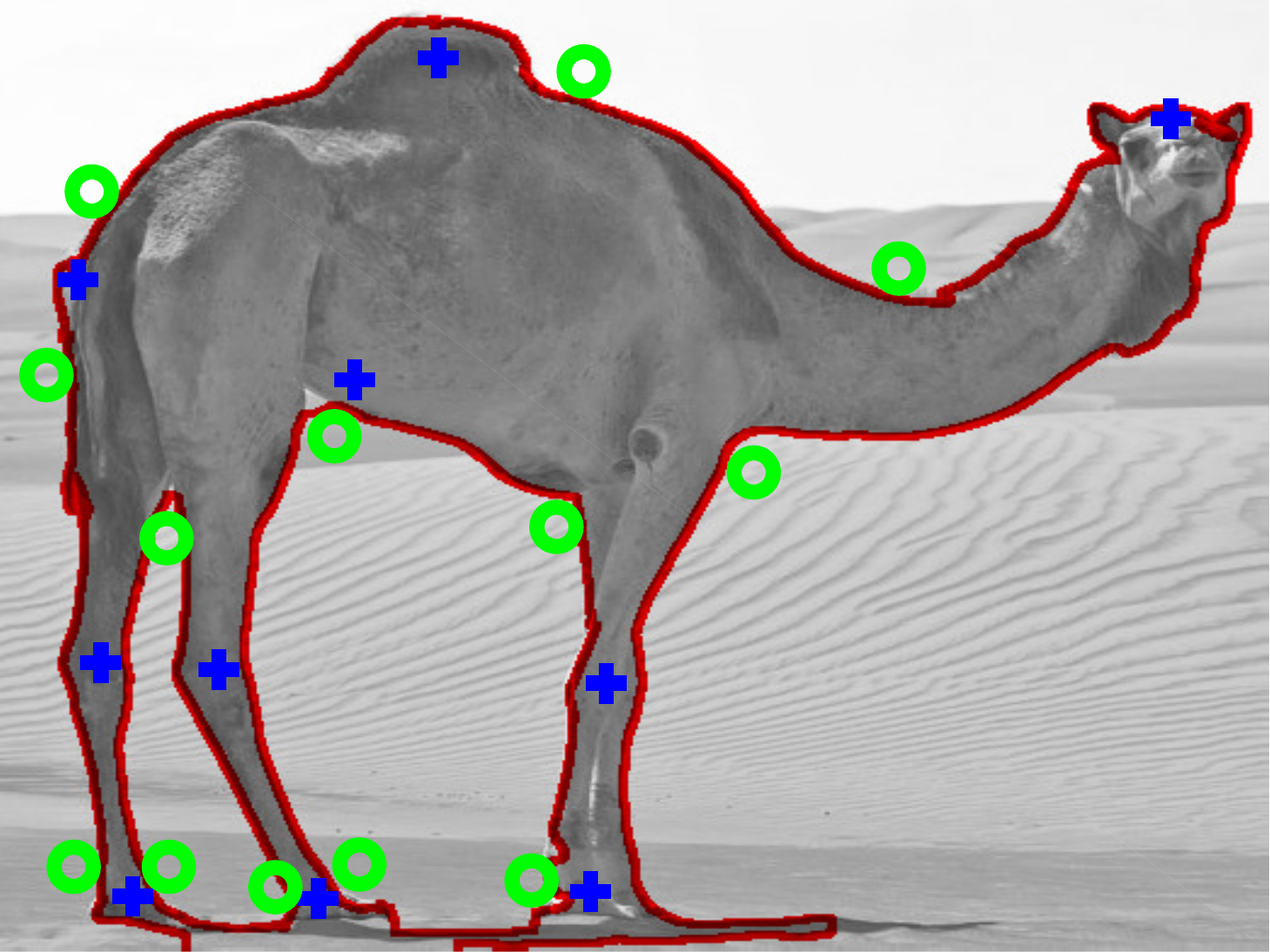}
\includegraphics[width=1.2in]{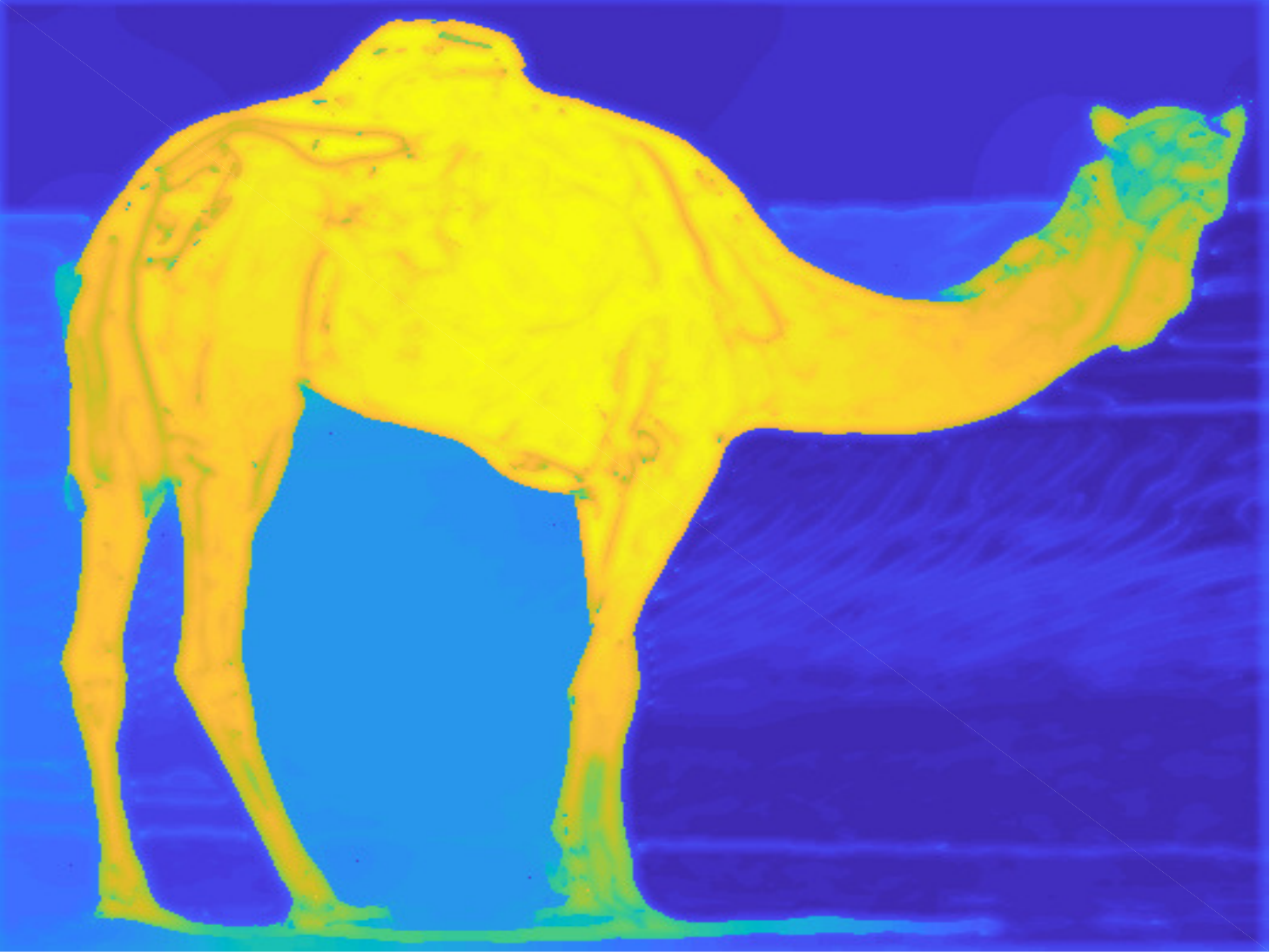}\\
\includegraphics[width=1.2in]{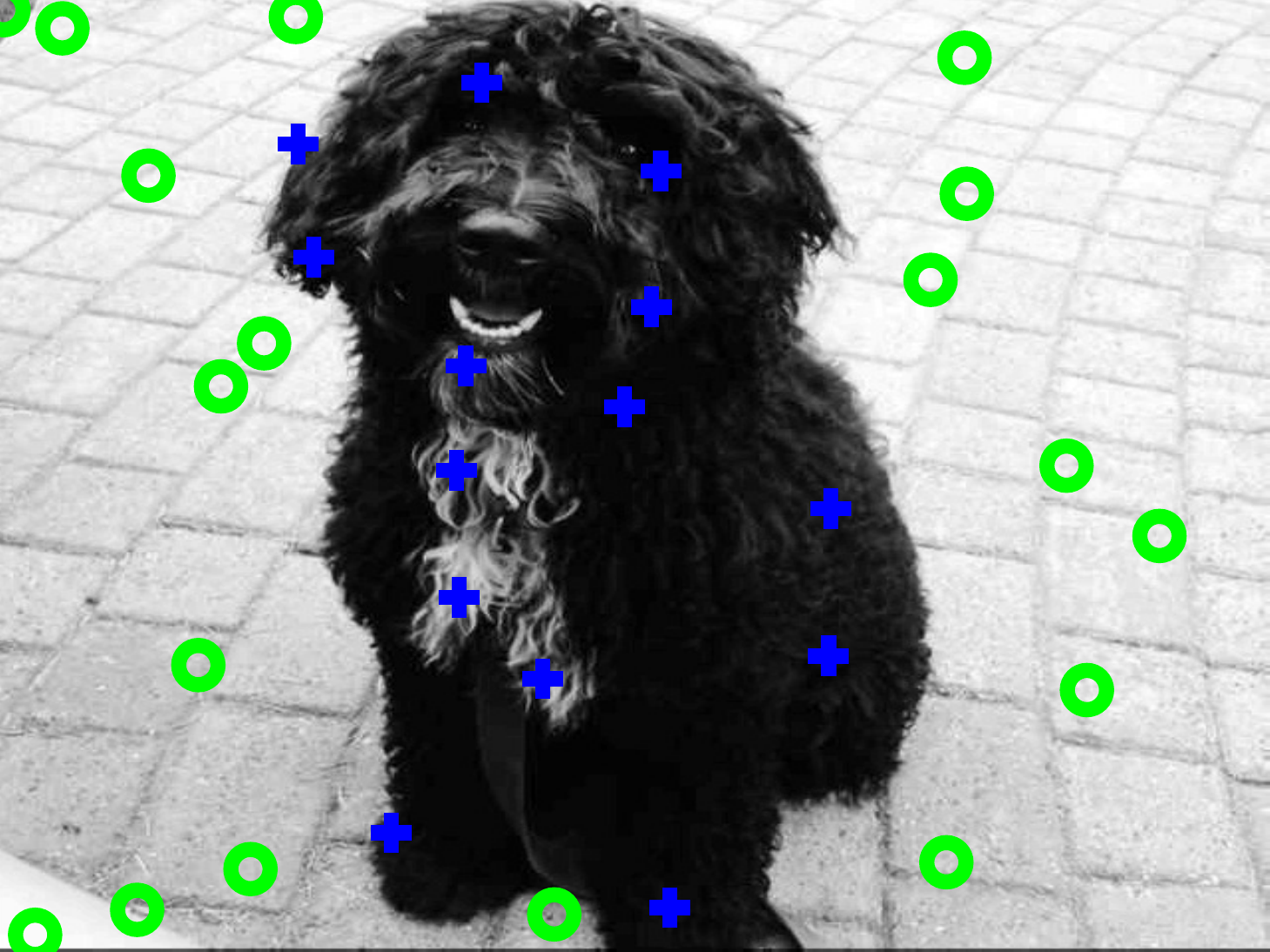}
\includegraphics[width=1.2in]{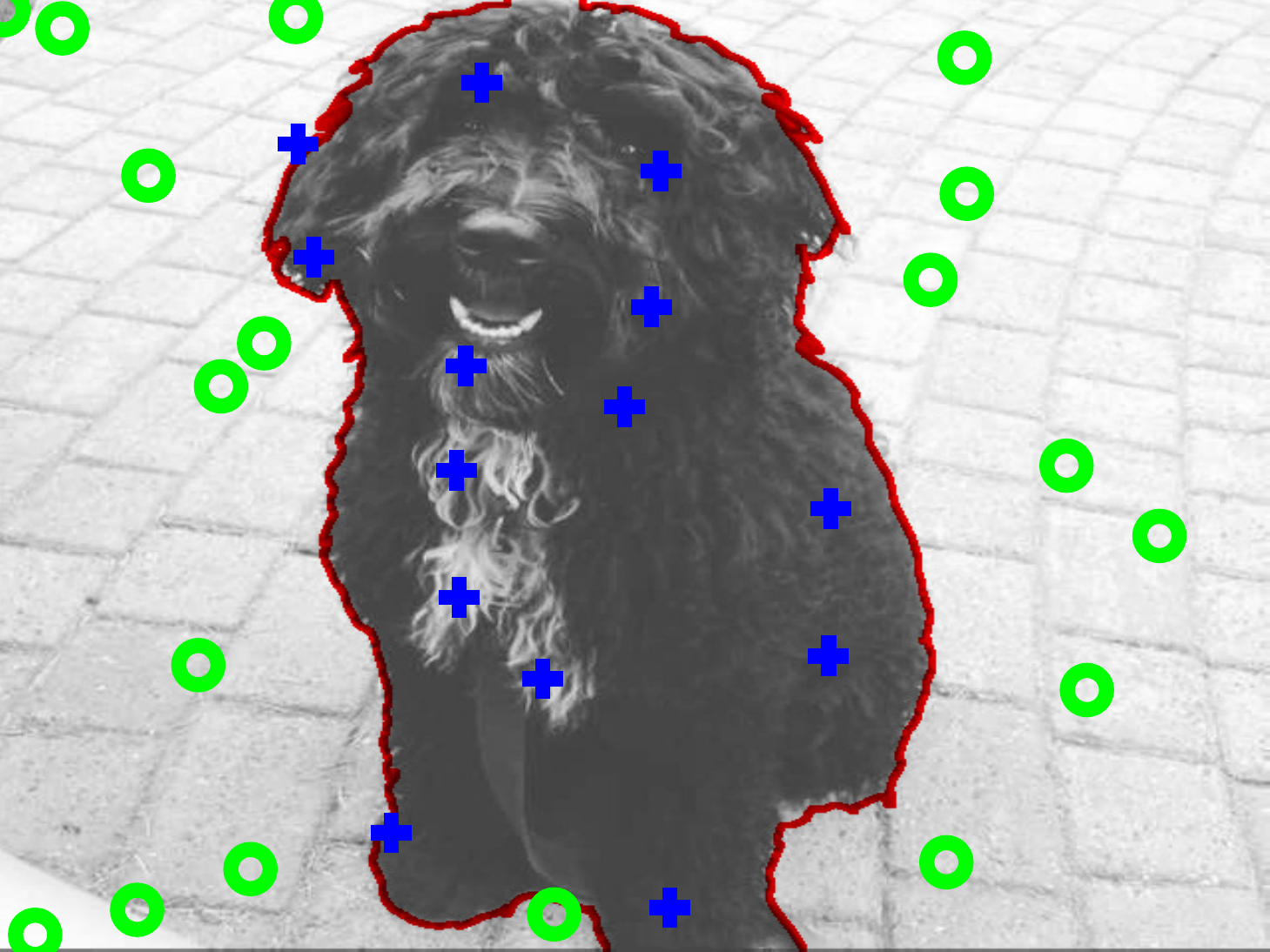}
\includegraphics[width=1.2in]{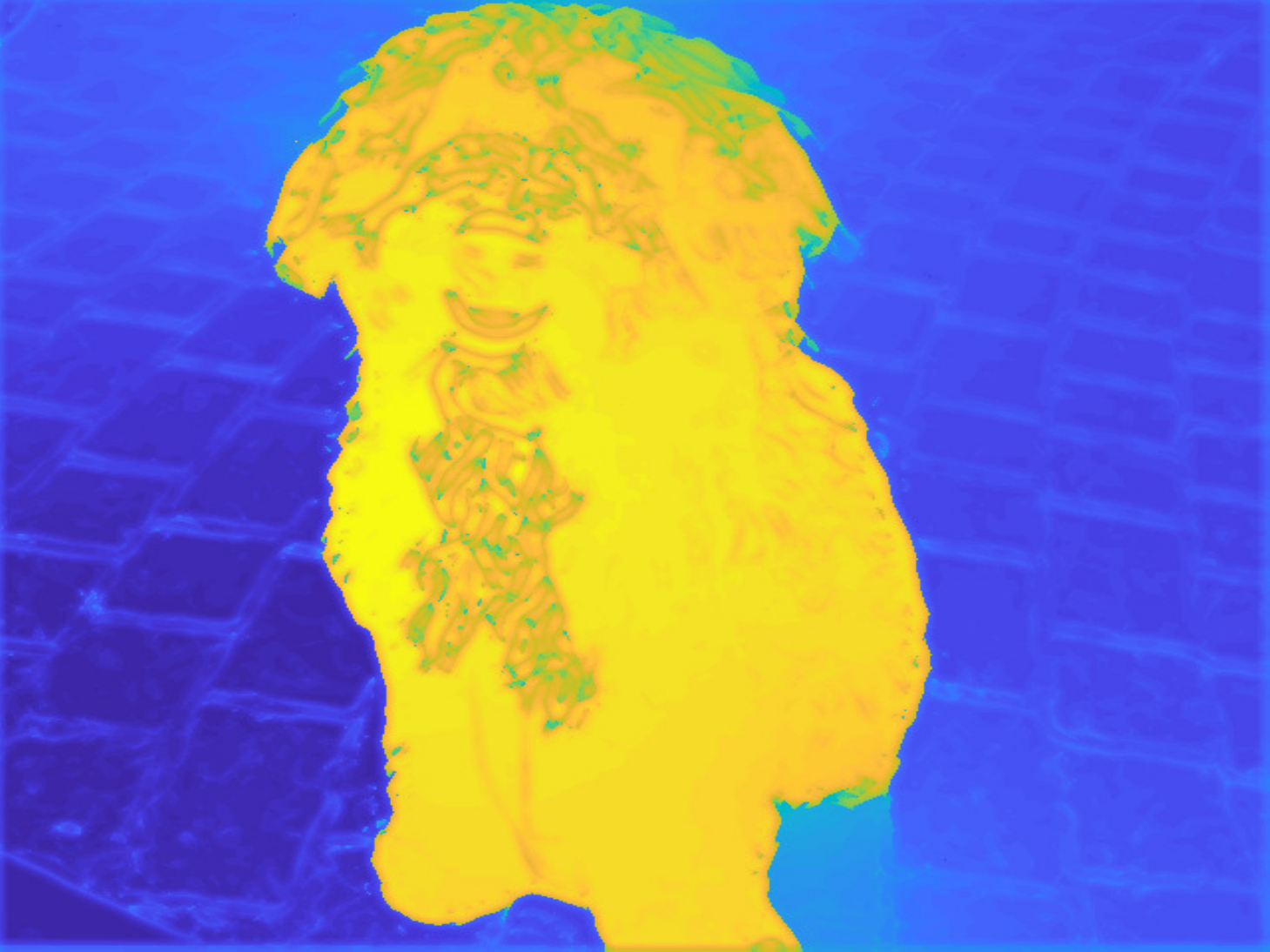}\\
\includegraphics[width=1.2in]{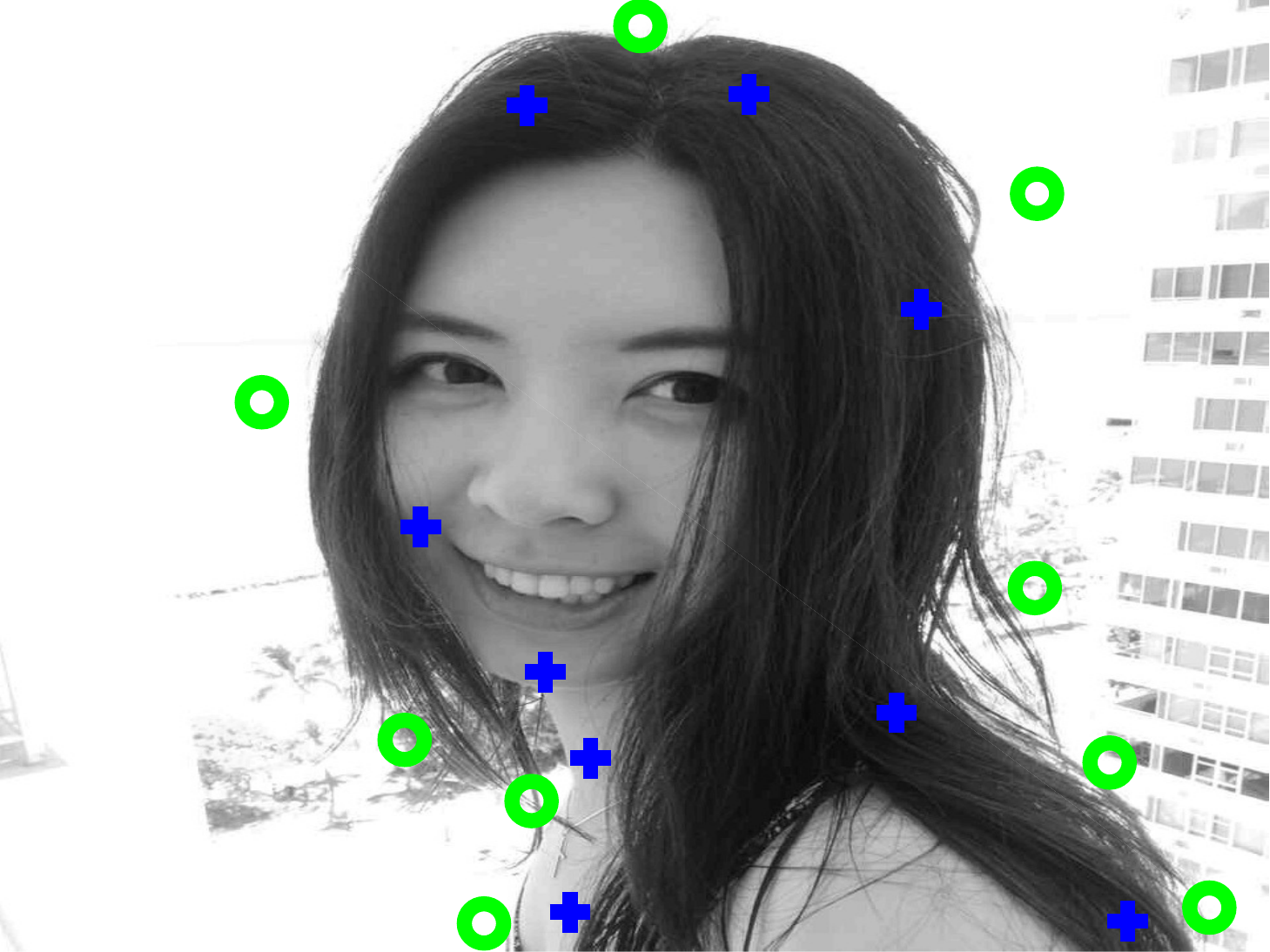}
\includegraphics[width=1.2in]{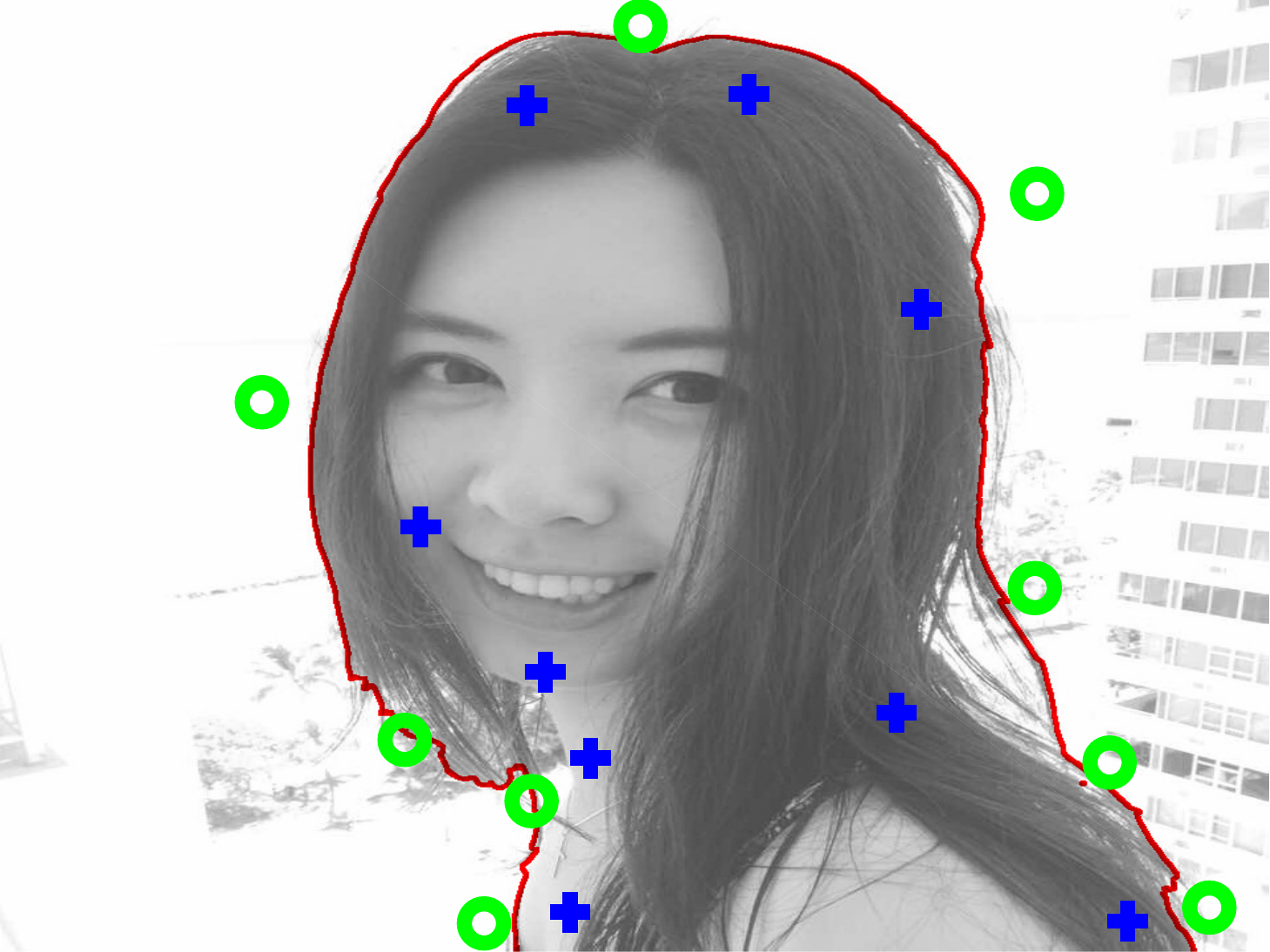}
\includegraphics[width=1.2in]{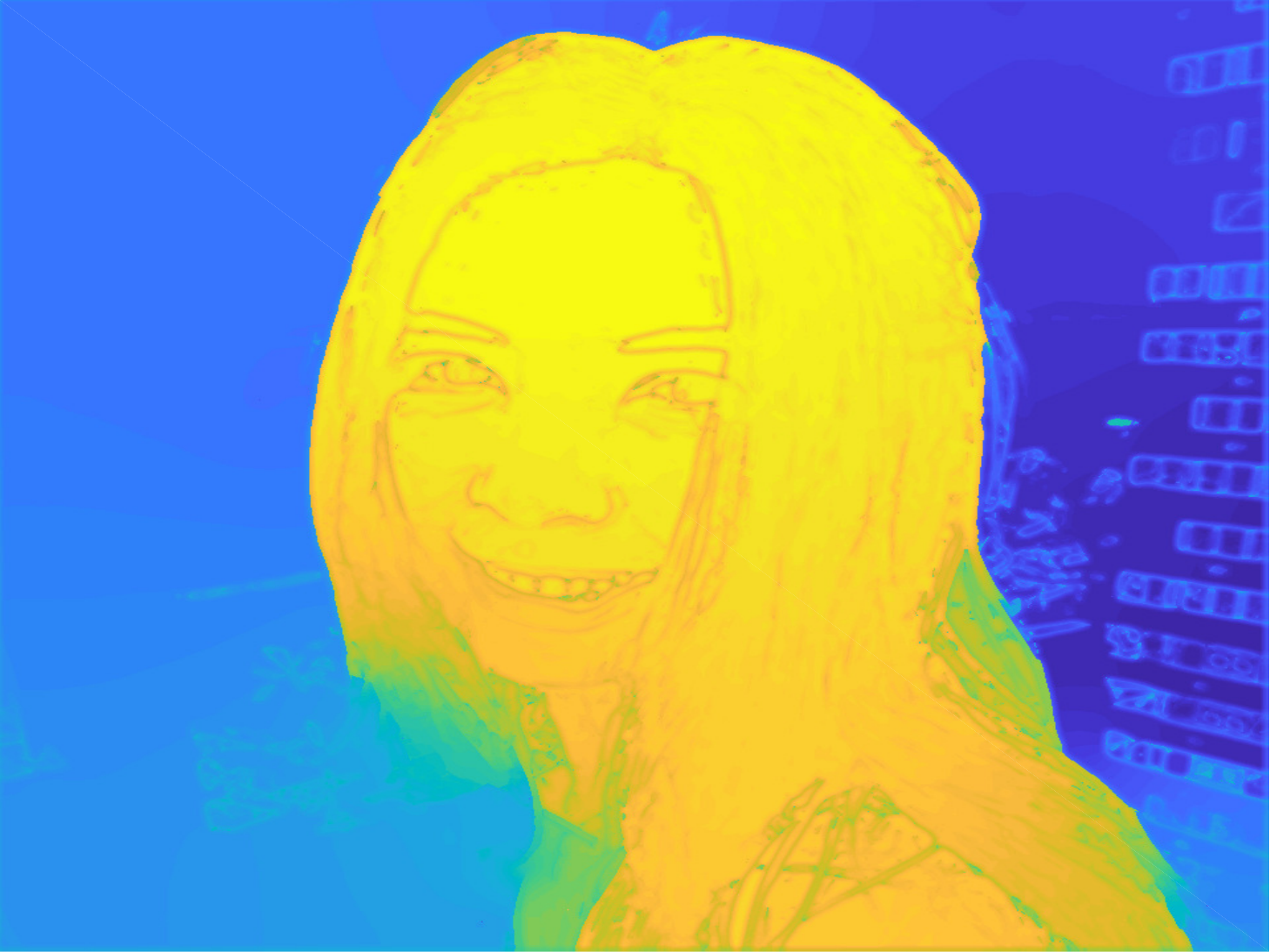}\\
\includegraphics[width=1.2in]{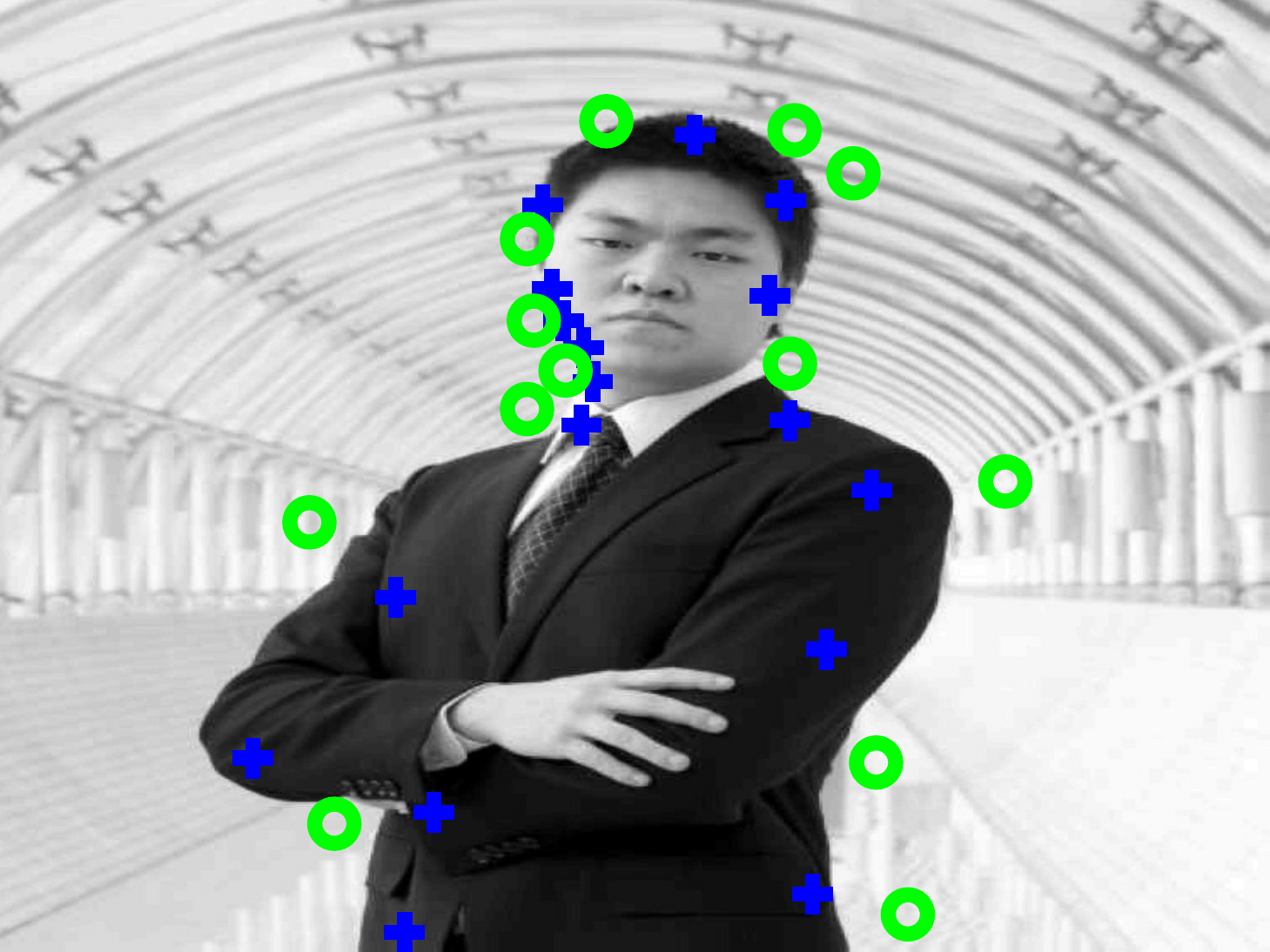}
\includegraphics[width=1.2in]{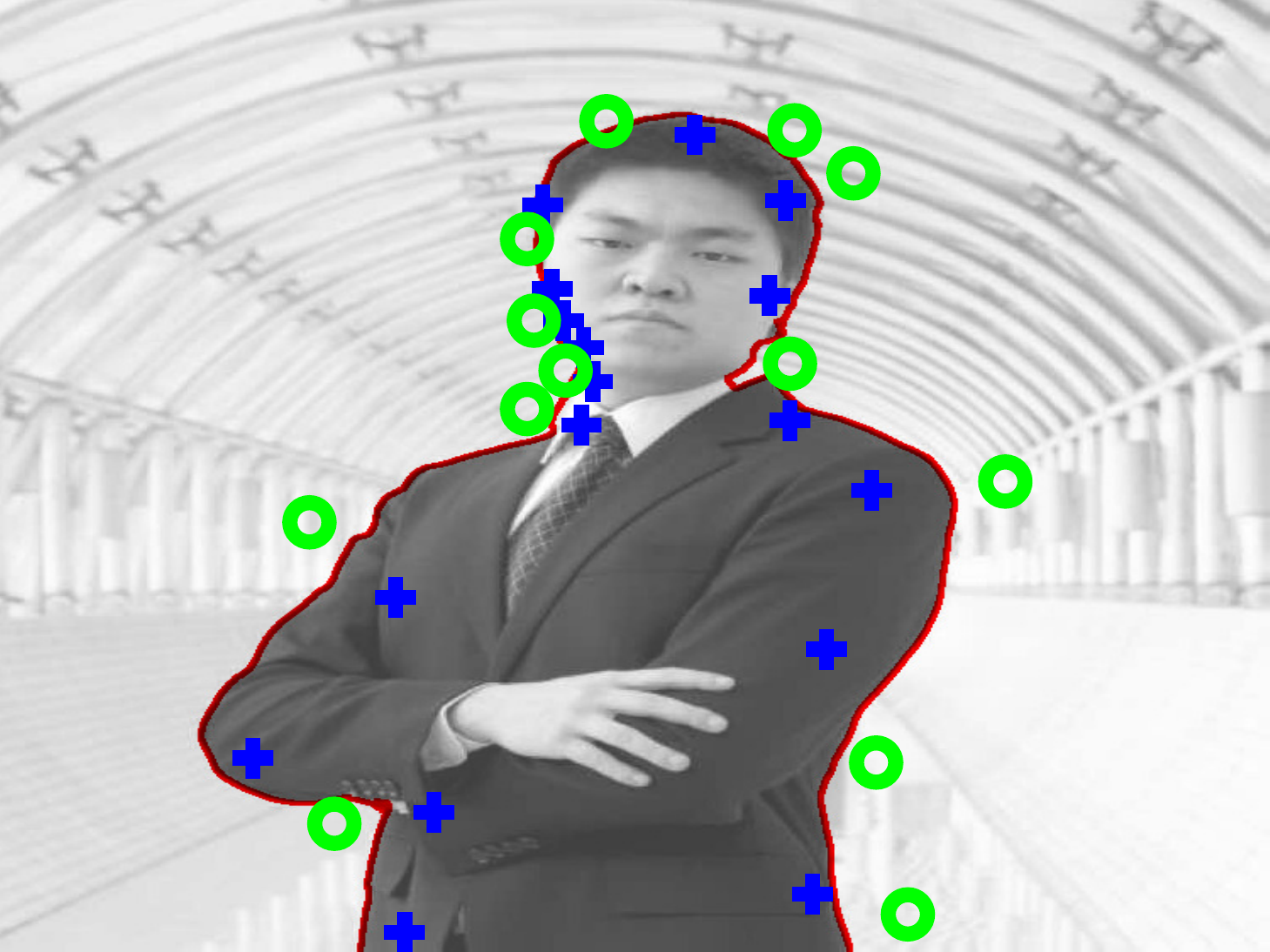}
\includegraphics[width=1.2in]{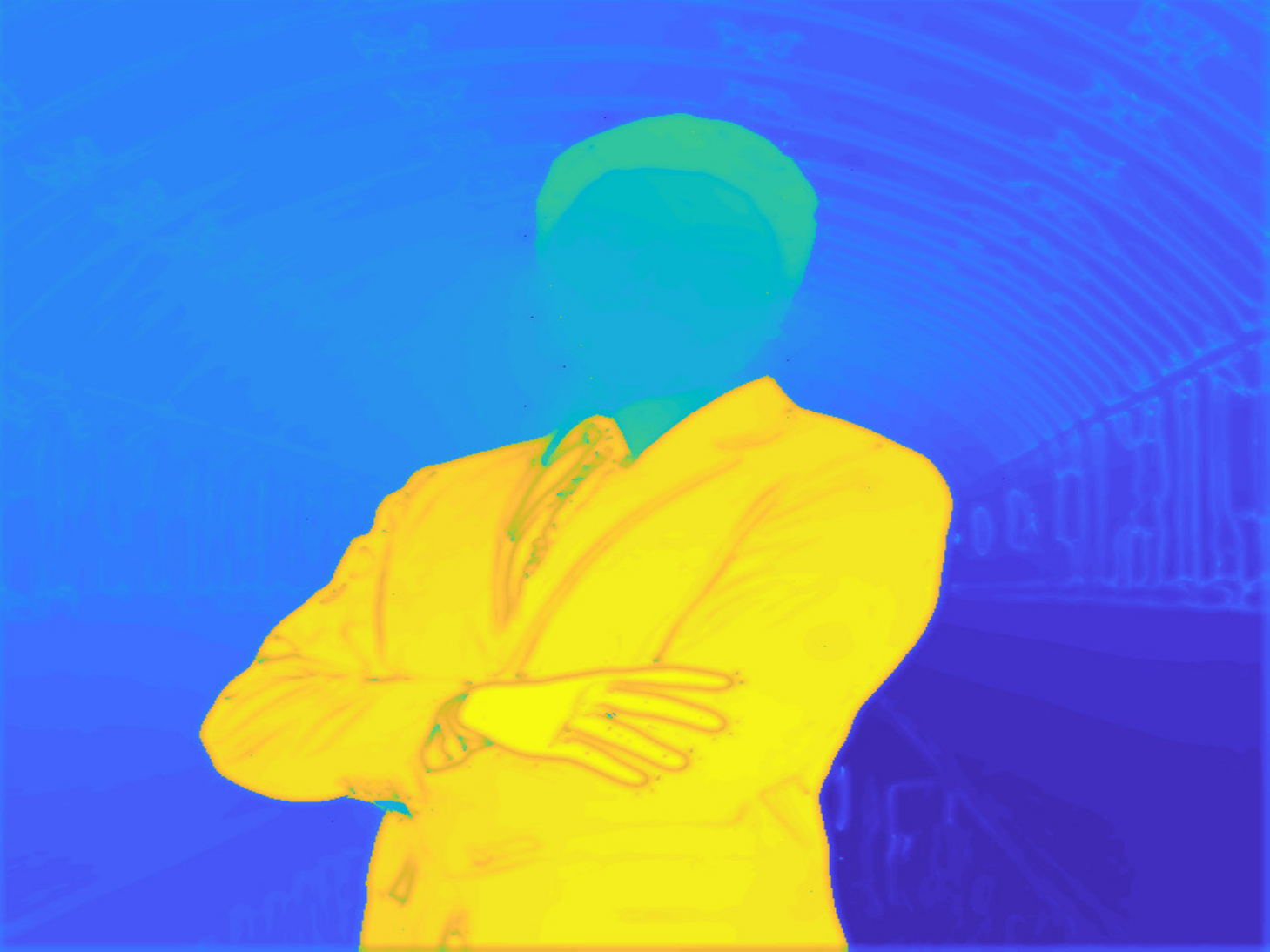}\\
\includegraphics[width=1.2in]{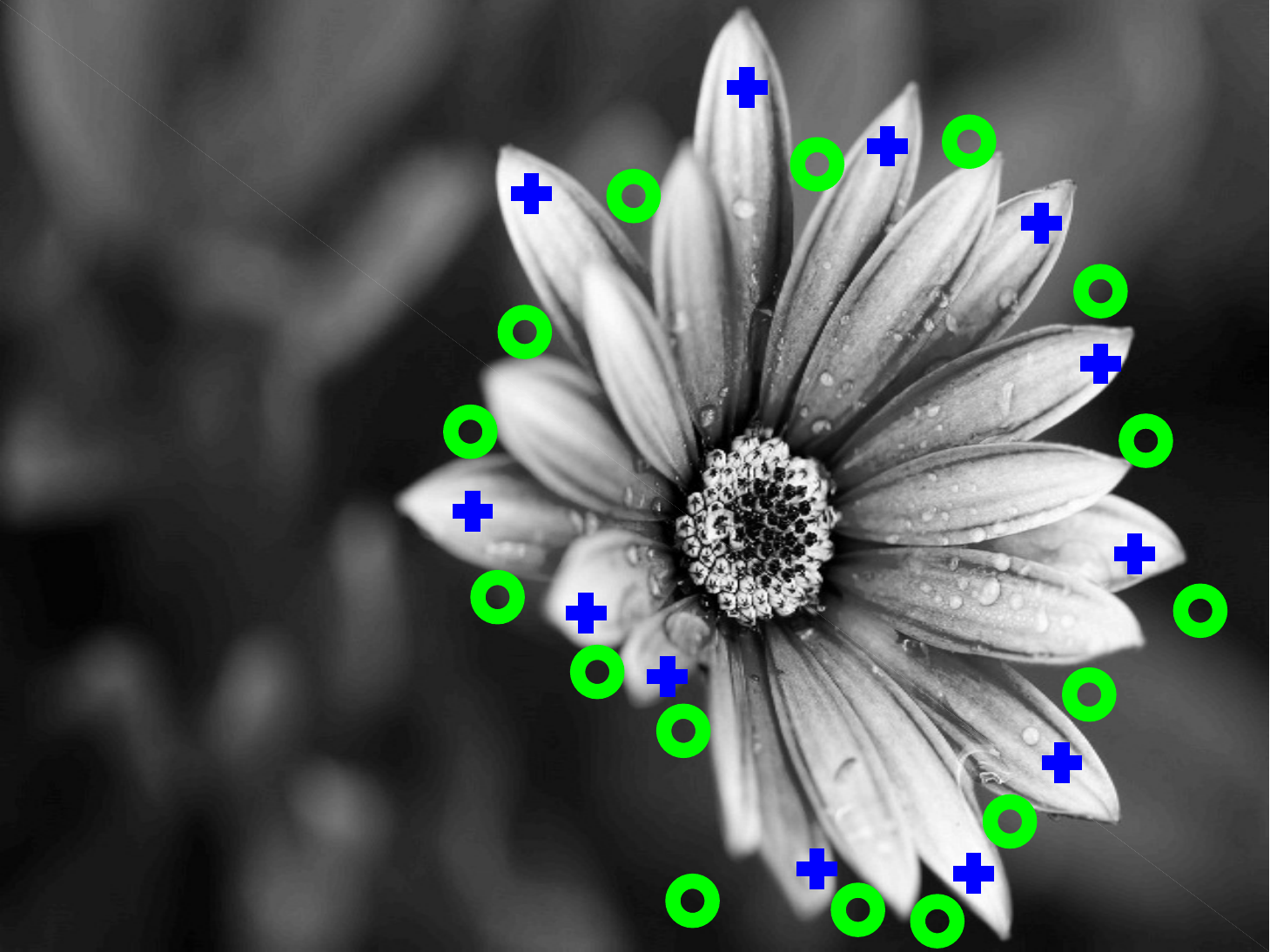}
\includegraphics[width=1.2in]{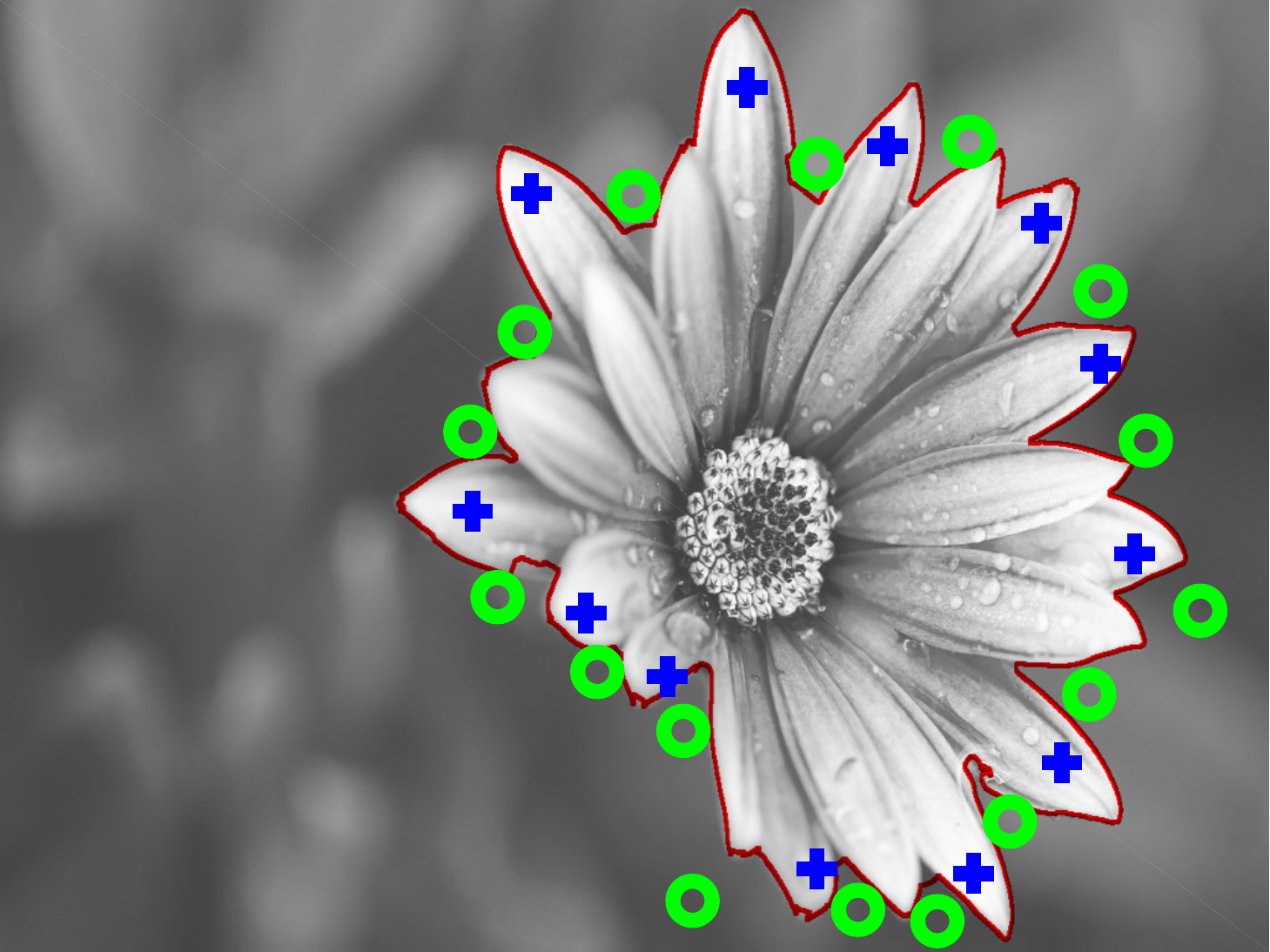}
\includegraphics[width=1.2in]{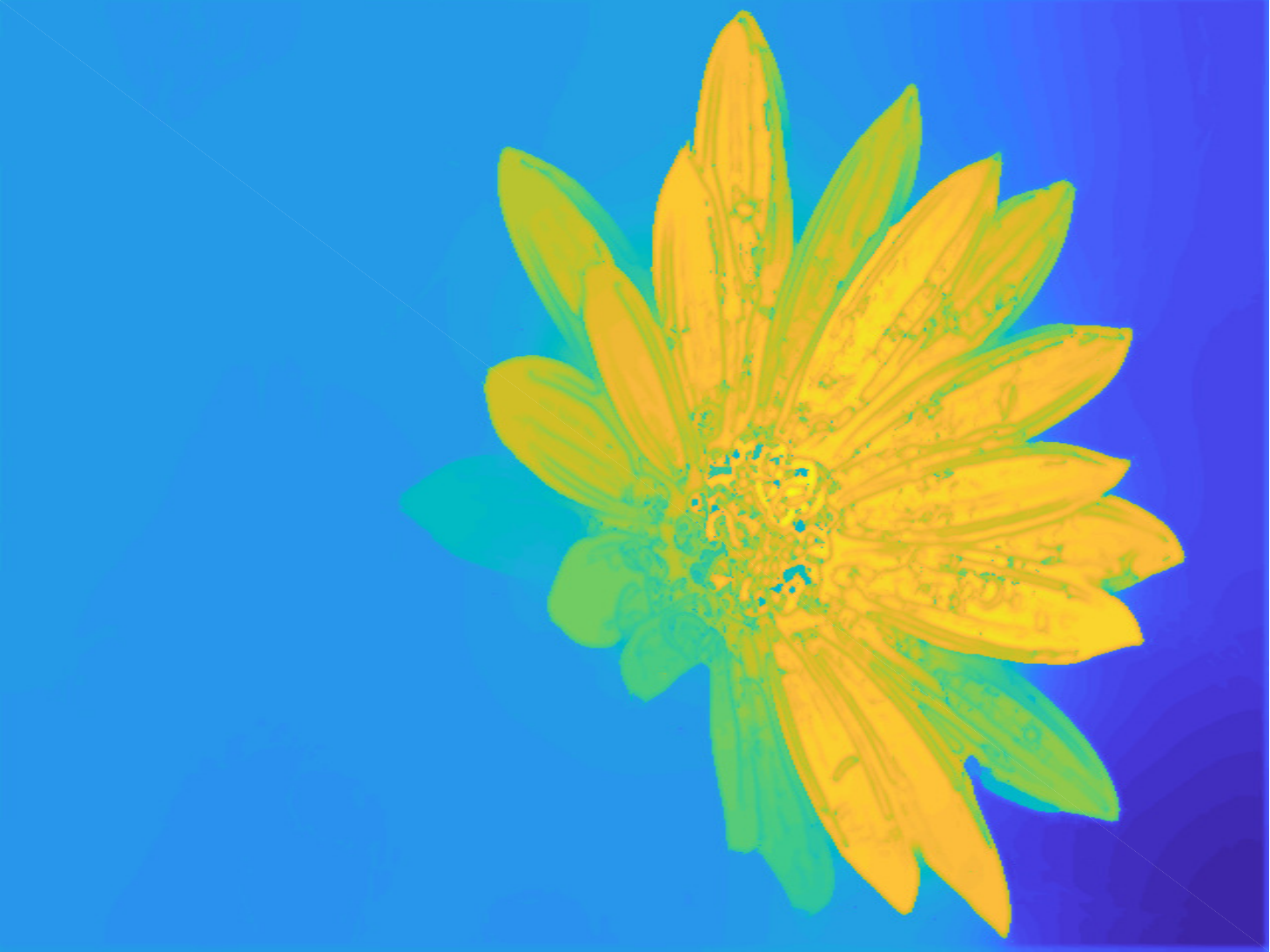}\\
\includegraphics[width=1.2in]{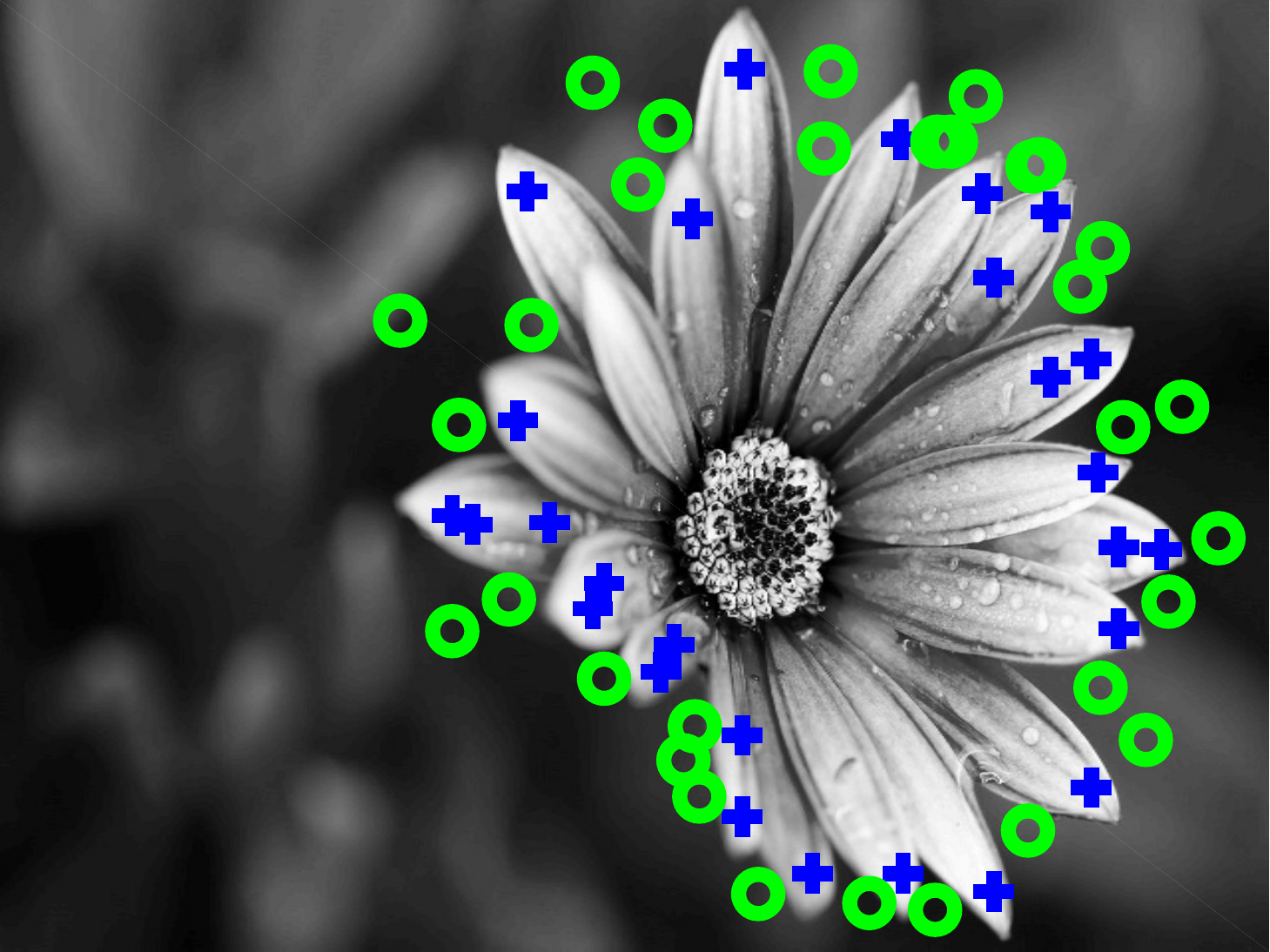}
\includegraphics[width=1.2in]{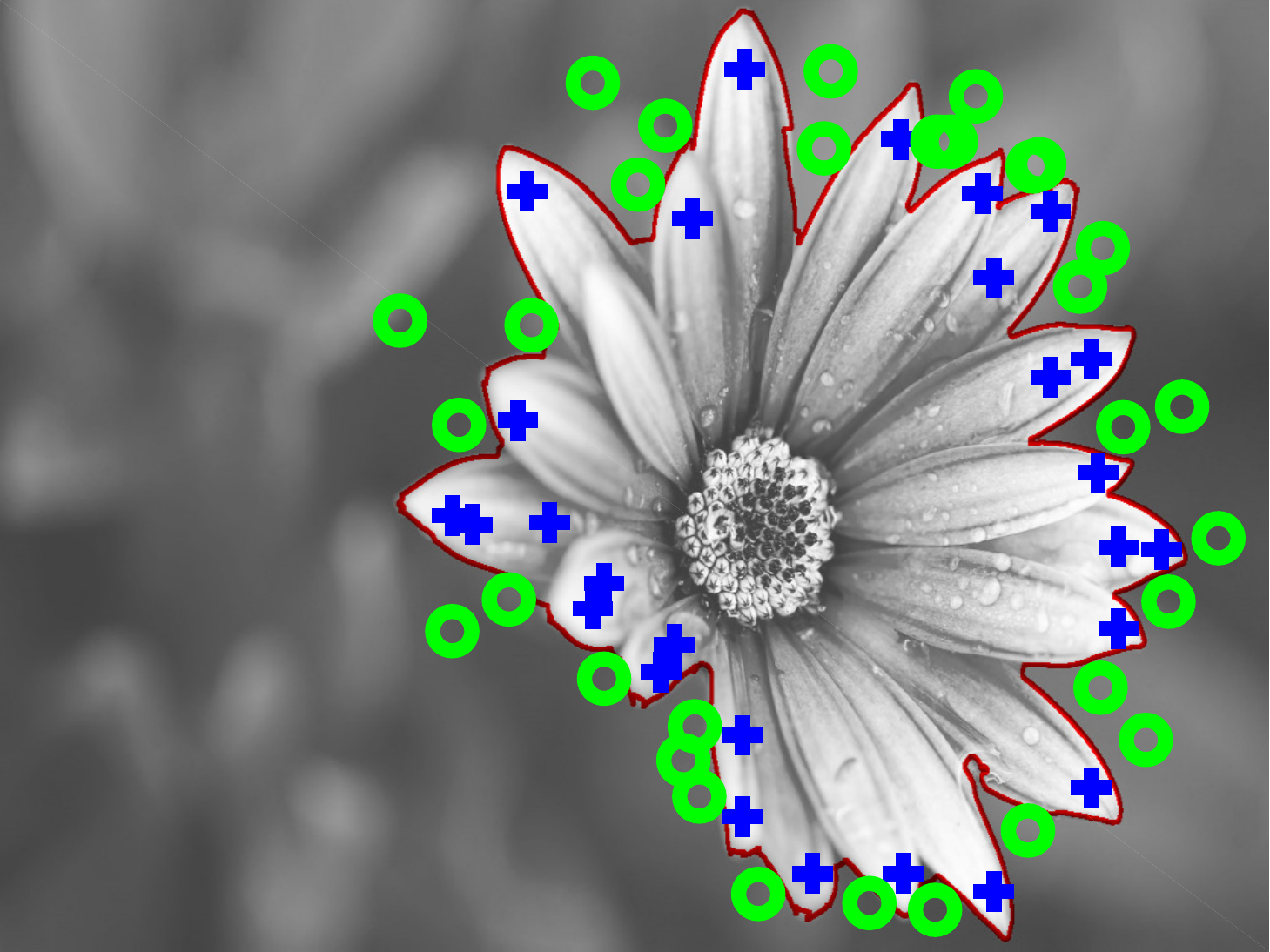}
\includegraphics[width=1.2in]{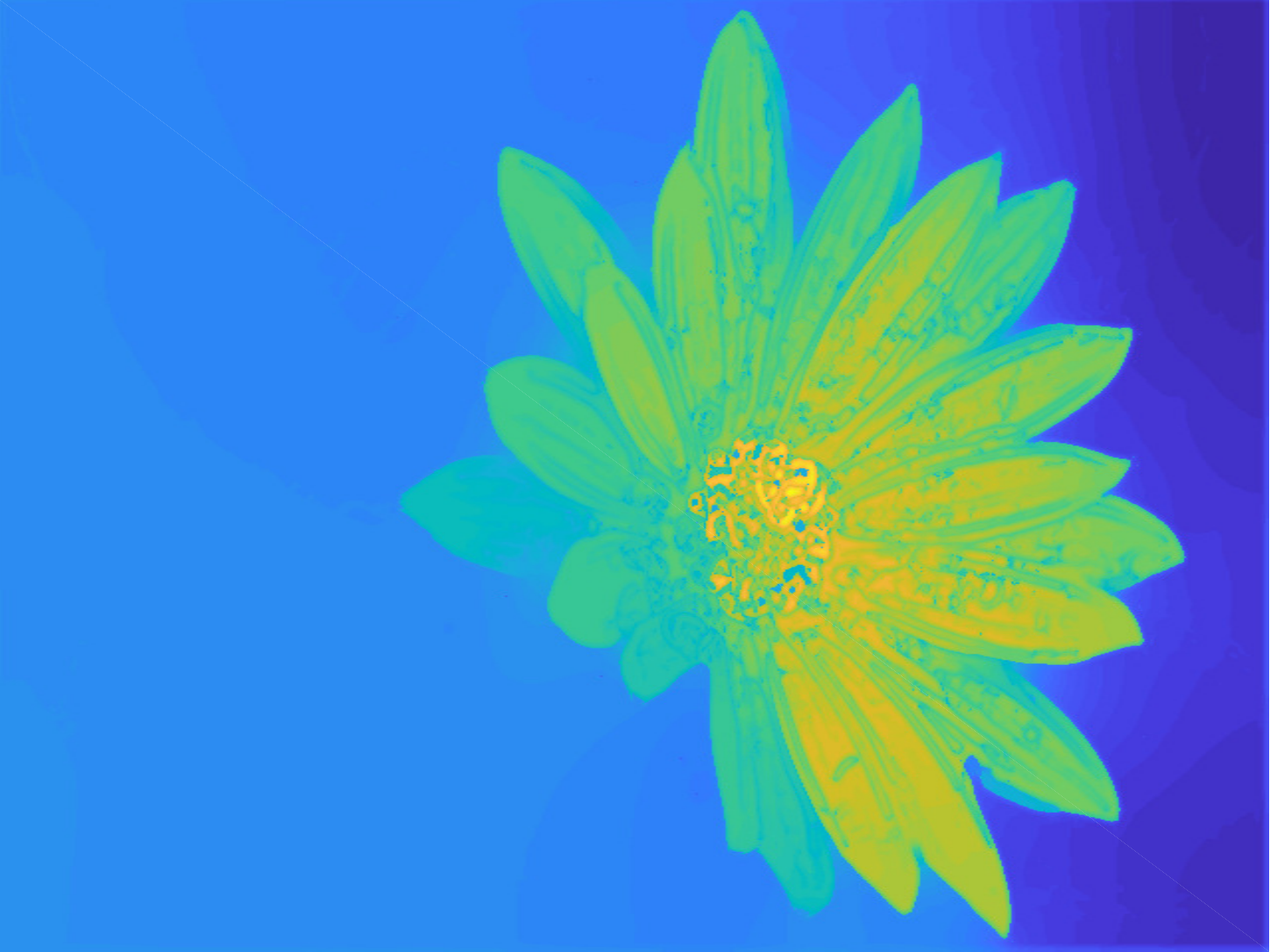}\\
\end{center}
\caption{The left, middle and right columns are labels,
results of image cut and the heat maps of the solutions
by the Lanczos algorithm for CRQopt, respectively.
Images from top to bottom are
Flower, Road, Crab, Camel, Dog, Face1, Face2,
Daisy and Daisy2, respectively.}
\label{fig:imagecut}
\end{figure}

\begin{table}[H]
\caption{Runtime (in seconds) and number of Lanczos steps}
\label{Table:runtimeimages2}
\centerline{
\begin{tabular}[b]{|@{\hspace{5pt}}c@{\hspace{5pt}}||c|c|} \hline
Image  &   Run Time    &   Lanczos steps     \\ \hline\hline
Flower  & 4.61& 210 \\\hline
Road  &14.92& 200 \\\hline
Crab & 21.58& 135 \\\hline
Camel  & 31.12& 300 \\\hline
Dog  & 22.33& 135 \\\hline
Face1  & 67.46& 215  \\\hline
Face2  & 35.54& 165  \\\hline
Daisy  & 84.09 &  235 \\\hline
Daisy2  & 105.80 &  245 \\\hline
\end{tabular}
}
\end{table}

The purpose of experiments on Daisy and Daisy2 which are 
the same images but with two different ways of labeling
is to observe how the size $m$ of the linear constraints may affect
running time. Daisy has $29$ linear constraints while Daisy2 has $59$.
As shown in Table~\ref{Table:runtimeimages2}, the Lanczos algorithm took  $84.09$ seconds for Daisy
and $105.80$ seconds for Daisy2, suggesting the larger $m$ is, the more times
the Lanczos algorithm needs, as expected, to solve the associated CQRopt.
This is because matrix-vector product $Px$ does more work as $m$ increases.

In Table \ref{Table:runtimeimages3}, we show the running time for Fast-GE-2.0~\cite{jixb:2017}, projected power method \cite{xuls:2009}, and the Lanczos algorithm for a few examples. For comparable segmentation quality,
the runtime of the Lanczos algorithm for CRQopt \eqref{eq:CRQopt}
is significantly less than the existing methods,
including Fast-GE-2.0 and
the projected power method.
 For example,
with the same prior labeling on the image Crab,
Fast-GE-2.0 and the projected power method
take $47.13$ seconds and $446.76$ seconds, respectively, while our Lanczos algorithm only takes $21.18$ seconds.
Again, with the same labeling on Daisy and Daisy2,
Fast-GE-2.0 takes $1572.81$ and seconds $1319.58$ seconds, respectively,
the projected power method fails to converge in three hours, while the Lanczos algorithm
only takes $84.09$ seconds and $105.80$ seconds, respectively.

\begin{table}[H]
\caption{Runtime  for Fast-GE-2.0, projected power method and the Lanczos algorithm}
\label{Table:runtimeimages3}
\centerline{
\begin{tabular}[b]{|@{\hspace{5pt}}c@{\hspace{5pt}}||c|c|c|} \hline
Image  &  Fast-GE-2.0  & Projected Power Method &  Lanczos algorithm     \\ \hline\hline
Crab & 47.13 s&446.76 s&21.58 s \\\hline
Daisy  &1572.81 s& 3+ hours & 84.09 s \\\hline
Daisy2  &1319.58 s& 3+ hours& 105.80 s  \\\hline
\end{tabular}
}
\end{table}

\section{Conclusions}\label{sec-concluding}
Although the constrained Rayleigh quotient  optimization problem (CRQopt) \eqref{eq:CRQopt},
also known as the linear constrained eigenvalue problem, has been around since 1970s, 
some of the mathematical claims
were not rigorously justified. There are not many numerical methods that are suitable 
for large scale CRQopt \eqref{eq:CRQopt}, such as those arising from constrained image segmentation.  
The projected power method \cite{xuls:2009} converges too slow while
the method in \cite{gozz:2000} is for the homogeneous constraints only.
Eigenvalue optimization method \cite{erof:2011} could be too 
expensive. 
In this paper, we launched a systematical and rigorous theoretical study of the problem
and, as a result, devised an efficient Lanczos algorithm for large scale CRQopt \eqref{eq:CRQopt}.
We perform a detailed convergence analysis. As an application, we apply our Lanczos algorithm
to the image cut problem with partial prior labeling. Numerical experiments on several images
demonstrate the effectiveness of the algorithm in terms of accuracy and superior efficiency
compared to Fast-GE-2.0~\cite{jixb:2017} and
the projected power method \cite{xuls:2009}. 
For future work, our goal is to solve rLGopt \eqref{eq:rLGopt} for nearly hard case 
and applications of our algorithms on more machine learning problems such as 
outlier removal \cite{lihs:2014}, semi-supervised kernel PCA \cite{paog:2013}, 
and transductive learning \cite{joac:2003}.


Although our developments in this article have been restricted to the real numbers, 
their extensions to the complex version of CRQopt~\eqref{eq:CRQopt}
$$
\min_{v \in \mathbb{C}^n} v^{\HH}Av\quad\mbox{s.t.}\,\, v^{\HH}v=1\,\,\mbox{and}\,\,C^{\HH}v=b
$$
is rather straightforward, where $A\in\bbC^{n\times n}$ is Hermitian, 
i.e., $A=A^{\HH}$, $C\in\bbC^{n\times m}$.
Essentially, all we need to do is to replace all transposes ${\,\cdot\,}^{\T}$ 
by complex conjugate transposes ${\,\cdot\,}^{\HH}$.

\appendix
\section{Solve secular equation}\label{sec:secularEq}

We are interested in computing the smallest zero $\lambda_{\ast}$ of the secular function
\begin{equation}\label{eq:sec-fun:H'}
\chi(\lambda):=\sum_{i=1}^{n}\frac {\xi_i^2}{(\lambda-\theta_i)^2}-\gamma^2,
\end{equation}
where  it is assumed
\begin{align*}
&\gamma>0,\,\, \theta_1\le\theta_2\le\cdots\le\theta_n,\,\,\mbox{and}, \\
&\mbox{either $\xi_1\ne 0$, or $\xi_1=0$ but}\,\,\lim_{\lambda\to\theta_1^-}\chi(\lambda)>0.
\end{align*}
Those assumptions guarantee that $\chi(\lambda)$ has a unique zero $\lambda_{\ast}$ in $(-\infty,\theta_1)$. This is because
$$
\lim_{\lambda\to-\infty}\chi(\lambda)=-\gamma^2<0,\,\,
\lim_{\lambda\to\theta_1^-}\chi(\lambda)>0,\,\,
\mbox{and}\,\,
\chi'(\lambda)=-2\sum_{i=1}^{n}\frac {\xi_i^2}{(\lambda-\theta_i)^3}>0\,\,
\mbox{for $\lambda<\theta_1$}.
$$
First, we find an initial lower bound $\alpha^{(0)}$ of $\lambda_\ast$, i.e., $\alpha^{(0)}<\theta_1$ such that $\chi(\alpha^{(0)})<0$. Note
$$
\chi(\lambda)\le\sum_{i=1}^{n}\frac {\xi_i^2}{(\lambda-\theta_1)^2}-\gamma^2
\quad\mbox{for $\lambda<\theta_1$}.
$$
One such $\alpha^{(0)}$ can be found by solving
$$
\sum_{i=1}^{n}\frac {\xi_i^2}{(\alpha^{(0)}-\theta_1)^2}-\gamma^2=0
\quad\Rightarrow\quad
\alpha^{(0)}=\theta_1-\delta_0
\,\,
\mbox{with}\,\,
\delta_0=\frac 1{\gamma}\sqrt{\sum_{i=1}^{n}\xi_i^2}\,.
$$
We conclude that $\lambda_\ast\in[\alpha^{(0)},\beta^{(0)}]$, where $\beta^{(0)}=\theta_1$.
Quantities $\alpha^{(k)}$ and $\beta^{(k)}$ will be determined during our
iterative process to be described  such that $\lambda_\ast\in[\alpha^{(k)},\beta^{(k)}]$.

Without loss of generality, we may assume that
$$
\mbox{if $\theta_1=\cdots=\theta_d<\theta_{d+1}$, then $\xi_2=\cdots=\xi_d=0$.}
$$
Let
\begin{equation}\label{eq:j0}
j_0=\min\{i\,:\,\xi_i\ne 0\}.
\end{equation}
To find the initial guess of the root, we solve
$$
\frac {\xi_{j_0}^2}{(\lambda-\theta_{j_0})^2}+\underbrace{\sum_{i=j_0+1}^{n}\frac {\xi_i^2}{([\theta_{j_0}-\delta_0]-\theta_i)^2}-\gamma^2}_{=:-\eta}=0
$$
for $\lambda$ to get
$$
\lambda^{(0)}=\begin{cases}
          \theta_{j_0}-|\xi_{j_0}|/\sqrt{\eta},&\quad\mbox{if $\eta>0$}, \\
          \theta_{j_0}-\delta_0/2,  &\quad\mbox{if $\eta\le 0$},
          \end{cases}
$$
where the second case is based on bisection.

For the iterative scheme, suppose we have an approximation $\lambda^{(k)}\approx\lambda_\ast$. First, the interval $(\alpha^{(k)},\beta^{(k)})$ will be updated as
$$
\mbox{$\alpha^{(k+1)}\leftarrow\lambda^{(k)}$ and $\beta^{(k+1)}\leftarrow\beta^{(k)}$ if $\chi(\lambda^{(k)})<0$}$$
$$\mbox{$\beta^{(k+1)}\leftarrow\lambda^{(k)}$ and $\alpha^{(k+1)}\leftarrow\alpha^{(k)}$ if $\chi(\lambda^{(k)})> 0$.}
$$
Then we find the next approximation $\lambda^{(k+1)}$. For that purpose,
we seek to approximate $\chi$, in the neighborhood of $\lambda^{(k)}$, by
$$
g(\lambda):=-b+\frac a{(\lambda-\theta_{j_0})^2}\approx\chi(\lambda),
$$
such that
\begin{alignat*}{2}
g(\lambda^{(k)})&\equiv -b+\hphantom{2}\frac a{(\lambda^{(k)}-\theta_{j_0})^2}&&=\chi(\lambda^{(k)})=\hphantom{-2}\sum_{i=1}^{n}\frac {\xi_i^2}{(\lambda^{(k)}-\theta_i)^2}-\gamma^2,\\
g'(\lambda^{(k)})&\equiv\hphantom{-b} -2\frac a{(\lambda^{(k)}-\theta_{j_0})^3}&&=\chi'(\lambda^{(k)})=-2\sum_{i=1}^{n}\frac {\xi_i^2}{(\lambda^{(k)}-\theta_i)^3},
\end{alignat*}
yielding
\begin{align*}
a&=-\frac 12(\lambda^{(k)}-\theta_{j_0})^3\chi'(\lambda^{(k)})=(\lambda^{(k)}-\theta_{j_0})^3\sum_{i=1}^{n}\frac {\xi_i^2}{(\lambda^{(k)}-\theta_i)^3}>0, \\
b&=\frac a{(\lambda^{(k)}-\theta_{j_0})^2}-\chi(\lambda^{(k)})=(\lambda^{(k)}-\theta_{j_0})\sum_{i=1}^{n}\frac {\xi_i^2}{(\lambda^{(k)}-\theta_i)^3}-\chi(\lambda^{(k)}).
\end{align*}
Ideally, $b>0$ so that $g(\lambda)=0$ has a solution in $(-\infty,\theta_{j_0})$. Assuming $b>0$, we find the next approximation
$\lambda^{(k+1)}\approx\lambda_\ast$ is given by
\begin{equation}\label{eq:tau-new}
\lambda^{(k+1)}=\theta_1-\sqrt{a/b}.
\end{equation}

Now if $b\le 0$ (then $\lambda^{(k+1)}$ as in \eqref{eq:tau-new} is undefined) or if $\lambda^{(k+1)}\not\in(\alpha,\beta)$, we let
$\lambda^{(k+1)}$ be $(\alpha^{(k+1)}+\beta^{(k+1)})/2$ according to bisection method.


\section{Proof of the equivalence between CRQopt and the 
eigenvalue optimization problem}\label{sec:dual}
Suppose 
$U\in\bbR^{n\times (n-m)}$ has full column rank and that $\cR(U)=\cN(C^{\T})$ and let $u\in\bbR^n$
satisfies $C^{\T}u=\sqrt n\,b$. Define
\begin{equation}\label{eq:defN}
\widehat{C}=[C^{\T},\ -\sqrt{n}b], \,\,
 N=\kbordermatrix{&\sss n-m &\sss 1 \\
   \sss n & U & u\\
   \sss 1 & 0&1}.
\end{equation}
and 
$$
L=N^{\T}\begin{bmatrix}
A &0\\0& 0
\end{bmatrix}N,\,\,
E=N^{\T}\begin{bmatrix}
-\frac{I}{n+1} &0\\0& 1-\frac{1}{n+1}
\end{bmatrix}N,\,\,
M=N^{\T}\begin{bmatrix}
I_n &0\\0& 0
\end{bmatrix}N.
$$
Note that it is easy to see that $\cR(N)=\mathcal{N}(\widehat{C})$. 

In this appendix we prove that CRQopt \eqref{eq:CRQopt} is equivalent to 
the following eigenvalue optimization problem
\begin{equation}\label{eq:res-erof:2011}
\max_{t\in\bbR}\lambda_{\min}(L+tE,M),
\end{equation}
where $\lambda_{\min}(L+tE,M)$ is the smallest eigenvalue of
$(L+tE)x=\lambda Mx$.
This equivalency was initiately established by 
Eriksson, Olsson and Kahl \cite{erof:2011}. 
However, the statements presented here are stronger than 
the related ones in \cite{erof:2011}. For examples, 
we will prove $M$ is positive definite, and we can use 
'$\max$' in \eqref{eq:res-erof:2011} instead of 
'$\sup$' in \cite{erof:2011}. 

\medskip

 Let $\widetilde{v}=\sqrt{n}v$, 
 $\widehat{v}=\begin{bmatrix}
             \widetilde{v}\\1
             \end{bmatrix}$, $\widehat{A}=\begin{bmatrix}
A &0\\0& 0
\end{bmatrix}$, $\widehat{B}=\begin{bmatrix}
I_n &0\\0& 0
\end{bmatrix}$.
Then $v$ is a minimizer of CRQopt \eqref{eq:CRQopt} if and only if $\widehat{v}$ is a minimizer of
\begin{equation}\label{eq:CRQopttrans2}
\min\frac{\widehat{v}^{\T} \widehat{A}\widehat{v}}
             {\widehat{v}^{\T}\widehat{B}\widehat{v}},
 \quad\mbox{s.t.}\,\,   \widehat{v}_{(n+1)}^2=1,\,\,
 \widehat{v}^{\T}\widehat{v}=n+1,  \,\,
  \widehat{C}\widehat{v}=0.
\end{equation}
Since $\cR(N)=\mathcal{N}(\widehat{C})$,  
for any $\widehat{v}$ satisfying $\widehat{C}\widehat{v}=0$, there exists 
$\widehat{y}\in\bbR^{n-m+1}$
such that $\widehat{v}=N\widehat{y}$,  $N$ is defined in \eqref{eq:defN}.
By the matrix structure in \eqref{eq:defN}, we know that $\widehat{v}_{(n+1)}^2=1$
if and only if  $\widehat{y}_{(n-m+1)}^2=1$. Therefore, solving \eqref{eq:CRQopttrans2} is equivalent to solving
\begin{equation}\label{eq:CRQopttrans}
\min  \frac{\widehat{y}^{\T}L\widehat{y}}{\widehat{y}M\widehat{y}},
 \quad\mbox{s.t.}\,\,   \widehat{y}_{(n-m+1)}^2-1=0,\,\, \widehat{y}^{\T}N^{\T}N\widehat{y}=n+1.
\end{equation}

To prove \eqref{eq:CRQopttrans} is equivalent to its dual problem, 
we use the following result on the duality of the 
quadratic constrained optimization problems.

\begin{lemma}[{\cite[Corollary 1]{erof:2011}}]\label{lm:p=d-quad}
Let $y^{\T}A_2y+2b^{\T}_2y+c_2$ be a positive semidefinite quadratic form.
If there exists $y$ such that $y^{\T}A_3y+2b^{\T}_3y+c_3<0$ and if $A_3$ is positive semidefinite, then the primal problem
$$
\inf_y \frac{y^{\T}A_1y+2b_1^{\T}y+c_1}{y^{\T}A_2y+2b_2^{\T}y+c_2},\quad\mbox{\rm s.t.}\,\,y^{\T}A_3y+2b^{\T}_3y+c_3= 0
$$
and the dual problem
$$
\sup_{\lambda }\inf_{y}\frac{y^{\T}(A_1+\lambda A_3)y+2(b_1+\lambda b_3)^{\T}y+(c_1+\lambda c_3)}{y^{\T}A_2y+2b^{\T}_2y+c_2}
$$
has no duality gap.
\end{lemma}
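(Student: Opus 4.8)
The plan is to prove the no-duality-gap assertion in two stages: weak duality, which is immediate, and strong duality, which is the real content and which I would recast as a single-constraint S-procedure. Write $q_i(y)=y^{\T}A_iy+2b_i^{\T}y+c_i$ for $i=1,2,3$, and let $p_\ast$ and $d_\ast$ denote the primal and dual optimal values. Weak duality $d_\ast\le p_\ast$ is direct: for any feasible $y$ (that is, $q_3(y)=0$) and any multiplier $\lambda$, the numerator $q_1(y)+\lambda q_3(y)$ coincides with $q_1(y)$, so the inner infimum over all $y$ is bounded above by its value over feasible $y$, which equals $p_\ast$. Taking the supremum over $\lambda$ gives $d_\ast\le p_\ast$; the point is that adding $\lambda q_3$ leaves the objective unchanged on the feasible set.

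For strong duality I would exhibit a multiplier $\bar\lambda$ at which the inner infimum equals $p_\ast$. Because the denominator form is positive semidefinite, i.e. $q_2(y)\ge 0$ for every $y$, the requirement $\inf_y (q_1+\bar\lambda q_3)/q_2\ge p_\ast$ is equivalent to
\[
q_1(y)-p_\ast q_2(y)+\bar\lambda q_3(y)\ge 0\quad\text{for all }y.
\]
By the definition of $p_\ast$ we already know $q_1(y)-p_\ast q_2(y)\ge 0$ on the zero set $\{q_3=0\}$, after first dealing with the degenerate feasible points where $q_2(y)=0$. Thus the lemma is precisely an S-lemma with one equality constraint: nonnegativity of one quadratic on the zero set of a convex quadratic (here $A_3$ is positive semidefinite), under the Slater-type hypothesis that $q_3(y_0)<0$ for some $y_0$, yields one multiplier certificate valid on all of $\bbR^{n}$.

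To establish this S-lemma I would homogenize. Pass to the forms $Q_i(z)=z^{\T}\Phi_i z$ on $\bbR^{n+1}$, where $z=[y^{\T},s]^{\T}$ and $\Phi_i=\begin{bmatrix}A_i & b_i\\ b_i^{\T} & c_i\end{bmatrix}$, so that $q_i(y)=Q_i([y^{\T},1]^{\T})$. The engine is the convexity of the joint range $\{(Q_1(z)-p_\ast Q_2(z),\,Q_3(z)):z\in\bbR^{n+1}\}$, a consequence of Dines' theorem for two real quadratic forms. With convexity in hand, I would separate this range from the open region forbidden by the constraint; the normal of the supporting hyperplane supplies the multiplier $\bar\lambda$, and the convexity of $q_3$ pins down its admissibility. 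The strict-feasibility point $q_3(y_0)<0$ is what guarantees the separation is nondegenerate, so the certificate cannot put zero weight on the objective, and it is what forces the supremum over $\lambda$ to be attained, yielding $d_\ast=p_\ast$ exactly rather than only in the limit.

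The main obstacle, and the reason the hypotheses $A_3$ positive semidefinite and $q_3(y_0)<0$ are both needed, is that the constraint is an \emph{equality} rather than an inequality: the ordinary S-lemma produces a sign-constrained multiplier for $\{q_3\le 0\}$, whereas here $\bar\lambda$ may have either sign and one must additionally control the homogenized forms on the hyperplane at infinity $\{s=0\}$ as well as the locus where the denominator $q_2$ degenerates. Convexity of $q_3$ (from $A_3$ positive semidefinite) is precisely what lets me replace the equality set by the boundary of a convex body and push the optimum there, closing the gap between the equality and inequality versions of the argument.
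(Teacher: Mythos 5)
The paper does not actually prove this lemma: its ``proof'' is the single line ``See \cite[Corollary 1]{erof:2011}'', so the statement is imported wholesale from Eriksson, Olsson and Kahl. Your proposal therefore cannot be compared with an argument in the paper; what it does is reconstruct, in outline, the standard route by which such fractional strong-duality results are established --- weak duality by inspection, then a multiplier certificate $q_1-p_\ast q_2+\bar\lambda q_3\ge 0$ obtained from an equality-constrained S-lemma, which is in turn proved by homogenizing and invoking Dines' convexity of the joint range of two quadratic forms followed by a separating-hyperplane argument. That is the correct skeleton, and it is essentially the machinery underlying the cited Corollary 1.

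As a proof, however, the sketch stops at exactly the points where the work lives, and you flag these without closing them. First, the homogenization $q_i(y)=Q_i([y^{\T},1]^{\T})$ only yields the hypothesis $Q_1-p_\ast Q_2\ge 0$ on $\{Q_3=0\}$ for vectors with nonzero last coordinate, so the separation must be arranged (by an $\epsilon$-perturbation or a closure argument) so that it never requires nonnegativity on the slice at infinity; you name this obstacle but do not resolve it. Second, the forbidden region $\{(u,v):u<0,\ v=0\}$ is neither open nor closed, so the supporting hyperplane has to be extracted from a relaxed set such as $\{(u,v): u<-\epsilon|v|\}$ and a limit taken, and one must rule out the degenerate separator that places zero weight on the first coordinate --- this is precisely where the Slater point $q_3(y_0)<0$ and the positive semidefiniteness of $A_3$ must be used, and that deduction is asserted rather than carried out. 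Third, the loci where $q_2(y)=0$ and the cases $p_\ast=\pm\infty$ are acknowledged but not dispatched. Finally, the lemma claims only that the duality gap is zero, not that the supremum over $\lambda$ is attained, so your attainment claim is stronger than what is asked and would need its own argument. None of this is a wrong turn --- the strategy is the right one --- but the decisive steps are named rather than proved.
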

\begin{proof}
See {\cite[Corollary 1]{erof:2011}}.
\end{proof}

With the help of Lemma~\ref{lm:p=d-quad},
we have the following theorem to show that
that there is no duality gap between the optimization problem 
\eqref{eq:CRQopttrans} and its dual problem.

\begin{theorem}[{\cite[Theorem 1]{erof:2011}}]\label{thm:dual}
Let
$\widehat{A}_i=\begin{bmatrix}
A_i &b_i\\b_i^{\T} &c_i
\end{bmatrix}$ for $i=1,2,3$. If $\widehat{A}_2$ and $A_3$ are  positive semidefinite and if there exists $\widehat{y}$ such that $\widehat{y}^{\T}\widehat{A}_3\widehat{y}<n+1$
and $\widehat{y}_{n+1}^2=1$, then the primal problem
\begin{equation}\label{eq:primal}
\inf_{y^{\T}A_3y+2b^{\T}_3y+c_3= n+1} \frac{y^{\T}A_1y+2b_1^{\T}y+c_1}{y^{\T}A_2y+2b_2^{\T}y+c_2}
  =\inf_{\widehat{y}^{\T}\widehat{A}_3\widehat{y}= n+1, \widehat{y}_{n+1}^2=1}\frac{\widehat{y}^{\T}\widehat{A}_1\widehat{y}}
                                                                                   {\widehat{y}^{\T}\widehat{A}_2\widehat{y}}
\end{equation}
and its dual
$$\sup_t\inf_{\widehat{y}^{\T}\widehat{A}_3\widehat{y}=n+1}\frac{\widehat{y}^{\T}\widehat{A}_1\widehat{y}-t\widehat{y}_{n+1}^2-t}{\widehat{y}^{\T}\widehat{A}_2\widehat{y}}$$
has no duality gap.
\end{theorem}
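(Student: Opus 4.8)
The plan is to derive the no--duality--gap statement from the single--constraint fractional duality in Lemma~\ref{lm:p=d-quad}, after first disposing of the identity \eqref{eq:primal} and of the easy inequality. Throughout I write $\widehat{y}=[\,y^{\T},\,1\,]^{\T}$, so that $\widehat{y}^{\T}\widehat{A}_i\widehat{y}=y^{\T}A_iy+2b_i^{\T}y+c_i$ for $i=1,2,3$.

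First I would prove \eqref{eq:primal}. The affine ratio on its left is literally the homogeneous ratio on its right restricted to the slice $\widehat{y}_{n+1}=1$. The constraint $\widehat{y}_{n+1}^2=1$ only forces $\widehat{y}_{n+1}=\pm1$; since each form $\widehat{y}^{\T}\widehat{A}_i\widehat{y}$ and the norm constraint $\widehat{y}^{\T}\widehat{A}_3\widehat{y}=n+1$ are invariant under $\widehat{y}\mapsto-\widehat{y}$, the value $\widehat{y}_{n+1}=-1$ can always be traded for $\widehat{y}_{n+1}=+1$ at no cost, so the two sides share the same feasible set and objective and hence the same infimum. The easy direction of the duality is then immediate: for any $\widehat{y}$ feasible for both constraints and any $t$ the penalty attached to $\widehat{y}_{n+1}^2=1$ vanishes, so the dual inner objective equals the primal objective, giving $(\text{dual})\le(\text{primal})$ after taking $\inf_{\widehat{y}}$ and then $\sup_t$.

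The crux is the reverse inequality, and here I would call on Lemma~\ref{lm:p=d-quad}. The obstacle is that the Lemma dualizes a single quadratic equality while we carry two, $\widehat{y}^{\T}\widehat{A}_3\widehat{y}=n+1$ and $\widehat{y}_{n+1}^2=1$; these cannot be merged, because the leading matrix of the combined constraint $\widehat{y}^{\T}\widehat{A}_3\widehat{y}-(n+1)\widehat{y}_{n+1}^2=0$ is a multiple of $E$ and is indefinite, violating the positive--semidefiniteness required by the Lemma. The route I would take is to use \eqref{eq:primal} to absorb $\widehat{y}_{n+1}^2=1$ (by fixing $\widehat{y}_{n+1}=1$) and then apply Lemma~\ref{lm:p=d-quad} to the resulting affine problem, dualizing only the norm constraint $\widehat{y}^{\T}\widehat{A}_3\widehat{y}=n+1$. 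Its three hypotheses are exactly the standing assumptions: the denominator $y^{\T}A_2y+2b_2^{\T}y+c_2=\widehat{y}^{\T}\widehat{A}_2\widehat{y}$ is a positive--semidefinite form because $\widehat{A}_2=M\succeq0$; the leading matrix $A_3$ of the dualized constraint is positive semidefinite; and the Slater point demanded by the Lemma --- a feasible $\widehat{y}$ making the constraint form strictly negative --- is precisely the point assumed in the theorem, namely one with $\widehat{y}^{\T}\widehat{A}_3\widehat{y}<n+1$ and $\widehat{y}_{n+1}^2=1$. The Lemma then yields equality between the primal and the Lagrangian dual in the multiplier $\lambda$ conjugate to the norm constraint.

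Finally I would rewrite this Lagrangian dual in the stated form. Because the ratio is positively homogeneous of degree zero, on the slice $\widehat{y}_{n+1}=1$ the constant $-\lambda(n+1)$ produced by the Lemma can be homogenized as $-\lambda(n+1)\widehat{y}_{n+1}^2$, whereupon the (now degree--two) numerator lets me swap the normalization: I may instead take the inner infimum over $\{\widehat{y}^{\T}\widehat{A}_3\widehat{y}=n+1\}$, on which $\lambda\widehat{y}^{\T}\widehat{A}_3\widehat{y}=\lambda(n+1)$ collapses the penalty to a multiple of $\widehat{y}_{n+1}^2$. Setting $t=\lambda(n+1)$ turns $\sup_\lambda$ into $\sup_t$ and reproduces the displayed dual with the inner infimum over the norm constraint and the multiplier $t$ on $\widehat{y}_{n+1}^2$. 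I expect the main difficulty to be exactly this two--constraint bookkeeping --- choosing which equality to retain and which to dualize so that the psd and Slater hypotheses of Lemma~\ref{lm:p=d-quad} are met, and then justifying the homogenization/normalization swap for the scale--invariant ratio; a prerequisite finiteness fact, the positive definiteness of $M$, I would record separately.
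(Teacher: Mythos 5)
Your overall architecture is sound and shares the paper's key ingredient, but you replace one step by a genuinely different device, and that is where care is needed. The paper proves the result by a pure sandwich: starting from the primal it inserts the vanishing penalty $t\widehat{y}_{n+1}^2-t$, relaxes the slice constraint to get the dual, relaxes further to the full Lagrangian $\sup_{t,\lambda}\inf_{\widehat y}$, and then applies Lemma~\ref{lm:p=d-quad} \emph{twice} --- once to reinstate $\widehat{y}_{n+1}^2=1$ (multiplier $t$, leading matrix $e_{n+1}e_{n+1}^{\T}\succeq0$, Slater point $\widehat y_{n+1}=0$), once to reinstate the norm constraint (multiplier $\lambda$, leading matrix $A_3\succeq0$, the assumed Slater point) --- landing back on the primal, which forces equality throughout. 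You keep the first application (your identity $\gamma_\ast=\sup_\lambda\inf_{y}$ is exactly the paper's step \eqref{eq:pr8} read backwards, with the same hypothesis checks) but replace the second by a homogenization argument: write the Lagrangian numerator as a homogeneous form on the slice $\widehat y_{n+1}=1$ and move the inner infimum to the quadric $\widehat y^{\T}\widehat A_3\widehat y=n+1$ using degree-zero homogeneity, with $t=-\lambda(n+1)$. Your preliminary observations (the $\widehat y\mapsto-\widehat y$ symmetry behind \eqref{eq:primal}, and weak duality) are correct and match the paper. Note also that you are implicitly reading the dual's penalty as $+t\widehat y_{n+1}^2-t$, consistent with the paper's proof rather than the sign in the displayed statement.

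The one genuine issue is the normalization swap itself. Homogeneity identifies the infimum over the slice with the infimum over the open cone $\{\widehat y_{n+1}\neq0\}$, and the infimum over the quadric with the infimum over $\{\widehat y^{\T}\widehat A_3\widehat y>0\}$; these cones are different, and the inequality you need, $\inf_{\widehat y_{n+1}\neq0}\le\inf_{\widehat y^{\T}\widehat A_3\widehat y=n+1}$, requires approximating points with $\widehat y_{n+1}=0$ on the quadric by points of the slice cone without losing the infimum. That is a density-plus-continuity argument for the ratio, and it needs the denominator $\widehat y^{\T}\widehat A_2\widehat y$ to be strictly positive away from the origin --- i.e.\ $\widehat A_2\succ0$, not merely $\succeq0$ as the theorem assumes. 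In the application this is supplied by Lemma~\ref{lem:pd} ($M\succ0$), and you flag it as a ``prerequisite finiteness fact,'' so your route closes for the CRQopt instance; but as a proof of the theorem at its stated generality it leaves a hole exactly where the paper's second application of Lemma~\ref{lm:p=d-quad} (whose Slater point $\widehat y_{n+1}=0$ is free) needs nothing beyond $\widehat A_2\succeq0$. If you adopt your route, state the positive-definiteness hypothesis explicitly and write out the limiting argument at points of the quadric with vanishing last coordinate.
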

\begin{proof}
Let $\gamma_\ast$ be the optimal value of \eqref{eq:primal}, then
\begin{align}
\gamma_\ast&=\inf_{\widehat{y}^{\T}\widehat{A}_3\widehat{y}= n+1, \widehat{y}_{n+1}^2=1}\frac{\widehat{y}^{\T}\widehat{A}_1\widehat{y}}
                                                                                   {\widehat{y}^{\T}\widehat{A}_2\widehat{y}} \notag
\\
&=\sup_t\inf_{\widehat{y}^{\T}\widehat{A}_3\widehat{y}= n+1, \widehat{y}_{n+1}^2=1}\frac{\widehat{y}^{\T}\widehat{A}_1\widehat{y}+t\widehat{y}_{n+1}^2-t}
                                                                                   {\widehat{y}^{\T}\widehat{A}_2\widehat{y}} \notag\\
                                                                                 &\geq\sup_t\inf_{\widehat{y}^{\T}\widehat{A}_3\widehat{y}= n+1}\frac{\widehat{y}^{\T}\widehat{A}_1\widehat{y}+t\widehat{y}_{n+1}^2-t}
                                                                                   {\widehat{y}^{\T}\widehat{A}_2\widehat{y}} \notag\\
                                                                                   &\geq\sup_{t,\lambda}\inf_{\widehat{y}} \frac{\widehat{y}^{\T}\widehat{A}_1\widehat{y}+t\widehat{y}_{n+1}^2-t+\lambda(\widehat{y}^{\T}\widehat{A}_3\widehat{y}-( n+1))}
                                                                                   {\widehat{y}^{\T}\widehat{A}_2\widehat{y}} \notag\\
                                                                                   &=\sup_{t,\lambda}\inf_{\widehat{y}} \frac{{y}^{\T}{A}_1{y}+2b_1^{\T}y+c_1+t\widehat{y}_{n+1}^2-t+\lambda({y}^{\T}{A}_3{y}+2b_3^{\T}y+c_3-( n+1))}
                                                                                   {{y}^{\T}{A}_2{y}+2b_2^{\T}y+c_2} \notag\\
                                                                                   &=\sup_{t,\lambda}\inf_{\widehat{y}_{n+1}^2=1} \frac{{y}^{\T}{A}_1{y}+2b_1^{\T}y+c_1+\lambda({y}^{\T}{A}_3{y}+2b_3^{\T}y+c_3-( n+1))}
                                                                                   {{y}^{\T}{A}_2{y}+2b_2^{\T}y+c_2} \label{eq:pr6}\\
                                                                                   &=\inf_{y^{\T}A_3y+2b^{\T}_3y+c_3= n+1} \frac{y^{\T}A_1y+2b_1^{\T}y+c_1}{y^{\T}A_2y+2b_2^{\T}y+c_2}=\gamma_\ast, \label{eq:pr8}
\end{align}
where \eqref{eq:pr6} and \eqref{eq:pr8} apply Lemma \ref{lm:p=d-quad}.
\end{proof}

\begin{remark}
One of the conditions in {\cite[Theorem 1]{erof:2011}} is 
``$\widehat{A}_3$ is positive semidefinite''. However, the proof of Theorem \ref{thm:dual} 
applies Lemma \ref{lm:p=d-quad}, which requires $A_3$ to be positive semidefinite and 
there exists $\widehat{y}$ such that $\widehat{y}^{\T}\widehat{A}_3\widehat{y}<n+1$
and $\widehat{y}_{n+1}^2=1$. Therefore, the condition ``$\widehat{A}_3$ is positive 
semidefinite'' is not necessary.
In addition, in the statement of {\cite[Theorem 1]{erof:2011}}, one of the constraints is 
$ {y}_{n+1}^2=1$. However, in \eqref{eq:primal}, the size of the 
matrix $A_i$ and $\widehat{A_i}$ is $n\times n$ and $(n+1)\times (n+1)$ for $i=1,2,3$, 
respectively. Therefore, we consider $y\in\mathbb{R}^n$ and $\widehat{y}\in\mathbb{R}^{n+1}$.
Therefore, we change the constraint $ {y}_{n+1}^2=1$ to $ \widehat{y}_{n+1}^2=1$.
\end{remark}

We now prove that the conditions of Theorem~\ref{thm:dual} 
are staisfied for the constrained Rayleigh quotient optimization problem \eqref{eq:CRQopttrans}.

\begin{lemma}\label{lem:interior}
Suppose $\|v_0\|<1$, where $v_0=(C^{\T})^\dag b$.
Then there exists $\widehat{y}$ such that $\|\widehat{y}\|_N^2=\widehat{y}^{\T}N^{\T}N\widehat{y}<n+1$ and $\widehat{y}_{(n-m+1)}=1$.
\end{lemma}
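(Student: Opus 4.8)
The plan is to exploit the block structure of $N$ in \eqref{eq:defN} to turn the problem into a single least-squares projection computation. First I would parametrize the candidate vector so that the constraint $\widehat{y}_{(n-m+1)}=1$ is satisfied automatically: write $\widehat{y}=\begin{bmatrix} y\\ 1\end{bmatrix}$ with $y\in\bbR^{n-m}$ still free. Then, using the definition of $N$,
\begin{equation*}
N\widehat{y}=\begin{bmatrix} U & u\\ 0 & 1\end{bmatrix}\begin{bmatrix} y\\ 1\end{bmatrix}=\begin{bmatrix} Uy+u\\ 1\end{bmatrix},
\end{equation*}
so that $\|\widehat{y}\|_N^2=\widehat{y}^{\T}N^{\T}N\widehat{y}=\|Uy+u\|^2+1$. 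Hence it suffices to exhibit a single $y$ with $\|Uy+u\|^2<n$, and the natural choice is the $y$ that minimizes this quantity.

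Next I would evaluate that minimum. Since $Uy$ sweeps out $\cR(U)=\cN(C^{\T})$ as $y$ ranges over $\bbR^{n-m}$, orthogonally decomposing $u$ relative to the splitting $\bbR^n=\cN(C^{\T})\oplus\cR(C)$ gives
\begin{equation*}
\min_{y\in\bbR^{n-m}}\|Uy+u\|^2=\|(I-P)u\|^2,
\end{equation*}
where $P=I-CC^{\dag}$ is the orthogonal projection onto $\cN(C^{\T})$ from \eqref{eq:theP-dfn}; the residual $(I-P)u=CC^{\dag}u$ is exactly the component of $u$ lying in $\cR(C)$, and it is attained since $Pu\in\cR(U)$ forces $Uy=-Pu$ for some $y$.

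The key step is then the identity coming from the defining relation $C^{\T}u=\sqrt{n}\,b$. Using $C^{\dag}=(C^{\T}C)^{-1}C^{\T}$ and $(C^{\T})^{\dag}=C(C^{\T}C)^{-1}$,
\begin{equation*}
(I-P)u=C(C^{\T}C)^{-1}C^{\T}u=\sqrt{n}\,C(C^{\T}C)^{-1}b=\sqrt{n}\,(C^{\T})^{\dag}b=\sqrt{n}\,v_0.
\end{equation*}
Therefore $\min_y\|Uy+u\|^2=n\|v_0\|^2$, and the hypothesis $\|v_0\|<1$ yields $n\|v_0\|^2<n$. Taking $y$ to attain this minimum produces $\widehat{y}$ with $\|\widehat{y}\|_N^2=n\|v_0\|^2+1<n+1$ and $\widehat{y}_{(n-m+1)}=1$, as required. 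This argument is essentially a routine computation; the only point needing care is the projection identity connecting $(I-P)u$ to $v_0=(C^{\T})^{\dag}b$, which is where the hypothesis $\|v_0\|<1$ enters and where I would double-check the Moore--Penrose formulas, so I do not expect a genuine obstacle here.
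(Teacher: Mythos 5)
Your proof is correct and is essentially the paper's argument in more explicit form: the minimizing $y$ you construct gives $N\widehat{y}=[\sqrt{n}\,v_0^{\T},\ 1]^{\T}$, which is exactly the witness $\widehat{v}$ the paper takes directly (using that $\sqrt{n}\,v_0$ is the minimum-norm solution of $C^{\T}v=\sqrt{n}\,b$ and that $\cR(N)=\cN(\widehat{C})$). The projection identity $(I-P)u=\sqrt{n}\,v_0$ you verify is the same fact the paper invokes implicitly, so the two proofs coincide in substance.
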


\begin{proof}
Note that $v_0=(C^{\T})^\dag b$ is the minimum norm solution of $C^{\T}v=b$.
Let $\widehat{v}=[\sqrt{n}v_0^{\T}, 1]^{\T}$. Then $\widehat{v}\in\mathcal{N}(\widehat{C})$ and thus
there exists $\widehat{y}$ such that $\widehat{v}=N\widehat{y}$ for which we have
$\|\widehat{y}\|_N=\|\widehat{v}\|_2<\sqrt{n+1}$, and, at the same time, $\widehat{y}_{(n-m+1)}=\widehat{v}_{(n+1)}=1$.
\end{proof}

By Lemma \ref{lem:interior} and Theorem \ref{thm:dual}, the optimization problem \eqref{eq:CRQopttrans} is equivalent to its dual problem
\begin{equation}\label{eq:dual}
\sup_t\inf_{\widehat{y}^{\T}N^{\T}N\widehat{y}=n+1} \frac{\widehat{y}^{\T}L\widehat{y}+t\widehat{y}_{n-m+1}^2-t}{\widehat{y}^{\T}M\widehat{y}}.
\end{equation}
Since
$$
t\widehat{y}_{n-m+1}^2-t=t\widehat{y}_{n-m+1}^2-t\frac{\widehat{y}^{\T}N^{\T}N\widehat{y}}{n+1}=\widehat{y}^{\T}E\widehat{y},
$$
\eqref{eq:dual} is equivalent to
\begin{equation}\label{eq:dual2}
\sup_t\inf_{\widehat{y}^{\T}N^{\T}N\widehat{y}=n+1} \frac{\widehat{y}^{\T}(L+tE)\widehat{y}}{\widehat{y}^{\T}M\widehat{y}}.
\end{equation}

To transform the dual problem \eqref{eq:dual2} to an eigenvalue problem, 
we first prove that $M$ is positive definite.

\begin{lemma}\label{lem:pd}
Let $b$ be as defined in \eqref{eq:CRQopt-2} and $b\neq 0$.  $N$ has full column rank, then $M$ is positive definite.
\end{lemma}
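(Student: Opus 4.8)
The plan is to work directly from the factored form $M=N^{\T}\widehat{B}N$ with $\widehat{B}=\begin{bmatrix} I_n & 0\\ 0 & 0\end{bmatrix}$ and to reduce positive definiteness of $M$ to a full-column-rank statement about the matrix $[\,U\ u\,]$. First I would partition an arbitrary vector as $\widehat{y}=[\,y^{\T},\,s\,]^{\T}$ with $y\in\bbR^{n-m}$ and $s\in\bbR$, so that by the block structure of $N$ in \eqref{eq:defN} we have $N\widehat{y}=[\,(Uy+su)^{\T},\,s\,]^{\T}$. Applying $\widehat{B}$ annihilates the last coordinate, giving the clean identity $\widehat{y}^{\T}M\widehat{y}=\|Uy+su\|_2^2\ge 0$. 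This already shows $M$ is positive semidefinite; all the actual work is in ruling out a nonzero vector in the null space.

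Next I would suppose $\widehat{y}^{\T}M\widehat{y}=0$, i.e.\ $Uy+su=0$, and aim to deduce $\widehat{y}=0$. The crucial point---and the only place where the hypothesis $b\ne 0$ enters---is that $u\notin\cR(U)$. Indeed $\cR(U)=\cN(C^{\T})$ by assumption, whereas $C^{\T}u=\sqrt{n}\,b\ne 0$ by the choice of $u$ together with \eqref{eq:CRQopt-2}, so $u$ cannot lie in $\cN(C^{\T})$. Consequently the columns of $[\,U\ u\,]\in\bbR^{n\times(n-m+1)}$ are linearly independent: the columns of $U$ are independent since $U$ has full column rank, and $u$ is not in their span. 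From $Uy+su=0$ I would then read off $s=0$ and $Uy=0$, and full column rank of $U$ forces $y=0$; hence $\widehat{y}=0$, proving $M\succ 0$.

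The main obstacle here is conceptual rather than computational: since $\widehat{B}$ is only positive semidefinite (it carries a zero in its last diagonal entry), positive definiteness of $M$ cannot follow from full column rank of $N$ alone. One must exploit that the extra column $u$ points genuinely outside $\cN(C^{\T})$, which is exactly what $b\ne 0$ guarantees through $C^{\T}u=\sqrt{n}\,b$. I would emphasize in the write-up that this is precisely where the nonhomogeneity of the constraint is used; were $b=0$, one could take $u\in\cN(C^{\T})$, and then $M$ would be singular.
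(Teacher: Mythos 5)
Your proof is correct, and it takes a genuinely different and noticeably more elementary route than the paper's. The paper argues by contradiction at the level of inverses and determinants: assuming $Mx=0$ for some $x\ne 0$, it shows the last component of $x$ must be nonzero, rewrites the condition as $N^{\T}Nx=e_{n-m+1}$, and then evaluates $(N^{\T}N)^{-1}_{(n-m+1,n-m+1)}$ in two ways --- once as $1$ from that equation, once as $1/\bigl(1+u^{\T}(I-P_U)u\bigr)$ via a block determinant and the adjugate formula --- to force $u\in\cR(U)$ and hence a contradiction with $b\ne 0$. You instead compute the quadratic form directly: writing $\widehat{y}=[y^{\T},\,s]^{\T}$, the block structure of $N$ gives $\widehat{y}^{\T}M\widehat{y}=\|Uy+su\|_2^2$, so positive definiteness reduces to linear independence of the columns of $[\,U\ u\,]$, which follows because $U$ has full column rank and $u\notin\cR(U)=\cN(C^{\T})$ (since $C^{\T}u=\sqrt{n}\,b\ne 0$). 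Both proofs ultimately hinge on the same fact --- $u$ lies outside $\cN(C^{\T})$ exactly when $b\ne 0$ --- but your version makes this the visible pivot of the argument and avoids the determinant machinery entirely; it also isolates cleanly why the lemma fails for $b=0$. One cosmetic remark: the lemma's stated hypothesis that $N$ has full column rank plays no role in your argument (nor, really, in the paper's); what is actually used is that $U$ has full column rank, which is part of the standing setup of the appendix, so nothing is lost.
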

\begin{proof}
It is clear that $M$ is positive semi-definite. We claim that $M$ is nonsingular.
Suppose, to the contrary, that $M$ is singular. Then there exists a nonzero $x$ such that $Mx=0$.

We claim that $x_{(n-m+1)}\neq 0$; otherwise suppose $x_{(n-m+1)}= 0$ and write
$x=\begin{bmatrix}
     x_1 \\
     0
   \end{bmatrix}$. It follows from $Mx=0$ that $U^{\T}Ux=0$, implying $x_1=0$ because $U$ has full column rank.
Thus $x=0$, a contradiction.

Without loss of generality, we may normalize $x_{(n-m+1)}$ to $1$, i.e.,  $x=\begin{bmatrix}
x_1\\1
\end{bmatrix} $. Note that $M=N^{\T}N-e_{n-m+1}e_{n-m+1}^{\T}$. $Mx=0$ implies $N^{\T}Nx=\begin{bmatrix}
0\\1
\end{bmatrix}$.
$N^{\T}N$ is invertible. We now express $(N^{\T}N)^{-1}_{(n-m+1,n-m+1)}$ in two different ways.
$N^{\T}Nx=\begin{bmatrix}
0\\1
\end{bmatrix}$ yields $x=(N^{\T}N)^{-1}\begin{bmatrix}
                                             0\\ 1
                                             \end{bmatrix}$ and thus
$$
1=\begin{bmatrix}
    0\\ 1
    \end{bmatrix}^{\T}x=\begin{bmatrix}
    0\\ 1
    \end{bmatrix}^{\T}(N^{\T}N)^{-1}\begin{bmatrix}
                                             0\\ 1
                                             \end{bmatrix}=(N^{\T}N)^{-1}_{(n-m+1,n-m+1)}.
$$
{On the other hand,
$$
N^{\T}N=\begin{bmatrix}
U^{\T}U &U^{\T}u\\u^{\T}U &u^{\T}u+1
\end{bmatrix}.
$$
By the assumption that $U$ has full column rank, $U^{\T}U$ is invertible. 
With help of a formula
\footnote{$\det\left(\begin{bmatrix}
A&B\\C&D
\end{bmatrix}\right)=\det(A)\det(D-CA^{-1}B)$ when $A$ is invertable.} 
of the determinant of block matrices, we have
$$\det(N^{\T}N)=\det(U^{\T}U)\det[(1+u^{\T}u-u^{\T}U(U^{\T}U)^{-1}U^{\T}u].$$
According to the relationship between the inverse and the adjoint of a matrix, we find}
\begin{align*}
(N^{\T}N)^{-1}_{(n-m+1,n-m+1)}&=(-1)^{n-m+1+n-m+1}\frac{\det(U^{\T}U)}{\det(N^{\T}N)}\\
&=\frac{\det(U^{\T}U)}{\det(U^{\T}U)\det[(1+u^{\T}u-u^{\T}U(U^{\T}U)^{-1}U^{\T}u]}\\
   &=\frac{\det(U^{\T}U)}{\det(U^{\T}U)[1+u^{\T}(I-P_U)u]},
\end{align*}
where $P_U$ is the orthogonal projection onto $\cR(U)$. Therefore, $(N^{\T}N)^{-1}_{(n-m+1,n-m+1)}=1$
if and only if $u^{\T}(I-P_U)u=0$ implying that $u$ is in the column space of $U$. Without loss of generality,
we may assume the first column of $U$ is $u$. Now
subtract the first column of $N$ from its last column to conclude that  $e_{n+1}$ is in the null space of $\widehat{C}$,
which contradicts that $b\neq 0$.
\end{proof}

By Lemma \ref{lem:pd} and Courant-Fisher minimax theorem \cite[Theorem 8.1.2]{govl:2013}, finding
$$
\inf_{\widehat{y}^{\T}N^{\T}N\widehat{y}=n+1} \frac{\widehat{y}^{\T}(L+tE)\widehat{y}}{\widehat{y}^{\T}M\widehat{y}}
$$
is equivalent to finding the smallest eigenvalue of $K^{-1}(L+tE)K^{-\T}x=\lambda x$,
where $M=KK^{\T}$ is the Cholesky factorization of $M$. Therefore, \eqref{eq:dual2} is equivalent to
\begin{equation}\label{eq:eigt}
\sup_t\lambda_{\min}(L+tE,M).
\end{equation}

Finally, we prove that the maximum value can be obtained, i.e., 
'$\sup$' in \eqref{eq:eigt} can be replaced by '$\max$'.

\begin{lemma}
Let $f(t)=\lambda_{\min}(L+tE,M)$. There exits $t_0\in\bbR$ such that $f(t_0)=\sup_{t\in\bbR}f(t)$.
\end{lemma}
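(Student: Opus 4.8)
The plan is to prove that $f(t)=\lambda_{\min}(L+tE,M)$ is continuous on all of $\mathbb{R}$ and satisfies $f(t)\to-\infty$ as $t\to\pm\infty$; a continuous function with this behavior necessarily attains its supremum at some finite $t_0$, which is exactly the claim. Since $M$ is positive definite by Lemma~\ref{lem:pd}, the generalized Rayleigh quotient is well-defined and
$$
f(t)=\min_{\widehat{y}\neq 0}\frac{\widehat{y}^{\T}(L+tE)\widehat{y}}{\widehat{y}^{\T}M\widehat{y}}.
$$
For each fixed $\widehat{y}$, the map $t\mapsto \widehat{y}^{\T}(L+tE)\widehat{y}/(\widehat{y}^{\T}M\widehat{y})$ is affine in $t$, so $f$ is a pointwise minimum of affine functions, hence concave and in particular continuous on $\mathbb{R}$. (Continuity also follows by writing $M=KK^{\T}$ and noting $f(t)$ is the smallest eigenvalue of the symmetric matrix $K^{-1}(L+tE)K^{-\T}$, which depends continuously on $t$.)

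The crux of the argument, and the main obstacle, is to show that $E$ is \emph{indefinite}; this is precisely what forces $f$ to turn downward at both ends rather than to increase monotonically to a limit that is never reached. I would first record the identity, obtained directly from the definition of $E$ exactly as in the derivation preceding \eqref{eq:dual2},
$$
\widehat{y}^{\T}E\widehat{y}=\widehat{y}_{n-m+1}^2-\frac{1}{n+1}\,\|\widehat{y}\|_N^2,\qquad \|\widehat{y}\|_N^2:=\widehat{y}^{\T}N^{\T}N\widehat{y}.
$$
For a negative direction, take any nonzero $\widehat{y}$ with $\widehat{y}_{n-m+1}=0$ (possible since $n-m+1\ge 2$ because $m<n$); as $N$ has full column rank, $\|\widehat{y}\|_N>0$ and hence $\widehat{y}^{\T}E\widehat{y}<0$. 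For a positive direction, I would invoke Lemma~\ref{lem:interior}, which under the standing hypothesis $\|v_0\|<1$ supplies a $\widehat{y}$ with $\widehat{y}_{n-m+1}=1$ and $\|\widehat{y}\|_N^2<n+1$; for this vector $\widehat{y}^{\T}E\widehat{y}=1-\tfrac{1}{n+1}\|\widehat{y}\|_N^2>0$. Thus $E$ has both a strictly negative and a strictly positive direction. It is worth emphasizing that the positive direction is exactly the point where the feasibility/interior assumption enters, and without it $E$ could fail to be indefinite and the supremum could escape to infinity.

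Finally I would convert indefiniteness into the two limits. Fix $\widehat{y}_{-}$ with $\widehat{y}_{-}^{\T}E\widehat{y}_{-}<0$; since $\widehat{y}_{-}^{\T}M\widehat{y}_{-}>0$, the Rayleigh quotient along $\widehat{y}_{-}$ equals $\dfrac{\widehat{y}_{-}^{\T}L\widehat{y}_{-}}{\widehat{y}_{-}^{\T}M\widehat{y}_{-}}+t\,\dfrac{\widehat{y}_{-}^{\T}E\widehat{y}_{-}}{\widehat{y}_{-}^{\T}M\widehat{y}_{-}}\to-\infty$ as $t\to+\infty$, so $f(t)\to-\infty$ as $t\to+\infty$; symmetrically, using $\widehat{y}_{+}$ with $\widehat{y}_{+}^{\T}E\widehat{y}_{+}>0$ gives $f(t)\to-\infty$ as $t\to-\infty$. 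With $f$ continuous and $f(t)\to-\infty$ at both ends, choose any $t_1$ and $R>0$ so that $f(t)<f(t_1)$ for $|t|>R$; the restriction of $f$ to the compact interval $[-R,R]$ attains its maximum at some $t_0$, and $f(t_0)\ge f(t_1)>f(t)$ for all $|t|>R$, so $t_0$ is a global maximizer and $f(t_0)=\sup_{t\in\bbR}f(t)$, as required.
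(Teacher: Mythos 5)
Your proposal is correct and follows essentially the same route as the paper: exhibit a direction with $\widehat{y}^{\T}E\widehat{y}<0$ (last component zero) and one with $\widehat{y}^{\T}E\widehat{y}>0$ (via the interior point from Lemma~\ref{lem:interior}, i.e.\ $\|v_0\|<1$) to force $f(t)\to-\infty$ as $t\to\pm\infty$, then conclude by continuity and compactness. Your added remark that $f$ is concave as a pointwise minimum of affine functions is a slightly more self-contained justification of continuity than the paper's citation, but the argument is otherwise the same.
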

\begin{proof}
 We prove the claim by showing that
$$
\lim_{t\rightarrow+\infty}f(t)=\lim_{t\rightarrow-\infty}f(t)=-\infty.
$$
First, let $v_1\in\cR(N)$ with the last component being zero, and set $y_1=N^{\T}v_1$.
We have $y_1^{\T}Ey_1=-\frac{\|v_1\|^2_2}{n+1}<0$ and $y_1^{\T}My_1>0$ 
since $M$ is positive definite. Hence
$$
\lim_{t\rightarrow+\infty}f(t)
  =\lim_{t\rightarrow+\infty}\inf_{\widehat{y}} \frac{\widehat{y}^{\T}(L+tE)\widehat{y}}{\widehat{y}^{\T}M\widehat{y}}
  \le  \lim_{t\rightarrow+\infty}\frac{y_1^{\T}(L+tE)y_1}{y_1^{\T}My_1}
  \le \lim_{t\rightarrow+\infty}t \frac{y_1^{\T}Ey_1}{y_1^{\T}My_1}+\lambda_{\max}(L,M)
  = -\infty.
$$
Recall $v_0=(C^{\T})^{\dag}b$ and the assumption that $\|v_0\|<1$.
Let $v_2=[\sqrt{n}v_0^{\T},\ 1]^{\T}$. Clearly $v_2\in\cR(N)$ and let $y_2=N^{\T}v_2$.
We have $y_2^{\T}Ey_2=-\frac{\|v_0\|^2_2}{n+1}+1-\frac{1}{n+1}>0$ since $\|v_0\|<1$ and $y_2^{\T}My_2>0$
since $M$ is positive definite. Hence
$$
\lim_{t\rightarrow-\infty}f(t)
  =\lim_{t\rightarrow-\infty}\inf_{\widehat{y}} \frac{\widehat{y}^{\T}(L+tE)\widehat{y}}{\widehat{y}^{\T}M\widehat{y}}
  \le  \lim_{t\rightarrow-\infty}\frac{y_2^{\T}(L+tE)y_2}{y_2^{\T}My_2}
  \le \lim_{t\rightarrow-\infty}t \frac{y_2^{\T}Ey_2}{y_2^{\T}My_2}+\lambda_{\max}(L,M)
  = -\infty.
$$

Therefore, there exits $t_1<0$ such that $f(t)<f(0)$ for $t<t_1$  and there exits $t_2>0$ such that $f(t)<f(0)$
for when $t>t_2$. Therefore
$$
\sup_{t\in\bbR} f(t)=\sup_{t\in[t_1,t_2]} f(t).
$$
Because $f(t)=\lambda_{\min}(L+tE,M)$ is a continuous function \cite{stsu:1990},
there exists $t_0\in[t_1,t_2]$ such that $f(t_0)=\sup_{t\in\bbR} f(t)$.
\end{proof}

In conclusion, we have shown that CRQopt \eqref{eq:CRQopt} is equivalent to 
the eigenvalue optimization problem \eqref{eq:res-erof:2011}.

\section{CRQPACK}\label{sec-alg-software}
The Lanczos algorithm for solving CRQopt \eqref{eq:CRQopt} described
in this paper has been implemented in MATLAB.  
In the spirit of reproducible research, MATLAB scripts of 
the implementation of the Lanczos algorithm and the data that used to 
generate numerical results presented in this paper are packed
in a software called package called CRQPACK. 
CRQPACK can be obtained from 
\begin{center}
\url{https://www.math.ucdavis.edu/~yszhou/CRQPACK.zip}.
\end{center}
CRQPACK consists of three folders:
\begin{itemize}
\item \verb|src|:  the source code for solving CRQopt \eqref{eq:CRQopt}.

It consists of four functions \verb|CRQ_Lanczos|, \verb|QEPmin|, \verb|LGopt| and \verb|rLGopt|.
\verb|CRQ_Lanczos| is the driver and calls
\verb|QEPmin| and \verb|LGopt|. \verb|LGopt| is dependent on \verb|rLGopt|.

In addition, we also provide two other drivers
for solving CRQopt \eqref{eq:CRQopt}, namely 
\verb|CRQ_explicit| for the direct method \cite{gagv:1989} and 
\verb|CRQ_ppm| for the projected power method \cite{xuls:2009}. 

\item \verb|synthetic|:  the drivers for numerical examples
      in section \ref{sec-crq-ex}.

 \verb|correct.m| and \verb|QEPres.m| are for the examples in 
            Sections \ref{sec-crq-ex-correct} and \ref{sec-crq-ex-res},
 respectively. \verb|CRQsharp.m| is used to generate the plots for Example~\ref{ex:sharp} 
            on error bounds in \eqref{eq:rLGopt-UBs-1} and \eqref{eq:rLGopt-UBs-2}, while
 \verb|CRQnotsharp.m| on the error bounds \eqref{eq:rLGopt-UBs-1} and \eqref{eq:rLGopt-UBs-2}.

\item \verb|imagecut|: the code for constrained image segmentation. 

It has three subfolders: \verb|examples| contains the drivers,
\verb|data| contains image data including prior labeling information,
and \verb|auxiliary| contains program to generate the 
            matrices $A$, $C$, and vector $b$ of CRQopt \eqref{eq:CRQopt}.
\end{itemize}
The syntax  of calling the driver \verb|CRQ_Lanczos| is as follows:
\begin{center}
\verb|[v,info] = CRQopt(A,C,b,opts)|
\end{center}
where 
\begin{itemize}
\item \verb|A|:  the    matrix $A$ in CRQopt \eqref{eq:CRQopt}
\item \verb|C|:  the   matrix $C$ in CRQopt \eqref{eq:CRQopt}
\item \verb|b|:  the vector $b$ in CRQopt \eqref{eq:CRQopt}
\item \verb|opts|: option parameters:
\begin{itemize}
\item \verb|opts.maxit|: maximum number of Lanczos iteraions
\item \verb|opts.minit|: minimum number of Lanczos iteraions
\item \verb|opts.tol|: tolerance of relative residual
\item \verb|opts.method|: method to solve the optimization problem

      1: solve CRQopt via LGopt (default) 

        2: solve CRQopt via QEPmin

\item \verb|opts.checkstep|: the number of Lanczos steps between solving two 
      rLGopt or two rQEPmin and checking the residuals

\item \verb|opts.resopt|: option for computing the residual (only valid when \verb|opts.method=2|)

      0 : using residual bound \eqref{eq:NRes-QEPmin-2} to estimate residual (default)

        1: using  residual \eqref{eq:NRes-QEPmin-1}

  \item \verb|opts.returnQ|: indicator that whether the algorithm returns $Q_k$ 
        in structure \verb|info|
\end{itemize}

\item \verb|v|:  computed solution of CRQopt \eqref{eq:CRQopt}
\item \verb|info|: information for some internal data: 
\begin{itemize}
\item \verb|info.n0|: vector $n_0$
\item \verb|info.b0|: vector $b_0$
\item \verb|info.gamma2|: the square of parameter $\gamma$
\item \verb|info.k|: the number of Lanczos steps
\item \verb|info.T|: tridiagonal matrix $T_k$
\item \verb|info.mu|: computed eigenvalue or Lagrange multipliers in each iteration
\item \verb|info.res|: norms of relative residual of Lagrange equations/QEP in each iteration
\item \verb|info.Q|: the matrix $Q_k$. This field is valid only when  \verb|opts.returnQ=1|
 \item \verb|info.x|: a cell, whose elements are the solutions of all rLGopt \eqref{eq:rLGopt} solved.
 This field is valid only when\verb|opts.method=1|.
 \item \verb|info.s|: a cell, whose element are the eigenvectors of all LEP \eqref{eq:QEPproj'ed-lin} corresponding
 to the desired eigenvalue. This field is valid only when  \verb|opts.method=2|.
\end{itemize}
\end{itemize}

\section*{Acknowledgment}
YZ would like to thank Mr.~Ning Wan for sharing his study notes of
theory and algorithm of CRQopt,
Mr.~Yanwen Luo for his help in the proof for Lemma \ref{lm:noeig},
Dr.~Chengming Jiang for providing his FAST-GE2.0 implementation
of the constrained image segmentation, and 
Mr.~Michael Ragone for his comments on an early version of 
this manuscript.

\bibliographystyle{plain}
\bibliography{references}
\end{document}